\documentclass[11pt]{article}

\usepackage[utf8]{inputenc}
\usepackage{amsmath}
\usepackage{amssymb,amsfonts,amsthm}
\usepackage{mathrsfs}
\usepackage{xcolor}
\usepackage{tikz}
\usepackage{fullpage}
\usepackage{dsfont}
\usepackage{thmtools}
\usepackage{thm-restate}
\usepackage{enumitem}

\usepackage{amsfonts}
\usepackage{amssymb}
\usepackage{fancyhdr}
\usepackage{comment}
\usepackage{bm}
\usepackage{soul}

\usepackage{caption}
\usepackage[titles]{tocloft}
\captionsetup[figure]{font=footnotesize,labelfont=bf}
\captionsetup[table]{font=footnotesize,labelfont=bf}




\usepackage{xcolor}

\usepackage{subfig}

\usepackage{wrapfig}

\usepackage{tikz}
\usetikzlibrary{arrows.meta}
\usepackage{floatflt}
\usepackage[ruled,vlined,linesnumbered]{algorithm2e}

\usepackage{hyperref}
\usepackage{xcolor}
\hypersetup{
  colorlinks   = true, 
  urlcolor     = blue, 
  linkcolor    = blue, 
  citecolor   = blue 
}

\usepackage{amsmath}
\usepackage{dsfont}
\usepackage{graphicx}
\usepackage[hypcap]{caption}
\usepackage{color, soul}
\usepackage{float}
\usepackage{dsfont}

\newtheorem{theorem}{Theorem}[section]
\newtheorem{remark}{Remark}
\newtheorem{lemma}[theorem]{Lemma}
\newtheorem{corollary}[theorem]{Corollary}

\newtheorem{proposition}[theorem]{Proposition}
\newtheorem{fact}[theorem]{Fact}
\theoremstyle{definition}
\newtheorem{definition}{Definition}

\newenvironment{mylem}[1]
  {\innercustomlem\itshape}
  {\endinnercustomlem}

\newenvironment{mythm}[1]
  {\innercustomthm\itshape}
  {\endinnercustomthm}

\newenvironment{mycor}[1]
  {\innercustomcor\itshape}
  {\endinnercustomcor}

\renewcommand*{\big}[1]{\vcenter{\hbox{\scalebox{1.3}{\ensuremath#1}}}}

\newcommand{\tsq}{\sqrt{\theta}}

\newcommand{\eqdist}{\stackrel{d}=}



\newcommand{\supp}{\mathrm{supp}}

\newcommand{\Rad}{\mathsf{Rad}}
\newcommand{\tr}{^\top}
\renewcommand{\P}{\mathbb{P}}

\newcommand{\sign}{\mathsf{sign}}

\newcommand{\la}{\langle}
\newcommand{\ra}{\rangle}




\newcommand{\R}{\mathbb{R}}
\newcommand{\Z}{\mathbb{Z}}

\newcommand{\B}{\Big}
\renewcommand{\b}{\big}



\newcommand{\up}[1]{^{(#1)}}



\newcommand{\ksparse}[1]{\mathcal{B}_{#1}(k)}
\newcommand{\ksparseflat}[1]{\overline{\mathcal{B}_{#1}}(k)}

\newcommand{\TV}{\tv}
\newcommand{\tv}{d_\mathrm{TV}}

\DeclareMathOperator{\Var}{\mathsf{Var}}

\newcommand{\sw}{\mathsf{SpWig}}
\renewcommand{\sc}{\mathsf{SpCov}}

\newcommand{\inv}{^{-1}}

\newcommand{\bern}[1]{\mathsf{Bern}(#1)}

\newcommand{\cL}{\mathcal{L}}


\newcommand{\mL}{\mathcal{L}}

\newcommand{\KL}{\kl}

\newcommand{\bP}{\mathbb{P}}
\newcommand{\bE}{\mathbb{E}}

\newcommand{\N}{\mathcal{N}}

\newcommand{\GS}{\mathtt{GS}}
\newcommand{\GaussC}{\mathtt{GaussClone}}
\newcommand{\CloneCov}{\mathtt{CloneCov}}
\newcommand{\WishartToSSBM}{\mathtt{SpCovToSpWig}}
\newcommand{\RK}{\mathtt{RK}}
\newcommand{\Denoise}{\mathtt{Denoise}}
\newcommand{\GaussCR}{\mathtt{GaussCloneRep}}

\newcommand{\tO}{\widetilde{O}}

\newcommand{\tOm}{\widetilde{\Omega}}


\newcommand{\Ysym}{Y_{\text{sym}}}

\newcommand{\Zt}{\widetilde{Z}}
\newcommand{\Xt}{\widetilde{X}}

\DeclareMathOperator*{\E}{{\rm I}\kern-0.18em{\rm E}}
\renewcommand{\P}{\,{\rm I}\kern-0.18em{\rm P}}

\newcommand{\law}{\mathcal{L}}

\newenvironment{fminipage}%
  {\begin{Sbox}\begin{minipage}}%
  {\end{minipage}\end{Sbox}\fbox{\TheSbox}}

\setcounter{tocdepth}{1}

\newcommand{\rad}{\mathsf{Rad}}

\newcommand{\kl}{\mathsf{KL}}

\newcommand{\lamcomp}{\lambda^\mathrm{comp}}
\newcommand{\lamstat}{\lambda^\mathrm{stat}}

\newcommand{\betacomp}{\beta^\mathrm{comp}}
\newcommand{\betastat}{\beta^\mathrm{stat}}

\newcommand{\thetacomp}{\theta^\mathrm{comp}}
\newcommand{\thetastat}{\theta^\mathrm{stat}}

\newcommand{\Sigmahat}{\widehat \Sigma}

\newcommand{\Xbar}{\bar X}

\newcommand{\fr}{\mathrm{F}}

\newcommand{\cc}[1]{%
  \ifcase#1 c_{\scriptscriptstyle K}
  \or c''_{\scriptscriptstyle K}
  \or c'_{\scriptscriptstyle K}
  \else c_{u}
  \fi}

\newcommand{\boldnormal}[1]{\textbf{\textnormal{#1}}}

\newcommand{\Enorm}[1]{\mathcal{E}_{#1}^{\mathrm{norm}}}
\newcommand{\EE}[1]{\mathcal{E}_{#1}}
\newcommand{\Eg}{\mathcal{E}^{g}}
\newcommand{\Em}{\mathcal{E}}

\newcommand{\I}{\mathrm{I}}
\newcommand{\II}{\mathrm{II}}
\newcommand{\III}{\mathrm{III}}

\newcommand{\EIII}[1]{\mathcal{E}_{#1}^\III}

\renewcommand{\bP}{\mathbf{P}}
\newcommand{\equaldist}{\stackrel{d}=}

\newcommand{\FR}{\mathrm{Fr}}

\newcommand{\cU}{\mathcal{U}}

\newcommand{\cM}{\mathcal{M}}
\newcommand{\cQ}{\mathcal{Q}}

\newcommand{\PP}[2]{\mathcal{P}^{#1}_{#2}}
\newcommand{\QQ}[2]{\mathcal{Q}^{#1}_{#2}}

\newcommand{\cP}{\mathcal{P}}

\renewcommand{\tr}{\mathrm{Tr}}

\newcommand{\GOE}{\mathrm{GOE}}

\newcommand{\Omc}{\Omega^\mathsf{Cov}}
\newcommand{\Omw}{\Omega^\mathsf{Wig}}
\newcommand{\CCw}{\mathcal{C}^\mathsf{Wig}}
\newcommand{\CCc}{\mathcal{C}^\mathsf{Cov}}

\newcommand{\polylog}{\mathsf{polylog}}

\newcommand{\as}{a_{\mathsf{sp}}}
\newcommand{\an}{a_{-}}

\newcommand{\ws}{w_{\mathsf{sp}}}
\newcommand{\wn}{w_{-}}

\newcommand{\qquadand}{\qquad\text{and}\qquad}

\usepackage[titles]{tocloft}
\setlength{\cftbeforesecskip}{6pt}

\title{Computational Equivalence of Spiked Covariance and Spiked Wigner Models via Gram-Schmidt Perturbation\footnote{Accepted for presentation at the Conference on Learning Theory (COLT) 2025.}}

\author{Guy Bresler\thanks{Supported by NSF Grant CCF-2428619.} \and Alina Harbuzova\thanks{Supported by Vanu Bose Presidential Fellowship.}}

\date{Massachusetts Institute of Technology}

\begin{document}

\maketitle


\begin{abstract}
    In this work, we show the first average-case reduction transforming the sparse Spiked Covariance Model into the sparse Spiked Wigner Model and as a consequence obtain the first computational equivalence result between two well-studied high-dimensional statistics models. Our approach leverages a new perturbation equivariance property for Gram-Schmidt orthogonalization, enabling removal of dependence in the noise while preserving the signal.
\end{abstract}


\setcounter{tocdepth}{2}

\setlength{\cftbeforesecskip}{4pt}

\tableofcontents

\newpage
\setcounter{page}{1}

\section{Introduction}

A fundamental challenge in modern statistics is understanding the trade-off between statistical resource, in the form of data quantity or quality, and computational resource.
For a majority of structured statistical problems of modern interest, the best computationally efficient algorithms require significantly more data than what is theoretically sufficient in the absence of the computational constraints. This phenomenon, also known as ``computational-statistical gap", appears across a wide range of problems, including sparse linear regression \cite{zhang2014lower}, sparse principle component analysis (PCA) \cite{berthet2013optimal}, tensor PCA \cite{richard2014statistical}, submatrix detection \cite{ma2015computational}, community recovery \cite{hajek2015computational}, and many others \cite{brennan2020reducibility}. A thriving line of research aims to determine the fundamental limits of computationally efficient algorithms for the extensive array of modern statistical problems.

Traditional computational complexity theory 
focuses on problems whose input data are worst-case, i.e., adversarially chosen. In contrast, statistics and machine learning problems usually involve data generated by a benign process, often modeled by specific distributions. As a result, traditional worst-case complexity techniques, which primarily rely on reductions that relate one problem to another, do not directly apply to the modern \emph{average-case} or \emph{distributional} problems.\footnote{Early average-case complexity theory, e.g., \cite{levin1986average,bogdanov2006average}, was tailored to a very strong notion of dist-NP-completeness that works for \emph{any} distribution; this theory does not address problems studied in mathematical statistics which assume plausible data generating distributions. } To address this gap, a recent line of work focused on developing the \emph{average-case} reductions instead (\cite{berthet2013complexity,brennan2018reducibility,brennan2020reducibility,brennan2019optimal,ma2015computational,hajek2015computational}). These average-case reductions relate two problems of interest by precisely transforming one data distribution into another. This fundamental relationship between problems goes beyond merely transferring computational feasibility and demonstrates that their associated statistical and computational phenomena has the same root.

In parallel, researchers have sought to analyze the limits of \emph{restricted classes of algorithms}, including sum-of-squares \cite{barak2016nearly,hopkins2017power}, approximate message passing (AMP) \cite{zdeborova2016statistical}, statistical query \cite{diakonikolas2017statistical,kearns1998efficient,feldman2015complexity,feldman2013statistical}, low-degree polynomials \cite{hopkinsThesis,kunisky2019notes,wein2023average}, certain smooth algorithms through overlap gap methodology \cite{gamarnik2014limits,gamarnik2021overlap,huang2022tight}, and many others. While such analyses give insight, each class of algorithms has its own limitations and indeed completely fails to solve (or correctly predict computational hardness for) problems that are known to be easy \cite{li2024some,holmgren2021counterexamples,koehler2022reconstruction}. The limitations of this approach to hardness analysis are clear -- each time a new class of algorithms gains prominence, the entire collection of statistics problems of interest must be analyzed from scratch, one at a time. In this sense, by elucidating connections between problems themselves in an algorithm-independent way, average-case reductions are ``future-proofed".

A central triumph of the worst-case complexity theory is the \emph{classification of problems into equivalence classes}, which constitutes a major conceptual simplification:
one may study any \emph{single} problem from the class in place of them all. In contrast, existing average-case reductions do not establish equivalence classes like NP-complete problems for the worst-case setting. Instead, they create partial order of hardness: webs of reductions starting at planted clique or lattice problems relate diverse statistical problems but lack the \emph{two-way} reductions needed for equivalence (\cite{brennan2018reducibility,brennan2020reducibility,vafa2025symmetric,gupte2024sparse,bruna2021continuous,bangachev2024near}). Some simple equivalences follow from \cite{brennan2019universality}, but only for planted submatrix problems that differ in entry distributions rather than structure. The paper
\cite{bresler2023algorithmic} makes a significant conceptual advance by showing that random graphs with correlations in the form of added triangles constitute an equivalence class. While this result achieves the first equivalence between problems with different dependence structure, the setting remains artificial.

The motivation for this paper is the eventual goal of classification of statistical problems into equivalence classes with the implication that problems in the same class are in a precise sense fundamentally the same, rather than coincidentally similar. In this work we take a step towards such classification by proving several computational equivalence results between two of the most well-studied high-dimensional statistics problems: the Spiked Covariance Model and Spiked Wigner Model.

In the Spiked Covariance Model ($\sc$) of Johnstone and Lu \cite{johnstoneSparse04}, one observes $n$ i.i.d. samples from 
$$
\N(0,I_d + \theta u u^\top)\,,
$$
where $u\in \R^d$ represents the signal vector and $\theta\in \R$ is a signal strength parameter. The Spiked Covariance Model is a generative model for studying Principal Component Analysis (PCA), the task of finding a direction of maximum variance in data. 
PCA is an important dimensionality reduction technique and has been a cornerstone of data analysis since Pearson \cite{Pearson01111901} with modern day applications spanning the full spectrum of data science
in economics, computational biology, social science, and engineering.

In the Spiked Wigner Model ($\sw$), studied in \cite{deshpande2014information,lesieur2015phase}, one observes a $d\times d$ matrix
$$
\lambda u u^\top + W\,,
$$
where $W$ is symmetric with $\N(0,1)$ off-diagonal and $\N(0,2)$ diagonal entries, $u\in\R^d$ is a signal vector, and $\lambda\in \R$ is a signal strength parameter. The Spiked Wigner Model
captures $\mathbb{Z}_2$ synchronization \cite{javanmard2016phase}
and a Gaussian version of the stochastic block model \cite{deshpande2016asymptotic}.

For both models, it is often assumed that the signal vector $u$ is \emph{sparse}, having few nonzero entries~\cite{johnstone2009consistency}.
Sparse PCA has a vast literature and was the first well-studied high-dimensional statistics problem shown to have a computational-statistical gap \cite{berthet2013complexity}, and the sparse 
Spiked Wigner problem has seen a similarly extensive study \cite{deshpande2014information,lesieur2015phase,brennan2018reducibility,hopkins2017power}.

The relationship between the two models remains murky. While both the $\sw$ and the $\sc$ samples' empirical covariance matrix can be viewed as ``noisy observations" of the signal $u u^\top$, the entries of the former are jointly independent, whereas the entries of the latter exhibit the complex dependence structure of the Wishart distribution. This difference is more than superficial and it has led to entirely separate lines of work analyzing each. 
For example, while \cite{hopkins2017power} contains hardness against sum-of-squares algorithms for $\sw$, the analogous result for $\sc$ model remained elusive until \cite{NEURIPS2022_e85454a1} appeared five years later. As far as reductions go, there exist reductions transforming $\sw$ into $\sc$---but only in certain parameter regimes---and in the other direction there simply are no reductions known.

In this paper, we prove the first reductions transforming $\sc$ into $\sw$, and as a consequence obtain the first equivalence result between well-studied statistical problems.

\begin{theorem}[Informal Statement of Main Theorem~\ref{thm:main_equiv}]
    For a big part of their parameter ranges, $\sc$ and $\sw$ are computationally equivalent. 
\end{theorem}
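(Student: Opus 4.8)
The plan is to construct an efficient randomized map $\Phi$ sending a sample of $\scpar$ to a matrix within $o(1)$ total variation of $\swpar$ for an explicit $\lambda=\lambda(d,k,n,\theta)$, and to pair it with a reduction in the opposite direction; on the overlap of parameters where both apply this yields the claimed computational equivalence. Throughout, write the $\sc$ data matrix as $X=\tsq\,g\,u^\top+Z\in\R^{n\times d}$, where $g\sim\N(0,I_n)$ holds the per-sample signal coefficients, $Z$ has i.i.d.\ $\N(0,1)$ entries, and $u$ is a $k$-sparse unit vector. Recentering the empirical Gram matrix, $\tfrac1{\sqrt n}(X^\top X-nI)\approx \theta\sqrt n\,uu^\top+\tfrac1{\sqrt n}(Z^\top Z-nI)$, and $\tfrac1{\sqrt n}(Z^\top Z-nI)$ is the Wishart approximation of a $\sw$-type noise matrix (the off-diagonal variance $1$ and diagonal variance $2$ match exactly). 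This pins down the target correspondence $\lambda\asymp\theta\sqrt n$; note that the quantity $\lambda^2=\theta^2 n$ is precisely what sample-cloning preserves.

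\textbf{Step 1 (sample-size adjustment).} Turning a $d\times d$ Wishart matrix into a $\sw$ noise matrix in total variation requires the number of samples to be polynomially larger than $d$. I first apply a cloning reduction $\CloneCov$ mapping $\scpar\to\sc(d,k,n',\theta')$ with $\theta'^2 n'=(1-o(1))\theta^2 n$ and $n'$ as large as the later steps need; this perturbs $\lambda=\theta'\sqrt{n'}$ only negligibly. The constraints on which target $n'$ are reachable (cloning cannot decrease $n$, and the Gaussianization error must stay $o(1)$) are exactly what confine the result to a large part of, rather than all of, the parameter range.

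\textbf{Step 2 (Gram--Schmidt and perturbation equivariance, plus the converse).} The core of the reduction removes the dependence in $Z^\top Z$ without destroying the spike. Run Gram--Schmidt on the columns of $X$, writing $X=QR$ with $Q$ having orthonormal columns and $R\in\R^{d\times d}$ upper-triangular; then $X^\top X=R^\top R$. In the pure-noise case the Bartlett decomposition makes the entries of $R$ \emph{independent} --- $R_{jj}\sim\chi_{n-j+1}$ and $R_{ij}\sim\N(0,1)$ for $i<j$ --- which is the independence one wants. The perturbation equivariance property of Gram--Schmidt (established as its own lemma) says that replacing $Z$ by $Z+\tsq\,g\,u^\top$ perturbs the factor $R$ in a correspondingly structured, explicitly described way, supported essentially on the columns indexed by $\supp(u)$; in particular no signal leaks onto noise columns since $R_{ij}=\langle q_i,X_{\cdot j}\rangle$ is a fresh Gaussian whenever column $j$ is pure noise. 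With the law of $R$ in hand I apply an entrywise $\Denoise/\Gaussianize$ step that replaces the $\chi$-distributed diagonal by matched Gaussians and symmetrizes, producing a symmetric matrix whose noise part is exactly a $\sw$ noise matrix and whose mean part is $\lambda\,uu^\top$ with $\lambda=(1-o(1))\theta\sqrt n$, each step's total-variation cost being bounded (possibly recursing on the leading principal block to control the lower-triangular cross-terms of $R^\top R$). For the converse, $\swpar\to\sc$ is comparatively soft --- inject a calibrated amount of independent Gaussian noise into a $\sw$ matrix and re-expand into $n$ samples, using known $\sw\to\sc$ reductions in the relevant regime --- so wherever both directions apply, $\scpar$ and $\swpar$ are computationally equivalent with $\lambda\asymp\theta\sqrt n$.

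\textbf{Main obstacle.} The crux is proving the Gram--Schmidt perturbation lemma and pushing it through the total-variation accounting of Step 2: one must verify that the genuinely non-Gaussian, Wishart-like contributions to $R^\top R$ (the sums $\sum_{l<i}R_{li}R_{lj}$) become negligible in total variation after rescaling, while simultaneously checking that the structured perturbation of $R$ reassembles into a clean rank-one spike of the correct magnitude --- all of this uniformly over the unknown, possibly adversarially placed support of $u$. Getting the dependence on $k$ and $n$ sharp enough to cover a large swath of parameters rather than a thin sliver is the delicate part.
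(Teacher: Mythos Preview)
Your high-level instincts --- Gram--Schmidt to remove Wishart dependence, and a perturbation-equivariance lemma controlling how the spike survives orthogonalization --- match the paper's central ideas. But the concrete plan has a real gap.

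Your claim that, after Gaussianizing the diagonal of $R$ and symmetrizing, ``the mean part is $\lambda uu^\top$'' is not correct. In the paper's notation ($Z=X+\sqrt{\theta}\,gu^\top$ with $X$ the noise), for $i<j$ one has $R_{ij}=\langle q_i, X_j\rangle + \sqrt{\theta}\,u_j\,\langle q_i, g\rangle$, and the perturbation lemma (the paper's Lemma~\ref{lem:gZ_strong}) gives $\langle q_i,g\rangle \approx \langle \Xt_i,g\rangle + \sqrt{\theta n}\,u_i$. So the signal in $R_{ij}$ is $\theta\sqrt{n}\,u_iu_j$ \emph{plus} a cross-term $\sqrt{\theta}\,u_j\langle \Xt_i,g\rangle$ that is sparse in $j$ but dense in $i$. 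A Frobenius/TV bound on this cross-term is of order $\sqrt{d\theta}$, which is $o(1)$ only for $n\gg d^3$; a $\chi^2$ argument improves this to $n\gg d^2$ but no further. To reach $n\gg d$ --- where the $\sc\to\sw$ direction has real content --- the paper uses two devices you do not mention: it clones into several independent copies and projects each onto the GS basis of one fixed copy (so the noise is exactly i.i.d.\ $\N(0,1)$, no $\chi$ diagonal or triangular structure to repair), then applies a ``flipping'' step $\sgn\big(Y^{(l)}_{ij}\cdot Y^{(l')}_{ji}\big)$ so that an entry carries signal only when \emph{both} $i,j\in\supp(u)$; the residual multiplicative error in the resulting Rademacher means is then removed by a Bernoulli-denoising polynomial (Section~\ref{subsec:bern_denoise}). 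Without flipping and denoising you are stuck at $n\gg d^2$, and there the paper already gives a simpler argument (Section~\ref{sec:direct_reduction}) that bypasses Gram--Schmidt entirely.

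Separately, your Step~2 and your ``main obstacle'' are inconsistent: the former works with $R$, the latter with the sums $\sum_{l<i}R_{li}R_{lj}$ in $R^\top R$. But $R^\top R=Z^\top Z$ is just the Wishart matrix you started from; those sums are precisely the dependence you set out to remove, and they are negligible in TV only for $n\gg d^3$. The whole point of orthogonalizing is never to form $R^\top R$.
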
 

Average-case reductions changing the dependence structure are rare. As mentioned above, with the exception of \cite{bresler2023algorithmic} which does change dependence for a newly defined problem, the vast majority of reductions in the literature change the planted structure but for the most part leave the noise i.i.d. \cite{ma2015computational,hajek2015computational,brennan2018reducibility,brennan2020reducibility}. In contrast, we develop algorithmic techniques that substantially change the dependence structure of the noise while leaving the signal intact. Our main analysis tool is a novel \emph{perturbation equivariance} result for Gram-Schmidt orthogonalization.

We next define the problems that we address in this paper (Sec.~\ref{sec:intro_models}), followed by a discussion of their computational profiles in Sec.~\ref{subsec:phase_diagram}. We then introduce average-case reductions in Sec.~\ref{sec:intro_avg_case} and discuss the nuanced notion of computational equivalence for statistical problems in Sec.~\ref{sec:intro_equiv}. We present our results in Sec.~\ref{sec:intro_results} and discuss our technical contributions, including our novel techniques for structural and dependence manipulation, in Sec.~\ref{sec:ideas}.

\subsection{Two Spiked Models and Their Similarities}\label{sec:intro_models}

Both Spiked Wigner and Spiked Covariance Models have the general form of signal + noise, where the signal is a $d$-dimensional vector from an allowed collection. 
Throughout the paper we consider sparse versions of the models. We define an allowed collection of $k$-sparse $d$-dimensional unit vectors with all nonzero entries of the same magnitude as:
$$
\ksparseflat d = \b\{v\in \mathbb{R}^d, \|v\|_2=1, \|v\|_0=k, v_i \in\{\pm k^{-1/2}, 0\}
\b\}\,.
$$
Unless specified otherwise, we assume $\ksparseflat d$ as the allowed collection of signals for $\sc$ and $\sw$. For our equivalence results we rely on some prior work that requires a more relaxed version defined for some $c_{u} = O(\polylog(d))$ as
$$
\ksparse d = \b\{v\in \mathbb{R}^d, \|v\|_2=1, \|v\|_0=k, v_i \in  [-c_u k^{-1/2}, c_u k^{-1/2}]\b\}\,.
$$

\paragraph{Spiked Wigner Model, $\sw(d,k,\lambda)$.} In the $d$-dimensional Spiked Wigner Model we observe
\begin{equation}\label{e:Wigner}
    Y = \lambda u u^\top + W\,, \qquad W\sim \GOE(d)\,,
\end{equation}
where $u\in\ksparseflat d$ (or $\ksparse d$) is a $k$-sparse unit signal vector, $\lambda \in \R$ is a signal-to-noise ratio (SNR) parameter, and $\GOE(d)$ denotes the distribution of $\frac{1}{\sqrt{2}} (A+A^\top)$ with $A\sim \N(0, 1)^{\otimes d\times d}$ \cite{deshpande2014information,lesieur2015phase}. 
We denote the law of $Y$ by $\sw(d,k,\lambda)$.

\paragraph{Spiked Covariance Model, $\sc(d,k,\theta,n)$.} Also referred to as the Spiked Wishart Model or Sparse PCA, the Spiked Covariance Model of Johnstone and Lu 
\cite{johnstoneSparse04} consists of
$n$ i.i.d. samples from the $d$-dimensional multivariate Gaussian 
\begin{equation}\label{e:SCM}
\N(0,I_d+\theta uu^\top)\,,
\end{equation}
where again $u\in \ksparseflat d$ (or $\ksparse d$) is a $k$-sparse unit signal vector and $\theta$ is an SNR parameter.

Note that if we arrange the samples in the rows of matrix $Z \in \R^{n\times d}$, the spiked covariance model data has the  simple representation
\begin{equation}\label{e:SCM2}
    Z = X + \tsq g u^\top,
\end{equation}
where $X \sim \N(0, I_n)^{\otimes d}, g\sim \N(0, I_n)$ are independent. 
We denote the law of $Z$ by $\sc(d,k,\theta,n)$.

The detection and recovery problems for the Spiked Wigner and Spiked Covariance Models are formally defined in Sec.~\ref{subsec:detect_problem} and \ref{subsec:recover_problem}. Detection entails distinguishing between the null case ($\lambda=0$ for $\sw$ or $\theta=0$ for $\sc$) and the planted case ($\lambda\geq\lambda^{\star}>0$ or $\theta\geq\theta^{\star}> 0$). The recovery task is to produce estimate $\widehat{u}$ approximating the underlying sparse signal $u$.

\paragraph{Quantitative Relationship Between the Two Models.}

Historically, these problems have largely been studied independently, with separate papers studying their information theoretic limits, algorithms, and computational barriers, which we overview in Section~\ref{subsec:phase_diagram}. However, the following comparison between the empirical covariance matrix of $\sc$ samples and the $\sw$ matrix reveals quantitative similarities between the models.

Given $Z$ sampled from the Spiked Covariance Model~\eqref{e:SCM2}, and the corresponding empirical covariance matrix
$
\Sigmahat = \frac1n Z^\top Z \,,
$
consider the matrix $M=\sqrt{n}( \Sigmahat - I_d)$, where the $\sqrt{n}$ scaling matches the unit variance in the entries of the spiked Wigner model. Observe that 
$$\E M= \theta \sqrt{n}\cdot u u^\top = \E Y, \quad\text{where}\quad Y \sim \sw(d, k, \theta \sqrt{n})\,.
$$
Comparing with \eqref{e:Wigner}, we draw a natural correspondence between SNR parameters $\lambda$ and $\theta$ of the two models: 
\begin{equation}\label{e:param_corr}
  \lambda\quad \longleftrightarrow \quad \theta \sqrt{n}\,.  
\end{equation}

\subsection{Parametrization}

To analyze the complexity of the problems associated with $\sw$ and $\sc$ we make the following choice of parametrization. For $\sw$, consider $\nu = (\alpha, \beta)$ and let
$$
    \sw(\nu) := \b\{\sw(d,k_d = d^{\alpha},\lambda_d = d^{\beta})\b\}_d\,.
    $$
For $\sc$, consider $\mu = (\alpha, \beta, \gamma)$ and let 
$$
    \sc(\mu) := \b\{\sc(d, k_d = d^{\alpha}, \theta_d = d^{\beta}, n_d = d^{\gamma})\b\}_d\,.
$$ 
Here we chose $d$ as a primary scaling parameter and it turns out that expressing parameters $k,\theta,\lambda, n$ as powers $\alpha, \beta,\gamma$ of $d$ for \emph{constant} $\alpha,\beta,\gamma$, captures all complexity phase transitions in the two models up to log factors (even though such granularity ignores subpolynomial $d^{o(1)}$ factors). While one could ask for finer precision, our focus is on obtaining phase diagrams (as described below) capturing behavior at this polynomial scale. For more discussion on the choice of parametrization in statistical problems, see~\cite{bresler2023algorithmic}.

The parameter correspondence  $\lambda\leftrightarrow\theta \sqrt{n}$ in \eqref{e:param_corr} is expressed in the above scaling as
\begin{align}
    \label{eq:param}\sw\b(\nu = (\alpha, \beta + \gamma/2)\b) &\longleftrightarrow \sc\b(\mu = (\alpha, \beta, \gamma)\b)\,.
\end{align}
We call \eqref{eq:param} the \emph{canonical parameter correspondence}.
We focus on the regime $\gamma \geq 1$ where the number of samples in $\sc$  exceeds the dimension. Note that $\alpha\in (0,1)$, since $1\leq k\leq d$.

\subsection{Computational Phase Diagrams}\label{subsec:phase_diagram}

\begin{figure*}[t]
\vspace{-3mm}
\centering
\def\scalesize{0.20} 
\subfloat{
\begin{tikzpicture}[scale=\scalesize]
\tikzstyle{every node}=[font=\scriptsize]
\def\xmin{0}
\def\xmax{11}
\def\ymin{0}
\def\ymax{11}


\coordinate (A) at (0, 1.8);
\coordinate (B) at (0, 6.8);
\coordinate (C) at (5, 6.8);
\coordinate (D) at (10, 6.8);

\node at (5, 0) [below] {$\frac{1}{2}$};
\node at (10, 0) [below] {$1$};
\node at (B) [left] {$0$};
\node at (A) [left] {$-\frac12$};

\filldraw[fill=cyan, draw=cyan] (A) -- (C) -- (D) -- (A);
\filldraw[fill=green!25, draw=green] (A) -- (0, 10) -- (10, 10) -- (D) -- (C) -- (A);
\filldraw[fill=gray!25, draw=gray] (A) -- (D) -- (10, 0) -- (0, 0) -- (A);

\node at (6, 6) {H};
\node at (2, 7.5) {E};
\node at (5.8, 2.5) {I};

\draw[->] (\xmin,\ymin) -- (\xmax,\ymin) node[right] {$\alpha$};
\draw[->] (\xmin,\ymin) -- (\xmin,\ymax) node[above] {$\beta$};

\draw (A) -- (C); 
\draw (C) -- (D);
\draw (D) -- (A);

\node at (5, -4) {\textbf{(a)} $\sc(d,k, \theta,n)$ with};
\node at (5, -5.5) {$\gamma = \log_d n = 1$ and};
\node at (5, -7) {$\alpha = \log_d k, \beta = \log_d \theta$};
\end{tikzpicture}}
\subfloat{
\begin{tikzpicture}[scale=\scalesize]
\tikzstyle{every node}=[font=\scriptsize]
\def\xmin{0}
\def\xmax{11}
\def\ymin{0}
\def\ymax{11}

\coordinate (A) at (0, 0.8);
\coordinate (B) at (0, 5.8);
\coordinate (C) at (5, 5.8);
\coordinate (D) at (10, 5.8);

\node at (5, 0) [below] {$\frac{1}{2}$};
\node at (3, 0) [below] {$\frac{\gamma}{6}$};
\node at (10, 0) [below] {$1$};
\node at (0, 10) [left] {$0$};
\node at (B) [left] {$\frac{1-\gamma}{2}$};
\node at (A) [left] {$-\frac{\gamma}2$};

\filldraw[fill=cyan, draw=cyan] (A) -- (C) -- (D) -- (A);
\filldraw[fill=green!25, draw=green] (A) -- (0, 10) -- (10, 10) -- (D) -- (C) -- (A);
\filldraw[fill=gray!25, draw=gray] (A) -- (D) -- (10, 0) -- (0, 0) -- (A);
\filldraw[fill=black, draw=black] (C) -- (D) -- (3, 0.8 + 3/10*5) -- (3, 0.8 + 3/10*10) -- (C);

\node at (2, 6.5) {E};
\node at (5.8, 1.5) {I};

\node[text=white] at (6, 5) {?};

\draw[->] (\xmin,\ymin) -- (\xmax,\ymin) node[right] {$\alpha$};
\draw[->] (\xmin,\ymin) -- (\xmin,\ymax) node[above] {$\beta$};

\draw (A) -- (C); 
\draw (C) -- (D);
\draw (D) -- (A);

\node at (5, -4) {\textbf{(b)} $\sc(d, k, \theta, n)$ with};
\node at (5, -5.5) {$1 <\gamma = \log_d n <3$ and};
\node at (5, -7) {$\alpha = \log_d k, \beta = \log_d \theta$};

\end{tikzpicture}}
\subfloat{
\begin{tikzpicture}[scale=\scalesize]
\tikzstyle{every node}=[font=\scriptsize]
\def\xmin{0}
\def\xmax{11}
\def\ymin{0}
\def\ymax{11}

\coordinate (A) at (0, 0.8);
\coordinate (B) at (0, 5.8);
\coordinate (C) at (5, 5.8);
\coordinate (D) at (10, 5.8);

\node at (5, 0) [below] {$\frac{1}{2}$};
\node at (10, 0) [below] {$1$};
\node at (0, 10) [left] {$0$};
\node at (B) [left] {$\frac{1-\gamma}{2}$};
\node at (A) [left] {$-\frac{\gamma}2$};

\filldraw[fill=cyan, draw=cyan] (A) -- (C) -- (D) -- (A);
\filldraw[fill=green!25, draw=green] (A) -- (0, 10) -- (10, 10) -- (D) -- (C) -- (A);
\filldraw[fill=gray!25, draw=gray] (A) -- (D) -- (10, 0) -- (0, 0) -- (A);

\node at (6, 5) {H};
\node at (2, 6.5) {E};
\node at (5.8, 1.5) {I};

\draw[->] (\xmin,\ymin) -- (\xmax,\ymin) node[right] {$\alpha$};
\draw[->] (\xmin,\ymin) -- (\xmin,\ymax) node[above] {$\beta$};

\draw (A) -- (C); 
\draw (C) -- (D);
\draw (D) -- (A);

\node at (5, -4) {\textbf{(c)} $\sc(d,k, \theta,n)$ with};
\node at (5, -5.5) {$\gamma = \log_d n \geq 3$ and};
\node at (5, -7) {$\alpha = \log_d k, \beta = \log_d \theta$};

\end{tikzpicture}}
\subfloat{
\begin{tikzpicture}[scale=\scalesize]
\tikzstyle{every node}=[font=\scriptsize]
\def\xmin{0}
\def\xmax{11}
\def\ymin{0}
\def\ymax{11}


\coordinate (A) at (0, 4.2);
\coordinate (B) at (0, 9.2);
\coordinate (C) at (5, 9.2);
\coordinate (D) at (10, 9.2);

\node at (5, 0) [below] {$\frac{1}{2}$};
\node at (10, 0) [below] {$1$};
\node at (B) [left] {$\frac12$};
\node at (A) [left] {$0$};

\filldraw[fill=cyan, draw=cyan] (A) -- (C) -- (D) -- (A);
\filldraw[fill=green!25, draw=green] (A) -- (0, 10) -- (10, 10) -- (D) -- (C) -- (A);
\filldraw[fill=gray!25, draw=gray] (A) -- (D) -- (10, 0) -- (0, 0) -- (A);

\node at (6, 8.5) {H};
\node at (2, 8) {E};
\node at (5.8, 3) {I};

\draw[->] (\xmin,\ymin) -- (\xmax,\ymin) node[right] {$\alpha$};
\draw[->] (\xmin,\ymin) -- (\xmin,\ymax) node[above] {$\beta$};

\draw (A) -- (C); 
\draw (C) -- (D);
\draw (D) -- (A);

\node at (5, -4) {\textbf{(d)} $\sw(d,k, \lambda)$ with};
\node at (5, -5.5) {$\alpha = \log_d k$ and};
\node at (5, -7) {$\beta = \log_d \lambda$};
\end{tikzpicture}}
\vspace{-3mm}\caption{\scriptsize
Subset of \textbf{phase diagrams} from \cite{brennan2018reducibility}, \cite{brennan2019optimal} plotted as signal $\beta = \log_d \theta$ (or $\beta = \log_d \lambda$) vs. sparsity $\alpha = \log_d k$ characterizing feasibility of both detection and recovery.
}
\label{fig:diagrams}
\vspace{-4mm}\end{figure*}
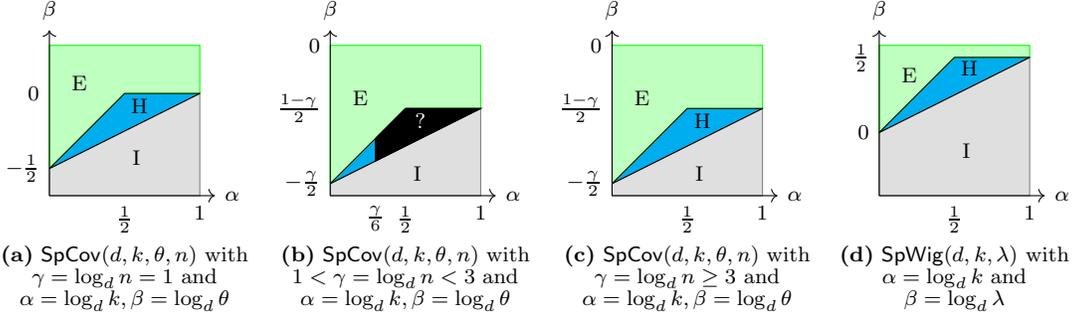

A \emph{phase diagram} captures a problem's feasibility. With every point representing complexity of a problem associated with the corresponding parameter, the diagrams divide the parameter space into the following regions:
\begin{enumerate}
    \item [(E)] Efficiently solvable (``Easy"), where there is a polynomial-time algorithm;
    \item [(H)] Computationally hard (``Hard"), where the planted clique conjecture\footnote{This conjecture is widely believed and has been extensively studied; see e.g. \cite{barak2016nearly} and references therein.} implies that there is no polynomial-time algorithm;
    \item [(I)] Information-theoretically impossible (``Impossible").
\end{enumerate}

Note that for $\sw$ the parameter space $\{\nu = (\alpha, \beta)\}$ is 2-dimensional while for $\sc$ the parameter space $\{\mu = (\alpha, \beta, \gamma)\}$ is 3-dimensional. Virtually all prior work only considered the slice $\gamma=\log_d n=1$ of the 3-dimensional phase diagram. It turns out that the relationship between the number of samples $n$ and the dimension $d$ plays an important role, and for this reason we will consider \textbf{2-dimensional slices} $\sc_{\gamma}$ of the $\sc$ diagram for all fixed values of $\gamma \geq 1$.

Complete phase diagrams are known for $\sw$ as well as for $\sc$ in case $\gamma \in\{1\}\cup[3,\infty)$, and for $\sc$ with $1<\gamma<3$ the diagrams are partially known.
Figure~\ref{fig:diagrams} shows phase diagrams for $\sw$ and for $\sc_{\gamma}$ for cases $\gamma = 1, 1< \gamma< 3,$ and $\gamma \geq 3$. We now discuss prior work that informs these diagrams, starting with the easy regimes.

\paragraph{Algorithms for Detection and Recovery (Easy Regime).}

For both $\sw$ and $\sc$, two simple classes of algorithms -- spectral and thresholding -- achieve the best performance, each for a different range of sparsities.

\emph{Spectral Algorithms:} The BBP transition for the spectrum of the $\sw$ matrix enables efficient recovery and detection for $\lambda = \tOm(\sqrt{d})$ using spectral methods \cite{baik2005phase,paul2007asymptotics,johnstone2009consistency,peche2006largest,feral2007largest,capitaine2009largest,benaych2011eigenvalues}. Separately, efficient detection and recovery spectral algorithms have been developed for $\sc$ for $\theta = \tOm(\sqrt{d}/\sqrt{n})$, where the first eigenvalue of the sample covariance matrix is well-separated \cite{ma2013sparse}, \cite{cai2013sparse}. Approximate Message Passing algorithms were shown in \cite{deshpande2014information} to be optimal for both models in a certain not-very-sparse regime where additionally $n$ is a constant factor of $d$, and spectral algorithms for both models were sharply analyzed in \cite{perry2016optimality}.

\emph{Thresholding Algorithms:} These algorithms are based on the thresholding of the largest matrix entry ($\sw$) or the largest entry of the re-scaled empirical covariance matrix ($\sc$) and succeed whenever $\lambda = \tOm(k)$, $\theta = \tOm(k/\sqrt{n})$  \cite{johnstone2009consistency,amini2009high, berthet2013complexity, berthet2013optimal,deshp2016sparse}.

We summarize the current best computational results, up to log factors, in the following table: thresholding algorithms are better for $k\leq \sqrt{d}$ and spectral for $k\geq d$.
\begin{table}[h]
\small
\centering
\renewcommand{\arraystretch}{1.5}
\begin{tabular}{|c|c|c|}
\hline
\textbf{Model} & \textbf{SNR} & \textbf{log\(_d\) SNR} \\ \hline
\(\sw(d,k,\lambda)\) & \(\lamcomp(d,k) = \min \{k, \sqrt{d}\}\) & \(\betacomp_{\lambda}(\alpha) = \min \{\alpha, 1/2\}\) \\ \hline
\(\sc(d,k,\theta,n)\) & \(\thetacomp(d,k,n) = \min \{k/\sqrt{n}, \sqrt{d/n}\}\) & \(\betacomp_{\theta}(\alpha, \gamma) = \min \{\alpha - \gamma/2, 1/2-\gamma/2\}\) \\ \hline
\end{tabular}
\vspace{-2mm}
\caption{\footnotesize Summary of SNR computational thresholds for $\sw$ and $\sc$.}
\label{tab:snr_summary}
\end{table}
The thresholds\footnote{For simplicity, we sometimes write $\thetacomp$ and $\lamcomp$ in place of $\thetacomp(d,k,n)$ and $\lamcomp(d,k)$, leaving the dependence on $d,k,n$ implicit.} $\thetacomp(d,k,n)$ and $\lamcomp(d,k)$
constitute the boundaries between ``Easy" and ``Hard" regimes in Figure~\ref{fig:diagrams}. The thresholds are consistent with parameter correspondence $\lambda\ \leftrightarrow \ \theta \sqrt{n}$~\eqref{eq:param}.

\paragraph{Statistical Limits (Impossible Regime).}

For $\sw$, it is statistically impossible to solve both detection and recovery whenever $\lambda = \tO(\sqrt{k})$ \cite{montanari2015limitation,perry2016statistical,perry2016optimality,banks2018information} and for $\sc$ whenever $\theta = \tO(\sqrt{k/n})$ \cite{amini2009high,vu2012minimax,berthet2013optimal,birnbaum2013minimax,cai2013sparse,wang2016statistical,cai2015optimal}. This constitutes the threshold between ``Hard" and ``Impossible" regimes, and we summarize it in Table~\ref{tab:stat_summary}.

\begin{table}[h]
\small
\centering
\renewcommand{\arraystretch}{1.5}
\begin{tabular}{|c|c|c|}
\hline
\textbf{Model} & \textbf{SNR} & \textbf{log\(_d\) SNR} \\ \hline
\(\sw(d,k,\lambda)\) & \(\lamstat(d,k) = \sqrt{k}\) & \(\betastat_{\lambda}(\alpha) = \alpha / 2\) \\ \hline
\(\sc(d,k,\theta,n)\) & \(\thetastat(d,k,n) = \sqrt{k/n}\) & \(\betastat_{\theta}(\alpha, \gamma) = \alpha / 2 - \gamma / 2\) \\ \hline
\end{tabular}
\vspace{-2mm}
\caption{\footnotesize Summary of SNR statistical thresholds for \(\sw\) and \(\sc\).}
\label{tab:stat_summary}
\vspace{-4mm}
\end{table}

\paragraph{Hard Regime.}

The remaining regions of the phase diagrams, which are statistically feasible but with no efficient algorithms known, are believed to be computationally hard.

In this paper we focus on the reductions approach to computational hardness. 
\cite{berthet2013complexity} showed a reduction from planted clique to a model for sparse PCA that is different from $\sc$. 
\cite{brennan2019optimal} showed a reduction to the canonical $\sc$ model via average-case reductions from $\sw$, substantiating the entire ``Hard" regime in case $\gamma \in \{1\}\cup[3,\infty]$ and the part of hard regime $\alpha \leq \gamma/6$ in case $1<\gamma<3$. For $\sw$, the entire hard region was obtained by reduction from the biclustering problem by \cite{brennan2018reducibility}.

\subsection{Average-Case Reductions}\label{sec:intro_avg_case}

Any average-case problem is associated with a distribution over instances. An \emph{average-case reduction} is an (efficient) algorithm (or Markov kernel) that maps instances of one average-case problem to another, with the requirement that the \emph{distribution} of the outputs is close in total variation to the target distribution. The existence of such reduction guarantees that any algorithm solving the target problem can be used to solve the source problem.

We formally define average-case reductions between problems like $\sc(\mu)$ and $\sw(\nu)$ for detection in Sec.~\ref{sec:avg_case} and for recovery\footnote{As explained in Sec.~\ref{sec:avg_case_recovery} we slightly restrict the allowed class of recovery algorithms to simplify the analysis.} in Sec.~\ref{sec:avg_case_recovery}. 
A key consequence of these reductions (Lem.~\ref{lem:red_detect} and \ref{lem:red_recover}) is that computational complexities are transferred: if an $O(d^{C})$-time average-case reduction
 from one point to another (e.g. from $\sc(\mu)$ to $\sw(\nu)$) is proved, then the existence of an $O(d^{M})$-time algorithm in the small neighborhood of the latter point ($\sw(\nu)$) yields an $O(d^{M} + d^C)$-time algorithm for the first point ($\sc(\mu)$). All our reductions run in $O(d^C = d^{2+\gamma})$-time.

We pay special attention to what we call ``average-case reductions at the computational threshold", which are formalized in Def.~\ref{def:avg_case_points_threshold}, Sec.~\ref{sec:avg_case}. We note that decreasing the SNR ($\theta$ or $\lambda$) makes detection and recovery harder.\footnote{This can be made formal via a reduction that simply adds noise.} This is why some of our reductions focus on points in the ``Hard'' region that are near the ``Easy" boundary, with $\theta\approx\thetacomp$, $\lambda\approx\lamcomp$: establishing hardness for those implies hardness in the entire ``Hard" region.

As described in Sections~\ref{sec:avg_case} and \ref{sec:avg_case_recovery}, average-case reductions for detection and recovery have slightly different requirements. If a reduction algorithm is shown to satisfy both, we avoid specifying if it is for detection or recovery and just call it ``an average-case reduction".

\subsection{Notions of Equivalence}\label{sec:intro_equiv}
Since the models' computational complexities are well understood for the ``Easy" and ``Impossible" regimes, we focus on the notion of computational equivalence in the ``Hard" regime.\footnote{We note that our definitions as well as equivalence results could be easily extended to the ``Easy" and ``Impossible" regimes; we choose to omit these to simplify the description.} We define the parameters inside the corresponding ``Hard" regimes as $\Omw$ and $\Omc$ with $\Omc_{\gamma}$ representing a slice of the ``Hard" region of $\sc$ at particular $\gamma$:
\begin{align*}
    \Omw &= \b\{ \nu = (\alpha, \beta): \betastat_{\lambda}(\alpha) < \beta < \betacomp_{\lambda}(\alpha) \b\}\,,\\
    \Omc &= \b\{ \mu = (\alpha, \beta, \gamma): \betastat_{\theta}(\alpha,\gamma) < \beta < \betacomp_{\theta}(\alpha, \gamma) \b\}\,,\\
    \Omc_{\gamma^{\star}} &= \b\{ \mu = (\alpha, \beta, \gamma) \in \Omc : \gamma = \gamma^{\star} \b\}\,.
    \vspace{-2mm}
\end{align*}

Establishing equivalence between $\sc$ and $\sw$ involves relating two \emph{families} of problems $\{ \sc(\mu) \}_{\mu \in \Omc}$ and $\{ \sw(\nu) \}_{\nu \in \Omw}$ or, in other words, two different phase diagrams. While \cite{bresler2023algorithmic} gave a general definition for relating different phase diagrams, we focus on a notion of \emph{bijective equivalence} between them. 
Essentially, bijective equivalence means that two phase diagrams are in one-to-one correspondence, with the computational complexities transferring between corresponding points. 

\begin{definition}[Comparison of points]
For $\mu\in \Omc$ and $\nu \in \Omw$, we write $\sc(\mu) \preceq \sw(\nu)$ if there exists an average-case reduction (Def.~\ref{def:avg_case_points}) from $\sc(\mu)$ to $\sw(\nu)$. We use the analogous definition for $\sw(\nu) \preceq \sc(\mu)$.
\end{definition}

\begin{definition}[Equivalent points]\label{def:equiv_points}
    For $\mu\in \Omc$ and $\nu \in \Omw$, we say that $\sc(\mu)$ and $\sw(\nu)$ are \textbf{computationally equivalent points}, denoted $\sc(\mu) \equiv \sw(\nu)$, if $\sc(\mu) \preceq \sw(\nu)$ and $\sw(\nu) \preceq \sc(\mu)$.
\end{definition}

We propose to aim for a bijective equivalence between the $\sw$ phase diagram and 2-dimensional \emph{slices} $\sc_{\gamma}$ of the 3-dimensional $\sc$ phase diagram. In addition to the bijective equivalence, which constitutes a deep connection between two problems,
we also define a weaker notion of equivalence, that already implies that the phase diagram of one problem is a \emph{consequence} of the phase diagram of the other.

\subsubsection{Bijective Equivalence with the Canonical Parameter Map}\label{subsec:bijective_equiv}

\begin{wrapfigure}{r}{0.45\textwidth}
\vspace{-10mm}
\centering
\def\scalesize{0.20} 
\subfloat{
\begin{tikzpicture}[scale=\scalesize]
\tikzstyle{every node}=[font=\scriptsize]
\def\xmin{0}
\def\xmax{11}
\def\ymin{0}
\def\ymax{9}


\coordinate (A) at (0, 0.5);
\coordinate (C) at (5, 5.5);
\coordinate (D) at (10, 5.5);

\filldraw[fill=cyan, draw=cyan] (A) -- (C) -- (D) -- (A);
\filldraw[fill=green!25, draw=green, opacity = 0.3] (A) -- (0, 8) -- (10, 8) -- (D) -- (C) -- (A);
\filldraw[fill=gray!25, draw=gray, opacity = 0.3] (A) -- (D) -- (10, 0) -- (0, 0) -- (A);

\node at (6.6, 4.7) {H};
\node at (7, 2) {I};
\node at (2, 6) {E};

\draw[->] (\xmin,\ymin) -- (\xmax,\ymin) node[right] {$\alpha$};
\draw[->] (\xmin,\ymin) -- (\xmin,\ymax) node[right] {$\beta$};

\draw[cyan!75!black, very thick] (A) -- (C);
\draw[cyan!75!black, very thick] (C) -- (D);
\draw[cyan!75!black, very thick] (D) -- (A);


\node at (5, -1) {$\sc$};
\end{tikzpicture}}\quad
\subfloat{
\begin{tikzpicture}[scale=\scalesize]
\tikzstyle{every node}=[font=\scriptsize]
\def\xmin{0}
\def\xmax{11}
\def\ymin{0}
\def\ymax{9}






\coordinate (A) at (0, 2);
\coordinate (C) at (5, 7);
\coordinate (D) at (10, 7);

\filldraw[fill=cyan, draw=cyan] (A) -- (C) -- (D) -- (A);
\filldraw[fill=green!25, draw=green, opacity = 0.3] (A) -- (0, 8) -- (10, 8) -- (D) -- (C) -- (A);
\filldraw[fill=gray!25, draw=gray, opacity = 0.3] (A) -- (D) -- (10, 0) -- (0, 0) -- (A);

\node at (6, 6.3) {H};
\node at (7, 2.8) {I};
\node at (2, 7) {E};

\draw[->] (\xmin,\ymin) -- (\xmax,\ymin) node[right] {$\alpha$};
\draw[->] (\xmin,\ymin) -- (\xmin,\ymax) node[right] {$\beta$};

\draw[cyan!75!black, very thick] (A) -- (C);
\draw[cyan!75!black, very thick] (C) -- (D);
\draw[cyan!75!black, very thick] (D) -- (A);

\node at (5, -1) {$\sw$};

\end{tikzpicture}}

\begin{tikzpicture}[overlay, remember picture]
\draw[<->, >=Latex, thick, purple, bend left=35] (-2.9+0.8, 0.9+0.7) to (0.4+0.8, 0.9+0.7 + 0.33);
\end{tikzpicture}
\vspace{-8mm}
\caption{\footnotesize
Bijective equivalence
}
\label{fig:equiv1}
\vspace{-4mm}
\end{wrapfigure}
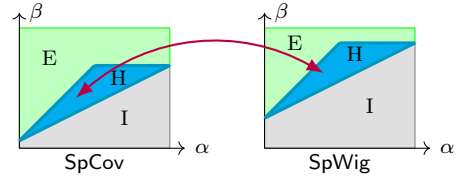

For a \emph{slice} $\sc_{\gamma}$ of the $\sc$ phase diagram at fixed $\gamma = \log_d n$, we say that $\sc_{\gamma}$ and $\sw$ are \emph{bijectively (computationally) equivalent} if there exists a bijective map $f_\gamma:\Omc_\gamma \to \Omw$ between their sets of parameters, such that $\sc(\mu)\equiv \sw(f_{\gamma}(\mu))$ for all $\mu\in \Omc_{\gamma}$.

All of our bijective equivalence results will be for the canonical parameter correspondence from \eqref{eq:param}, which is clearly a bijection. In this work, we thus say that $\sc_{\gamma}$ and $\sw$ are \textbf{bijectively equivalent} if they are bijectively equivalent with the correspondence~\eqref{eq:param}.

\subsubsection{Equivalence of Hardness Thresholds}\label{subsec:comp_thr_equiv}
 
As discussed above in Sec.~\ref{sec:intro_avg_case}, average-case reductions \emph{at the computational threshold} have complexity implications for the whole ``Hard" region. We denote the points at the computational threshold for $\sw, \sc$, and $\sc_{\gamma}$ as follows:
\begin{align*}
    \CCw &= \b\{ \nu = (\alpha, \beta): \beta = \betacomp_{\lambda}(\alpha) = \min\{\alpha, 1/2\} \b\}\,,\\
    \CCc &= \b\{ \mu = (\alpha, \beta, \gamma): \beta = \betacomp_{\theta}(\alpha, \gamma) = \min\{\alpha-\gamma/2, 1/2-\gamma/2\} \b\}\,,\\ \CCc_{\gamma^{\star}} &= \b\{ \mu = (\alpha, \beta, \gamma) \in \CCc : \gamma = \gamma^{\star} \b\}\,.
\end{align*}

We note that the definition of bijective equivalence above naturally extends to the points in $\CCw$ and $\CCc$. Here we define a weaker notion of equivalence at the computational threshold -- surjective in each direction -- that still transfers computational hardness for the whole ``Hard" regime, but does not require a bijective point equivalence between diagrams.\footnote{A toy example illustrates how this can occur. Suppose that the computational threshold for two problems is described by the ray $\{(x,y)\in \R^2: x\geq0\text{ and } y=0\}$ and that reductions existed in both directions, each under parameter correspondence $(x,y)\mapsto (x^3,y)$.} 

We say that $\sc_{\gamma}$ and $\sw$ have \textbf{equivalent computational thresholds} if
\begin{wrapfigure}{r}{0.47\textwidth}
\vspace{-2mm}
\centering
 \def\scalesize{0.20} 
\subfloat{
\begin{tikzpicture}[scale=\scalesize]
\tikzstyle{every node}=[font=\scriptsize]
\def\xmin{0}
\def\xmax{11}
\def\ymin{0}
\def\ymax{9}


\coordinate (A) at (0, 0.5);
\coordinate (C) at (5, 5.5);
\coordinate (D) at (10, 5.5);

\filldraw[fill=cyan, draw=cyan] (A) -- (C) -- (D) -- (A);
\filldraw[fill=green!25, draw=green, opacity = 0.3] (A) -- (0, 8) -- (10, 8) -- (D) -- (C) -- (A);
\filldraw[fill=gray!25, draw=gray, opacity = 0.3] (A) -- (D) -- (10, 0) -- (0, 0) -- (A);

\node at (5.9, 4.5) {H};
\node at (7, 2) {I};
\node at (2, 6) {E};

\draw[->] (\xmin,\ymin) -- (\xmax,\ymin) node[right] {$\alpha$};
\draw[->] (\xmin,\ymin) -- (\xmin,\ymax) node[right] {$\beta$};

\draw[cyan!45!black, ultra thick] (A) -- (C);
\draw[cyan!45!black, ultra thick] (C) -- (D);

\node at (5, -1) {$\sc$};
\end{tikzpicture}}\quad
\subfloat{
\begin{tikzpicture}[scale=\scalesize]
\tikzstyle{every node}=[font=\scriptsize]
\def\xmin{0}
\def\xmax{11}
\def\ymin{0}
\def\ymax{9}


\coordinate (A) at (0, 2);
\coordinate (C) at (5, 7);
\coordinate (D) at (10, 7);

\filldraw[fill=cyan, draw=cyan] (A) -- (C) -- (D) -- (A);
\filldraw[fill=green!25, draw=green, opacity = 0.3] (A) -- (0, 8) -- (10, 8) -- (D) -- (C) -- (A);
\filldraw[fill=gray!25, draw=gray, opacity = 0.3] (A) -- (D) -- (10, 0) -- (0, 0) -- (A);

\node at (6, 6.15) {H};
\node at (7, 2.8) {I};
\node at (2, 7) {E};

\draw[->] (\xmin,\ymin) -- (\xmax,\ymin) node[right] {$\alpha$};
\draw[->] (\xmin,\ymin) -- (\xmin,\ymax) node[right] {$\beta$};

\draw[cyan!45!black, ultra thick] (A) -- (C);
\draw[cyan!45!black, ultra thick] (C) -- (D);

\node at (5, -1) {$\sw$};

\end{tikzpicture}}

\begin{tikzpicture}[overlay, remember picture]
\draw[->, >=Latex, thick, purple, bend left=30] (-2.9+1.58, 0.9+1.15) to (0.4+0.55, 0.9+0.64 + 0.38);
\draw[<-, >=Latex, thick, purple, bend left=30] (-2.9+1.5, 0.9+1.15) to (0.4+1.5, 0.9+1.45 );
\end{tikzpicture}
 \vspace{-8mm}
\caption{\footnotesize
Equiv. of hardness threshold: every point at threshold in each diagram is mapped from some point at threshold in the other diagram.
}
\label{fig:equiv2}
\vspace{-8mm}
\end{wrapfigure}
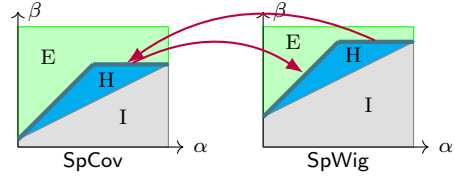
\begin{itemize}
    \vspace{-2mm}
    \item There exists a \emph{surjective average-case reduction} from the computational threshold of $\sw$ onto the entire computational threshold of $\sc_{\gamma}$. In particular, for every $\mu \in \CCc_{\gamma}$, there is a $\nu \in \CCw$, such that $\sw(\nu) \preceq \sc(\mu)$, i.e., there is an average-case reduction from $\sw(\nu)$ to $\sc(\mu)$ per Def.~\ref{def:avg_case_points_threshold}.
\end{itemize}

\begin{itemize}
    \vspace{-6mm}
    \item There exists a \emph{surjective average-case reduction} from the computational threshold of $\sc_{\gamma}$ onto the entire computational threshold of $\sw$. In particular, for every $\nu \in \CCw$, there exists $\mu \in \CCc_{\gamma}$, such that $\sc(\mu) \preceq \sw(\nu)$.
\end{itemize}

\subsection{Our Results}\label{sec:intro_results}

Our goal is to establish two-way computational equivalence between $\sw$ and $\sc$ models. As discussed above, such an equivalence requires two average case reductions: one from $\sw$ to $\sc$ and the other from $\sc$ to $\sw$. 
While reductions from $\sw$ to $\sc$ exist, none are known in the reverse direction. We address this gap by constructing reductions from $\sc$ to $\sw$, establishing the strongest possible reduction with canonical parameter correspondence~\eqref{eq:param} for a large portion of the parameter space. As a consequence we obtain a surjective reduction onto the computational threshold of $\sw$ in all regimes, i.e., for arbitrary $\gamma \geq 1$. We note that our reductions either keep the sparse signal vector $u$ unchanged or up to entry sign changes, allowing us to obtain strong implications for both detection and recovery.

\begin{theorem}[Main Result 1: $\sc \to \sw$ Reductions]\label{thm:main_reduction}
    $\ $
    \begin{enumerate}
        \item \textbf{(Reductions with Canonical Parameter Correspondence)} For $\mu$ and $\nu$ in accordance to~\eqref{eq:param}, i.e, 
         $\mu = (\alpha, \beta, \gamma) \leftrightarrow \nu = (\alpha, \beta+\gamma/2)$:
         \begin{enumerate} 
            \item There exists an $O(d^{2+\gamma})$-time average-case reduction from $\sc(\mu)$ to $\sw(\nu)$ for any $\gamma \geq 2$ and $\mu\in\Omc$ (i.e., all points in $\Omc$ for $n \gg d^2$),
            \item There exists an $O(d^{2+\gamma})$-time average-case reduction from $\sc(\mu)$ to $\sw(\nu)$ for any $\gamma\geq 1$ and $\mu\in\CCc_{\gamma}$ with $\alpha \leq 1/2$ (i.e., points on the computational threshold for $n\gg d$ and $k\leq \sqrt{d}$).     
        \end{enumerate}
        \item \textbf{(Surjective Reduction to the Computational Threshold)} There exists a surjective average-case reduction from the computational threshold of $\sc_{\gamma}$ onto the entire computational threshold of $\sw$  for any $\gamma\geq 1$ (i.e., for all $n \gg d$). In particular, for all $\nu\in \CCw$ and $\gamma\geq 1$, there exists $\mu\in \CCc_{\gamma}$ 
        and an average-case reduction from $\sc(\mu)$ to $\sw(\nu)$.\footnote{This result utilizes existing ``reflection cloning" reductions within $\sw$ from \cite{brennan2018reducibility} which require the use of a relaxed version $\ksparse d$ for the target $\sw$ instance whenever $\alpha>1/2$ and $\gamma < 3$.}  
    \end{enumerate}    
    \end{theorem}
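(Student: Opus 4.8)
The plan is to design the $\sc(\mu)\to\sw(\nu)$ reduction so that it touches the $\sc$ data $Z=X+\tsq\,gu^\top$ (with $X\sim\N(0,I_n)^{\otimes d}$ and $g\sim\N(0,I_n)$ independent) only through the Gram matrix $Z^\top Z$, which costs $O(nd^2)=O(d^{2+\gamma})$ time. The starting observation is an exact distributional identity: decomposing $X$ along $g$ and $g^{\perp}$ and expanding yields, conditionally on $g$,
\[
Z^\top Z \;\equaldist\; W^\top W + ww^\top,\qquad W\sim\N(0,I_{n-1})^{\otimes d},\quad w\sim\N(\tsq\|g\|\,u,\,I_d),
\]
with $\|g\|^2\sim\chi^2_n$ and $W$ independent of $(w,g)$. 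So $Z^\top Z$ is a Wishart matrix perturbed by a \emph{noisy} rank-one spike whose mean direction is exactly $u$ and whose magnitude $\tsq\|g\|\approx\sqrt{\theta n}$ already implements the canonical correspondence $\lambda\leftrightarrow\theta\sqrt n$.

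Next I would run Gram--Schmidt on the columns of (a matrix realizing) $Z^\top Z$, i.e.\ compute the upper-triangular Cholesky factor $R$ with $R^\top R=Z^\top Z$. The payoff appears already in the null case, where $W^\top W\sim\mathrm{Wishart}(n,I_d)$ and the Bartlett decomposition makes the strictly-upper entries of $R$ \emph{exactly} i.i.d.\ $\N(0,1)$ and the squared diagonal entries independent $\chi^2$; hence symmetrizing the strictly-upper part of $R$ produces \emph{exact} GOE off-diagonal noise, with no Wishart-to-Gaussian total-variation cost --- this is what lets the reduction break the $n\gg d^3$ barrier of naive Wishart-to-Wigner. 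The genuinely new ingredient I would need to prove is a \emph{perturbation equivariance} for Gram--Schmidt: running the $d$ sequential orthogonalization steps on $W^\top W+ww^\top$ makes the spike reappear in the output as an approximately clean rank-one object, $R_{ij}\approx\lambda\,u_iu_j+(\text{small, entry-correlated noise})+(\text{fresh }\N(0,1))$ for $i<j$, with the diagonal still concentrated near $\sqrt n$. I would establish this by tracking the rank-one Cholesky update step by step: at step $i$ the pivot is $R_{ii}=\|\text{col }i\|\approx\sqrt n$; the new off-diagonal entry is $R_{ij}\approx R_{ii}^{-1}(w_iw_j+\la W_i^{(i-1)},W_j^{(i-1)}\ra)$, whose signal part is $R_{ii}^{-1}(\tsq\|g\|u_i)(\tsq\|g\|u_j)\approx\theta\sqrt n\,u_iu_j=\lambda u_iu_j$; and each residual column loses only an $O(1/\sqrt n)$ component per step, so the signal $\tsq\|g\|u_i$ carried by $w_i$ propagates essentially undamped, while the extraneous cross-terms $\tsq(u_iv_j+u_jv_i)$ and the $\chi$-fluctuations of the diagonal stay of size $O(\sqrt{\theta/k})$ and $O(1)$ respectively.

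With $R$ in hand the reduction finishes by symmetrizing its strictly-upper part, adjoining a fresh $\N(0,2)^{\otimes d}$ diagonal, and applying a rejection-kernel/denoising clean-up ($\RK$, $\Denoise$) to remove the residual $i$- and support-dependent structure from the diagonal and to subtract off the spike's cross-terms (the latter occasionally requires recovering the sign of $u_i$, which is the source of the ``up to entry sign changes'' caveat and is harmless since $\ksparseflat d$ is closed under coordinate sign flips). The output law equals $\sw(d,k,\theta\sqrt n)$ up to a total-variation error that splits into: the $O(\sqrt{\theta/k})$ cross-noise summed over entries; the $O(1)$ $\chi$-fluctuations and $i$/support-dependence of $\mathrm{diag}(R)$; the $\chi_n$ fluctuation of $\|g\|$ in the spike size; and the clean-up error. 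A computation should show all four are $\too(1)$ throughout the hard region $\Omc$ once $n\gg d^2$, i.e.\ $\gamma\ge 2$ (Part 1(a)); and, by a sharper accounting exploiting that on the computational threshold with $k\le\sqrt d$ the spike is comparatively strong (so the effective geometry collapses onto the single direction $u$), already once $n\gg d$ on that portion of the threshold (Part 1(b)). Since the map sends $\theta=0$ to $\lambda=0$ and $\theta\ge\theta^\star$ to $\lambda=\theta\sqrt n\ge\lambda^\star$ with $u$ preserved up to signs, it is simultaneously a valid detection and recovery reduction. For Part 2: a point $\nu=(\alpha,\alpha)\in\CCw$ with $\alpha\le\tfrac12$ is hit directly by Part 1(b) from $\mu=(\alpha,\alpha-\gamma/2,\gamma)\in\CCc_{\gamma}$; a point $\nu=(\alpha,\tfrac12)\in\CCw$ with $\tfrac12<\alpha<1$ is hit by composing Part 1(b)'s reduction $\sc(\mu_0)\preceq\sw(\tfrac12,\tfrac12)$ (for the appropriate $\mu_0\in\CCc_{\gamma}$ with first coordinate $\tfrac12$) with the ``reflection cloning'' $\sw\to\sw$ reduction of \cite{brennan2018reducibility}, which sweeps the $\beta=\tfrac12$ segment of $\CCw$ from $\alpha=\tfrac12$ up to any $\alpha\in(\tfrac12,1)$ (using the relaxed class $\ksparse d$ for the $\sw$ target when $\gamma<3$); composition of average-case reductions is again an average-case reduction.

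The step I expect to be the main obstacle is the perturbation-equivariance claim. Because Cholesky/Gram--Schmidt is not orthogonally equivariant, there is no a priori reason a rank-one spike should survive triangularization as a rank-one object, so the work is to prove that across all $d$ sequential steps the spike neither leaks into the lower-triangular ``noise'' coordinates nor accumulates more than $\too(\lambda)$ of extra --- and crucially entry-correlated --- Gaussian noise, and simultaneously that the $\chi$-distributed diagonal decouples from the exactly-Gaussian strictly-upper part at $\too(1)$ total-variation cost. Pinning down the joint law of the propagated spike and the residual noise tightly enough to fit the reduction's total-variation budget --- which is exactly what forces $\gamma\ge 2$ in the interior of $\Omc$ --- is the technically demanding core of the argument.
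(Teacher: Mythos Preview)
Your approach differs substantially from the paper's, and while the Bartlett-decomposition observation is elegant, the proposal has a genuine gap for Part 1(b).

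The paper does \emph{not} compute the Cholesky factor of $Z^\top Z$. For Part 1(a) it Gaussian-clones $Z$ into two independent copies and forms the bipartite inner-product matrix $\frac{1}{\sqrt n}(Z^{(1)})^\top Z^{(2)}$; the null noise is close to i.i.d.\ Gaussian by a bipartite-Wishart CLT valid for $n\gg d^2$, and the cross-terms $\sqrt\theta\,u(X_0)^\top$ are removed by a $\chi^2$ argument that also needs $\theta^2 d=o(1)$, i.e.\ $n\gg d^2$. For Part 1(b) the paper clones $Z$ into $2K{+}1$ copies, runs Gram--Schmidt on \emph{one} copy to obtain an orthonormal basis $\widetilde Z^{(0)}$, and projects the \emph{other} clones onto it: $(X^{(l)})^\top\widetilde Z^{(0)}$ is then exactly i.i.d.\ $\N(0,1)$ in \emph{all} entries, by independence of $X^{(l)}$ from $\widetilde Z^{(0)}$. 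The Gram--Schmidt perturbation lemma controls only $\langle g,\widetilde Z^{(0)}_j\rangle$, which suffices because that is the sole quantity through which the spike enters. Crucially, the resulting cross-terms are sparse only in rows, and the paper removes them not by ``subtracting'' but by the \emph{flipping} trick $\sign(Y^{(l)}_{ij}\cdot Y^{(l+K)}_{ji})$, which forces sparsity in both row and column, followed by Bernoulli \emph{denoising} across the $K$ independent clones. Both steps are only possible because cloning produced multiple independent copies.

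Your gap is exactly here. The cross-terms $\sqrt\theta(u_i a_j+u_j a_i)$ you correctly identify cannot be ``subtracted off'' without knowing $\supp(u)$ --- that is the recovery problem itself --- and the $\chi^2$ mean-shift argument that removes such rank-two contamination in total variation requires $\theta^2 d=o(1)$, which on the computational threshold yields only $n\gg d^2$, not $n\gg d$. Your appeal to a ``sharper accounting'' when $k\le\sqrt d$ is not a mechanism: the matrix $\sqrt\theta(ua^\top+au^\top)$ has TV-nonnegligible mass outside the $k\times k$ signal block for every $\gamma<2$. Your invocation of $\Denoise$ cannot help as stated, since that routine requires multiple i.i.d.\ Rademacher samples per entry, whereas a single Cholesky gives one Gaussian value per entry; producing multiple samples is precisely what cloning is for. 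Your Bartlett observation --- that the strictly-upper null entries are \emph{exactly} $\N(0,1)$ --- is correct, but it addresses the wrong bottleneck: the paper also obtains exact Gaussian null noise (via independence of $X^{(l)}$ from the basis), and the $\gamma\ge 2$ and $\gamma\ge 1$ thresholds both arise from spiked-case cross-terms, not from the null. Your Part~2 argument (compose 1(b) with reflection cloning inside $\sw$) matches the paper and is fine once 1(b) is in hand.
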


To show equivalence between $\sc$ and $\sw$ we combine this theorem together with existing reductions of \cite{brennan2018reducibility,brennan2019optimal} from $\sw$ to $\sc$. The details are in Section \ref{sec:spcainternal}.

\begin{theorem}[Main Result 2: Equivalence between $\sw$ and $\sc$]\label{thm:main_equiv}
    $\ $
        \begin{enumerate}
            \item (\textbf{Bijective Equivalence}) $\sw(\nu)$ and $\sc(\mu)$ are bijectively equivalent with the canonical parameter map \eqref{eq:param}
            for all $\gamma \geq 3$ and for $\gamma = 1, \alpha = 1/2$, $\beta = \betacomp$ (i.e., everywhere for $n\gg d^3$ or for $n=\Theta(d)$ at the threshold at point $ k = \Theta(d^{1/2})$).
            \item (\textbf{Equivalence of Hardness Thresholds}) $\sw(\nu)$ and $\sc(\mu)$ have equivalent computational thresholds for $\gamma = 1 \text{ or } \gamma \geq 3$ (i.e., entire computational threshold for $n=\Theta(d)$ or  $n\gg d^3$). 
        \end{enumerate}
\end{theorem}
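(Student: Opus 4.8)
The plan is to obtain each equivalence by composing the new $\sc\to\sw$ reductions of Theorem~\ref{thm:main_reduction} with the existing $\sw\to\sc$ reductions of \cite{brennan2018reducibility,brennan2019optimal}, after checking that the two parameter maps compose to the canonical correspondence~\eqref{eq:param} over the relevant region (for the bijective equivalence) or merely cover the whole computational threshold surjectively (for the threshold equivalence). Since Theorem~\ref{thm:main_reduction} keeps the signal $u$ fixed up to sign changes and the imported reductions likewise preserve the planted structure, the resulting two-way reductions are valid for both detection (Def.~\ref{def:avg_case_points}) and recovery, so the equivalences hold in the sense of Def.~\ref{def:equiv_points}.

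\textbf{Part 1 (bijective equivalence).} I would first record that for fixed $\gamma$ the canonical map $f_\gamma:\mu=(\alpha,\beta,\gamma)\mapsto\nu=(\alpha,\beta+\gamma/2)$ is a bijection $\Omc_\gamma\to\Omw$: plugging the formulas $\betastat_\theta(\alpha,\gamma)=\alpha/2-\gamma/2$, $\betacomp_\theta(\alpha,\gamma)=\min\{\alpha-\gamma/2,\,1/2-\gamma/2\}$ and their $\sw$ counterparts into the defining inequalities shows that $\betastat_\theta(\alpha,\gamma)<\beta<\betacomp_\theta(\alpha,\gamma)$ is equivalent to $\betastat_\lambda(\alpha)<\beta+\gamma/2<\betacomp_\lambda(\alpha)$, hence $f_\gamma(\Omc_\gamma)=\Omw$. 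For $\gamma\geq 3$: the direction $\sc(\mu)\preceq\sw(f_\gamma(\mu))$ for every $\mu\in\Omc_\gamma$ is immediate from Theorem~\ref{thm:main_reduction}(1a), which applies for all $\gamma\geq 2$ and all $\mu\in\Omc$; for the reverse direction I would invoke the $\sw\to\sc$ reduction of \cite{brennan2019optimal}, which substantiates the entire hard region for $\gamma\in\{1\}\cup[3,\infty)$, verify that it realizes $f_\gamma^{-1}$ — i.e.\ it emits $n=d^\gamma$ samples with $\theta\asymp\lambda/\sqrt n$ up to the subpolynomial slack that our polynomial-scale parametrization ignores — and check that, since $\gamma\geq 3$, no relaxation of the signal class is triggered (cf.\ the footnote of Theorem~\ref{thm:main_reduction}), so the source may be taken in the strict class $\ksparseflat d$. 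Combining gives $\sc(\mu)\equiv\sw(f_\gamma(\mu))$ for all $\mu\in\Omc_\gamma$, which is exactly bijective equivalence of the slice $\sc_\gamma$ with $\sw$. For the isolated point $\gamma=1,\ \alpha=1/2,\ \beta=\betacomp$: the direction $\sc\to\sw$ is Theorem~\ref{thm:main_reduction}(1b) (the point lies on $\CCc_1$ and $\alpha=1/2\leq 1/2$), and $\sw\to\sc$ is \cite{brennan2019optimal} at $\gamma=1$; since $\alpha=1/2$ is not $>1/2$, no signal-class relaxation is needed, yielding $\sc(\mu)\equiv\sw(f_1(\mu))$ at that point.

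\textbf{Part 2 (equivalence of hardness thresholds).} By definition this asks for a surjective average-case reduction from $\CCw$ onto $\CCc_\gamma$ and one from $\CCc_\gamma$ onto $\CCw$. For $\gamma\geq 3$ both follow by restricting the bijective equivalence of Part~1 to the thresholds (a bijection is surjective in each direction). For $\gamma=1$: surjectivity from $\CCw$ onto $\CCc_1$ is again the $\sw\to\sc$ reduction of \cite{brennan2019optimal}, which covers the whole hard region of $\sc_1$ and hence its computational threshold under $f_1^{-1}$; surjectivity from $\CCc_1$ onto all of $\CCw$ is precisely Theorem~\ref{thm:main_reduction}(2), valid for any $\gamma\geq 1$. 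For this last reduction the target $\sw$ instance may lie in the relaxed class $\ksparse d$ when $\alpha>1/2$, which is harmless since the definition of equivalent computational thresholds requires only surjectivity in each direction, not a bijection.

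\textbf{Main obstacle.} The logical composition above is routine; the real work is the bookkeeping for the imported $\sw\to\sc$ reductions of \cite{brennan2019optimal}: confirming that they respect the canonical correspondence~\eqref{eq:param} over the \emph{entire} hard slice $\Omc_\gamma$ (not just at the threshold, where such statements are usually phrased), tracking that the $d^{o(1)}$ slack in the threshold formulas is exactly what the polynomial-scale parametrization discards, and keeping the two notions of sparse signal ($\ksparseflat d$ versus $\ksparse d$) consistent along the reduction chains — so that a reduction emitting a signal in $\ksparse d$ is always fed into a downstream problem set up to accept it, and this relaxation never silently weakens a bijective equivalence to a mere threshold equivalence outside the declared regime $\gamma<3$.
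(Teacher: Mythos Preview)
Your proposal is correct and follows the same approach as the paper: combine the new $\sc\to\sw$ reductions of Theorem~\ref{thm:main_reduction} with the existing $\sw\to\sc$ reductions, verifying that both respect the canonical map~\eqref{eq:param} on the relevant regions. The paper packages the imported reductions into a single summary statement (Theorem~\ref{thm:wig_to_cov}) rather than citing \cite{brennan2018reducibility,brennan2019optimal} directly, but the logical composition you describe --- in particular, using item~1(a) of Theorem~\ref{thm:wig_to_cov} for the whole hard slice when $\gamma\geq 3$, item~1(b) for the threshold point $\gamma=1,\alpha=1/2$, and item~2(a) for the surjection onto $\CCc_1$ --- is exactly what the paper does.
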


Theorem~\ref{thm:main_equiv} shows the first computational equivalence between any two well-studied problems in high-dimensional statistics. While we do not achieve full bijective equivalence for all $\gamma$, we make progress towards this objective. The limitations of our equivalence results are largely due to the $\sw$ to $\sc$ direction, for which we use existing reductions (see Sec.~\ref{sec:spcainternal}). Future improvements depend on improved reductions from $\sw$ to $\sc$: for example, new average-case reductions from $\sw$ onto the computational threshold of $\sc_{\gamma}$ for some $1<\gamma<3$ will immediately yield equivalence of computational thresholds between $\sw$ and $\sc_{\gamma}$ when combined with our $\sc_{\gamma}$ to $\sw$ reductions. For the $\sc$ to $\sw$ reduction direction, the limiting condition $\alpha \leq 1/2$ for the general case $\gamma \geq 1$ arises from the limitations of the ``flipping” subroutine in our reduction (see Sec.~\ref{subsec:flipping1}): this subroutine effectively squares the Wigner entry-wise signal $\lambda u_i u_j$, which becomes lossy when $\alpha > 1/2$.

Our results also imply that subexponential runtimes above polynomial time are transferred \cite{ding2024subexponential,holtzman2020greedy}. Thus, bijective equivalence is meaningful independently of whether or not hardness holds at the conjectured threshold.

Moreover, our results yield the first reductions \emph{within} $\sc$ that modify the parameter $\gamma = \log_d n$, i.e., number of samples $n$ relative to the dimension $d$ (see Sec.~\ref{subsec:our_results}).

We remark that $\sw$ was shown by \cite{johnstone2015testing} to arise as the large-sample $n\to\infty$ limit (with other parameters fixed) of $\sc$. 
Thus, Item 1 of Theorem~\ref{thm:main_equiv} was essentially proved for $\gamma=\infty$.

\section*{Acknowledgments}
We are grateful to Kiril Bangachev for valuable contributions during the early stages of this project. GB and AH are supported in part by NSF Grant CCF-2428619. AH was supported by Vanu Bose Presidential Fellowship.

\section{Overview of Ideas}\label{sec:ideas}

The $\sc$ and $\sw$ data distributions are different in a fundamental way. While both can be viewed as ``noisy observations" of the rescaled signal $u u^\top$, the entries of the $\sw$ matrix are jointly independent, whereas the entries of the empirical covariance matrix for $\sc$ samples exhibit the complex dependence structure of the Wishart distribution. This distinction makes devising an average-case reduction from $\sc$ to $\sw$ challenging, 
as few techniques are known for removing dependencies while preserving the underlying signal.

To address this challenge, we introduce several novel techniques for dependence removal and structure manipulation. This section overviews the main ideas.

We first use a CLT result for Wishart matrices to show that the two models' distributions are close in total variation if $n\gg d^3$, 
establishing a bijective equivalence between $\sc$ and $\sw$ in this regime.\footnote{Since the models are close in TV, no reduction is needed.} 
For the more challenging $n \ll d^3$ regime, where the models are genuinely different (their total variation distance is nearly 1), we develop new tools to remove the dependencies while preserving the signal. To start, we overcome the $n \gg d^3$ barrier by leveraging a CLT for \emph{subsets} of Wishart matrices, and build an average-case reduction with canonical parameter correspondence \eqref{eq:param} from $\sc$ to $\sw$ for all $n\gg d^2$.  

Next, we introduce a more powerful dependence-removal method based on Gram-Schmidt orthogonalization, which succeeds at the computational threshold in the range $n \gg d$ for sparse signal $k\leq \sqrt{d}$. The key challenge, described below, is showing a certain \emph{perturbation equivariance} property of Gram-Schmidt.

\subsection{Warm Up: Dependence Removal via Wishart CLT when $n\gg d^3$}

\cite{bubeck2016testing,jiang2015approximation} prove that the rescaled Wishart distribution converges to Wigner for $n\gg d^3$: for $X\in \R^{n\times d}$ with i.i.d. $\N(0, 1)$ entries and the empirical covariance matrix $\Sigmahat = \frac{1}{n} X^\top X$, if $n$ and $d$ simultaneously grow large with $n \gg d^3$, then
    \begin{gather}
        \label{e:wishart_wigner_conv}\tv\b(\sqrt{n}(\Sigmahat - I_d), W\b) \to 0\,, 
    \end{gather}
where $W\sim \GOE(d)$. This result shows that for $n\gg d^3$, the distributions of $\sw$ and $\sc$ are close in total variation in the absence of the spike (i.e. $\theta = \lambda = 0$). We extend this result in Thm.~\ref{thm:naive_reduction} to show that the \emph{spiked} models also converge in this regime: letting $Z\in \R^{n\times d}$ be data from $\sc$ in the form of~\eqref{e:SCM2} with the empirical covariance matrix $\Sigmahat_{\mathrm{sp}} = \frac{1}{n} Z^\top Z$, we have for $n\gg d^3$,
\begin{equation}\label{e:nd3}
        \TV\big( \sqrt{n}(\Sigmahat_{\mathrm{sp}} - I_d), \lambda u u^\top + W \big) \to 0 \text{ as } d\to \infty\,. 
\end{equation}
The key observation here is that 
\begin{align*}
    \Sigmahat_{\mathrm{sp}} &= \frac{1}{n}Z^\top Z = \frac{1}{n}\b( X+\tsq g u^\top \b)^\top \b( X+\tsq g u^\top \b) \\
    &= \frac{1}{n}X^\top X + \theta \frac{\|g\|^2}{n} u u^\top + \frac{1}{n}(\tsq X^\top g u^\top + \tsq u g^\top X) \approx \frac{1}{n}X^\top X + \theta u u^\top = \Sigmahat + \theta u u^\top,
\end{align*}
where $\Sigmahat = \frac{1}{n}X^\top X$ is the noise term characterized in \eqref{e:wishart_wigner_conv} and $\theta u u^\top$ is the desired sparse signal. 
The technical content of \eqref{e:nd3} is that the cross-terms $\tsq X^\top g u^\top + \tsq u g^\top X$ can be dropped. However, the limitations of this approach are apparent once we decrease the number of samples to $n \ll d^3$. In the same work, \cite{bubeck2016testing} prove that the models' noise distributions are genuinely different: for $n \ll d^3$, 
\begin{align}
\label{e:wishart_wigner_div}\tv\b(\sqrt{n}(\Sigmahat - I_d), W\b) \to 1 \text{ as } d \to \infty\,.
\end{align}

\subsection{Gaussian Cloning and Inner Products when $n\gg d^2$}

The Gaussian Cloning procedure, which utilizes the rotational invariance property of Gaussians, has been widely applied for average-case reductions \cite{brennan2019optimal,brennan2020reducibility}. In Lemma~\ref{lem:cloning-guarantee}, we describe how it can be used to create two independent copies of $\sc$ data from~\eqref{e:SCM2}. Given $
    Z = X + \tsq g u^\top
    $, 
    we obtain $
        Z\up1 = X\up1 + \tsq g u^\top / \sqrt{2}$ and $Z\up2 = X\up2 + \tsq g u^\top / \sqrt{2}
        $,
        where $X\up1, X\up2 \sim \N(0, I_n)^{\otimes d}$ are independent. That is, $Z\up1, Z\up2$ are independent samples from $ \sc(d,k,\theta/2, n)$ with same signal vectors $u, g$. 

For our average-case reduction, we consider a two-sample analogue of the empirical covariance matrix, the inner product matrix 
\begin{equation}\label{e:innerProd}
   Y = \frac{1}{\sqrt{n}}(Z\up1)^\top Z\up2 = \frac{1}{\sqrt{n}}(X\up1)^\top X\up2 + \frac \theta {2\sqrt{n}} \|g\|^2 u u^\top + \text{ cross-terms}\,.  
\end{equation}

Using the independence of $Z\up1$ and $Z\up2$ along with a recent result of Brennan et al. \cite{brennan2021finetti}, which shows that inner product matrices $\frac{1}{\sqrt{n}}(X\up1)^\top X\up2$ (the top right submatrix of a Wishart) converge to $\GOE$ already for $n\gg d^2$, we prove the average-case reduction guarantees in this regime (Section~\ref{sec:direct_reduction}).

\subsection{Gram-Schmidt Orthogonalization for Dependence Removal when $n\gg d$}

Our techniques above achieved sufficient independence for $n \gg d^2$, but remained algorithmically \emph{passive}. In contrast, our new approach described next introduces an \emph{active} ingredient -- an orthogonalization step that directly creates independence -- succeeding in the much wider range $n \gg d$.

Our approach makes use of the rotational invariance of Gaussian distributions: if $n\geq d$, $X^\top \sim N(0, I_d)^{\otimes n}$ and $\widetilde{R} \in \R^{n\times d}$ is an orthonormal matrix, then $Y = X^\top \widetilde{R} \sim N(0, I_d)^{\otimes d}$. Recall from \eqref{e:innerProd} that the inner product matrix $Y = \frac{1}{\sqrt{n}}(Z\up 1)^\top Z\up2$ of two independent sample copies preserves the desired rank-1 signal $uu^\top$, at least in expectation. 
We combine these ideas and study 
$$
Y = (Z\up 1)^\top \GS(Z\up 2)\,,
$$
where $\GS(X)$ (Algorithm~\ref{alg:GS}) denotes Gram-Schmidt orthogonalization of the column vectors of $X$. 
Note that in the absence of a spike, $Y \sim N(0, I_d)^{\otimes d}$, which is i.i.d. noise as desired.

However, the situation is much more challenging with a spike.
\textbf{Can the underlying signal $u u^\top$ survive the structural manipulation imposed by Gram-Schmidt orthogonalization?} To retain the desired signal $uu^\top$ in $Y$, the strength and sparsity of $u$ must correctly propagate from $Z\up2$ into the basis $\Zt\up 2$. Moreover, we must ensure that the presence of the signal does not interfere with independence of the noise.

If we could show that a sample $Z = X + \tsq g u^\top$ from $\sc$ yields orthonormal basis 
\begin{equation}\label{e:pertEquiv}
    \GS(Z)\approx \GS(X) + \sqrt{\frac\theta n} g u^\top,
\end{equation}
then
\begin{equation}\label{e:Y_GS}
    Y = (Z\up 1)^\top \GS(Z\up 2) \approx \underbrace{(X\up1)^\top\GS( Z\up2)}_{\text{i.i.d. noise}} + \underbrace{\frac \theta {2\sqrt{n}} \|g\|^2 u u^\top}_{\text{signal}} + \underbrace{\sqrt{\frac\theta2} u g^\top \GS(X\up 2)}_{\text{cross-term}}\,,
\end{equation}
addressing both issues.

\begin{wrapfigure}{r}{0.50\textwidth}
\vspace{-2mm}
\centering
 \tikzset{every picture/.style={line width=0.75pt}} 

\begin{tikzpicture}[x=0.75pt,y=0.75pt,yscale=-1,xscale=1]

\def\xposa{160} 
\def\xposb{325} 
\def\yposa{120} 
\def\yposb{70} 

\draw    (\xposa,\yposb) -- (\xposa,\yposa) ;
\draw [shift={(\xposa,\yposa)}, rotate = 270] [color={rgb, 255:red, 0; green, 0; blue, 0 }  ][line width=0.75]    (10.93,-3.29) .. controls (6.95,-1.4) and (3.31,-0.3) .. (0,0) .. controls (3.31,0.3) and (6.95,1.4) .. (10.93,3.29)   ;
\draw    (\xposb,\yposb) -- (\xposb,\yposa) ;
\draw [shift={(\xposb,\yposa)}, rotate = 270] [color={rgb, 255:red, 0; green, 0; blue, 0 }  ][line width=0.75]    (10.93,-3.29) .. controls (6.95,-1.4) and (3.31,-0.3) .. (0,0) .. controls (3.31,0.3) and (6.95,1.4) .. (10.93,3.29)   ;

\def\yshift{12}
\draw    (\xposa+50,\yposb-\yshift) -- (\xposb-30,\yposb-\yshift) ;
\draw [shift={(\xposb-30,\yposb-\yshift)}, rotate = 180] [color={rgb, 255:red, 0; green, 0; blue, 0 }  ][line width=0.75]    (10.93,-3.29) .. controls (6.95,-1.4) and (3.31,-0.3) .. (0,0) .. controls (3.31,0.3) and (6.95,1.4) .. (10.93,3.29)   ;

\def\lineshift{13}
\draw    (\xposa+52,\yposa+\lineshift) -- (\xposb-28,\yposa+\lineshift) ;
\draw [shift={(\xposb-28,\yposa+\lineshift)}, rotate = 180] [color={rgb, 255:red, 0; green, 0; blue, 0 }  ][line width=0.75]    (10.93,-3.29) .. controls (6.95,-1.4) and (3.31,-0.3) .. (0,0) .. controls (3.31,0.3) and (6.95,1.4) .. (10.93,3.29)   ;



\draw (\xposa-40,\yposb-20) node [anchor=north west][inner sep=0.75pt]   [align=left] {$\displaystyle \sc(\theta=0)$};
\draw (\xposb-25,\yposb-20) node [anchor=north west][inner sep=0.75pt]   [align=left] {$\displaystyle \sc({\theta})$};
\draw (\xposa-65,\yposa+5) node [anchor=north west][inner sep=0.75pt]   [align=left] {$\displaystyle \GS\b(\sc(\theta={0})\b)$};
\draw (\xposb-25,\yposa+5) node [anchor=north west][inner sep=0.75pt]   [align=left] {$\displaystyle \GS\b(\sc({\theta})\b)$};
\draw (\xposa+4,90) node [anchor=north west][inner sep=0.75pt]   [align=left] {$\GS$};
\draw (\xposb+4,90) node [anchor=north west][inner sep=0.75pt]   [align=left] {$\GS$};
\draw (215,\yposb-8) node [anchor=north west][inner sep=0.75pt]   [align=left] {\small $+\sqrt{\theta} gu^\top$};
\draw (210,\yposa-8) node [anchor=north west][inner sep=0.75pt]   [align=left] {\small $+\sqrt{\theta/n} gu^\top$};
\end{tikzpicture}
 \vspace{-3mm}
\caption{\footnotesize
Perturbation equivariance of $\GS$. The down then right path corresponds to the RHS of \eqref{e:pertEquiv}, whereas right then down corresponds to LHS of~\eqref{e:pertEquiv}.}
\label{fig:equiv1}
\vspace{-3mm}
\end{wrapfigure}

Eq.~\eqref{e:pertEquiv} is a certain \emph{perturbation equivariance}.  
We want $\GS$ to satisfy two competing desiderata: (1) Wishart dependence is removed, just as when the signal is not present and (2) The signal, which itself is a form of dependence, is not damaged in the reduction.
This is captured by a \emph{commuting diagram} (Figure~\ref{fig:equiv1}), similar to that in \cite{bresler2023algorithmic}.

For an iterative process like Gram-Schmidt, such robustness to perturbation in the form of planted signal is nontrivial: the signal in early vectors affects the subsequent ones, distorting the planted signal. In Lemma~\ref{lem:gZ_strong} we establish a key perturbation result that characterizes the planted signal's behavior under Gram-Schmidt and makes \eqref{e:pertEquiv} precise.

\subsection{Two Novel Procedures: ``Flipping" and Bernoulli Denoising}\label{subsec:flipping1}

\paragraph{``Flipping".} Our matrix $Y$ from \eqref{e:Y_GS} and the $\sw$ distribution differ due to the cross-term.
Observe that the cross-term $u g^\top \GS(X\up2)$ is sparse only in the rows \emph{but not the columns}. To obtain the $\sw$ signal structure, which is sparse in both rows and columns, we propose the following procedure we call ``flipping".

Let $\dot{Z}\up {1}$ and $\ddot{Z}\up {1}$ be the outputs of the cloning procedure applied to $Z\up 1$.
We create two copies of the projection of interest, $Y \up1 = (\dot{Z}\up {1})^\top \GS(Z\up 2)$ and $Y\up 2 = (\ddot{Z}\up {1})^\top \GS(Z\up 2)$. 
Notice that the cross-term $u g^\top \GS(X\up2)$ of $Y\up 1$ is sparse in the rows and the cross-term $\GS(X\up2)^\top g u^\top $ in $\b(Y\up 2\b)^\top$, i.e., the flipped copy, is sparse in the columns. We consider 
$$
Y = \sign \b( Y\up 1 \odot (Y\up 2)^\top \b)\,,
$$
where $\odot$ denotes the entrywise multiplication. 
Observe that conditioned on $u$ and $g$, $Y$ has jointly independent entries. Because $\sign(\N(a,1)\cdot \N(0,1))\sim \textsf{Unif}(\{\pm1\})$ for any $a$, the signal in $Y$ is sparse in both directions. In particular, only entries $Y_{ij}$, where both $i,j \in \supp(u)$, contain the signal.

This ``flipping" procedure is a subroutine of our orthogonalization reduction in Sec.~\ref{sec:gs_reduction}, and is analyzed as part of Theorem~\ref{t:wishSSBM}.

\paragraph{Bernoulli Denoising.} Following the flipping procedure, we use our novel Bernoulli Denoising step described in Section~\ref{subsec:bern_denoise}, which allows us to remove unwanted perturbations in the means of the entries of $Y$ that contain the signal.

Notice that our flipping procedure involves discretization of $Y$ to $\pm 1$ values. It turns out that we can show that entries $Y_{ij}$ for $i,j\in\supp(u)$ are of the form $\rad(a+\Delta)$, where $\rad(t)$ denotes a random variable taking $\pm1$ values such that $\E \rad(t) = t$. Here $\Delta \ll a$ represents an undesired noise whose size we do not know and $a$ is the desired signal level. We propose a denoising procedure designed to reduce $\Delta$ relative to $a$ when given multiple samples from $\rad(a+\Delta)$. The main idea is to construct a polynomial of the noisy samples and fresh $\rad(a)$ samples, such that the resulting variable is distributed as $\Rad\b(C(a^M - (-\Delta)^{M})\b)$ for some constant $C$ and power $M$ depending on the number of available noisy samples:
$$
\text{poly}\b(\underbrace{\rad(a+\Delta),\dots,\rad(a+\Delta)}_{N\text{ noisy inputs}},\underbrace{\rad(a),\dots,\rad(a)}_{\text{fresh r.v.s}}\b) \sim \Rad\b(C(a^M - (-\Delta)^{M})\b)\,.
$$
Then, in a setting when $a \approx 1$ and $\Delta \ll 1$, this procedure decreases the undesired noise relative to the constant mean: $$\B|\frac{\Delta}{a}\B|^{M} \ll \frac{\Delta}{a}\,.$$ 
Equally important, it keeps the signal $a\approx a^M\approx 1$ almost the same. The algorithm is outlined as Algorithm~\ref{alg:denoise} with its guarantees stated in Lemma~\ref{lem:denoise_guarantee}.

The output of these procedures -- flipping and Bernoulli Denoising -- is a matrix of $\pm 1$ entries that resembles the spiked Wigner matrix in that it has independent entries and correct sparse signal strength and structure. Our final step -- Gaussianization of matrix entries, by now a standard procedure in the average-case reductions literature (see Section~\ref{subsec:gaussianize}) -- lifts the entries of $Y$ from discrete $\pm 1$ values to Gaussian-distributed entries while retaining the signal in the entry means, concluding our reduction in Section~\ref{sec:gs_reduction}.

\section{Notation and Organization of Paper}
\subsection{Notation}

For matrices $A$ and $B$ of the same dimension, $A \odot B $ denotes entry-wise matrix multiplication. For matrix $A$ we denote by $A_i$ the $i$th column of $A$. 

We write $\cL(X)$ for the probability law of a random variable $X$, and for two random variables $X$ and $Y$ we often use the shorthand $\TV(X,Y)= \TV(\cL(X),\cL(Y))$. For two random variables $X$ and $Y$ we sometimes use the shorthand $X\approx_{o(1)} Y$ in place of $\TV(X,Y)=o(1)$.

For a vector $v\in \R^n$, $\|v\|=\|v\|_2=\sqrt{\sum_i v_i^2}$ denotes the Euclidean norm.
For a matrix $A\in \R^{n\times n}$, $\|A\|_\FR = \sqrt{\sum_{ij} A_{ij}^2}$ denotes the Frobenius norm.

For $a\in [-1,1]$, we denote by $\rad(a)$ the law of random variable $X\in \{-1,1\}$ with $\E X =a$. We denote $X\sim \N(0,I_d)^{\otimes n}$ as $n$ i.i.d. samples from $\N(0,I_d)$ arranged in the columns of $X\in \R^{d\times n}$.

We sometimes write $\tO(A)$ (corresp. $\tOm(A)$) to denote that the expression is upper (corresp. lower) bounded by $c_1 (\log n)^{c_2} A$ for some known universal constants $c_1, c_2$. For all such bounds the expressions for $c_1, c_2$ are explicitly derived and we only sometimes resort to $\tO, \tOm$ notation to facilitate understanding.

We write $a_n\gg b_n$ to express that $\lim_{n\to\infty}b_n/a_n = 0$.

\subsection{Organization of Rest of the Paper}
The rest of the paper is organized as follows: in Sec.~\ref{sec:spcainternal} we overview existing reductions for $\sc,\sw$ and describe our results; in Sec.~\ref{sec:prelims} and \ref{sec:prelims_new} we describe new and known subroutines for our reductions; in Sec.~\ref{sec:naive_reduction} we prove the total variation equivalence between $\sc$ and $\sw$ in regime $n\gg d^3$; in Sec.~\ref{sec:direct_reduction} we show a reduction from $\sc$ to $\sw$ in regime $n \gg d^2$; in Sec.~\ref{sec:gs_reduction} we show a reduction from $\sc$ to $\sw$ in regime $n \gg d$ via Gram-Schmidt; in Sec.~\ref{sec:gs_perturb} we establish the perturbation equivariance result for the Gram-Schmidt algorithm; formal definitions of average-case reductions for detection and recovery are deferred to Sec.~\ref{sec:avg_case} and \ref{sec:avg_case_recovery}.

\section{Reductions for $\sc$ and $\sw$: Known Results and Our Contributions}\label{sec:spcainternal}

In this section we overview existing internal reductions within $\sc$ and $\sw$ models (Section~\ref{subsec:internal_sparse}, \ref{subsec:internal_dense}) as well as reductions from $\sw$ to $\sc$ (Section~\ref{subsec:sw_to_sc_known}). Then in Section~\ref{subsec:our_results} we describe how to combine these with our novel reductions from $\sc$ to $\sw$ to obtain our main results on equivalence.

\subsection{Internal Reductions within Spiked Models}

\begin{figure*}[h]
\vspace{-3mm}
\centering
\def\scalesize{0.20} 
\subfloat{
\begin{tikzpicture}[scale=\scalesize]
\tikzstyle{every node}=[font=\scriptsize]
\def\xmin{0}
\def\xmax{11}
\def\ymin{0}
\def\ymax{11}

\coordinate (A) at (0, 0.8);
\coordinate (B) at (0, 5.8);
\coordinate (C) at (5, 5.8);
\coordinate (D) at (10, 5.8);

\node at (5, 0) [below] {$\frac{1}{2}$};
\node at (10, 0) [below] {$1$};
\node at (0, 10) [left] {$0$};
\node at (B) [left] {$\frac{1-\gamma}{2}$};
\node at (A) [left] {$-\frac{\gamma}2$};

\filldraw[fill=cyan, draw=cyan, opacity=0.7] (A) -- (C) -- (D) -- (A);
\filldraw[fill=green!25, draw=green, opacity=0.7] (A) -- (0, 10) -- (10, 10) -- (D) -- (C) -- (A);
\filldraw[fill=gray!25, draw=gray, opacity=0.7] (A) -- (D) -- (10, 0) -- (0, 0) -- (A);

\node at (6, 4.8) {H};
\node at (2, 6.5) {E};
\node at (5.8, 1.5) {I};

\draw[->] (\xmin,\ymin) -- (\xmax,\ymin) node[right] {$\alpha$};
\draw[->] (\xmin,\ymin) -- (\xmin,\ymax) node[above] {$\beta$};

\draw (A) -- (C); 
\draw (C) -- (D);
\draw (D) -- (A);

\draw[->, >=Latex, very thick, purple] (C) -- (2, 5.8-3); 
\draw[->, >=Latex, very thick, purple] (C) -- (8, 5.8); 

\node at (5, -4) {\textbf{(a)} $\sc_{\gamma}(d,k, \theta,n)$ with};
\node at (5, -5.5) {$(\alpha,\beta,\gamma)= (\log_d n,\log_d k,\log_d \theta)$};

\end{tikzpicture}}\qquad\qquad
\subfloat{
\begin{tikzpicture}[scale=\scalesize]
\tikzstyle{every node}=[font=\scriptsize]
\def\xmin{0}
\def\xmax{11}
\def\ymin{0}
\def\ymax{11}


\coordinate (A) at (0, 4.2);
\coordinate (B) at (0, 9.2);
\coordinate (C) at (5, 9.2);
\coordinate (D) at (10, 9.2);

\node at (5, 0) [below] {$\frac{1}{2}$};
\node at (10, 0) [below] {$1$};
\node at (B) [left] {$\frac12$};
\node at (A) [left] {$0$};

\filldraw[fill=cyan, draw=cyan, opacity=0.7] (A) -- (C) -- (D) -- (A);
\filldraw[fill=green!25, draw=green, opacity=0.7] (A) -- (0, 10) -- (10, 10) -- (D) -- (C) -- (A);
\filldraw[fill=gray!25, draw=gray, opacity=0.7] (A) -- (D) -- (10, 0) -- (0, 0) -- (A);

\node at (5.6, 8) {H};
\node at (2, 8) {E};
\node at (5.8, 3) {I};

\draw[->] (\xmin,\ymin) -- (\xmax,\ymin) node[right] {$\alpha$};
\draw[->] (\xmin,\ymin) -- (\xmin,\ymax) node[above] {$\beta$};

\draw (A) -- (C); 
\draw (C) -- (D);
\draw (D) -- (A);

\draw[->, >=Latex, very thick, purple] (C) -- (2, 9.2-3); 
\draw[->, >=Latex, very thick, purple] (C) -- (8, 9.2); 

\node at (5, -4) {\textbf{(b)} $\sw(d,k, \lambda)$ with};
\node at (5, -5.5) {$(\alpha, \beta) = (\log_d k, \log_d \lambda)$};
\end{tikzpicture}}
\vspace{-3mm}\caption{\scriptsize
Internal reductions within $\sc$ and $\sw$: the central point is the \emph{easiest} among those at the computational threshold and reductions map in the directions of the red arrows to all other points on the computational threshold.  Thus, hardness for the entire hard region follows from hardness of the center point.
}
\label{fig:internal_diagrams}
\end{figure*}
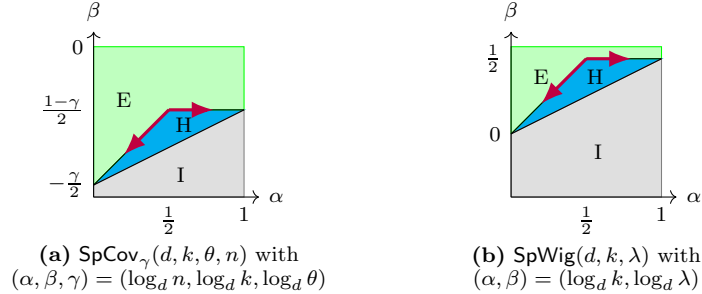

As can be seen in their phase diagrams, both $\sc$ and $\sw$ exhibit a certain change in behavior at $\alpha = 1/2$ (i.e. $k = \Theta(\sqrt{d})$), splitting the computational threshold into two parts. It turns out that for $\sw$ and every fixed slice $\sc_{\gamma}$ of $\sc$, the central point $\alpha = 1/2$ can be shown to be the ``computationally easiest" among those on the threshold. In particular, the results in Lemmas~\ref{lem:spca_internal_subsample} and~\ref{lem:spca_internal_reflection} below can be summarized as follows (see Fig.~\ref{fig:internal_diagrams}):
\begin{enumerate}
\item Let $\nu^\star=(1/2, 1/2)\in \CCw$. Then for every $\nu\in\CCw$,
$$\sw(\nu^\star) \preceq \sw(\nu)\,.$$
\item For every $\gamma\geq 1$, let $\mu_{\gamma}^\star=(1/2, 1/2-\gamma/2,\gamma/2)\in \CCc_{\gamma}$. Then for every $\mu\in\CCc_{\gamma}$,
$$\sw(\mu_{\gamma}^\star) \preceq \sw(\mu)\,.$$

\end{enumerate}

\subsubsection{Changing Sparsity in the More Sparse Regime $k\leq \sqrt{d}$}\label{subsec:internal_sparse}

The following internal reductions for $\sc$ were introduced and analyzed in \cite{brennan2019optimal}. The first one can be easily extended to $\sw$. We present proof sketches for each of these results.

\begin{lemma}[Internal Reduction for both $\sc$, $\sw$]\label{lem:spca_internal_subsample}
$\ $
    \begin{enumerate}
        \item [$\sc$] There exists an average-case reduction for both detection and recovery from $\sc(d,k,\theta,n)$ to $\sc(d,k/2,\theta/2,n)$. Consequently, $\sc(\alpha, \beta, \gamma) \preceq \sc \b(\alpha-t, \beta-t, \gamma\b)$ for any $0<t\leq\alpha$.
        \item [$\sw$] There exists an average-case reduction for both detection and recovery from $\sw(d,k,\lambda)$ to $\sc(d,k/2,\lambda/2)$. Consequently, $\sw(\alpha, \beta) \preceq \sw\b(\alpha-t, \beta-t\b)$ for any $0<t\leq\alpha$.
    \end{enumerate}
    Importantly, for $\alpha\leq 1/2$, if the $\sc$ parameter $\mu=(\alpha,\beta,\gamma)$ is on the computational threshold $\CCc_{\gamma}$, i.e., $\beta = \alpha - \gamma/2$, then $\mu_t = (\alpha-t,\beta-t,\gamma)$ is also on the computational threshold $\CCc_{\gamma}$. Analogous statement holds for $\sw$. 
    
\end{lemma}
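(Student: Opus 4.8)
The plan is to use a single primitive --- \emph{coordinate refreshing} --- for both models, exploiting that the per-coordinate signal scale is invariant under simultaneously halving the SNR and the sparsity. For $\sc$, given $Z = X + \tsq\, g u^\top\in\R^{n\times d}$, I would draw a random subset $S\subseteq[d]$, keep the columns $\{Z_i:i\in S\}$, and replace each column $Z_i$ with $i\notin S$ by a fresh independent $\N(0,I_n)$ vector. Since the original $X_i$ and the fresh columns are all i.i.d.\ $\N(0,I_n)$ and independent of $g$, the output is exactly $Z' = X' + \tsq\, g\, u_S^\top$ with $X'\sim\N(0,I_n)^{\otimes d}$ and $u_S\in\R^d$ equal to $u$ on $S$ and zero elsewhere; in particular the \emph{noise} is already of the target form, and in the null case ($\theta=0$) the output is exactly null. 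For $\sw$, given $Y = \lambda uu^\top + W$, I would likewise keep the principal block $Y_{S\times S}$ and fill the rest with a fresh symmetric Gaussian array ($\N(0,1)$ off-diagonal, $\N(0,2)$ on-diagonal); because every principal submatrix of $\GOE(d)$ is again GOE and the fresh block is independent, the output is $Y'=\lambda\,u_Su_S^\top + W'$ with $W'\sim\GOE(d)$.

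Next I would check that $u_S$ (suitably rescaled) is an allowed signal of exactly half the SNR. The key observation is that the signal amplitude per coordinate, $\tsq\|u\|_\infty=\sqrt{\theta/k}$ for $\sc$ and $\lambda\|u\|_\infty^2=\lambda/k$ for $\sw$, is unchanged under $(\theta,k)\mapsto(\theta/2,k/2)$, resp.\ $(\lambda,k)\mapsto(\lambda/2,k/2)$. Setting $v:=\sqrt2\,u_S$, on the event $\{|S\cap\supp(u)|=k/2\}$ the vector $v$ has $k/2$ nonzeros each of magnitude $\sqrt{2/k}=(k/2)^{-1/2}$, hence $v\in\overline{\mathcal{B}_d}(k/2)$, and $\tsq\,g\,u_S^\top=\sqrt{\theta/2}\,g\,v^\top$ while $\lambda\,u_Su_S^\top=\tfrac{\lambda}{2}\,vv^\top$. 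So on this event $Z'$ is exactly distributed as $\sc(d,k/2,\theta/2,n)$ and $Y'$ as $\sw(d,k/2,\lambda/2)$; and if the source signal is drawn from the uniform prior on $\overline{\mathcal{B}_d}(k)$ then, conditioned on the event, $\supp(v)=S\cap\supp(u)$ is a uniformly random $(k/2)$-subset of $[d]$ (a uniform half of a uniform $k$-set is a uniform $(k/2)$-set) with uniform signs --- exactly the target prior.

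The step I expect to be the main obstacle is arranging $|S\cap\supp(u)|=k/2$ \emph{exactly}: the reduction cannot see $\supp(u)$, and for a uniform $S$ of size $d/2$ this count is only hypergeometric around $k/2$. I would handle this with a standard sparsity-adjustment argument (in the spirit of \cite{brennan2019optimal}): take a slightly larger kept set and thin it down using exchangeability of coordinates under the uniform support prior, forcing the output sparsity to be exactly $k/2$; failing that, one can simply note that the hypergeometric law concentrates within $O(\sqrt k)$ of its mean, so the output sparsity is $(k/2)(1+o(1))=d^{\alpha-\log_d 2+o(1)}$, which already suffices at the polynomial scale of the phase-diagram framework. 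Everything else is exact, so this bookkeeping is the only delicate point.

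Finally, for the ``Consequently'' clause I would reach a general $t\in(0,\alpha]$ either by composing the halving $O(\log d)$ times (a composition of $\polylog(d)$ polynomial-time maps is polynomial-time) or, in one shot, by keeping each coordinate independently with probability $d^{-t}$, which by the same amplitude-invariance identity sends $k\mapsto k\,d^{-t}=d^{\alpha-t}$ and $\theta\mapsto\theta\,d^{-t}=d^{\beta-t}$ (resp.\ $\lambda\mapsto\lambda\,d^{-t}$), giving $\sc(\alpha,\beta,\gamma)\preceq\sc(\alpha-t,\beta-t,\gamma)$ and $\sw(\alpha,\beta)\preceq\sw(\alpha-t,\beta-t)$. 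The map leaves the signal on the kept coordinates intact, so it is immediately a valid detection reduction; for recovery I would run the target solver on $\tO(d^{t})$ independent refreshings and union the recovered supports to reconstruct all of $\supp(u)$. The threshold-preservation remark is then just arithmetic: for $\alpha\le 1/2$ one has $\betacomp_\theta(\alpha,\gamma)=\alpha-\gamma/2$, so $\beta=\alpha-\gamma/2$ forces $\beta-t=(\alpha-t)-\gamma/2=\betacomp_\theta(\alpha-t,\gamma)$ since $\alpha-t\le 1/2$, and the same computation applies to $\sw$.
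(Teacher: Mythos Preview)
Your proposal is correct and matches the paper's approach: the paper's proof sketch uses exactly the same coordinate-refreshing primitive (retain each column with probability $1/2$, replace the rest with fresh Gaussian noise; for $\sw$, do the same symmetrically along rows and columns), and defers the remaining bookkeeping to \cite{brennan2019optimal}. Your writeup is more detailed than the paper's sketch---in particular your discussion of the hypergeometric sparsity count, the one-shot $d^{-t}$ thinning for general $t$, and the recovery step via multiple refreshings---but the underlying idea is the same.
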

\begin{proof}
    Given $Z \in \R^{n\times d}$ -- samples from Spiked Covariance Model in form \eqref{e:SCM2}, the reduction retains every column with probability $1/2$ and replaces it with $N(0,I_n)$ otherwise. This preserves parameters $d$ and $n$, but decreases parameters $k, \theta$ by a factor of $\approx 2$.

    Similarly, given a sample $Y\in \R^{d \times d}$ from the Spiked Wigner Model, the reduction retains every row and column $i$ with probability $1/2$ and replaces all entries in row and column $i$ with i.i.d. $N(0,1)$ symmetrically otherwise. This preserves parameter $d$, but decreases parameters $k, \lambda$ by a factor of $\approx 2$. For details, see \cite{brennan2019optimal}.
\end{proof}

\begin{lemma}[Internal Reduction for $\sc$]\label{lem:spca_internal_noise}
There exists an average-case reduction for both detection and recovery from $\sc(d,k,\theta,n)$ to $\sc(2d,k,\theta,n)$. It follows that $\sc(\alpha, \beta, \gamma) \preceq \sc \b(\alpha/t, \beta/t, \gamma/t\b)$ for any constant $0< t$.

    Importantly, for $\alpha \leq 1/2$, if $\mu=(\alpha, \beta,\gamma)$ is on the computational threshold, i.e., $\beta = \alpha - \gamma/2$, then $\mu_t = (\alpha/t,\beta/t,\gamma/t)$ is also on the computational threshold. 
\end{lemma}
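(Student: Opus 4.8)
The plan is to realize this reduction as pure \emph{dimension padding}: append fresh independent Gaussian coordinates to each $\sc$ sample, enlarging the ambient dimension from $d$ to $2d$ while leaving the signal (suitably embedded), the SNR $\theta$, and the sample count $n$ untouched. Concretely, on input $Z\in\R^{n\times d}$ I would draw $Z''\in\R^{n\times d}$ with i.i.d.\ $\N(0,1)$ entries independent of $Z$ and output the column concatenation $Z'=[\,Z\mid Z''\,]\in\R^{n\times 2d}$. If the target $\sc(2d,k,\theta,n)$ is set up with a signal drawn uniformly from $\ksparseflat{2d}$ rather than a worst-case one, I would additionally apply a uniformly random permutation of the $2d$ columns of $Z'$ and i.i.d.\ uniform $\pm1$ rescalings of its columns; both operations preserve the spiked-covariance structure and act on the signal only by permuting coordinates and flipping entry signs.

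Correctness is an exact distributional identity with no total variation loss. Writing $Z=X+\tsq\,g u^\top$ with $X\sim\N(0,I_n)^{\otimes d}$ and $g\sim\N(0,I_n)$ as in~\eqref{e:SCM2}, we get
$$
Z'=[\,X\mid Z''\,]+\tsq\,g\,[\,u^\top\mid 0\,]=X'+\tsq\,g\,\bar u^\top,
$$
where $X':=[\,X\mid Z''\,]\sim\N(0,I_n)^{\otimes 2d}$ (since $X$ and $Z''$ are independent with i.i.d.\ standard Gaussian entries) and $\bar u:=(u,0)\in\R^{2d}$ has exactly $k$ nonzero entries, all of magnitude $k^{-1/2}$, with $\|\bar u\|_2=\|u\|_2=1$, so $\bar u\in\ksparseflat{2d}$. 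Hence $\cL(Z')=\sc(2d,k,\theta,n)$ with signal $\bar u$, and in the null case $\theta=0$ the same identity gives $Z'\sim\N(0,I_n)^{\otimes 2d}$. The map runs in $O(nd)$ time, so the detection reduction follows immediately.

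For recovery, I would post-process any estimate $\widehat w\in\R^{2d}$ produced at dimension $2d$ (first undoing the random permutation, if one was applied) by restricting to the first $d$ coordinates and renormalizing: set $\widehat u:=\widehat w_{1:d}/\|\widehat w_{1:d}\|_2$, and $\widehat u:=0$ if $\widehat w_{1:d}=0$. Since $\langle\widehat u,u\rangle=\langle\widehat w,\bar u\rangle/\|\widehat w_{1:d}\|_2$ and $\|\widehat w_{1:d}\|_2\le\|\widehat w\|_2=1$, we have $|\langle\widehat u,u\rangle|\ge|\langle\widehat w,\bar u\rangle|$, so any correlation-type recovery guarantee at dimension $2d$ transfers to dimension $d$; moreover this post-processing is deterministic and lies in the restricted class of recovery algorithms considered in this paper.

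For the ``consequently'' statement I would run the identical argument with $D-d$ appended noise columns instead of $d$, giving $\sc(d,k,\theta,n)\preceq\sc(D,k,\theta,n)$ for every integer $D\ge d$. Given $\sc(d,\,k=d^\alpha,\,\theta=d^\beta,\,n=d^\gamma)$ and a constant $t\ge 1$, take $D=\lceil d^t\rceil$; then $k=D^{\alpha/t+o(1)}$, $\theta=D^{\beta/t+o(1)}$, $n=D^{\gamma/t+o(1)}$, which is exactly $\sc(\alpha,\beta,\gamma)\preceq\sc(\alpha/t,\beta/t,\gamma/t)$ at the $d^{o(1)}$ resolution of the parametrization (with $t\le\gamma$ ensuring the target stays in the regime $\gamma'=\gamma/t\ge 1$), and the computational threshold is preserved since $\beta=\alpha-\gamma/2$ forces $\beta/t=\alpha/t-(\gamma/t)/2$. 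There is no substantive obstacle here: the content is an exact embedding with zero TV cost, and the only points requiring care are matching the target model's convention for the signal distribution (handled by the optional random permutation and sign flips) and checking that the recovery post-processing both stays in the allowed class and transfers the correlation bound.
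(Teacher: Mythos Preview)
Your proposal is correct and takes essentially the same approach as the paper: the paper's proof also appends $d$ fresh $\N(0,I_n)$ columns to $Z$ and then randomly permutes the columns, preserving $k,\theta,n,u$ while doubling $d$. You supply additional detail the paper omits (the explicit verification that $\bar u\in\ksparseflat{2d}$, the recovery post-processing, and the scaling argument for the ``consequently'' part via $D=\lceil d^t\rceil$ with $t\ge 1$), but the underlying construction is identical.
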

\begin{proof}
    Given $Z \in \R^{n\times d}$ -- samples from $\sc$ in form \eqref{e:SCM2}, the reduction adds $d$ new columns to $Z$, each sampled from $N(0,I_n)$ and then randomly permutes the columns. This preserves parameters $k, \theta, n, u$ and increases the parameter $d$ by a factor~of~2.
\end{proof}

\subsubsection{Changing Sparsity in the More Dense Regime $k\geq \sqrt{d}$}\label{subsec:internal_dense}

The following ``reflection cloning" reduction for $\sw$ was introduced in \cite{brennan2018reducibility} and can be easily generalized to an internal reduction for $\sc$. We note that it requires the use of $\ksparse d$ (rather than $\ksparseflat d$) as the collection of allowed sparse vectors for the target instance of the reduction. 

\begin{lemma}[Internal Reduction for both $\sc$ and $\sw$] \label{lem:spca_internal_reflection}
    $\ $
    \begin{enumerate}
        \item [$\sc$] There exists an average-case reduction for both detection and recovery from $\sc(d,k,\theta,n)$ to $\sc(d,2k,\theta,n)$. Hence,
        $\sc(\alpha, \beta, \gamma) \preceq \sc \b(\alpha+t , \beta, \gamma\b)$ for any $t > 0$.
        \item [$\sw$] There exists an average-case reduction for both detection and recovery from $\sw(d,k,\lambda)$ to $\sw(d,2k,\lambda)$. Hence, 
        $\sw(\alpha, \beta) \preceq \sw\b(\alpha+t, \beta\b)$ for any $t > 0$.  
    \end{enumerate}
    Importantly, for $\alpha\geq 1/2$, if the $\sc$ parameter $\mu=(\alpha, \beta, \gamma)$ is on the computational threshold $\CCc_{\gamma}$, i.e., $\beta = 1/2 - \gamma/2$, then $\mu_t = (\alpha+t,\beta,\gamma)$ is also on the computational threshold. The analogous statement holds for $\sw$.
\end{lemma}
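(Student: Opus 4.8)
The plan is to realize the doubling $k \mapsto 2k$ while keeping $\theta$ (resp.\ $\lambda$) and $n$ fixed by a two-step construction. First I would state the reduction at the level of the data matrix. For $\sc$, given $Z = X + \tsq\, g u^\top \in \R^{n\times d}$ with $X\sim\N(0,I_n)^{\otimes d}$, I use the Gaussian rotational-invariance ("cloning") idea on the \emph{columns}: for each coordinate $j\in[d]$, replace the single column $Z_j$ by \emph{two} columns $\tfrac{1}{\sqrt2}(Z_j + \xi_j)$ and $\tfrac{1}{\sqrt2}(Z_j - \xi_j)$, where $\xi_j\sim\N(0,I_n)$ is fresh independent noise. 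The new matrix $Z'\in\R^{n\times 2d}$ then has the form $X' + \tsq\, g (u')^\top$ where $u'\in\R^{2d}$ has two entries of magnitude $k^{-1/2}/\sqrt2$ (one $+$, one $-$, since a reflection is used) in place of each nonzero entry of $u$. Hence $\|u'\|_0 = 2k$ and $\|u'\|_2 = 1$, and $X'\sim\N(0,I_n)^{\otimes 2d}$ because the map $(a)\mapsto \tfrac1{\sqrt2}(a+\xi, a-\xi)$ is an orthogonal embedding applied to i.i.d.\ Gaussian noise. Since the new signal entries take values in $\{0,\pm k^{-1/2}/\sqrt2\} = \{0,\pm(2k)^{-1/2}\cdot \sqrt2/\sqrt2\}$... one checks $k^{-1/2}/\sqrt2 = (2k)^{-1/2}$, so in fact $u'\in\ksparseflat{2d}$ exactly; the relaxed class $\ksparse{2d}$ is only needed once this doubling is iterated or combined with other reflections that produce unequal magnitudes, which is why the lemma states $\ksparse{}$ for safety. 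Relabelling $2d$ as the new ambient dimension — but here the dimension doubles, which is \emph{not} what we want — so instead I would apply this construction to only $d/2$ of the coordinates...

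Let me restate more carefully. The cleanest route: start from $\sc(d,k,\theta,n)$, first embed into $\sc(2d,k,\theta,n)$ by padding with $d$ fresh $\N(0,I_n)$ columns and permuting (this is exactly Lemma~\ref{lem:spca_internal_noise} with $t=2$, which we may assume), then within that $2d$-dimensional instance perform the reflection-cloning \emph{in place}: pair up the $d$ signal-carrying columns with $d$ of the pure-noise columns, and for each such pair $(Z_j, Z_{j'})$ with $Z_{j'}\sim\N(0,I_n)$ replace them by $\tfrac1{\sqrt2}(Z_j+Z_{j'})$ and $\tfrac1{\sqrt2}(Z_j-Z_{j'})$. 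By orthogonal invariance of the i.i.d.\ Gaussian noise the noise part stays $\N(0,I_n)^{\otimes 2d}$, and the signal coordinate $\theta^{1/2} g\, u_j$ is split into $\pm \theta^{1/2} g\, u_j/\sqrt2$ across the two output columns. After a random permutation of all $2d$ columns, the result is exactly $\sc(2d, 2k, \theta, n)$ with signal in $\ksparse{2d}$ (magnitudes in $[-c_u(2k)^{-1/2}, c_u(2k)^{-1/2}]$). For $\sw$ the construction is the matrix analogue from \cite{brennan2018reducibility}: given $Y = \lambda u u^\top + W$, one uses an orthogonal matrix $R\in\R^{2d\times 2d}$ built from $2\times2$ reflection blocks $\tfrac1{\sqrt2}\begin{pmatrix}1&1\\1&-1\end{pmatrix}$ acting on the padded matrix $\begin{pmatrix}Y&0\\0&W'\end{pmatrix}$ with $W'\sim\GOE(d)$ independent, and sets the output to $R\begin{pmatrix}Y&0\\0&W'\end{pmatrix}R^\top = \lambda u'(u')^\top + W''$ where $W''\sim\GOE(2d)$ by orthogonal invariance of the GOE and $u' = R\binom{u}{0}$ has doubled support with halved-magnitude entries, so $\|u'\|_2=1$ and $\|u'\|_0 = 2k$.

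The verification has three parts: (i) the noise output has exactly the target law — this is pure orthogonal/rotational invariance of $\N(0,I_n)^{\otimes m}$ and of $\GOE$, so it is exact, contributing $0$ to the total variation error; (ii) the signal transforms correctly — $u' = R u$ (suitably padded) is a deterministic function of $u$ with $\|u'\|_2 = \|u\|_2 = 1$ since $R$ is orthogonal, $\|u'\|_0 = 2\|u\|_0$ since each reflection block sends one nonzero and one zero to two nonzeros, and each new entry has magnitude $|u_j|/\sqrt2 \le c_u k^{-1/2}/\sqrt2 \le c_u (2k)^{-1/2}$, so $u'\in\ksparse{2d}$; (iii) this being a valid average-case reduction for both detection (null $\theta=0$ maps to null $\theta=0$) and recovery (the signal map $u\mapsto u'$ is explicit and invertible on its image, so an estimate of $u'$ yields an estimate of $u$ — this is where the slight restriction on recovery algorithms from Sec.~\ref{sec:avg_case_recovery} is used). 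The parameter-space consequence $\sc(\alpha,\beta,\gamma)\preceq\sc(\alpha+t,\beta,\gamma)$ for any $t>0$ follows by iterating the $k\mapsto 2k$ step $\lceil t\log_d 2\rceil$... i.e.\ $\Theta(t\log d)$ times, each doubling $k$ and costing a factor $2$ in dimension at worst $\mathrm{poly}(d)$ overall — one checks $k$ reaches $d^{\alpha+t}$ while $d$ only needs to grow polynomially, and on the threshold $\beta = 1/2-\gamma/2$ is independent of $\alpha$ so it is automatically preserved.

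\textbf{Main obstacle.} The only genuinely nontrivial point is bookkeeping the iteration for the "$\preceq$'' consequence: each doubling enlarges the ambient dimension, so to reach sparsity exponent $\alpha+t$ at the \emph{same} scaling parameter $d$ one must interleave the reflection step with the dimension-padding of Lemma~\ref{lem:spca_internal_noise} (or its inverse) and argue that the accumulated $\mathrm{polylog}$ magnitude blow-up in the signal entries stays within the $c_u = O(\polylog(d))$ budget of $\ksparse{}$ — each reflection multiplies the max entry magnitude (relative to $k^{-1/2}$) by at most $\sqrt2$, and after $O(\log d)$ iterations this is $2^{O(\log d)}$... which is polynomial, not polylog, so in fact one must be careful to only apply \emph{few} reflections or to always reflect a nonzero against a \emph{zero} (keeping magnitudes exactly $k^{-1/2}$ and staying in $\ksparseflat{}$), deferring to $\ksparse{}$ only for the composite statement as \cite{brennan2018reducibility,brennan2019optimal} do. I would follow their accounting rather than reprove it.
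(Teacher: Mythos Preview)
Your construction has a genuine gap: it doubles the ambient dimension along with the sparsity, so in the polynomial parametrization it maps $(\alpha,\beta,\gamma)$ to itself, not to $(\alpha+t,\beta,\gamma)$. Concretely, your ``pad then reflect'' route sends $\sc(d,k,\theta,n)$ to $\sc(2d,2k,\theta,n)$; since $\alpha' = \log_{2d}(2k) \to \alpha$ as $d\to\infty$, no amount of iteration moves $\alpha$. You spot this in your ``Main obstacle'' paragraph but your proposed fix---interleaving with an inverse of the dimension-padding of Lemma~\ref{lem:spca_internal_noise}---does not exist: there is no reduction that shrinks $d$ while keeping $k$ fixed, because any oblivious column-deletion kills signal columns in the same proportion as noise columns.

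The paper's approach avoids this by reflecting \emph{within} the existing columns rather than against fresh noise. For $\sc$: randomly permute the columns of $Z\in\R^{n\times d}$, split into halves $A,B\in\R^{n\times d/2}$, and output $\bigl[(A+B)/\sqrt2,\ (A-B)/\sqrt2\bigr]\in\R^{n\times d}$. The noise stays i.i.d.\ Gaussian by rotational invariance; the signal $u=(u_A,u_B)$ becomes $u'=\bigl((u_A+u_B)/\sqrt2,\ (u_A-u_B)/\sqrt2\bigr)$, which generically has $\approx 2k$ nonzeros (the random permutation makes overlaps of $\supp(u_A)$ and $\supp(u_B)$ rare, and the resulting unequal magnitudes are absorbed by the relaxed class $\ksparse{d}$). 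For $\sw$ one applies the same $d\times d$ reflection on both sides of $Y$---no padding with a fresh $\GOE$ block---so again $d$ is preserved. This is exactly the reflection-cloning of \cite{brennan2018reducibility}, and the paper simply points there for the analysis; the key idea you are missing is that the reflection pairs existing data columns against each other, not against injected noise, so that $d$ stays fixed while $k$ doubles.
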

\begin{proof}
    Given $Z \in \R^{n\times d}$ -- samples from Spiked Covariance Model in form \eqref{e:SCM2}, the reduction splits our matrix into two parts $A, B \in R^{n\times (d/2)}$ and recombines them as $\frac{A+B}{\sqrt{2}}, \frac{A-B}{\sqrt{2}}$, i.e., does the ``reflection" step. For spiked Wigner, a similar reflection step is done twice -- along both the $x$ and $y$-axes. The analysis of the Spiked Covariance version is completely analogous to that of the Spiked Wigner version in \cite{brennan2018reducibility}.
\end{proof}

\subsection{Overview of Existing $\sw \to \sc$ Reductions}\label{subsec:sw_to_sc_known}

We summarize the state-of-the-art average-case reductions from $\sw$ to $\sc$ (\cite{brennan2018reducibility,brennan2019optimal}) as follows: 
\begin{theorem}[Prior Average-Case Reductions From $\sw \to \sc$]\label{thm:wig_to_cov}
    $\ $
    \begin{enumerate}
        \item \textbf{(Reduction with Canonical Parameter Correspondence)} For $\mu$ and $\nu$ in accordance to \eqref{eq:param}, i.e., $\mu = (\alpha, \beta, \gamma)\leftrightarrow \nu = (\alpha, \beta+\gamma/2)$:  
        \begin{enumerate}
             \item There exists an average-case reduction from $\sw(\nu)$ to $\sc(\mu)$ for all $\nu\in \Omw$ and $\gamma \geq 3$ (i.e., whole ``hard regime" for $n\gg d^3$).
            \item There exists an average-case reduction from $\sw(\nu)$ to $\sc(\mu)$ for $\gamma = 1$ and any $\nu \in \CCw$ with $\alpha\geq 1/2$ (i.e., computational threshold of the ``the spectral regime" $k\geq \sqrt{d}$ for $n = \Theta(d)$),
        \end{enumerate}
    \item \textbf{(Reduction to the Computational Threshold)} There exists an average-case reduction from the computational threshold $\sw(\nu)$ (i.e., for some $\nu\in \CCw$) to $\sc(\mu )$ for the following regimes of $\mu \in \CCc_\gamma$:
        \begin{enumerate}
            \item $\gamma = 1 \text{ or } \gamma \geq 3$, $\alpha \in (0,1)$ (i.e., whole computational threshold
        for $n=\Theta(d) \text{ or } n\gg d^3$).
            \item $1 < \gamma < 3$ and $\alpha \leq \gamma/6$ (i.e., a part of the computational threshold, where $k\leq d^{\gamma/6}$).
        \end{enumerate}
      \end{enumerate}
\end{theorem}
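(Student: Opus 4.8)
Since the statement merely compiles existing reductions, the plan is to assemble it from the $\sw\to\sc$ constructions of \cite{brennan2018reducibility,brennan2019optimal} together with the internal $\sc$ reductions of Lemmas~\ref{lem:spca_internal_subsample}, \ref{lem:spca_internal_noise} and~\ref{lem:spca_internal_reflection}. For Part~1(a) I would invoke the main $\sw\to\sc$ reduction of \cite{brennan2019optimal}: run with target sample size $n=d^{\gamma}$ for $\gamma\geq 3$, it transforms a spiked Wigner matrix of SNR $\lambda$ into $n$ spiked covariance samples of SNR $\theta=\lambda/\sqrt n$ for every $\lambda$ in the hard range; after reconciling the SNR normalization and applying the canonical map \eqref{eq:param} this reads as $\sw(\nu)\preceq\sc(\mu)$ for all $\nu\in\Omw$ and all $\gamma\geq 3$. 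For Part~1(b) I would cite the reflection-cloning-plus-Gaussianization reduction of \cite{brennan2018reducibility,brennan2019optimal} executed at $n=\Theta(d)$, which yields $\sw(\nu)\preceq\sc(\mu)$ at $\gamma=1$ for $\nu\in\CCw$ with $\alpha\geq1/2$; the only bookkeeping is to check that the SNR range produced matches the dense branch $\betacomp_\lambda=1/2$ of the $\sw$ threshold and that the target lands on $\CCc_\gamma$.

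For Part~2(a) I would combine Part~1 with the internal reductions. When $\gamma\geq3$, Part~1(a) already gives a reduction onto every point of $\CCc_\gamma$ (in fact onto all of $\Omc_\gamma$), so the claim is immediate. When $\gamma=1$, the dense branch $\alpha\geq1/2$ of $\CCc_1$ is handled directly by Part~1(b), and for the sparse branch $\alpha<1/2$ I would compose with the $\sc$-version of Lemma~\ref{lem:spca_internal_subsample}: starting from the central point $\mu^\star=(1/2,0,1)\in\CCc_1$ supplied by Part~1(b) (with source $\nu^\star=(1/2,1/2)\in\CCw$), that lemma gives $\sc(\mu^\star)\preceq\sc((1/2-t,-t,1))$ for every $0<t\leq1/2$, and these targets exhaust the sparse branch of $\CCc_1$ since $-t=(1/2-t)-1/2$; transitivity of $\preceq$ then yields $\sw(\nu^\star)\preceq\sc(\mu)$ for all $\mu\in\CCc_1$. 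For Part~2(b) I would invoke the refined sub-$d^3$ construction of \cite{brennan2019optimal}, which establishes the threshold reduction precisely in the sparse range $\alpha\leq\gamma/6$ for $1<\gamma<3$, optionally using Lemma~\ref{lem:spca_internal_subsample} to spread within that range (it remains on the threshold since $\gamma/6<1/2$) and Lemma~\ref{lem:spca_internal_noise} to adjust $\gamma$.

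I do not expect a genuine obstacle, as no new reduction is built; the delicate points are organizational. First, one must reconcile the parametrizations and normalization conventions of \cite{brennan2018reducibility,brennan2019optimal} (the variance of the Wigner noise, the way $n$ is tied to $d$ there) so that the canonical map $\lambda\leftrightarrow\theta\sqrt n$ and the exponents $\betacomp,\betastat$ line up with the statements in those papers. Second, when spreading along the threshold one must pivot at $\alpha=1/2$: Lemma~\ref{lem:spca_internal_subsample} keeps one on $\CCc_\gamma$ only for $\alpha\leq1/2$ and Lemma~\ref{lem:spca_internal_reflection} only for $\alpha\geq1/2$, which is exactly why the dense-branch input from Part~1(b) (for $\gamma=1$) and the native full-region reduction (for $\gamma\geq3$) are the required entry points. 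Finally, one should verify that each cited and each internal reduction supports recovery as well as detection, so that the unqualified term ``average-case reduction'' is warranted; this holds because all of them act on the data so that the support of $u$ is recoverable from the support of the transformed signal.
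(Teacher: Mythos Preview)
Your plan is correct and matches the paper's treatment: the paper does not give a proof of Theorem~\ref{thm:wig_to_cov} at all, but states it as a summary of the results of \cite{brennan2018reducibility,brennan2019optimal}, so your assembly-from-citations approach is exactly what is intended. Your use of the internal $\sc$ reductions (Lemmas~\ref{lem:spca_internal_subsample}--\ref{lem:spca_internal_reflection}) to spread along the threshold is a reasonable elaboration of what the paper leaves implicit.
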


\subsection{Our Results: $\sc \to \sw$ Reductions and Equivalence}\label{subsec:our_results}

All our reductions are aligned with the canonical parameter correspondence \eqref{eq:param} 
$$
\mu = (\alpha, \beta, \gamma) \quad\longleftrightarrow \quad \nu = f(\mu)=(\alpha, \beta+\gamma/2)\,.
$$
In Section~\ref{sec:direct_reduction} we give a reduction from $\sc(\mu)$ to $\sw(\nu = f(\mu))$ for all $\gamma \geq 2, \mu \in \Omc_{\gamma}, \nu \in \Omw$ (see Corollary~\ref{cor:clone_reduction}), establishing the strongest possible reduction in this regime. In Section~\ref{sec:gs_reduction} we give a bijective reduction from $\sc(\mu)$ to $\sw(\nu = f(\mu))$ for all $\gamma \geq 1, \alpha \leq 1/2, \mu \in \CCc_{\gamma}, \nu \in \CCw$ (see Corollary~\ref{cor:gs_reduction}), establishing a bijective reduction on the computational threshold for $\alpha \leq 1/2$. Combined with the internal $\sw$ reduction (Lemma~\ref{lem:spca_internal_reflection}) this gives a surjective reduction onto the whole computational threshold of $\sw$ in the general case $\gamma\geq 1$. We summarize these results in the following Theorem:

\begin{mythm}{\ref{thm:main_reduction}}\textnormal{(Main Result 1: $\sc \to \sw$ Reductions)}
    $\ $
    \begin{enumerate}
        \item \textbf{(Reductions with Canonical Parameter Correspondence)} For $\mu$ and $\nu$ in accordance to~\eqref{eq:param}, i.e, 
         $\mu = (\alpha, \beta, \gamma) \leftrightarrow \nu = (\alpha, \beta+\gamma/2)$:
         \begin{enumerate} 
            \item There exists an $O(d^{2+\gamma})$-time average-case reduction from $\sc(\mu)$ to $\sw(\nu)$ for any $\gamma \geq 2$ and $\mu\in\Omc$ (i.e., all points in $\Omc$ for $n \gg d^2$),
            \item There exists an $O(d^{2+\gamma})$-time average-case reduction from $\sc(\mu)$ to $\sw(\nu)$ for any $\gamma\geq 1$ and $\mu\in\CCc_{\gamma}$ with $\alpha \leq 1/2$ (i.e., points on the computational threshold for $n\gg d$ and $k\leq \sqrt{d}$). 
            
        \end{enumerate}
        \item \textbf{(Surjective Reduction to the Computational Threshold)} There exists a surjective average-case reduction from the computational threshold of $\sc_{\gamma}$ onto the entire computational threshold of $\sw$  for any $\gamma\geq 1$ (i.e., for all $n \gg d$). In particular, for all $\nu\in \CCw$ and $\gamma\geq 1$, there exists $\mu\in \CCc_{\gamma}$ 
        and an average-case reduction from $\sc(\mu)$ to $\sw(\nu)$.\footnote{This result utilizes existing ``reflection cloning" reductions within $\sw$ from \cite{brennan2018reducibility} which require the use of a relaxed version $\ksparse d$ for the target $\sw$ instance whenever $\alpha>1/2$ and $\gamma<3$.}  
    \end{enumerate}    \end{mythm}
    
To establish equivalence between $\sc$ and $\sw$ we combine our result with existing $\sw \to \sc$ reductions in Theorem~\ref{thm:wig_to_cov}. Observe that our equivalence results are limited by incompleteness of existing $\sw \to \sc$ reductions and any improvement in this direction would immediately yield even stronger equivalence results.

    \begin{mythm}{\ref{thm:main_equiv}}\textnormal{(Main Result 2: Equivalence between $\sw$ and $\sc$)}
    $\ $
        \begin{enumerate}
            \item (\textbf{Bijective Equivalence}) $\sw(\nu)$ and $\sc(\mu)$ are bijectively equivalent with the canonical parameter map \eqref{eq:param}
            for all $\gamma \geq 3$ and for $\gamma = 1, \alpha = 1/2$, $\beta = \betacomp$ (i.e., everywhere for $n\gg d^3$ or for $n=\Theta(d)$ at the threshold at point $ k = \Theta(d^{1/2})$).
            \item (\textbf{Equivalence of Hardness Thresholds}) $\sw(\nu)$ and $\sc(\mu)$ have equivalent computational thresholds for $\gamma = 1 \text{ or } \gamma \geq 3$ (i.e., entire computational threshold for $n=\Theta(d)$ or  $n\gg d^3$).\footnote{To establish the hardness threshold equivalence for $\gamma=1$ we use a reduction from $\sc$ to $\sw$. This requires the target instance of $\sw$ to have a signal from collection $\ksparse d$ as opposed to $\ksparseflat d$. We believe this to be a technical challenge rather than a conceptual problem.}
        \end{enumerate}

    \end{mythm}

Finally, we note that our results yield the first reductions \emph{within} $\sc$ that modify the parameter $\gamma = \log_d n$, which controls the sample size $n$ relative to the dimension $d$. Specifically, we show that 
\begin{enumerate}
    \item The following sets of parameters form \textbf{computational equivalence classes}:
    \begin{enumerate}
        \item $\mathcal{S}_{\alpha} = \b\{ \mu = (\alpha, \betacomp_{\theta}(\alpha, \gamma), \gamma)\b\}_{\gamma\geq 3}$ for every $\alpha\in (0,1)$ (that is, for all $\alpha \in (0,1)$ and $\mu, \mu' \in \mathcal{S}_{\alpha}$, we have $\sc(\mu) \equiv \sc(\mu')$).
        \item $\mathcal{S}_{\alpha} = \b\{ \mu = (\alpha, \betacomp_{\theta}(\alpha, \gamma), \gamma)\b\}_{\gamma \in \{1\}\cup[3,\infty)}$ for every $\alpha\in [0.5,1)$.
    \end{enumerate}
    \item Slices of $\sc$ for $\gamma=1, \gamma\geq 3$ have \textbf{pairwise equivalent computational thresholds}.
\end{enumerate}

\section{Preliminaries}\label{sec:prelims}
\subsection{Properties of Total Variation} The analysis of our reductions will make use of the following well-known facts and inequalities concerning total variation distance (see, e.g., \cite{Polyanskiy_Wu_2025}). Recall that the total variation distance between distributions $P$ and $Q$ on a measure space $(\mathcal{X}, \mathcal{B})$ is given by $\tv(P,Q) = \sup_{A\in \mathcal{B} } |P(A) - Q(A)|$.

\begin{fact}[TV Properties] \label{tvfacts}
The distance $\TV$, satisfies the following properties:
\begin{enumerate}
\item (Triangle Inequality) For any distributions $P, Q, S$ on a measurable space $(\mathcal{X}, \mathcal{B})$, it holds that 
$$
\tv (P, Q) \leq \tv(P, S) + \tv(S, Q)\,.
$$
\item (Conditioning on an Event) For any distribution $P$ on a measurable space $(\mathcal{X}, \mathcal{B})$ and event $A \in \mathcal{B}$, it holds that
$$\TV\b( P(\,\cdot\, | A), P \b) = 1 - P(A)\,.$$
\item (Conditioning on a Random Variable, Convexity) For any two pairs of random variables $(X, Y)$ and $(X', Y')$ each taking values in a measurable space $(\mathcal{X}, \mathcal{B})$, it holds that
\begin{align*}\TV\big( \mL(X), \mL(X') \big) &\le \TV\big( \mL(Y), \mL(Y') \big)
+ \bE_{y \sim Y} \big[ \TV\big( \mL(X | Y = y), \mL(X' | Y' = y) \big)\big]\,,\end{align*}
where we define $\TV\b( \mL(X | Y = y), \mL(X' | Y' = y) \b) = 1$ for all $y \not \in \textnormal{supp}(Y')$.
\item (DPI) For any two random variables $X, X'$ on a measurable space $(\mathcal{X}, \mathcal{B})$ and any Markov Kernel $A: \mathcal{X} \to \mathcal{Y}$, it holds that 
$$
\tv\b( \mL(A(X)), \mL(A(X')) \b) \leq \tv\b(\mL(X), \mL(X')\b)\,.
$$
\end{enumerate}
\end{fact}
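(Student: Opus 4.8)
The plan is to derive each of the four items directly from the variational characterization of total variation, $\TV(P,Q) = \sup_{A\in\mathcal B}(P(A)-Q(A)) = \sup_{0\le f\le 1}\int f\,d(P-Q)$, together with (for the third item only) the description of a mixture of kernels as the law produced by a two-stage sampling procedure.

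Items~1, 2 and~4 I would dispatch immediately. For the triangle inequality, fix a measurable $A$ and write $P(A)-Q(A) = (P(A)-S(A))+(S(A)-Q(A))\le \TV(P,S)+\TV(S,Q)$, then take the supremum over $A$. For conditioning on an event (item~2), observe that $P(\cdot\mid A)$ has density $\mathbf{1}_A/P(A)$ with respect to $P$, which is $>1$ on $A$ and $=0$ off $A$ (the case $P(A)=1$ being trivial); hence the event maximizing $P(\cdot\mid A)-P(\cdot)$ is $A$ itself, giving $P(A\mid A)-P(A)=1-P(A)$. For the DPI (item~4), a Markov kernel $A:\mathcal X\to\mathcal Y$ acts by $\mL(A(X))(B)=\int A(x)(B)\,d\mL(X)(x)$, so for any $B$ the integrand $x\mapsto A(x)(B)$ lies in $[0,1]$ and $\mL(A(X))(B)-\mL(A(X'))(B)=\int A(x)(B)\,d(\mL(X)-\mL(X'))(x)\le \TV(\mL(X),\mL(X'))$ by the $\sup_{0\le f\le1}$ form; take the supremum over $B$.

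Item~3 is the only step requiring a genuine (if small) construction. I would write both laws as mixtures over the conditioning variable, $\mL(X)=\int \mL(X\mid Y=y)\,d\mL(Y)(y)$ and $\mL(X')=\int \mL(X'\mid Y'=y)\,d\mL(Y')(y)$, and introduce the intermediate law $S:=\int \mL(X'\mid Y'=y)\,d\mL(Y)(y)$, where for $y\notin\supp(Y')$ one sets $\mL(X'\mid Y'=y)$ to be an arbitrary fixed distribution (this only weakens the final bound, consistent with the stated convention that $\TV(\mL(X\mid Y=y),\mL(X'\mid Y'=y))=1$ there). The triangle inequality then gives $\TV(\mL(X),\mL(X'))\le \TV(\mL(X),S)+\TV(S,\mL(X'))$. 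The second term is at most $\TV(\mL(Y),\mL(Y'))$ by the DPI applied to the kernel $y\mapsto\mL(X'\mid Y'=y)$, since $S$ and $\mL(X')$ are its images of $\mL(Y)$ and $\mL(Y')$. The first term is at most $\bE_{y\sim Y}[\TV(\mL(X\mid Y=y),\mL(X'\mid Y'=y))]$ by convexity of $\TV$, since $\mL(X)$ and $S$ are mixtures of the respective conditional laws against the \emph{same} mixing measure $\mL(Y)$. Combining the two bounds yields the claim. The main obstacle --- really the only thing requiring care --- is choosing this intermediate mixture on the correct side, so that the DPI step produces $\TV(\mL(Y),\mL(Y'))$ while the convexity step produces the expectation over $y\sim Y$ (rather than over $y\sim Y'$), and handling the measure-zero conditioning on $y\notin\supp(Y')$ via the stated convention; everything else is routine manipulation of the variational formula.
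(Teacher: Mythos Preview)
Your proof is correct. The paper does not actually prove this statement --- it is stated as a \emph{Fact} with a reference to \cite{Polyanskiy_Wu_2025} --- so there is nothing to compare against; your arguments for all four items are standard and sound, and your treatment of item~3 (introducing the intermediate mixture $S$ so that the DPI yields $\TV(\mL(Y),\mL(Y'))$ while convexity yields the expectation over $y\sim Y$) is exactly the right construction.
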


The following simple consequence of the data processing inequality will be useful. 
\begin{lemma}\label{e:DPIapplication}
    If $A$ is independent of $X$ and $X'$, then
    $$
    \tv(X+A,X'+A)\leq \tv(X,X')\,.
    $$
\end{lemma}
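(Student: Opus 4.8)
The statement to prove is Lemma~\ref{e:DPIapplication}: if $A$ is independent of $X$ and $X'$, then $\tv(X+A, X'+A) \leq \tv(X, X')$. The plan is to realize the map $x \mapsto x + A$ as a Markov kernel and invoke the data processing inequality (Fact~\ref{tvfacts}, item 4). Concretely, I would define the kernel $K \colon \mathcal{X} \to \mathcal{X}$ by $K(x, \cdot) = \mathcal{L}(x + A)$, i.e., the distribution of the sum when the first summand is the fixed point $x$ and the second is an independent copy of $A$. This is a genuine Markov kernel (measurability of $(x, B) \mapsto \P(x + A \in B)$ follows from Fubini/the standard construction for convolution kernels), and it does not depend on which of $X$, $X'$ it is applied to.

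The key point is that, because $A$ is independent of $X$, pushing $\mathcal{L}(X)$ through $K$ yields exactly $\mathcal{L}(X+A)$: for any measurable $B$,
$$
(K \star \mathcal{L}(X))(B) = \int \P(x + A \in B)\, d\mathcal{L}(X)(x) = \P(X + A \in B)\,,
$$
where the last equality is the disintegration of $\mathcal{L}(X, A) = \mathcal{L}(X) \otimes \mathcal{L}(A)$ afforded by independence. The identical computation with $X'$ in place of $X$ (using independence of $A$ and $X'$) gives $K \star \mathcal{L}(X') = \mathcal{L}(X' + A)$. Applying the DPI to the kernel $K$ then yields
$$
\tv\big(\mathcal{L}(X+A), \mathcal{L}(X'+A)\big) = \tv\big(\mathcal{L}(K(X)), \mathcal{L}(K(X'))\big) \leq \tv\big(\mathcal{L}(X), \mathcal{L}(X')\big)\,,
$$
which is the claim.

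There is no real obstacle here; the only thing to be a little careful about is that the DPI as stated in Fact~\ref{tvfacts} requires a \emph{fixed} Markov kernel applied to both random variables, so the argument genuinely uses that the convolution-by-$A$ kernel is the same object in both cases — this is precisely where the independence hypothesis enters (it is what lets a single kernel reproduce both output laws). If one prefers a self-contained argument avoiding the kernel formalism, an equivalent route is the coupling characterization of total variation: take an optimal coupling $(X, X')$ with $\P(X \neq X') = \tv(X, X')$, draw an independent copy of $A$, and observe that $(X+A, X'+A)$ is a coupling of the two target laws with $\{X + A \neq X' + A\} \subseteq \{X \neq X'\}$, so $\tv(X+A, X'+A) \leq \P(X \neq X') = \tv(X,X')$. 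Either way the proof is a couple of lines; I would present the DPI version for consistency with the surrounding use of Fact~\ref{tvfacts}.
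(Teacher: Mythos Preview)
Your proposal is correct and matches the paper's approach: the paper does not give a detailed proof but simply introduces the lemma as ``a simple consequence of the data processing inequality,'' which is precisely the argument you spell out via the convolution kernel $K(x,\cdot)=\mathcal{L}(x+A)$ and Fact~\ref{tvfacts}, item~4.
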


Recall that the KL-divergence between two distributions $P$ and $Q$ on a measure space $(\mathcal{X},\mathcal{B})$ is defined by
$\kl(P\|Q) = \E_{X\sim P} \log \frac{dP}{dQ}(X)$.

\begin{lemma}[KL-Divergence Between Gaussians]
\label{l:KLGauss}
Let $\mu_1,\mu_2\in \R^n$ and $\Sigma_1,\Sigma_2\in \R^{n\times n}$ be positive definite matrices. Then
$$
\kl(\N(\mu_1,\Sigma_1)\| \N(\mu_2,\Sigma_2))
= \frac12\Big[ 
\log\frac{\det \Sigma_2}{\det \Sigma_1} - n + \mathrm{Tr}(\Sigma_2\inv\Sigma_1) +(\mu_2 - \mu_1)^\top \Sigma_2\inv (\mu_2 - \mu_1)
\Big]\,.
$$
In particular, by Pinsker's inequality,
\begin{equation}\label{eq:tv_gauss_pinsker}
    \tv(\N(\mu_1,I_n), \N(\mu_2,I_n)) \leq \sqrt{\frac12 \KL(\N(\mu_1,I_n)\| \N(\mu_2,I_n))} = \frac12 \|\mu_2-\mu_1\|_2\,. 
\end{equation}
\end{lemma}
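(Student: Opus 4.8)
The plan is to evaluate the divergence directly from the multivariate Gaussian density and then specialize to the identity-covariance case. Since $\Sigma_1$ and $\Sigma_2$ are positive definite, the densities $p_i(x) = (2\pi)^{-n/2}(\det\Sigma_i)^{-1/2}\exp(-\tfrac12(x-\mu_i)^\top\Sigma_i\inv(x-\mu_i))$ are well defined for $i=1,2$, and taking the log-ratio gives
$$
\log\frac{p_1(x)}{p_2(x)} \;=\; \tfrac12\log\frac{\det\Sigma_2}{\det\Sigma_1} \;-\; \tfrac12 (x-\mu_1)^\top\Sigma_1\inv(x-\mu_1) \;+\; \tfrac12 (x-\mu_2)^\top\Sigma_2\inv(x-\mu_2)\,.
$$
By definition $\kl(\N(\mu_1,\Sigma_1)\|\N(\mu_2,\Sigma_2)) = \E_{X\sim\N(\mu_1,\Sigma_1)}[\log\frac{p_1(X)}{p_2(X)}]$, so it remains only to compute the expectations of the two quadratic forms.

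For this I would use the elementary identity that if $X$ has mean $\mu_1$ and covariance $\Sigma_1$, then for any fixed symmetric matrix $A$ and vector $c$,
$$
\E\left[(X-c)^\top A (X-c)\right] \;=\; \E\left[\mathrm{Tr}\!\left(A(X-c)(X-c)^\top\right)\right] \;=\; \mathrm{Tr}(A\Sigma_1) + (\mu_1-c)^\top A(\mu_1-c)\,,
$$
which follows from linearity of trace and expectation together with $\E[(X-c)(X-c)^\top] = \Sigma_1 + (\mu_1-c)(\mu_1-c)^\top$. Taking $c=\mu_1$, $A=\Sigma_1\inv$ yields $\E[(X-\mu_1)^\top\Sigma_1\inv(X-\mu_1)] = \mathrm{Tr}(\Sigma_1\inv\Sigma_1) = n$, and taking $c=\mu_2$, $A=\Sigma_2\inv$ yields $\E[(X-\mu_2)^\top\Sigma_2\inv(X-\mu_2)] = \mathrm{Tr}(\Sigma_2\inv\Sigma_1) + (\mu_2-\mu_1)^\top\Sigma_2\inv(\mu_2-\mu_1)$. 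Substituting these into the expectation of the log-ratio and collecting the factors of $\tfrac12$ gives exactly the claimed identity.

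For the ``in particular'' part, I would apply Pinsker's inequality $\tv(P,Q) \le \sqrt{\tfrac12\,\kl(P\|Q)}$ and then plug $\Sigma_1 = \Sigma_2 = I_n$ into the identity just proved: the log-determinant term is $0$, the term $-n + \mathrm{Tr}(I_n)$ is $0$, and $\Sigma_2\inv = I_n$, so $\kl(\N(\mu_1,I_n)\|\N(\mu_2,I_n)) = \tfrac12\|\mu_2-\mu_1\|_2^2$; inserting this into Pinsker yields the bound $\tfrac12\|\mu_2-\mu_1\|_2$. This lemma is a routine computation with no genuine obstacle; the only points requiring care are tracking the signs in the log-ratio correctly and using positive-definiteness of $\Sigma_1,\Sigma_2$ to ensure the densities, inverses, and log-determinants are all well defined.
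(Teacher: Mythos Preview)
Your proof is correct and is the standard direct computation. The paper does not actually prove this lemma; it states it as a known fact and moves on, so there is nothing to compare against beyond noting that your derivation is exactly the routine argument one would expect.
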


Combining the convexity of total variation (Item 3 in Fact~\ref{tvfacts}) and the bound~\eqref{eq:tv_gauss_pinsker} above, we obtain the following lemma. 

\begin{lemma}[TV Between Gaussians with Shifted Means]
\label{l:GaussianShift}
Let $X, A, B\in \R^d$ be random variables with $X\sim \N(0,I_d)$ independent of $A$ and $B$. Then
$$
\tv(X + A + B, X + B) \leq \E \|A\|_2\,.
$$
\end{lemma}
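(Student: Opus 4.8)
\textbf{Proof proposal for Lemma~\ref{l:GaussianShift}.}

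The plan is to combine the conditioning/convexity property of total variation (Item 3 of Fact~\ref{tvfacts}) with the explicit Gaussian bound~\eqref{eq:tv_gauss_pinsker} from Lemma~\ref{l:KLGauss}. The key point is that conditionally on the values of $A$ and $B$, both $X+A+B$ and $X+B$ are Gaussian vectors with identity covariance but shifted means, so their conditional total variation distance is controlled by half the Euclidean distance between those means, namely $\tfrac12\|A\|_2$.

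First I would set up the coupling of the two pairs required by Item 3 of Fact~\ref{tvfacts}: take the ``conditioning'' random variable to be $(A,B)$ on both sides (with the same distribution, so the first term $\TV(\mL(A,B),\mL(A,B))$ vanishes), and condition the laws of $X+A+B$ and $X+B$ on $(A,B)=(a,b)$. Since $X\sim\N(0,I_d)$ is independent of $(A,B)$, these conditional laws are exactly $\N(a+b,I_d)$ and $\N(b,I_d)$ respectively. Then Fact~\ref{tvfacts}(3) gives
\begin{equation}\label{eq:gs_shift_step}
\tv(X+A+B,\,X+B)\;\le\;\bE_{(a,b)\sim(A,B)}\big[\tv(\N(a+b,I_d),\,\N(b,I_d))\big].
\end{equation}

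Next I would apply~\eqref{eq:tv_gauss_pinsker} with $\mu_1=b$ and $\mu_2=a+b$ to bound the integrand in~\eqref{eq:gs_shift_step} by $\tfrac12\|(a+b)-b\|_2=\tfrac12\|a\|_2$. Plugging this back in yields $\tv(X+A+B,X+B)\le \tfrac12\,\bE\|A\|_2\le \bE\|A\|_2$, which is the claimed inequality (in fact with an extra factor of $\tfrac12$ to spare). There is essentially no obstacle here: the only minor point to be careful about is that~\eqref{eq:tv_gauss_pinsker} is stated for fixed mean vectors, so one must genuinely pass through the conditioning step rather than trying to apply Pinsker directly to the random shifts; once that is done the argument is a two-line computation.
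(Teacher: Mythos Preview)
Your proof is correct and follows exactly the approach the paper indicates: condition on $(A,B)$ via the convexity property (Fact~\ref{tvfacts}, Item 3), then apply the Gaussian mean-shift bound~\eqref{eq:tv_gauss_pinsker} to the conditional laws $\N(a+b,I_d)$ and $\N(b,I_d)$, and take expectations. The paper's own proof is just the one-line remark ``Combining the convexity of total variation (Item 3 in Fact~\ref{tvfacts}) and the bound~\eqref{eq:tv_gauss_pinsker} above,'' which you have unpacked correctly, including the harmless extra factor of $\tfrac12$.
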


Finally, we will need the following standard lemma bounding the $\chi^2$-divergence between isotropic Gaussians with a random shift. Recall that the $\chi^2$-divergence between two distributions $P$ and $Q$ on a measure space $(\mathcal{X},\mathcal{B})$ is defined as $\chi^2(P\|Q) = \E_{X\sim Q} \b(\frac{dP}{dQ}(X) - 1\b)^2$. 

\begin{lemma}[$\chi^2$-Divergence Between Mixtures of Isotropic Gaussians]
\label{l:GuassianChi2}
    Let $X\sim \N(0,I_d)$ and $A\in \R^d$ a random variable independent of $X$. Then
$$
1+\chi^2(X + A\| X) = \E \exp(\la A, A'\ra )\,, 
$$
where $A'$ is an i.i.d. copy of $A$. 
\end{lemma}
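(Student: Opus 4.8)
The plan is a direct computation of the $\chi^2$-divergence via densities. Let $\phi(x) = (2\pi)^{-d/2}\exp(-\|x\|_2^2/2)$ denote the density of $\N(0,I_d)$. Since $A$ is independent of $X$, the law of $X+A$ has density $p(x) = \E_A[\phi(x-A)]$ with respect to Lebesgue measure, while the law of $X$ has density $\phi$. By the definition of $\chi^2$-divergence,
$$
1+\chi^2(X+A\,\|\,X) = \E_{X\sim\N(0,I_d)}\Big[\Big(\tfrac{p(X)}{\phi(X)}\Big)^2\Big] = \int_{\R^d} \frac{p(x)^2}{\phi(x)}\,dx\,.
$$
First I would write $p(x)^2 = \E_{A,A'}[\phi(x-A)\phi(x-A')]$, where $A'$ is an i.i.d.\ copy of $A$ independent of everything, and then use Tonelli (all integrands are nonnegative) to swap the $x$-integral with the expectation over $(A,A')$, obtaining
$$
1+\chi^2(X+A\,\|\,X) = \E_{A,A'}\Big[\int_{\R^d}\frac{\phi(x-A)\phi(x-A')}{\phi(x)}\,dx\Big]\,.
$$

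The key step is to evaluate the inner Gaussian integral for fixed $A,A'$. Expanding the quadratic forms in the exponent gives
$$
\|x-A\|_2^2 + \|x-A'\|_2^2 - \|x\|_2^2 = \|x-(A+A')\|_2^2 - 2\la A,A'\ra\,,
$$
which one checks by completing the square (the cross term $\la A,A'\ra$ is exactly the discrepancy between $\|A+A'\|_2^2$ and $\|A\|_2^2+\|A'\|_2^2$). Hence
$$
\frac{\phi(x-A)\phi(x-A')}{\phi(x)} = \phi\big(x-(A+A')\big)\,\exp(\la A,A'\ra)\,,
$$
and integrating $x$ over $\R^d$ kills the Gaussian factor (it integrates to $1$), leaving $\exp(\la A,A'\ra)$. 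Substituting back yields $1+\chi^2(X+A\,\|\,X) = \E\exp(\la A,A'\ra)$, as claimed.

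There is no serious obstacle here; the only points requiring a word of care are the justification for interchanging expectation and integral (handled by Tonelli, since every quantity in sight is nonnegative) and the algebraic identity for the exponent, which is a routine completion of the square. I would present the computation in the order above: densities, squaring and Tonelli, the exponent identity, and the final Gaussian integral.
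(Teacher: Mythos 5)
Your proof is correct and follows essentially the same route as the paper's: both square the mixture likelihood ratio, interchange the integral over $x$ with the expectation over $(A,A')$, and evaluate the resulting Gaussian integral by completing the square to get $\exp(\la A,A'\ra)$. The paper phrases this via the likelihood ratio $\frac{d\bP_A}{d\bP_0}(X)=\exp(X^\top A - A^\top A/2)$ (the "Ingster trick"), while you work directly with the densities $\phi(x-A)/\phi(x)$, but these are the same computation in slightly different notation.
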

\begin{proof} The proof is a simple application of the so-called ``Ingster trick". 
    Note that 
$$
X+A \sim \E_A \bP_A\,,
$$
where $\bP_A$ is the law $\N(A, I_d)$.
We start by writing the likelihood ratio    
    $$
\frac{d\bP_A}{d\bP_0}(X) = \frac{\exp(-(X-A)^\top (X-A)/2)}{\exp(-X^\top X / 2)} = \exp(X^\top A- A^\top A/2)\,.
    $$
    This now implies, from the definition of $\chi^2$-divergence and Fubini's theorem,
    \begin{align*}
      \chi^2(X + A\| X) +1 &= \E_{X\sim \bP_0}\B( \E_{A}\frac{d\bP_A}{d\bP_0}(X)\B)^2
      =\E_{A,A'}\E_{X\sim\bP_0}\B( \frac{d\bP_A}{d\bP_0}(X) \frac{d\bP_{A'}}{d\bP_0}(X)\B)\,.
    \end{align*}
To evaluate the inner expectation w.r.t. $X$, one can complete the square, obtaining
$$
\E_{X\sim \bP_0}\B( \frac{d\bP_A}{d\bP_0} (X)\frac{d\bP_{A'}}{d\bP_0}(X)\B)
=\exp(\la A, A'\ra)\,.
$$
This completes the proof.    
\end{proof}

\subsection{Gaussian Cloning}\label{subsec:gaus_clone}

\vspace{3mm}
\begin{algorithm}\SetAlgoLined\SetAlgoVlined\DontPrintSemicolon
    \KwIn{
    $X_1, \dots, X_N \in \R^M$.
    }
    \KwOut{$X_1\up1,\dots,X_N\up1, X_1\up2,\dots,X_N\up2 \in \R^M$}
    
    \BlankLine

    Sample i.i.d. $Z_1,\dots,Z_N \sim \N(0, I_M)$
    
    \BlankLine

    For each $ i\in [N]$,
    \begin{align*}
        X_i\up1 \gets \frac{X_i + Z_i}{\sqrt{2}}\qquadand
        X_i\up2 \gets \frac{X_i - Z_i}{\sqrt{2}}
    \end{align*}
 
    Return $\{X_i\up1\}_i, \{X_i\up2\}_i$
    \caption{$\GaussC(X_1,\dots, X_N)$}
\label{alg:gaus-clone}
\end{algorithm}
\vspace{3mm}

\vspace{3mm}
\begin{algorithm}\SetAlgoLined\SetAlgoVlined\DontPrintSemicolon
    \KwIn{
    $X_1, \dots, X_N \in \R^M, K \in \Z^{+}$.
    }
    \KwOut{$\{X_1\up l,\dots,X_N\up l\}_{l=1}^{K}, \ X_i\up l \in \R^M$}
    
    \BlankLine

    $X_1\up 1,\dots,X_N\up 1 \gets X_1, \dots, X_N$

    \BlankLine

    \For{$t =1,\dots,\lceil \log K \rceil$}
       { \For{$i \in [2^{t-1}]$}
       {$X_1\up {2i-1},\dots,X_N\up {2i-1}, X_1\up {2i},\dots,X_N\up {2i} \gets \GaussC(X_1\up i, \dots, X_N\up i)$}} 

    \BlankLine
 
    Return $\{X_1\up l,\dots,X_N\up l\}_{l=1}^{K}$
    \caption{$\GaussCR(X_1,\dots, X_N, K)$}
\label{alg:gaus-clone-rep}
\end{algorithm}
\vspace{3mm}

We will frequently make use of a (by-now standard, see e.g. \cite{brennan2018reducibility}) procedure to create two samples with the same planted signal and independent noise, each at a mildly lower signal strength than the original.

\begin{lemma}[Single Cloning Guarantee]\label{lem:cloning-gen}
    Let $X \sim \N(\mu, I_M)^{\otimes N}$ and $X\up1, X\up2 = \GaussC(X)$ the output of of Algorithm \ref{alg:gaus-clone}.
    Then $X\up1$ and $X\up2$ are two i.i.d. samples of $\N(\mu / \sqrt{2}, I_M)^{\otimes N}$.
\end{lemma}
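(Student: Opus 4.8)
The plan is to verify the claim by a direct distributional computation, exploiting the fact that the map $X \mapsto (X\up1, X\up2)$ is linear and that orthogonal transformations of jointly Gaussian vectors preserve Gaussianity. Write $X = (X_1,\dots,X_N)$ with $X_i \in \R^M$ i.i.d.\ $\N(\mu, I_M)$, and let $Z_1,\dots,Z_N \simiid \N(0,I_M)$ be the auxiliary noise drawn inside Algorithm~\ref{alg:gaus-clone}, independent of $X$. The output pairs are $X_i\up1 = (X_i + Z_i)/\sqrt2$ and $X_i\up2 = (X_i - Z_i)/\sqrt2$. First I would fix $i$ and analyze the pair $(X_i\up1, X_i\up2)$ in isolation: since $X_i$ and $Z_i$ are independent Gaussians, the concatenation $(X_i, Z_i) \in \R^{2M}$ is Gaussian, and $(X_i\up1, X_i\up2)$ is obtained from it by the linear map $\tfrac{1}{\sqrt2}\begin{psmallmatrix} I_M & I_M \\ I_M & -I_M\end{psmallmatrix}$, hence is Gaussian as well. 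So it suffices to match first and second moments.

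Next I would compute the moments. For the means, $\bE X_i\up1 = (\mu + 0)/\sqrt2 = \mu/\sqrt2$ and likewise $\bE X_i\up2 = \mu/\sqrt2$. For the covariances, using $\Cov(X_i) = \Cov(Z_i) = I_M$ and $\Cov(X_i, Z_i) = 0$: $\Cov(X_i\up1) = \tfrac12(I_M + I_M) = I_M$, $\Cov(X_i\up2) = \tfrac12(I_M + I_M) = I_M$, and the cross-covariance $\Cov(X_i\up1, X_i\up2) = \tfrac12\bE[(X_i - \mu + Z_i)(X_i - \mu - Z_i)^\top] = \tfrac12(I_M - I_M) = 0$. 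This shows $X_i\up1$ and $X_i\up2$ are each $\N(\mu/\sqrt2, I_M)$ and are \emph{uncorrelated}; being jointly Gaussian, they are independent.

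Finally I would promote this per-coordinate statement to the full vector of $2N$ outputs. The key observation is that the pairs $\{(X_i\up1, X_i\up2)\}_{i=1}^N$ are mutually independent across $i$: each pair is a deterministic function of $(X_i, Z_i)$, and the $(X_i, Z_i)$ are mutually independent by construction (the $X_i$ are i.i.d.\ and the $Z_i$ are i.i.d.\ and independent of $X$). Combining mutual independence across $i$ with the within-pair independence and the identical $\N(\mu/\sqrt2, I_M)$ marginals established above, all $2N$ output vectors $X_1\up1,\dots,X_N\up1,X_1\up2,\dots,X_N\up2$ are i.i.d.\ $\N(\mu/\sqrt2, I_M)$, which is exactly the assertion that $X\up1$ and $X\up2$ are two i.i.d.\ samples of $\N(\mu/\sqrt2, I_M)^{\otimes N}$.

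This argument has no real obstacle — it is a routine Gaussian moment computation — so the only thing to be careful about is bookkeeping: making explicit that jointly Gaussian plus zero cross-covariance yields genuine independence (not merely uncorrelatedness), and that independence across the index $i$ follows from the independence of the fresh noise samples $Z_i$ together with the i.i.d.\ structure of the input. One could alternatively phrase the whole thing in one stroke: stack everything into a single Gaussian vector in $\R^{2NM}$ and check that its covariance is the identity and its mean is $\mu/\sqrt2$ in every block, but the coordinate-wise version above is cleaner to write.
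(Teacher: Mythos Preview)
Your proof is correct and follows essentially the same approach as the paper: both recognize that $(X_i\up1,X_i\up2)$ is obtained from $(X_i,Z_i)$ by an orthogonal linear map and appeal to Gaussianity. The paper compresses your moment computation into a one-line invocation of rotational invariance, but the underlying argument is identical.
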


\begin{proof}
        For each $i \in [N]$, 
    $$
    \begin{pmatrix}
        X_i\up1 \\
        X_i\up2 
    \end{pmatrix}  = 
    \begin{pmatrix}
        X_i \\
        Z_i 
    \end{pmatrix}
    \begin{pmatrix}
        1/\sqrt{2} & 1/\sqrt{2} \\
        1/\sqrt{2} & -1/\sqrt{2} 
    \end{pmatrix}\,.
    $$
    The lemma follows from the rotational invariance property of Gaussian distribution.
\end{proof}
\begin{lemma}[Multiple Cloning Guarantee]\label{lem:cloning-gen-mult}
    Let $X \sim \N(\mu, I_M)^{\otimes N}$ and $X\up1, X\up2, \dots, X\up K = \GaussCR(X, K)$ the output of of Algorithm \ref{alg:gaus-clone-rep}.
    Then $X\up1, X\up2, \dots, X\up K$ are $K$ independent samples of 
    $\N(\mu / \sqrt{2^{\lceil \log K\rceil}}, I_M)^{\otimes N}$.
\end{lemma}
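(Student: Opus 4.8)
The plan is to induct on the number $r := \lceil \log K\rceil$ of rounds of the outer loop in Algorithm~\ref{alg:gaus-clone-rep}, tracking at each stage \emph{both} the marginal law of every ``group'' $\{X_1\up\ell,\dots,X_N\up\ell\}$ and the \emph{mutual} independence of the groups. Concretely, I would prove by induction on $t\in\{0,1,\dots,r\}$ the following: after $t$ rounds, the $2^t$ groups indexed by $\ell\in[2^t]$ are mutually independent, and each group is distributed as $\N(\mu/\sqrt{2^t},I_M)^{\otimes N}$. The base case $t=0$ is immediate, since the algorithm initializes the single group to $X\sim\N(\mu,I_M)^{\otimes N}$.

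For the inductive step, suppose the claim holds after $t$ rounds. Round $t+1$ applies $\GaussC$ to each group $i\in[2^t]$ — using a fresh block of i.i.d.\ $\N(0,I_M)$ noise for each call, independent across calls and of everything generated so far — producing the two new groups indexed $2i-1$ and $2i$. Applying Lemma~\ref{lem:cloning-gen} with ``$\mu$'' replaced by $\mu/\sqrt{2^t}$, and using $\tfrac{1}{\sqrt2}\cdot\tfrac{\mu}{\sqrt{2^t}}=\tfrac{\mu}{\sqrt{2^{t+1}}}$, the pair $(X\up{2i-1},X\up{2i})$ consists of two i.i.d.\ samples of $\N(\mu/\sqrt{2^{t+1}},I_M)^{\otimes N}$; in particular the two children of group $i$ have the asserted marginal law and are independent of each other.

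For mutual independence across all $2^{t+1}$ new groups, I would observe that $(X\up{2i-1},X\up{2i})$ is a deterministic function of the pair (group $i$, fresh noise used in its $\GaussC$ call). Since the groups $\{i\in[2^t]\}$ are mutually independent by the inductive hypothesis and the fresh noise blocks are i.i.d.\ and independent of all groups, the $2^t$ input pairs (group $i$, its noise) are mutually independent, hence so are their images $(X\up{2i-1},X\up{2i})$, $i\in[2^t]$. Combining ``the $2^t$ children-pairs are mutually independent'' with ``within each pair the two children are independent'' yields that all $2^{t+1}$ children are mutually independent (the joint law factors fully), completing the induction. Taking $t=r$ gives $2^r\ge K$ mutually independent groups each distributed as $\N(\mu/\sqrt{2^{\lceil\log K\rceil}},I_M)^{\otimes N}$; the algorithm returns the first $K$ of them, and any sub-collection of mutually independent random vectors is still mutually independent, which is the claim.

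I do not expect a genuinely hard step here: the only points requiring care are not conflating pairwise with mutual independence (handled by the ``factors fully'' observation above) and correctly accounting for the fact that $K$ need not be a power of $2$, so the algorithm over-produces $2^{\lceil\log K\rceil}$ samples and truncates. Both are dispatched by the bookkeeping in the induction.
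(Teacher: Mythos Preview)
Your proposal is correct and takes essentially the same approach as the paper: the paper simply says the result follows from applying the single-step guarantee (Lemma~\ref{lem:cloning-gen}) $\log K$ times, and your induction on $t$ is precisely a careful expansion of that one-line argument. The extra care you take with mutual independence and the truncation from $2^{\lceil\log K\rceil}$ to $K$ is sound bookkeeping that the paper leaves implicit.
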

\begin{proof}
    The lemma statement follows from applying the guarantee for one cloning step (Lemma~\ref{lem:cloning-gen}) $\log K$ times.
\end{proof}
We apply the single and multiple cloning procedures' guarantees (Lemmas~\ref{lem:cloning-gen} and \ref{lem:cloning-gen-mult}) to samples from the Spiked Covariance Model to obtain the following lemma.
\begin{lemma}[Spiked Covariance Cloning]\label{lem:cloning-guarantee}
    Let $Z\in \R^{n\times d}$ be data from $\sc$ in the form ~\eqref{e:SCM2}, i.e., 
    $
    Z = X + \tsq g u^T,$
    where $X \sim \N(0, I_n)^{\otimes d}, g\sim \N(0, I_n)$ and $u$ is the unit signal vector. We have the following two cloning guarantees:
    \begin{enumerate}
        \item Let $Z_1\up1,\dots,Z_d\up1, Z_1\up2,\dots,Z_d\up2 = \GaussC(Z_1,\dots,Z_d)$ given in Algorithm \ref{alg:gaus-clone}.
        Then 
        $$
        Z\up1 = X\up1 + \tsq g u^T / \sqrt{2}, \quad Z\up2 = X\up2 + \tsq g u^T / \sqrt{2}\,,
        $$
        where $X\up1, X\up2 \sim \N(0, I_n)^{\otimes d}$ are independent. 
        
        \item Let $\{Z_1\up l,\dots,Z_d\up l\}_{l=1}^K = \GaussCR(Z_1,\dots,Z_d,K)$ be the output of Algorithm \ref{alg:gaus-clone-rep}.
        Then for all $l\in[K]$, 
        $$
        Z\up l = X\up l + \tsq g u^T / \sqrt{2^{\lceil \log K\rceil}}\,,
        $$
        where $X\up1, X\up2,\dots, X\up l \sim \N(0, I_n)^{\otimes d}$ are independent. 
    \end{enumerate}
    Thus, the output of $\GaussC(Z)$ (or $\GaussCR(Z)$) consists of independent samples from $\sc$ with SNR parameter $\theta / 2$ (or $\theta / \b(2^{\lceil \log K\rceil}\b)$) with the same signal vectors $g, u$.
    
\end{lemma}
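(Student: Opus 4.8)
The plan is to show that Gaussian cloning commutes with the additive structure $Z = X + \tsq g u^\top$ of the spiked covariance model. The one point to flag at the outset is that the columns $Z_1,\dots,Z_d$ of $Z$ are \emph{not} i.i.d.\ samples of a single $\N(\mu,I_n)$: they share the common random vector $g$, and conditionally on $g$ the $j$-th column has mean $\tsq u_j g$, which varies with $j$. Hence one cannot literally quote Lemma~\ref{lem:cloning-gen} as a black box, and I would instead argue directly at the level of the linear map defining the clones.

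For part~1, write $W \in \R^{n\times d}$ for the matrix whose columns $W_1,\dots,W_d$ are the i.i.d.\ $\N(0,I_n)$ vectors drawn inside $\GaussC$; by definition of Algorithm~\ref{alg:gaus-clone} applied to the columns of $Z$ we have $Z\up1 = (Z+W)/\sqrt2$ and $Z\up2 = (Z-W)/\sqrt2$. Substituting $Z = X + \tsq g u^\top$ from~\eqref{e:SCM2},
$$
Z\up1 = \frac{X+W}{\sqrt2} + \frac{\tsq}{\sqrt2}\, g u^\top\,, \qquad
Z\up2 = \frac{X-W}{\sqrt2} + \frac{\tsq}{\sqrt2}\, g u^\top\,,
$$
so it suffices to set $X\up1 := (X+W)/\sqrt2$ and $X\up2 := (X-W)/\sqrt2$ and verify: (i) each of $X\up1, X\up2$ is distributed as $\N(0,I_n)^{\otimes d}$; (ii) $X\up1 \perp X\up2$; (iii) $(X\up1,X\up2)$ is independent of $g$ (and of $u$). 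Claims (i)--(ii) are the standard $2\times 2$ orthogonal-rotation computation from the proof of Lemma~\ref{lem:cloning-gen}, carried out column by column: for each $j$, the pair $(X_j\up1,X_j\up2)$ is centered jointly Gaussian with $\Cov(X_j\up1) = \Cov(X_j\up2) = \tfrac12(I_n + I_n) = I_n$ and $\Cov(X_j\up1, X_j\up2) = \tfrac12\bigl(\Cov(X_j) - \Cov(W_j)\bigr) = 0$, while columns indexed by $j\ne j'$ are independent because $X$ and $W$ both have independent columns. Claim (iii) holds because $(X\up1,X\up2)$ is a deterministic function of $(X,W)$ and $(X,W)$ is independent of $g$---indeed $X\perp g$ by the definition of $\sc$ in~\eqref{e:SCM2} and $W$ is fresh randomness---so $Z\up1$ and $Z\up2$ are samples of $\sc(d,k,\theta/2,n)$ with the same signal vectors $g,u$.

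Part~2 then follows by iterating part~1 along the binary cloning tree of $\GaussCR$, mirroring the proof of Lemma~\ref{lem:cloning-gen-mult}. The invariant maintained after $t$ layers is that every current copy has the form $X\up i + \bigl(\tsq/\sqrt{2^{t}}\bigr)\,g u^\top$ with $X\up i \sim \N(0,I_n)^{\otimes d}$, these being mutually independent across $i$ and jointly independent of $g$; applying part~1 to each copy---each cloning call drawing its own disjoint fresh Gaussians---advances $t \mapsto t+1$ and halves the SNR, so that after $\lceil\log K\rceil$ layers the SNR is $\theta/2^{\lceil\log K\rceil}$. The only bookkeeping worth a sentence is the mutual independence of the noise matrices across all $2^{\lceil\log K\rceil}$ leaves: two leaves in a common subtree are independent by claim (ii) applied at their lowest common ancestor's cloning step, while two leaves lying in different subtrees are functions of disjoint collections of input noise and fresh randomness. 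I expect the only genuine obstacle to be the non-i.i.d.\ issue noted in the first paragraph; once one commits to arguing via the explicit linear map rather than quoting Lemma~\ref{lem:cloning-gen}, the remainder is routine.
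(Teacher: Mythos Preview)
Your proposal is correct and takes essentially the same approach as the paper—the paper's proof simply invokes Lemmas~\ref{lem:cloning-gen} and~\ref{lem:cloning-gen-mult} without further comment. You are right that those lemmas as stated assume a common mean across the $N$ input vectors, so your direct argument via the explicit linear map (equivalently, conditioning on $g$ and applying rotational invariance column by column) is the clean way to fill in that small gap the paper glosses over.
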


\subsection{Gaussianization}\label{subsec:gaussianize} Here we describe the guarantees for a subroutine that maps a pair of Bernoulli random variables with different means to a pair of Gaussian random variables with means differing by almost the same amount. Such entry-wise transformations of measure were used in \cite{ma2015computational}, \cite{hajek2015computational}, and later generalized in \cite{brennan2018reducibility}. The following is a special case of Lemma 5.4 in \cite{brennan2018reducibility}, applied to $\bern {1/2}, \bern {1/2+p}$ random variables:

\begin{lemma}[Gaussianization of $\bern {1/2}, \bern {1/2+p}$]\label{cor:gaussianize_}
   Let $n$ be a parameter and suppose that $p = p(n)$ satisfies $n^{-O_n(1)} \leq p < 1/2$.
Define $\mu = \mu(n) \in (0,1)$ to be
$$
\mu = \frac{p}{2 \sqrt{6 \log n + 2 \log p^{-1}}}.
$$
Then there exists a map $\mathtt{RK}_G$ that can be computed in $O(\log n)$-time and satisfies
$$
\tv\big(\mathtt{RK}_G(\bern{1/2+p}), \N(\mu,1)\big) = O_n(n^{-3}), \text{  and  } \tv\big(\mathtt{RK}_G(\bern{1/2}), \N(0,1)\big) = O_n(n^{-3})\,.
$$
\end{lemma}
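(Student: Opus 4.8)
The plan is to obtain this lemma as a direct specialization of the general rejection-kernel guarantee (Lemma~5.4 of \cite{brennan2018reducibility}). That lemma takes a pair of source distributions $P_0,P_1$ and a pair of target distributions $Q_0,Q_1$ and produces a Markov kernel $\mathtt{RK}$ — implemented by at most $N$ rounds of rejection sampling against a fixed reference distribution on the target space, where on input $b$ one repeatedly draws a candidate from the reference and accepts it with a probability built from the (clipped) target likelihood ratio and the source likelihood ratio evaluated at the input, returning the first accepted candidate or a default value if all $N$ rounds reject — with the guarantee that feeding in a sample from $P_b$ yields output whose law is within total variation $O(\mathrm{err})$ of $Q_b$, provided a compatibility condition between $dP_1/dP_0$ and $dQ_1/dQ_0$ holds on a high-probability set. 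Here I would instantiate $P_0=\bern{1/2}$, $P_1=\bern{1/2+p}$, $Q_0=\N(0,1)$, $Q_1=\N(\mu,1)$ with $\mu$ as in the statement, use the standard Gaussian as the reference distribution on the target space, and take $N=\Theta(\log n)$ with a large enough constant. Since each round costs $O(1)$, the resulting map $\mathtt{RK}_G$ runs in $O(\log n)$ time, as claimed.

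Next I would verify the hypotheses. The Bernoulli likelihood ratio $d\bern{1/2+p}/d\bern{1/2}$ takes only the values $1+2p$ and $1-2p$, so it lies in $[1-2p,1+2p]$. The Gaussian likelihood ratio is $\frac{d\N(\mu,1)}{d\N(0,1)}(x)=\exp(\mu x-\mu^2/2)$, which is \emph{unbounded} in $x$ — this is the crux and the reason the construction must truncate. On the window $\{|x|\le T\}$ with $T=\Theta\left(\sqrt{\log n+\log p^{-1}}\right)$ this ratio equals $1+O(\mu T)$. The precise choice $\mu=p/\left(2\sqrt{6\log n+2\log p^{-1}}\right)$ is calibrated exactly so that simultaneously (i) $\mu T=O(p)$, hence the truncated Gaussian likelihood ratio lies inside a constant dilation of the range $[1-2p,1+2p]$ of the Bernoulli ratio, making each per-round rejection step well defined with $\Omega(1)$ acceptance probability, and (ii) the Gaussian tail outside the window, $\P(|\N(0,1)|>T)$, is $O(n^{-3})$ — taking $T^2\gtrsim 6\log n$ forces $e^{-T^2/2}\le n^{-3}$ — with the $\log p^{-1}$ term absorbing the regime $p=n^{-\Theta(1)}$. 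One also checks $\mu\in(0,1)$: since $p<1/2$ and the denominator exceeds $\sqrt{6\log n}>1$ for $n\ge 2$.

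Finally I would collect the three error terms appearing in the general lemma's bound: the truncation error $\P(|\N(0,1)|>T)=O(n^{-3})$; the likelihood-ratio mismatch term, controlled because on the window the Gaussian ratio agrees with the Bernoulli ratio up to the $O(p)$ slack built into $\mu$, contributing $O(n^{-3})$ once $T$ is chosen as above; and the probability that all $N$ rejection rounds fail, at most $(1-\Omega(1))^N\le n^{-3}$ for $N=\Theta(\log n)$. Summing gives $\tv(\mathtt{RK}_G(\bern{1/2+p}),\N(\mu,1))=O_n(n^{-3})$, and running the identical argument with $p$ replaced by $0$ (so that $Q_1=Q_0=\N(0,1)$) gives $\tv(\mathtt{RK}_G(\bern{1/2}),\N(0,1))=O_n(n^{-3})$.

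I expect the only genuine obstacle to be the bookkeeping around the unbounded Gaussian likelihood ratio: one must pick the truncation level $T$ and the shift $\mu$ so that the window is wide enough to carry all but an $O(n^{-3})$ fraction of the Gaussian mass yet narrow enough that $\exp(\mu T)=1+O(p)$, and then confirm that under this calibration every error term in the rejection-kernel lemma collapses to $O(n^{-3})$ — which is precisely what the stated formula for $\mu$ encodes. All remaining steps are routine substitutions into \cite{brennan2018reducibility}.
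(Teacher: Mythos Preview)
Your proposal is correct and matches the paper's approach exactly: the paper simply states that this lemma is a special case of Lemma~5.4 in \cite{brennan2018reducibility} applied to $\bern{1/2}$ and $\bern{1/2+p}$, without giving any further details. Your write-up fills in the specialization (choice of truncation window, calibration of $\mu$, and accounting of the three error terms) that the paper omits, but the underlying argument is the same citation-based derivation.
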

\begin{remark}[Gaussianization of Rademacher Variables]
    Applying a trivial mapping between Rademacher and Bernoulli random variables, we can obtain similar mapping guarantees for $\rad(2p), \rad(0)$ in place of $\bern{1/2+p}, \bern{1/2}$. We will denote the corresponding map $\mathtt{RK}_G^{\rad}$.
\end{remark}

\subsection{Gram-Schmidt}\label{subsec:gs} One of our subroutines is the classic Gram-Schmidt Process described in Algorithm \ref{alg:GS} below. Given vectors $X_1,\dots,X_N \in \R^M$ it outputs an orthonormal basis $\Xt_1,\dots,\Xt_N$ in $\R^M$.

\vspace{3mm}
\begin{algorithm}\SetAlgoLined\SetAlgoVlined\DontPrintSemicolon
    \KwIn{
    $X_1, \dots, X_N \in \R^M$.
    }
    \KwOut{$\Xt_1,\dots,\Xt_N \in \R^M$}
    
    \BlankLine
    For $i = 1, \dots, N$,
    $$
    \Xt_i \gets \frac{X_i - \sum_{j=1}^{i-1} \la X_i, \Xt_j\ra \Xt_j}{\|X_i - \sum_{j=1}^{i-1} \la X_i, \Xt_j\ra \Xt_j\|}
    $$
    \BlankLine

    Return $\Xt_1,\dots,\Xt_N$
    \caption{$\GS(X_1,\dots, X_N)$}
\label{alg:GS}
\end{algorithm}
\vspace{3mm}

\section{Two Simple New Reduction Primitives}\label{sec:prelims_new}

\subsection{Bernoulli Denoising}\label{subsec:bern_denoise} 

Recall that $\rad(a)$ denotes a random variable taking $\pm1$ values, such that $\E \rad(a) = a$. In our reduction, we work with Rademacher random variables of the form $\rad(a+\Delta)$, where $\Delta \ll a$ represents an undesired noise whose size we do not know. We propose a denoising procedure designed to reduce $\Delta$ relative to $a$ when given multiple samples from $\rad(a+\Delta)$. The main idea is to construct a polynomial of the noisy samples and fresh $\rad(a)$ samples, such that the resulting variable is distributed as $\Rad\b(C(a^M - (-\Delta)^{M})\b)$ for some constant $C$ and power $M$ depending on the number of available noisy samples:
$$
\text{poly}\b(\underbrace{\rad(a+\Delta),\dots,\rad(a+\Delta)}_{N\text{ noisy inputs}},\underbrace{\rad(a),\dots,\rad(a)}_{\text{fresh r.v.s}}\b) \sim \Rad\b(C(a^M - (-\Delta)^{M})\b)\,.
$$
Then, in a setting when $a \approx 1$ and $\Delta \ll 1$, this procedure decreases the undesired noise relative to the constant mean: $$\B|\frac{\Delta}{a}\B|^{M} \ll \frac{\Delta}{a}\,.$$ 
Equally important, it keeps the signal almost the same. The algorithm is outlined in Algorithm~\ref{alg:denoise} with its guarantees stated in Lemma~\ref{lem:denoise_guarantee}. 

\vspace{3mm}
\begin{algorithm}\SetAlgoLined\SetAlgoVlined\DontPrintSemicolon
    \KwIn{
    $X_1,\dots,X_N \in \{\pm1\}, a\in[-1, 1]$.
    }
    \KwOut{$Y \in \{\pm1\}$}

    \BlankLine
    $M \gets \lfloor \frac{\sqrt{1+8N} - 1}{2} \rfloor$

    \BlankLine
    For $i = 0,1,\dots,M-1$, let $k_i = \frac{(2M+1)i - i^2}{2}$ and generate 
    $$
    W_{i1}, \dots, W_{ii} \sim \rad\B(-a {\binom{M}{i}}^{1/i}\B)^{\otimes i}\,,
    $$
    and compute 
    $$
    Y_i \gets \prod_{k =k_i+1}^{k_i + M-i} X_k \prod_{k=1}^i W_{ik}\,.
    $$

    \BlankLine 

    $$
    Y \gets \begin{cases}
        Y_0 &\text{ w.p. } 1/M,\\
        Y_1 &\text{ w.p. } 1/M,\\
        \,\,\vdots\\
        Y_{M-1} &\text{ w.p. } 1/M.
    \end{cases}
    $$

    \BlankLine 
    Return $(-1)^{M+1}Y$.
    
    \caption{$\Denoise(X_1,\dots, X_N, a)$}
\label{alg:denoise}
\end{algorithm}
\vspace{3mm}

\begin{lemma}\label{lem:denoise_guarantee}
    Fix $N, M = \lfloor \frac{\sqrt{1+8N} - 1}{2} \rfloor,$ and $ a \in [-1,1]$, such that $|a| \leq M^{-1}$ and let $X_1,\dots,X_N \sim \rad(p)^{\otimes N}$. Let $Y = \textsc{Denoise}(X_1,\dots,X_N, a)$ be the output of Algorithm~\ref{alg:denoise}. 
    \begin{enumerate}
        \item If $p = a + \Delta$, where $|\Delta| \leq |a|$, then $\law(Y) \eqdist \rad(M^{-1}a^M + (-1)^{M+1}M^{-1} \Delta^M)$;
        \item If $p = 0$, then $\law(Y) \eqdist \rad(0)$.
    \end{enumerate}
\end{lemma}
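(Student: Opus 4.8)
The plan is to reduce everything to a single elementary observation: any $\{\pm1\}$-valued random variable $V$ has law exactly $\rad(\E V)$, so it suffices to check that the returned variable takes values in $\{\pm1\}$ (immediate, since it is a $\pm1$-valued selection of products of $\pm1$'s) and to compute its mean. I will use repeatedly the multiplicativity of Rademacher means: if $B_1,\dots,B_r$ are independent with $B_j\sim\rad(b_j)$, then $\prod_j B_j\in\{\pm1\}$ and $\E\prod_j B_j=\prod_j b_j$, hence $\prod_j B_j\sim\rad\bigl(\prod_j b_j\bigr)$.

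First I would dispense with the index bookkeeping. From the closed form for $k_i$ one checks $k_{i+1}-k_i=M-i$, so the consecutive blocks $\{k_i+1,\dots,k_i+(M-i)\}$, $i=0,\dots,M-1$, are pairwise disjoint and together use $\sum_{i=0}^{M-1}(M-i)=M(M+1)/2$ of the inputs; the choice $M=\lfloor(\sqrt{1+8N}-1)/2\rfloor$ guarantees $M(M+1)/2\le N$, so there are enough of them, and for each fixed $i$ the factors $X_{k_i+1},\dots,X_{k_i+M-i}$ are i.i.d.\ $\rad(p)$ and, together with the freshly sampled $W_{i1},\dots,W_{ii}$, are mutually independent. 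I also need the Rademacher parameters of the $W_{ik}$ to be legal: since $\binom{M}{i}\le M^i$ we have $\binom{M}{i}^{1/i}\le M$, so $\bigl|a\,\binom{M}{i}^{1/i}\bigr|\le M^{-1}\cdot M=1$ by the hypothesis $|a|\le M^{-1}$.

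Next, the main computation. By multiplicativity, $Y_i\sim\rad\bigl(p^{M-i}(-a\binom{M}{i}^{1/i})^i\bigr)=\rad\bigl(\binom{M}{i}p^{M-i}(-a)^i\bigr)$ (reading the $i=0$ term as $\binom{M}{0}p^M$). Since the algorithm's internal variable is the uniform mixture of $Y_0,\dots,Y_{M-1}$, it is $\{\pm1\}$-valued with mean $\frac1M\sum_{i=0}^{M-1}\binom{M}{i}p^{M-i}(-a)^i$. Adding and subtracting the missing $i=M$ term $\binom{M}{M}p^0(-a)^M=(-a)^M$ and invoking the binomial theorem,
$$
\sum_{i=0}^{M-1}\binom{M}{i}p^{M-i}(-a)^i=(p-a)^M-(-a)^M .
$$
In Part 1, $p-a=\Delta$, so the internal variable has mean $\frac1M\bigl(\Delta^M-(-a)^M\bigr)$; multiplying by the deterministic sign $(-1)^{M+1}$ and using $(-1)^{M+1}(-a)^M=(-1)^{2M+1}a^M=-a^M$ yields mean $M^{-1}a^M+(-1)^{M+1}M^{-1}\Delta^M$, hence the claimed law. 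In Part 2, $p=0$: each block has length $M-i\ge1$, so every $Y_i$ carries at least one factor $X_k\sim\rad(0)$ independent of the remaining factors, forcing $Y_i\sim\rad(0)$; the uniform mixture is then $\rad(0)$, and the deterministic sign $(-1)^{M+1}$ leaves $\rad(0)$ invariant.

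There is no analytic obstacle here; the only part that needs genuine care is the combinatorial bookkeeping — verifying that the blocks tile an initial segment of $\{1,\dots,N\}$ via $k_{i+1}-k_i=M-i$ and that $M(M+1)/2\le N$ — together with applying the binomial theorem with exactly the right "missing term" restored. I note that the hypothesis $|\Delta|\le|a|$ plays no role in the distributional identities themselves; it is only needed (with $|a|\le M^{-1}$) to ensure $p=a+\Delta$ is a legal Rademacher parameter and to make the downstream contraction $|\Delta/a|^M\ll|\Delta/a|$ meaningful.
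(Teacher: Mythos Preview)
Your proposal is correct and follows essentially the same approach as the paper: compute $\E Y_i=\binom{M}{i}p^{M-i}(-a)^i$ by multiplicativity of Rademacher means, sum via the binomial theorem with the missing $i=M$ term restored, and multiply by $(-1)^{M+1}/M$. Your index bookkeeping (verifying $k_{i+1}-k_i=M-i$ so the blocks are disjoint and total $M(M+1)/2\le N$) is in fact a bit more explicit than the paper's, and your closing remark that $|\Delta|\le|a|$ is not used in the distributional identity itself is a correct and worthwhile observation.
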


\begin{proof}
First, we verify that given as input $X_1,\dots,X_N \sim \rad(p)^{\otimes N}$, where $p \in \{a+\Delta, 0\}$ and $|\Delta| \leq |a| \leq M^{-1}$, the algorithm contains valid operations. Since $|a| \leq M^{-1}$, 
$$
\B|-a \binom{M}{i}^{1/i}\B| \leq |a| M \leq 1\,,
$$
so $\rad\b(-a \binom{M}{i}^{1/i}\b)$ is a well-defined distribution. Moreover, since 
$$
k_{M-1} + M - (M-1) = \frac{M^2 + M - 2N + 2N}{2} \leq N\,,
$$
the calculation of $Y$ indeed involves at most $N$ samples $X_i$. 
We verify the two cases in the statement.
\paragraph{Case 1: $p = a + \Delta$.}
    Since $X_1,\dots, X_n, W_{i1},\dots, W_{ii}$ are jointly independent, $Y_i$ are jointly independent satisfying
    $$
    \E Y_i = \binom{M}{i}(a + \Delta)^{M-i}(-a)^i\,,
    $$
    and therefore, 
    \begin{align*}
        \E Y &= (-1)^{M+1}M^{-1} \sum_{i=0}^{M-1} \E Y_i\\
        &= (-1)^{M+1}M^{-1}\B(\sum_{i=0}^{M-1} \binom{M}{i}(a + \Delta)^{M-i}(-a)^i\B) \\
        &= (-1)^{M+1}M^{-1} \Delta^M + M^{-1}a^M\,.
    \end{align*}
    
    \paragraph{Case 2: $p = 0$.} In this case we similarly have $Y_i$ jointly independent with $\E Y_i = 0$ for all $i = 0, \dots,M-1$, and therefore, $\E Y= 0$.
\end{proof}

\subsection{Removing Randomness in the Size of the Spike}

Let $Z\in \R^{n\times d}$ be a sample from the Spiked Covariance Model~\eqref{e:SCM2}, i.e.,
$Z= X+\tsq g u^\top,$ where $X\sim \N(0,I_n)^{\otimes d}$, $g\sim \N(0,I_n)$ and $u$ is a unit signal vector.

From Gaussian Norm Concentration (Lemma \ref{lem:gausnorm}), with high probability, $\|g\| = \sqrt{n} + O(1)$. 
The following lemma shows that in the analysis of the Spiked Covariance Model~\eqref{e:SCM2} we may assume that $\|g\| = \sqrt{n}$. 
\begin{lemma}\label{l:gNorm}
    In the setup above, for $\theta \leq \thetacomp$,
$$
\tv\b(X + \tsq g u^\top, X + \tsq (\sqrt{n}/\|g\|) g u^\top\b)  = O(\tsq)\,.
$$
\end{lemma}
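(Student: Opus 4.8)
The plan is to condition on the Gaussian vector $g$ and compare the two resulting Gaussian laws of the $n\times d$ matrix, which differ only by a (random but shared) shift of the mean. Write the two random matrices as $X + A + B$ and $X + B$, where $B = \tsq(\sqrt n/\|g\|)\, g u^\top$ and $A = \tsq\bigl(1 - \sqrt n/\|g\|\bigr)\, g u^\top$, so that $A+B = \tsq g u^\top$; both $A$ and $B$ are functions of $g$ alone and hence independent of $X\sim\N(0,I_n)^{\otimes d}$ (viewed as $nd$ i.i.d.\ standard Gaussians). Applying Lemma~\ref{l:GaussianShift} with $\R^{nd}$ in place of $\R^d$ (the Frobenius norm being the Euclidean norm of the vectorization), I would get
$$
\tv\bigl(X + \tsq g u^\top,\ X + \tsq(\sqrt n/\|g\|)g u^\top\bigr) \le \E\,\|A\|_\FR.
$$
Since $\|g u^\top\|_\FR = \|g\|\,\|u\| = \|g\|$ (using $\|u\| = 1$), the Frobenius norm of $A$ collapses to
$$
\|A\|_\FR = \tsq\,\bigl|1 - \sqrt n/\|g\|\bigr|\,\|g\| = \tsq\,\bigl|\,\|g\| - \sqrt n\,\bigr|,
$$
the random factor $\|g\|$ in the shift cancelling exactly against the rescaling $\sqrt n/\|g\|$.

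It then remains to bound $\E\,\bigl|\,\|g\| - \sqrt n\,\bigr| = O(1)$. Using $\bigl|\,\|g\|-\sqrt n\,\bigr| = \bigl|\|g\|^2 - n\bigr|\big/\bigl(\|g\| + \sqrt n\bigr) \le \bigl|\|g\|^2 - n\bigr|/\sqrt n$, Cauchy--Schwarz, and $\Var(\|g\|^2) = 2n$ (as $\|g\|^2\sim\chi^2_n$), I get $\E\,\bigl|\,\|g\| - \sqrt n\,\bigr| \le \sqrt{2n}/\sqrt n = \sqrt 2$; alternatively this is immediate from the Gaussian norm concentration bound (Lemma~\ref{lem:gausnorm}). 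Combining the two displays yields
$$
\tv\bigl(X + \tsq g u^\top,\ X + \tsq(\sqrt n/\|g\|)g u^\top\bigr) \le \sqrt 2\,\tsq = O(\tsq),
$$
which is the claim. The hypothesis $\theta\le\thetacomp$ is not actually needed for this bound — it holds for all $\theta$ — and only serves to guarantee the right-hand side is $o(1)$ in the regime of interest, where $\thetacomp \le \sqrt{d/n}\le 1$.

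There is no substantial obstacle here. The only points requiring care are: (i) the perturbation $g u^\top$ is random and shared between the two laws, so one must keep $g$ fixed — via Lemma~\ref{l:GaussianShift}, or equivalently via the conditioning/convexity inequality of Fact~\ref{tvfacts}(3) with $Y=Y'=g$ followed by the Gaussian Pinsker bound \eqref{eq:tv_gauss_pinsker} — rather than treating the mean shift as a deterministic vector; and (ii) one should use the rank-one, unit-norm structure $\|g u^\top\|_\FR = \|g\|$ so that the discrepancy between the two means reduces precisely to $\tsq\,\bigl|\,\|g\| - \sqrt n\,\bigr|$, after which the standard $\chi^2$/Gaussian-concentration estimate closes the argument.
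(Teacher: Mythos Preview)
Your proof is correct and takes essentially the same approach as the paper: condition on $g$, apply the Gaussian mean-shift TV bound (Lemma~\ref{l:GaussianShift}), reduce to $\tsq\,\E\bigl|\,\|g\|-\sqrt n\,\bigr|$, and bound this expectation by $O(1)$. The only cosmetic difference is that the paper first projects the columns onto $g/\|g\|$ to work in $\R^d$ before applying the mean-shift bound, whereas you apply it directly in $\R^{nd}$ via vectorization; since the shift is rank-one in the $g$ direction these yield the identical quantity, and your route is arguably a bit cleaner.
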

\begin{proof}
By convexity of total variation, the quantity in the lemma statement is upper bounded by
$$
\E_g\tv\b(\law(X+ \tsq g u^\top|g), \law (X + \tsq (\sqrt{n}/\|g\|) g u^\top|g)\b)\,.
$$
Treating $g$ as fixed for a moment, the total variation is preserved under projection of the columns of both matrices onto $g/\|g\|$, so 
\begin{align*}
    &\tv\b(\law(X+ \tsq g u^\top|g), \law (X + \tsq (\sqrt{n}/\|g\|) g u^\top|g)\b) \\&= \tv(\N(0,I_d) + \tsq \|g\| u^\top,\N(0,I_d) + \tsq \sqrt{n} u^\top) \\
    &= O\b(\tsq\b|\|g\|-\sqrt{n}\b|\b)\,,
\end{align*}
where in the last line we used Lemma~\ref{l:GaussianShift} -- that is, $\tv\b(\N(\mu_1,I_d), \N(\mu_2,I_d)\b) = O(\|\mu_1-\mu_2\|)$. Now
$$
\b(\E \b|\|g\|-\sqrt{n}\b|\b)^2 \leq \E \b(\|g\|-\sqrt{n}\b)^2  = \Var \|g\| + \b(\E \|g\| - \sqrt{n}\b)^2  =  O(1)\,,
$$
where we used the fact that $\E\|g\|- \sqrt{n}= O(1)$ and $\Var \|g\|=O(1) $ (see, e.g., Exercises 3.1.4 and 3.1.5 of \cite{vershynin2018high}). Therefore, the quantity in the lemma is upper bounded by 
$$
O(\tsq) \E_g \b|\|g\|-\sqrt{n}\b| = O(\tsq) \leq O(\sqrt{\thetacomp}) = o(1)\,,
$$
which concludes the proof.
\end{proof}

\section{Total Variation Equivalence between $\sc$ and $\sw$ for $n\gg d^3$}\label{sec:naive_reduction}

In this section, we obtain a two-way equivalence result for the regime $n\gg d^3$, building on the noise distribution convergence \cite{bubeck2016testing,jiang2015approximation} and the $\sw$ to $\sc$ reduction of \cite{brennan2018reducibility}. Note that an average-case reduction from $\sc$ to $\sw$ in this regime is a direct consequence of our computational equivalence result in Section~\ref{sec:direct_reduction}, which holds for $n\gg d^2$. However, here we establish a stronger statement: not only does there exist an efficiently computable transformation of one into the other, but this transformation \emph{can be taken to be the identity map}, because the two distributions are already close in total variation. This highlights the deep connection between the two models and justifies our focus on the more challenging regime of $n\ll d^3$.

\begin{theorem}[Strong Bijective Equivalence between $\sw$ and $\sc$ for $n \gg d^3$] \label{thm:naive_reduction}
Let $Z\in \R^{n\times d}$ be a sample from the Spiked Covariance Model~\eqref{e:SCM2} with a unit signal vector $u\in \ksparseflat d$\footnote{The result holds for $u\in\ksparse d$ with the proof unchanged.} and let $Y = \sqrt{n} \b(\frac1{n} Z^T Z - I_{d} \b)$ be the corresponding rescaled empirical covariance matrix.  Let $\theta< \thetacomp=\min\{k/\sqrt{n},\sqrt{d/n}\}$, $\lambda = \theta\sqrt{n}$, and $W\sim \GOE(d)$. 
    If $n\gg d^3$, then 
    $$
\TV\big( Y, \lambda u u^\top + W \big) \to 0 \quad \text{as} \quad d\to\infty\,. 
    $$
\end{theorem}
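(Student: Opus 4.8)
The plan is to reduce the spiked claim to the known unspiked convergence \eqref{e:wishart_wigner_conv} of \cite{bubeck2016testing,jiang2015approximation} by a sequence of total-variation comparisons. First, recall the decomposition
\[
\Sigmahat_{\mathrm{sp}} = \frac1n Z^\top Z = \frac1n X^\top X + \theta\frac{\|g\|^2}{n} uu^\top + \frac{\tsq}{n}\big(X^\top g u^\top + u g^\top X\big),
\]
so that $Y = \sqrt n(\Sigmahat_{\mathrm{sp}} - I_d) = \sqrt n(\Sigmahat - I_d) + \theta \frac{\|g\|^2}{\sqrt n} uu^\top + \frac{\tsq}{\sqrt n}(X^\top g u^\top + u g^\top X)$. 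The target is $\lambda uu^\top + W$ with $\lambda = \theta\sqrt n$. I would interpolate through the intermediate random matrix $Y' := \sqrt n(\Sigmahat - I_d) + \lambda uu^\top$, applying the triangle inequality for $\tv$:
\[
\tv\big(Y,\ \lambda uu^\top + W\big)\ \le\ \tv\big(Y,\ Y'\big)\ +\ \tv\big(Y',\ \lambda uu^\top + W\big).
\]
The second term is handled by \eqref{e:wishart_wigner_conv}: since adding the fixed (given $u$) matrix $\lambda uu^\top$ is a Markov kernel independent of the noise, the data-processing-type Lemma~\ref{e:DPIapplication} gives $\tv(Y',\lambda uu^\top + W)\le \tv(\sqrt n(\Sigmahat - I_d),W)\to 0$ for $n\gg d^3$ (conditioning on $u$ and then averaging, via Item~3 of Fact~\ref{tvfacts}, since $u$ is the same on both sides).

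For the first term, $Y$ and $Y'$ differ in two ways: the spike coefficient $\theta\|g\|^2/\sqrt n$ versus $\lambda=\theta\sqrt n$, and the cross-term $\frac{\tsq}{\sqrt n}(X^\top g u^\top + u g^\top X)$. I would dispatch the coefficient discrepancy first: conditioned on $X$ (hence on $\Sigmahat$), replacing $\|g\|^2/n$ by $1$ changes a rank-one mean shift of size $\theta|\,\|g\|^2/n - 1\,|\sqrt n = O(\theta)\,O(\sqrt n/\sqrt n)$... more carefully, $\E|\|g\|^2/n - 1| = O(n^{-1/2})$, so the shift has Frobenius norm $O(\theta\sqrt n \cdot n^{-1/2}) = O(\theta) = O(\thetacomp) = o(1)$; by Lemma~\ref{l:GaussianShift}-type reasoning (or rather its analogue for a matrix Gaussian — here one projects onto the $u$-direction as in the proof of Lemma~\ref{l:gNorm}) this costs $o(1)$ in $\tv$, and an even cleaner route is to invoke Lemma~\ref{l:gNorm} directly to assume $\|g\| = \sqrt n$ from the outset at cost $O(\tsq) = o(1)$. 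For the cross-term, note it is a rank-$\le 2$ matrix $\frac{\tsq}{\sqrt n}(Pu^\top + uP^\top)$ with $P = X^\top g \in \R^d$, and $P \mid X \sim \N(0, n\,\Sigmahat)$, so $\|P\| = O(\sqrt{n}\,\|\Sigmahat\|^{1/2}\sqrt d) = \tO(\sqrt{nd})$ with high probability (using $\|\Sigmahat\| = O(1)$ w.h.p. for $n\gg d$). Thus the cross-term has Frobenius norm $\tO\big(\frac{\tsq}{\sqrt n}\sqrt{nd}\big) = \tO(\tsq\sqrt d)$. Since $\theta \le \thetacomp \le \sqrt{d/n}$, this is $\tO(d^{3/4}/n^{1/4}) = o(1)$ precisely when $n \gg d^3$. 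I would then remove the cross-term in $\tv$ by conditioning on $X$ (so $W$-side noise is $\N(0,1)$ i.i.d., or rather, I would first pass to the Wigner comparison and use that adding an independent $\N(0,1)$-type noise swamps a small mean shift — Lemma~\ref{l:GaussianShift}); here one must be slightly careful about ordering, and the cleanest packaging is: compare $Y$ to $Y'$ by first using \eqref{e:wishart_wigner_conv} to replace the Wishart noise in \emph{both} by $W$ (paying $o(1)$ twice), reducing to $\tv\big(\lambda uu^\top + \text{(cross-term)} + \text{(coeff. error)} + W,\ \lambda uu^\top + W\big)$, which by Lemma~\ref{l:GaussianShift} (with $X\leftarrow W$'s off-diagonal as the ambient Gaussian, $A\leftarrow$ the small perturbations, $B\leftarrow \lambda uu^\top$) is bounded by $\E\|\text{perturbation}\|_{\fr} = o(1)$.

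The main obstacle is the cross-term bound: it is exactly the term that forces the $n\gg d^3$ threshold, so the estimate $\|\frac{\tsq}{\sqrt n}X^\top g\| = \tO(\tsq \sqrt d)$ must be done with the right high-probability control on $\|\Sigmahat\|$ (a standard non-asymptotic covariance bound, e.g. from \cite{vershynin2018high}), and one must verify $\theta\sqrt{d}\to 0$ under $\theta\le \thetacomp$ and $n\gg d^3$ — indeed $\thetacomp\sqrt d \le \sqrt{d/n}\cdot \sqrt d = d/\sqrt n \ll 1$. A minor subtlety is that all the mean-shift lemmas (Lemmas~\ref{l:GaussianShift}, \ref{l:KLGauss}) are stated for vector Gaussians, so I would either apply them coordinate-wise / after vectorizing the matrix (the off-diagonal GOE entries are i.i.d.\ $\N(0,1)$ up to the symmetry constraint, which only helps), or project onto the relevant low-rank directions exactly as in the proof of Lemma~\ref{l:gNorm}. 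Everything else is bookkeeping with the triangle inequality and Fact~\ref{tvfacts}.
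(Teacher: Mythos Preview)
Your argument has a genuine gap at the step you yourself flag as ``slightly careful'': the cross-term $\frac{\tsq}{\sqrt n}(X^\top g\, u^\top + u\, g^\top X)$ depends on the \emph{same} $X$ that generates the Wishart noise $\sqrt n(\Sigmahat - I_d)$. Your ``cleanest packaging'' replaces the Wishart noise by $W$ in $Y$ via \eqref{e:wishart_wigner_conv}, but Lemma~\ref{e:DPIapplication} (the DPI step you need) requires the added term to be independent of both random variables being compared. Since the cross-term is a function of $X$, you cannot write $\tv\big(\sqrt n(\Sigmahat-I_d)+C(X,g),\ W+C(X,g)\big)\le \tv\big(\sqrt n(\Sigmahat-I_d),W\big)$. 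Conditioning on $X$ doesn't help either: then $\sqrt n(\Sigmahat-I_d)$ becomes a constant and Lemma~\ref{l:GaussianShift} no longer applies (no Gaussian ambient noise); conditioning on $g$ leaves the Wishart and the cross-term jointly random in $X$ and correlated.

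The paper's proof (which mirrors that of Theorem~\ref{t:bipartite}) resolves exactly this issue by an orthogonal decomposition: condition on $g$, write $X=\bar X + g\,X_0^\top/\|g\|$ with $\bar X = \Pi_{g^\perp}X$ and $X_0 = X^\top g/\|g\|\sim \N(0,I_d)$, so that $\bar X\perp X_0$. Then $X^\top X=\bar X^\top\bar X + X_0X_0^\top$, the Wishart-like piece $\bar X^\top\bar X$ is \emph{independent} of $X_0$, and the cross-terms and the spike depend only on $X_0$ (and $u,g$). Now one may legitimately replace $\frac1{\sqrt n}\bar X^\top\bar X-\sqrt n I$ by $W$ via Bubeck--Ganguly plus Lemma~\ref{e:DPIapplication}, and then remove the leftover terms $E=\frac1{\sqrt n}X_0X_0^\top$ and $F=\tsq(X_0u^\top+uX_0^\top)$ using Lemma~\ref{l:GaussianShift}, since $W$ is independent of them. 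Your Frobenius-norm estimates for the cross-term (giving $\tO(\tsq\sqrt d)=o(1)$ under $n\gg d^3$) are correct and match the paper's bound $\E\|F\|_\FR^2=\theta d\le (d^3/n)^{1/2}$; what you are missing is precisely this decomposition that manufactures the independence needed to invoke the lemmas.
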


\begin{proof}
    The proof is identical to that of Theorem~\ref{t:bipartite}, with a few key differences. To avoid repetition we only mention those differences:
    \begin{enumerate}
        \item We do not clone the input matrix $Z$, rather simply consider an empirical covariance matrix $Z^{\top} Z$. This removes the need for additional symmetry discussion.
        \item We substitute Proposition~\ref{prop:BBH-bipartite} with the result \eqref{e:wishart_wigner_conv} of Bubeck et al. and Jiang and Li \cite{bubeck2016testing,jiang2015approximation}.
    \end{enumerate}
    These changes adapt the proof of Theorem~\ref{t:bipartite} to the present theorem. 
\end{proof}

\section{Empirical Covariance Matrix $\&$ Cloning: Reduction for $n\gg d^2$}\label{sec:direct_reduction}

In this section we establish an average-case reduction from $\sc_{\gamma}$ to $\sw$ in the regime $\gamma \geq 2$, i.e. $n\gg d^2$. This reduction utilizes the Gaussian Cloning procedure (Section~\ref{subsec:gaus_clone}) to improve upon the total variation equivalence result in Section~\ref{sec:naive_reduction}.

We analyze the reduction Algorithm~\ref{alg:naive2} in Theorem~\ref{t:bipartite} just below to obtain the following Corollary (for proof, see Sec.~\ref{proof:clone_reduction}):
\begin{corollary}[Canonical Reduction from $\sc$ to $\sw$]\label{cor:clone_reduction} With canonical parameter correspondence $\mu = (\alpha, \beta, \gamma) \leftrightarrow \nu = f(\mu)=(\alpha, \beta + \gamma/2)$~\eqref{eq:param}, there exists a $O(d^{2+\gamma})$-time average-case reduction for both detection and recovery from $\sc(\mu)$ to $\sw(\nu=f(\mu))$ for $\gamma \geq 2, \mu\in\Omc_{\gamma}$.
\end{corollary}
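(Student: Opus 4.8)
The plan is to obtain this Corollary from Theorem~\ref{t:bipartite} (the two‑clone ``bipartite'' reduction, Algorithm~\ref{alg:naive2}, stated just below), so most of the substance lives in that theorem; below I sketch the reduction, its analysis, and the parameter bookkeeping. \textbf{The reduction.} Starting from $Z = X + \tsq g u^\top \sim \sc(\mu)$, I would first normalize $g$: replace it by $\bar g = (\sqrt n/\|g\|)g$, which by Lemma~\ref{l:gNorm} costs only $O(\tsq) = o(1)$ in total variation and lets us assume $\|\bar g\|^2 = n$. Then apply $\GaussC$ (Lemma~\ref{lem:cloning-guarantee}) to split $Z$ into independent clones $Z\up1 = X\up1 + \sqrt{\theta/2}\,\bar g u^\top$ and $Z\up2 = X\up2 + \sqrt{\theta/2}\,\bar g u^\top$, and output the symmetrized bipartite inner‑product matrix $Y = \tfrac{1}{\sqrt{2n}}\b((Z\up1)^\top Z\up2 + (Z\up2)^\top Z\up1\b)$. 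Expanding, $Y = N + C + \lambda' uu^\top$ with $\lambda' = \theta\sqrt n/\sqrt 2$, where $N = \tfrac1{\sqrt{2n}}\b((X\up1)^\top X\up2 + (X\up2)^\top X\up1\b)$ is pure bipartite‑Wishart noise and $C = uv^\top + vu^\top$ is a rank‑$\le 2$ cross term with $v = \tfrac{\tsq}{2}(\xi\up1+\xi\up2)$, $\xi\up j := (X\up j)^\top\bar g/\sqrt n \sim \N(0,I_d)$ independent; note $\|v\|^2 \approx \theta d/2$.

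\textbf{Analysis.} Decomposing each column of $X\up j$ along $\bar g$ and its orthocomplement, I would split $N = N_h + N_\perp$: the rank‑$\le 2$ part $N_h = \tfrac{1}{\sqrt{2n}}(\xi\up1(\xi\up2)^\top + \xi\up2(\xi\up1)^\top)$, which satisfies $\|N_h\|_\FR^2 = O(d^2/n) \to 0$, and the orthocomplement part $N_\perp$, which is the symmetrized off‑diagonal block of a $2d$‑dimensional Wishart matrix with $n-1$ rows and is \emph{independent} of $(\bar g,\xi\up1,\xi\up2,u)$. Since $n \gg d^2$, Proposition~\ref{prop:BBH-bipartite} (based on \cite{brennan2021finetti}) gives $\TV(N_\perp,\GOE(d)) = o(1)$, so by the data‑processing inequality (Lemma~\ref{e:DPIapplication}) I can replace $N_\perp$ by a fresh $W \sim \GOE(d)$ independent of the rest. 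Subtracting the (conditionally on $u$) fixed shift $\lambda' uu^\top$ — a TV‑preserving bijection — it then remains to show $\TV(W + P,\, W) = o(1)$ for $P := N_h + C$. For this I would use the Ingster‑type $\chi^2$ identity (cf.\ Lemma~\ref{l:GuassianChi2}), $1+\chi^2(W+P\,\|\,W) = \E_{P,\widetilde P}\exp\b(\tfrac12\la P,\widetilde P\ra_{\FR}\b)$ with $\widetilde P$ an i.i.d.\ copy sharing the same $u$; the dominant term is $\la C,\widetilde C\ra_{\FR} = 2\la v,\widetilde v\ra + 2\la u,v\ra\la u,\widetilde v\ra$, and a direct Gaussian computation yields $\E\exp\b(\tfrac12\la C,\widetilde C\ra_\FR\b) \approx (1-\theta^2/4)^{-d/2} = \exp\b(O(d\theta^2)\b)$, while the $N_h$‑terms are lower order because $\|N_h\|_\FR \to 0$.

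\textbf{Main obstacle.} The crux is precisely this last bound: one needs $d\theta^2 \to 0$. This is where $\gamma\ge 2$ enters — since $\mu\in\Omc_\gamma$ forces $\theta$ to be polynomially below $\thetacomp \le \sqrt{d/n}$, we have $d\theta^2 = d^{1+2\beta}$ with $1+2\beta < 2-\gamma \le 0$, hence $d\theta^2 \to 0$ and $\chi^2 \to 0$, so $\TV \to 0$. I expect this to be the delicate point: when $n$ is only $\gg d^2$ the cross term $C$ has $\|C\|_\FR \approx \sqrt{\theta d}$, which can diverge, so the crude Frobenius/Gaussian‑shift estimate that suffices in the $n\gg d^3$ regime of Theorem~\ref{thm:naive_reduction} breaks down, and one genuinely needs both the low‑rank $\chi^2$ argument above and the faster bipartite CLT (valid already for $n\gg d^2$) of \cite{brennan2021finetti}.

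\textbf{Bookkeeping.} The output $Y$ is $o(1)$‑close in total variation to $\lambda' uu^\top + W$, i.e.\ to $\sw(d,k,\lambda')$ with the \emph{unchanged} sparse signal $u$; since $\log_d\lambda' = \log_d(\theta\sqrt n) - o(1) = \beta + \gamma/2 + o(1)$, this is an instance of $\sw(\nu)$ with $\nu = f(\mu)$, and $\mu\in\Omc_\gamma$ implies $\nu\in\Omw$ because the statistical and computational thresholds in Tables~\ref{tab:snr_summary}--\ref{tab:stat_summary} correspond under $f$. The null case $\theta = 0$ collapses to $\TV(N,\GOE(d)) = o(1)$, again by Proposition~\ref{prop:BBH-bipartite}, so the reduction is valid for detection, and preservation of $u$ makes it valid for recovery as well. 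Finally, the running time is dominated by forming the $d\times d$ product $(Z\up1)^\top Z\up2$, which costs $O(nd^2) = O(d^{2+\gamma})$ (cloning and symmetrization are cheaper), and — unlike the $n\gg d$ reduction of Section~\ref{sec:gs_reduction} — no Gaussianization is needed, since $Y$ is already a continuous Gaussian ensemble.
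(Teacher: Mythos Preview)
Your sketch of the reduction and its analysis is essentially the paper's proof of Theorem~\ref{t:bipartite}: decompose along $g$ and its orthocomplement, invoke the bipartite Wishart CLT of Proposition~\ref{prop:BBH-bipartite} for the independent noise block, kill the rank-one cross piece $E$ (your $N_h$) by a Frobenius bound, and kill the two rank-one cross terms $F_1,F_2$ (your $C$) by a $\chi^2$/Ingster argument giving $(1-\theta^2)^{-d/2}$. The paper removes $E,F_1,F_2$ sequentially rather than bundling them into a single perturbation $P$, but this is cosmetic.

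There is, however, a genuine gap at the boundary $\gamma=2$. Your analysis invokes Proposition~\ref{prop:BBH-bipartite} and the bound $\|N_h\|_\FR^2=O(d^2/n)\to 0$ under the hypothesis ``$n\gg d^2$''. But in the parametrization $n=d^\gamma$, taking $\gamma=2$ gives $n=d^2$ exactly, so $d^2/n=1$ and neither the bipartite CLT nor the Frobenius bound on $N_h$ goes through. Your $\chi^2$ bound on $C$ \emph{does} survive at $\gamma=2$ because $\mu\in\Omc_\gamma$ forces $\beta$ strictly below $\betacomp_\theta$, hence $d\theta^2=d^{1+2\beta}\to 0$ as you note; but that is not the bottleneck. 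The paper's proof of the Corollary handles this boundary case separately: it first establishes the reduction for $\gamma>2$ directly from Theorem~\ref{t:bipartite}, and then for $\gamma=2$ it prepends a simple sample-augmentation step (clone $Z$, randomly rotate one copy, and stack the two to get a $\sc(d,k,\theta/2,2n)$ instance; iterate $\log_2(d^\eta)$ times) to push $n$ up to $d^{2+\eta}$ at the cost of a $d^{-\eta}$ factor in $\theta$, and then applies the $\gamma>2$ case. Since $\eta>0$ can be taken arbitrarily small, the limiting parameter correspondence is still $\nu=(\alpha,\beta+\gamma/2)$, and the formal definition of average-case reduction (Def.~\ref{def:avg_case_points}, which only requires $\nu_i\to\nu$) absorbs the $o(1)$ loss in the signal exponent. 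Your bookkeeping paragraph should include this two-step construction for $\gamma=2$.
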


\vspace{3mm}
\begin{algorithm}\SetAlgoLined\SetAlgoVlined\DontPrintSemicolon
    \KwIn{
    $Z_1,\dots, Z_N\in \R^M$
    }
    \KwOut{$Y\in \R^{{N\times N}}$}
    
    \BlankLine

    \tcp{Clone data into two copies}

    $\{Z^{(1)}_1,\dots, Z^{(1)}_N\}, \{Z^{(2)}_1,\dots, Z^{(2)}_N\} \gets \textsc{GaussClone}(\{Z_1,\dots, Z_N\})$
    
    \BlankLine
    
    \tcp{Compute the inner products}

    For every $i,j\in [N]$, let
    $$Y_{ij} = \frac1{\sqrt{M}} \la Z_i\up1,Z_j\up2\ra $$

    \tcp{Reflect for symmetry}

    $\Ysym \gets \frac{Y + Y^T}{\sqrt{2}}$
    
    Return $\Ysym\in \R^{N\times N}$
    \caption{$\CloneCov(Z_1,\dots, Z_N)$}
\label{alg:naive2}
\end{algorithm}
\vspace{3mm}

\begin{theorem}[TV Guarantees for Algorithm~\ref{alg:naive2}] \label{t:bipartite}

Let $Z= X + \tsq g u^\top\in \R^{n\times d}$ 
be data from the Spiked Covariance Model~\eqref{e:SCM2} with a unit signal vector $u\in \ksparseflat d$\footnote{The result holds for $u\in\ksparse d$ with the proof unchanged.}. Let
 $\Ysym\in \R^{d\times d}$ be the output of $\CloneCov(Z_1,\dots,Z_d)$ (Algorithm~\ref{alg:naive2}). Let $\theta\leq \thetacomp=\min\{k/\sqrt{n},\sqrt{d/n}\}$, $\lambda = \theta\sqrt{n}$, and $W\sim \GOE(d)$. 
    If $n \gg d^2$, then 
    $$
\TV\big( \Ysym, \lambda u u^\top + W \big) \to 0 \quad \text{as}\quad d\to\infty\,.
    $$
    Moreover, $\CloneCov(Z_1,\dots,Z_d)$ can be computed in $O(d^2 n)$-time.
\end{theorem}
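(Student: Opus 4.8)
The plan is to expand the output of $\CloneCov$ as (i.i.d.\ noise) $+$ (two cross terms) $+$ (signal), then show that everything but the noise and the signal is negligible in total variation and that the noise concentrates to $\GOE(d)$. By the cloning guarantee (Lemma~\ref{lem:cloning-guarantee}), after the $\GaussC$ step we have $Z\up1 = X\up1 + \tsq\, g u^\top/\sqrt2$ and $Z\up2 = X\up2 + \tsq\, g u^\top/\sqrt2$ with $X\up1,X\up2\sim\N(0,I_n)^{\otimes d}$ independent of each other and of $g$, so
\[
 Y = \tfrac1{\sqrt n}(Z\up1)^\top Z\up2 = \tfrac1{\sqrt n}(X\up1)^\top X\up2 + \tfrac{\tsq}{\sqrt{2n}}(X\up1)^\top g\, u^\top + \tfrac{\tsq}{\sqrt{2n}} u\, g^\top X\up2 + \tfrac{\theta\|g\|^2}{2\sqrt n} u u^\top .
\]
First I would invoke Lemma~\ref{l:gNorm} together with the data-processing inequality (Fact~\ref{tvfacts}, applied to the Markov kernel $\CloneCov$) to reduce, at cost $O(\tsq)=o(1)$, to the case $\|g\|=\sqrt n$; the signal term then equals $\tfrac\lambda2 u u^\top$ since $\lambda=\tsq\sqrt n$, and after the final symmetrization it will contribute a symmetric rank-one term of strength $\Theta(\lambda)$, which we identify with $\lambda u u^\top$ (the absolute constant being immaterial).

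Next, $Y$ is unchanged if we replace $(Z\up1,Z\up2)$ by $(QZ\up1,QZ\up2)$ for any orthogonal $Q\in\R^{n\times n}$, so by rotational invariance of the Gaussians $X\up1,X\up2$ (which are independent of $g$) I would assume $g=\sqrt n\,e_1$, with $X\up1,X\up2$ still fresh independent $\N(0,I_n)^{\otimes d}$. Writing $\xi\up l\in\R^d$ for the first row of $X\up l$ and $\hat X\up l\in\R^{(n-1)\times d}$ for rows $2,\dots,n$, this gives $Y = \tfrac1{\sqrt n}(\hat X\up1)^\top\hat X\up2 + \tfrac1{\sqrt n}\xi\up1(\xi\up2)^\top + \sqrt{\theta/2}\,\xi\up1 u^\top + \sqrt{\theta/2}\,u(\xi\up2)^\top + \tfrac\lambda2 u u^\top$. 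The gain from rotating is that $\tfrac1{\sqrt n}(\hat X\up1)^\top\hat X\up2$ — a $\tfrac1{\sqrt n}$-scaled off-diagonal block of a Wishart on $n-1$ samples — is now \emph{independent} of everything else. Proposition~\ref{prop:BBH-bipartite} (the bipartite Wishart CLT of Brennan et al.), together with the trivial rescaling $\sqrt{(n-1)/n}=1-O(1/n)$, lets me replace it by an i.i.d.\ Gaussian matrix $G\sim\N(0,1)^{\otimes d\times d}$ at cost $o(1)$ once $n\gg d^2$; this is where $\gamma\ge 2$ enters. The leftover noise piece $\tfrac1{\sqrt n}\xi\up1(\xi\up2)^\top$ has $\E\|\cdot\|_\FR = \tfrac1{\sqrt n}(\E\|\xi\up1\|)^2\le d/\sqrt n = o(1)$, so it is dropped via Lemma~\ref{l:GaussianShift} (applied with the isotropic $G$). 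At this stage $Y \approx_{o(1)} G + \sqrt{\theta/2}(\xi\up1 u^\top + u(\xi\up2)^\top) + \tfrac\lambda2 u u^\top$.

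The main obstacle is removing the two cross terms: each has Frobenius norm $\asymp\sqrt{\theta d}$, which for $\theta\le\thetacomp\le\sqrt{d/n}$ is $o(1)$ only when $n\gg d^3$, so the naive shift bound of Lemma~\ref{l:GaussianShift} is useless in the target regime $n\gg d^2$. The resolution is that $G + \sqrt{\theta/2}(\xi\up1 u^\top + u(\xi\up2)^\top) + \tfrac\lambda2 u u^\top$ is \emph{jointly Gaussian} (a linear image of $(G,\xi\up1,\xi\up2)$ shifted by the deterministic $\tfrac\lambda2 u u^\top$), with the same mean as $G+\tfrac\lambda2 u u^\top$ and with covariance on $\R^{d^2}$ equal to $I_{d^2}+\tfrac\theta2\big((uu^\top)\otimes I_d + I_d\otimes(uu^\top)\big)$; since $\|u\|=1$ the perturbation has eigenvalues $\theta$ (once), $\theta/2$ ($2(d-1)$ times), and $0$. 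By Lemma~\ref{l:KLGauss}, $\KL = \tfrac12\big[\theta d - \log(1+\theta) - 2(d-1)\log(1+\theta/2)\big] = O(d\theta^2)$, so Pinsker gives total variation $O(\sqrt d\,\theta) = O(d/\sqrt n) = o(1)$ (equivalently one bounds the $\chi^2$-divergence via Lemma~\ref{l:GuassianChi2}). The crucial phenomenon is that a \emph{random} rank-one Gaussian shift of isotropic Gaussian noise perturbs the law only to second order in the shift scale, so it is negligible already at $n\gg d^2$. Hence $Y \approx_{o(1)} G + \tfrac\lambda2 u u^\top$.

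Finally, applying the deterministic symmetrization $M\mapsto(M+M^\top)/\sqrt2$ and the data-processing inequality, $\Ysym \approx_{o(1)} \tfrac1{\sqrt2}(G+G^\top) + \lambda u u^\top$, where the rank-one signal strength is tracked as above and $\tfrac1{\sqrt2}(G+G^\top)\sim\GOE(d)$ by definition of $\GOE$. Chaining the $O(\tsq)$, Proposition~\ref{prop:BBH-bipartite}, $d/\sqrt n$, $\sqrt d\,\theta$, and symmetrization bounds through the triangle inequality yields $\TV(\Ysym,\lambda u u^\top + W)\to 0$ for $n\gg d^2$. The runtime claim is immediate: $\GaussC$ costs $O(dn)$, forming the $d^2$ length-$n$ inner products costs $O(d^2 n)$, and symmetrization $O(d^2)$, for a total of $O(d^2 n)$.
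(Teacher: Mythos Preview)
Your proof is correct and follows essentially the same route as the paper: normalize $\|g\|$, decompose along $g$ (your rotation to $e_1$ is exactly the paper's projection onto $g$ and $g^\perp$), apply the bipartite Wishart CLT (Proposition~\ref{prop:BBH-bipartite}), drop the small $\tfrac1{\sqrt n}\xi\up1(\xi\up2)^\top$ term by its Frobenius norm, and then remove the cross terms by a divergence bound. The only cosmetic difference is that the paper peels off $F_1$ and $F_2$ one at a time via the $\chi^2$ bound of Lemma~\ref{l:GuassianChi2}, whereas you handle both at once by computing the joint Gaussian covariance and bounding the KL directly; both arguments give the same $O(\sqrt d\,\theta)=O(d/\sqrt n)$ total-variation cost.
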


In our proof of Theorem~\ref{t:bipartite}, we will use the following result from \cite{brennan2021finetti}:
\begin{proposition}[Theorem 2.6 of Brennan-Bresler-Huang \cite{brennan2021finetti}]\label{prop:BBH-bipartite}
    Let $X,Y\in \R^{n\times d}$ and $G\in \R^{d\times d}$ have i.i.d. $\N(0,1)$ entries. 
    If $n\gg d^2$, then 
    $$
\tv(n^{-1/2}X^\top Y, G) \to 0\,,
    $$
    as $n\to \infty$.
\end{proposition}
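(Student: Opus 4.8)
This statement is Theorem~2.6 of \cite{brennan2021finetti}; here is the route I would take to prove it.

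\textbf{Reduction to a Gaussian scale mixture.} First I would condition on $X$. Since $Y$ has columns that are i.i.d.\ $\N(0,I_n)$ and independent of $X$, the columns of $M:=n^{-1/2}X^\top Y$ are, conditionally on $X$, i.i.d.\ $\N(0,\hat\Sigma)$ with $\hat\Sigma:=n^{-1}X^\top X$. Hence $\mathcal L(M)=\E_X[\N(0,\hat\Sigma)^{\otimes d}]$ is a centered Gaussian \emph{scale} mixture and $\mathcal L(G)=\N(0,I_d)^{\otimes d}$, and the goal becomes showing their total variation is $o(1)$ once $n\gg d^2$. The naive bound — Jensen, then Pinsker together with the product structure of the KL divergence — gives $\tv(M,G)\le\E_X\sqrt{\tfrac d2\,\kl(\N(0,\hat\Sigma)\,\|\,\N(0,I_d))}\lesssim\sqrt d\cdot\E\|\hat\Sigma-I_d\|_{\mathrm F}\asymp d^{3/2}/\sqrt n$, which only closes for $n\gg d^3$. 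The extra factor of $d$ must be recovered by exploiting the averaging over the random mixing covariance $\hat\Sigma$, and a second-moment computation is the tool for that.

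\textbf{Truncation and the $\chi^2$ identity.} Standard Wishart concentration (e.g.\ Davidson--Szarek, together with concentration of $\|\hat\Sigma-I_d\|_{\mathrm F}^2$ around its mean $(d^2+d)/n$) shows that the event $\mathcal E=\{\|\hat\Sigma-I_d\|_{\mathrm{op}}\le 1/2,\ \|\hat\Sigma-I_d\|_{\mathrm F}^2\le 2d^2/n\}$ has probability $1-o(1)$ for $n\gg d$. Replacing $\hat\Sigma$ by $I_d$ off $\mathcal E$ changes $\mathcal L(M)$ by at most $\P(\mathcal E^c)=o(1)$ in TV, so it suffices to bound $\tv(\mathcal L(\tilde M),\mathcal L(G))$ for the truncated mixture $\tilde M$, whose mixing covariance $\tilde\Sigma$ always satisfies $\tfrac12 I_d\preceq\tilde\Sigma\preceq\tfrac32 I_d$ and $\|\tilde\Sigma-I_d\|_{\mathrm F}^2\le 2d^2/n$. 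I would then use $\tv\le\tfrac12\sqrt{\chi^2}$ together with the Ingster-type identity for scale mixtures: with $\tilde\Sigma,\tilde\Sigma'$ i.i.d.\ copies, $E:=\tilde\Sigma-I_d$, $E':=\tilde\Sigma'-I_d$, and the elementary evaluation $\int\tfrac{d\N(0,A)}{d\N(0,I_d)}\tfrac{d\N(0,B)}{d\N(0,I_d)}\,d\N(0,I_d)=\det(A+B-AB)^{-1/2}$ applied with $A=I_d+E$, $B=I_d+E'$ (so $A+B-AB=I_d-EE'$),
$$
1+\chi^2\!\left(\mathcal L(\tilde M)\,\|\,\mathcal L(G)\right)=\E_{E,E'}\!\left[\det(I_d-EE')^{-d/2}\right],
$$
and it remains to prove that this tends to $1$. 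On $\mathcal E\cap\mathcal E'$ one has $\|EE'\|_{\mathrm{op}}\le 1/4$, so $-\tfrac d2\log\det(I_d-EE')=\tfrac d2\tr(EE')+\tfrac d2\sum_{k\ge 2}k^{-1}\tr((EE')^k)$, with the tail of the series bounded in absolute value by $Cd\|EE'\|_{\mathrm F}^2$.

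\textbf{The decisive calculation and the error terms.} The key point is that the leading term vanishes in expectation: the entries of $E=\hat\Sigma-I_d$ are pairwise uncorrelated with $\Var(E_{ij})\asymp 1/n$, so $\E[\tr(EE')]=0$ and $\E[(\tr(EE'))^2]\asymp\sum_{i,j}\Var(E_{ij})\,\Var(E'_{ij})\asymp d^2/n^2$, whence $\Var(\tfrac d2\tr(EE'))\asymp d^4/n^2\to 0$ exactly when $n\gg d^2$ — this is where the threshold comes from. Since $\tr(EE')$ is a well-behaved bilinear form in two independent Wishart noises, a conditional Bernstein bound (condition on $E$; the $E'_{ij}$ are nearly independent sub-exponentials of parameter $O(1/n)$) gives $\E[\exp(\tfrac d2\tr(EE'))\mid E]\le\exp(Cd^2\|E\|_{\mathrm F}^2/n)$, and on $\mathcal E$ one has $\|E\|_{\mathrm F}^2\le 2d^2/n$, so the outer expectation is $\exp(O(d^4/n^2))\to 1$. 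For the higher-order part one controls $\E[\exp(Cd\|EE'\|_{\mathrm F}^2)\,\mathds{1}_{\mathcal E\cap\mathcal E'}]$; here the essential observation is that $\|EE'\|_{\mathrm F}^2$ must be estimated using the cross-independence, $\E_{E'}[\|EE'\|_{\mathrm F}^2\mid E]=\tr(E^2\,\E[E'^2])=\tfrac{d+1}{n}\|E\|_{\mathrm F}^2$ because $\E[E'^2]=\tfrac{d+1}{n}I_d$ to leading order (the diagonal dominates), so $\E\|EE'\|_{\mathrm F}^2\asymp d^3/n^2$ and $d\|EE'\|_{\mathrm F}^2$ has mean $\asymp d^4/n^2$; a concentration estimate for this polynomial in the Gaussian entries (using the truncation) then makes this exponential moment $\to 1$ as well. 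Combining the two pieces by Cauchy--Schwarz gives $1+\chi^2\to 1$, hence $\tv(n^{-1/2}X^\top Y,G)\to 0$ for $n\gg d^2$.

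\textbf{Main obstacle.} Conceptually, the crux is to let the averaging over $\hat\Sigma$ work for you: the cancellation $\E[\tr(EE')]=0$ between the two independent copies is precisely what improves $n\gg d^3$ (the KL/Pinsker bound) to $n\gg d^2$, so a second-moment rather than a KL argument is unavoidable. Technically, the hard part is the exponential-moment control — one must truncate to a high-probability event merely to make $\det(I_d-EE')^{-d/2}$ and the relevant MGFs finite, while being careful that crude operator-norm estimates such as $\|EE'\|_{\mathrm F}^2\le\|E\|_{\mathrm{op}}^2\|E'\|_{\mathrm F}^2$ reintroduce an extra factor of $d$ and are therefore too lossy; the correct bounds use the independence of $E$ and $E'$ to estimate $\E\|EE'\|_{\mathrm F}^2$ and the higher $\log\det$ terms directly. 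The complete argument is carried out in \cite{brennan2021finetti}.
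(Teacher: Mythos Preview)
The paper does not prove Proposition~\ref{prop:BBH-bipartite}; it is quoted as Theorem~2.6 of \cite{brennan2021finetti} and invoked as a black box in the proof of Theorem~\ref{t:bipartite}. Your sketch is a correct outline of the standard second-moment route for this class of Wishart-versus-Gaussian threshold results and matches the approach of the cited reference: condition on $X$ to reduce to a Gaussian scale mixture, apply the Ingster $\chi^2$ identity to obtain $\E_{E,E'}\det(I_d-EE')^{-d/2}$, and exploit the cancellation $\E[\tr(EE')]=0$ between two independent Wishart noises to improve the naive $n\gg d^3$ KL/Pinsker bound to $n\gg d^2$; the truncation and exponential-moment control you describe are indeed the technical core.
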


\begin{proof}(of Theorem~\ref{t:bipartite})

First, $\GaussC (Z_1,\dots,Z_d)$ works in $O(dn)$-time and the inner product step takes $O(d^2 n)$, resulting in total runtime $O(d^2 n)$ of $\CloneCov$.

We prove 
$$
\TV\big( Y, \lambda u u^\top + G \big) \to 0\,, 
    $$
where $Y$ is the output before the last reflection step and $G\sim N(0,I_d)^{\otimes d}$. Note that given this bound the statement for $\Ysym$ immediately follows from the DPI for total variation (Lemma~\ref{tvfacts}).

For each column $i\in [d]$,
$Z_i = X_i + \tsq u_i g\in \R^n$, where $X_i\sim \N(0,I_n)$ and $g\sim \N(0,I_n)$. By Lemma~\ref{l:gNorm} and the triangle inequality for total variation we may assume that $\|g\|=\sqrt{n}$.
Throughout the proof we condition on $g$ and let the columns of $\bar X\in \R^{n\times d}$ be defined by $\Xbar_i = \Pi_{g^\perp} X_i\in \R^n$, while the entries of $X_0\in \R^d$ are defined by $X_{0,i} = \la X_i, g/\|g\|\ra$, so that $X_0=(X_{0,i})_i\sim \N(0,I_d)$. Note that $\Xbar$ and $X_0$ are conditionally independent given $g$ and that $\Xbar^\top g=0$. 
We decompose $Z\in \R^{n\times d}$ as 
$$
Z = \Xbar + g(\|g\|\inv X_{0}+ \tsq u)^\top\,.
$$
Decomposing both $Z\up 1$ and $Z\up 2$ this way we obtain
\begin{align}
Y &=
   \frac1{\sqrt{n}} (Z\up 1)^\top Z\up 2  \nonumber \\&=  \frac1{\sqrt{n}}(\Xbar \up 1)^\top \Xbar \up 2 +  \frac1{\sqrt{n}}\big[ \theta\|g\|^2 u u^\top + (X_0\up 1)(X_0\up 2)^\top  + \|g\|(X_0\up 1) \tsq u^\top + \|g\|\tsq u (X_0\up 2)^\top\big] \nonumber
   \\&\approx_{o(1)}
   G+ \sqrt{n}\theta u u^\top +  \frac1{\sqrt{n}} (X_0\up 1)(X_0\up 2)^\top  + (X_0\up 1) \tsq u^\top + \tsq u (X_0\up 2)^\top \nonumber
   \\&=:G+ \lambda  u u^\top + E + F_1 + F_2^\top
   \,, \label{e:innerProducts1}
\end{align}
where $G\sim N(0,I_d)^{\otimes d}$ is independent of $X_0\up 1, X_0\up 2, u$.
In the third line we used Prop.~\ref{prop:BBH-bipartite}, which states that $\frac1{\sqrt{n}}(\Xbar \up 1)^\top \Xbar \up 2 \approx G$, and
Lemma~\ref{e:DPIapplication}, valid due to independence of $\Xbar$ and $X_0$, as well as
that $\|g\|=\sqrt{n}$. What remains is to remove $E,F_1,F_2$, which we do in a sequence of steps.

\paragraph{Step 1: Removing $E=\frac1{\sqrt{n}} (X_0\up 1)(X_0\up 2)^\top$.}
Note that $X_{0,i}=\la X_i, g/\|g\|\ra\sim \N(0,1)$, so
\begin{align*}
    \E \|E\|_\fr^2 = \frac{d^2}{n} \E (X_{i,0}\up 1)^2\E (X_{j,0}\up 2)^2
= \frac{d^2}{n} 
=o(1) 
    \,,
\end{align*}
and $\E \|E\|_\fr \leq \sqrt{\E \|E\|_\fr^2}=o(1)$.
With this, Lemma \ref{l:GaussianShift} (TV between Gaussians of different means) implies that the quantity in \eqref{e:innerProducts1} is approximated in TV as 
$$G+\lambda  u u^\top + E + F_1 + F_2
   \approx_{o(1)} G + \lambda  u u^\top + F_1 + F_2\,.
   $$

 \paragraph{Step 2: Removing $F_1=(X_0\up 1) \tsq u^\top$ (in the case $n\gg d^3$).}

 Before showing how to remove $F_1$ using a $\chi^2$ argument, we show a simple way to bound these terms in case $n\gg d^3$. 
 We have that
 $$
 \E \|F_1\|_\FR^2 = \theta d \E (X_{0,i}\up 1)^2
= \theta d\leq (d^3 / n)^{1/2}\,, 
 $$
so $\E \|F_1\|_\FR \leq \sqrt{\E \|F_1\|_\FR^2}=o(1)$ when $n \gg d^3$.
Thus, we can similarly remove $F_1$ using Lemma \ref{l:GaussianShift} (TV between Gaussians of different means):
$$G+\lambda  u u^\top + F_1 + F_2
   \approx_{o(1)} G + \lambda  u u^\top + F_2\,.
   $$
 
\paragraph{Step 2': Removing $F_1=(X_0\up 1) \tsq u^\top$.}
We now show how to remove $F_1$ whenever $n\gg d^2$.
We apply the data processing inequality (to the operation of adding $F_2$ and using the fact that $F_2$ is independent of $G$ and $F_1$) and then Lemma~\ref{l:GuassianChi2} to obtain that
\begin{align*}
    \tv(G + \lambda  u u^\top + F_1 + F_2, G + \lambda  u u^\top + F_2) 
    &\leq 
     \tv(G  + F_1, G )\leq \sqrt{ \E \exp\la F_1, F_1'\ra -1 }\,,
\end{align*}
where $F_1'$ is an independent copy of $F_1$. 
Letting $A,B\stackrel{\mathrm{i.i.d.}}\sim\N(0,I_d)$ (so that $A \stackrel{d}= X_0\up1$ and $B= X_0\up2$), we have that
\begin{align*}
    \theta^{-1}\la F_1,F_1'\ra
    \equaldist \sum_{ij}  A_i u_j B_i u_j =
    \la  A,  B\ra \,.
\end{align*}
Recall that the MGF  of a 
$U\sim N(0,\sigma^2)$ random variable is $\E e^{tU}=e^{t^2\sigma^2/2}$ and that of a
$\chi^2$ random variable (such as $B_i^2$) is
$\E \exp(t B_i^2) = (1 - 2t)^{-1/2}$ for $|t|<1/2$. Using this and that $\la A, x\ra\sim N(0,\|x\|^2) $ for any fixed vector $x$, we obtain
\begin{align*}
 \E \exp \la  F_1,F_1'\ra = \E\b[\E[\exp (\theta \la A, B\ra)|B]\b] = \E[\exp(\theta^2\|B\|^2 /2)] &= (1-\theta^2)^{-d/2} \,.
\end{align*}
We obtain $ \E \exp \la A, B\ra \leq (1-\theta^2)^{-d/2} \leq 1 + O(\theta^2 d) \leq 1 + O(d^2/ n) = 1+o(1)$.

   \paragraph{Step 3: Removing $F_2$.} An identical argument to the previous one shows that 
   $$
G + \lambda  u u^\top + F_2 \approx_{o(1)} G + \lambda  u u^\top\,.
   $$
   Combining the steps via the triangle inequality completes the proof. 
\end{proof}

\section{Orthogonalization via Gram-Schmidt: Reduction for $n \gg d$}\label{sec:gs_reduction}

In this section we establish an average-case reduction from $\sc$ to $\sw$ for the general case $n \gg d$, i.e. for arbitrary $\gamma \geq 1$.
We combine Gaussian Cloning (Sec.~\ref{subsec:gaus_clone}), orthogonalization through Gram-Schmidt process (Sec.~\ref{subsec:gs}), Bernoulli Denoising procedure (Sec.~\ref{subsec:bern_denoise}), and the ``flipping" procedure in the reduction Algorithm~\ref{alg:wish-ssbm}. In the analysis of Algorithm~\ref{alg:wish-ssbm} in Theorem~\ref{t:wishSSBM} we rely on the key perturbation Lemma~\ref{lem:gZ_strong} for Gram-Schmidt process. Theorem~\ref{t:wishSSBM} implies the following corollary (for proof, see Sec.~\ref{proof:gs_reduction}).

\begin{corollary}[Canonical Reduction from $\sc$ to $\sw$ at Comp. Thresh.]\label{cor:gs_reduction}

With canonical parameter correspondence $\mu = (\alpha, \beta, \gamma) \leftrightarrow \nu = f(\mu)=(\alpha, \beta + \gamma/2)$~\eqref{eq:param}, there exists an $O(d^{2+\gamma})$-time average-case reduction at the computational threshold for both detection and recovery\footnote{As described in Section~\ref{sec:avg_case}, we prove the reduction for recovery for a slightly restricted class of potential recovery algorithms for $\sw$, which we believe to be a very mild restriction. The recovery algorithm in this case also finishes in $O(d^{2+\gamma})$ and given a recovery algorithm for $\sw$ with loss $<{\ell'}^{\star}$, is guaranteed to output an estimate for $\sc$ with loss at most ${\ell'}^{\star} + o(1)$.} from $\sc(\mu)$ to $\sw(\nu=f(\mu))$ for $\alpha \leq 1/2, \gamma\geq1, \mu \in \CCc_{\gamma}$.

\end{corollary}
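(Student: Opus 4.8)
The plan is to derive Corollary~\ref{cor:gs_reduction} from the total-variation guarantee for Algorithm~\ref{alg:wish-ssbm} (Theorem~\ref{t:wishSSBM}), exactly mirroring how Corollary~\ref{cor:clone_reduction} follows from Theorem~\ref{t:bipartite}: the corollary itself is parameter bookkeeping plus an invocation of the reduction-transfer lemmas, with all the genuine work deferred to Theorem~\ref{t:wishSSBM}. First I would instantiate the polynomial parametrization. For $\mu = (\alpha,\beta,\gamma) \in \CCc_{\gamma}$ with $\alpha \le 1/2$ we have $\beta = \betacomp_{\theta}(\alpha,\gamma) = \alpha - \gamma/2$, hence $k = d^\alpha$, $\theta = d^{\alpha-\gamma/2}$, $n = d^\gamma$, and $\theta = k/\sqrt n = \thetacomp$ (since $k \le \sqrt d$ makes $k/\sqrt n$ the active branch of the min). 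In particular $n \gg d$ and $k \lesssim \sqrt d$ and $\theta \le \thetacomp$, which I expect to be precisely the hypotheses under which Theorem~\ref{t:wishSSBM} asserts that Algorithm~\ref{alg:wish-ssbm} maps a $\sc(d,k,\theta,n)$ instance to a matrix whose law is within $o(1)$ total variation of $\sw(d,k,\lambda)$ with $\lambda = \theta\sqrt n = d^\alpha$, where the signal vector is $u$ up to entrywise sign changes. Since $\alpha \le 1/2$, the target is $\nu = (\alpha,\alpha)$, which satisfies $\alpha = \min\{\alpha,1/2\} = \betacomp_{\lambda}(\alpha)$, i.e.\ $\nu \in \CCw$ and $\nu = f(\mu)$. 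Thus Algorithm~\ref{alg:wish-ssbm} realizes the canonical correspondence~\eqref{eq:param} and lands exactly on the computational threshold of $\sw$, and because sign flips preserve membership in $\ksparseflat d$ no relaxation to $\ksparse d$ is needed in this regime.

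Second, I would bound the running time. The dominant cost is the Gram-Schmidt step $\GS(Z\up2)$, which orthogonalizes $d$ vectors in $\R^n$ in time $O(d^2 n) = O(d^{2+\gamma})$; every other subroutine is lower order — Gaussian cloning (Lemma~\ref{lem:cloning-guarantee}) costs $\tO(dn)$, the flipping step and the $d^2$ per-entry Bernoulli-denoising calls (Lemma~\ref{lem:denoise_guarantee}) cost $\tO(d^2)$, and Gaussianization (Lemma~\ref{cor:gaussianize_}) costs $O(d^2 \log n)$. Before running the algorithm one may invoke Lemma~\ref{l:gNorm} to assume $\|g\| = \sqrt n$ exactly, incurring total-variation error $O(\sqrt{\theta}) = o(1)$ because $\theta \le \thetacomp$. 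This gives the claimed $O(d^{2+\gamma})$ bound.

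Third, I would turn the total-variation closeness into an average-case reduction in the sense of Definitions~\ref{def:avg_case_points} and~\ref{def:avg_case_points_threshold}. In the null case $\theta = 0$ the algorithm outputs a Gaussianization of i.i.d.\ $\pm1$ noise, hence a matrix within $o(1)$ total variation of $\GOE(d) = \sw(d,k,0)$; in the planted case it outputs a matrix within $o(1)$ of $\sw(d,k,\lambda)$ with the correct sparsity and signal strength. Detection therefore transfers by Lemma~\ref{lem:red_detect}. For recovery, the reduction preserves the signal vector up to entrywise signs, so composing a $\sw$ recovery algorithm with the reduction (and applying the same signs back) yields a $\sc$ estimate whose loss is inflated by only $o(1)$; this gives the recovery reduction for the restricted class of recovery algorithms fixed in Section~\ref{sec:avg_case_recovery}, via Lemma~\ref{lem:red_recover}.

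The only real obstacle is entirely inside Theorem~\ref{t:wishSSBM}, and within it the perturbation-equivariance Lemma~\ref{lem:gZ_strong}: one must show that a spiked input $Z = X + \sqrt{\theta}\, g u^\top$ yields $\GS(Z) \approx \GS(X) + \sqrt{\theta/n}\, g u^\top$ with error negligible in the relevant metric, so that the decomposition~\eqref{e:Y_GS} holds and the three requirements — the noise term $(X\up1)^\top \GS(Z\up2)$ being i.i.d.\ Gaussian, the signal $\tfrac{\theta}{2\sqrt n}\|g\|^2 uu^\top$ surviving intact, and the cross-term being eliminable by the flipping and Bernoulli-denoising subroutines — can be established in sequence. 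Because Gram-Schmidt is sequential, the spike contaminates the early basis vectors and feeds forward into later ones, so the crux is controlling this accumulated distortion of the planted signal; granting that, the present corollary is the routine bookkeeping described above.
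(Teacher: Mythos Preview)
Your detection argument and the running-time bookkeeping are broadly right, but two points need fixing. First, Theorem~\ref{t:wishSSBM} does \emph{not} output $\lambda=\theta\sqrt n$: because of the $M$-th power in the denoising step, the output SNR is $\lambda \asymp M^{-1}(\tfrac1{2C}\theta^2 n k^{-2})^M k$, which lands in an interval $[\lamcomp\cdot\delta^{2A}\cdot C'\log^{-1/2}n,\ \lamcomp]$ rather than at $\theta\sqrt n$ exactly. On the polynomial scale this still converges to $\nu=(\alpha,\alpha)$, but you must argue this via the limiting Definition~\ref{def:avg_case_points_threshold} (sequences $\mu_i\to\mu$ with $\theta_i<\thetacomp$ strictly, yielding $\nu_i\to\nu$ with $\lambda_i<\lamcomp$), not by setting $\theta=\thetacomp$ literally. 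Relatedly, Theorem~\ref{t:wishSSBM} assumes $n=d^{1+\epsilon}$ with $\epsilon>0$, so the endpoint $\gamma=1$ requires a separate step (the paper handles it exactly as in Corollary~\ref{cor:clone_reduction}, by first cloning to bump $n$ slightly).

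The real gap is in recovery. The flipping step produces entries with mean proportional to $(u_ig^\top\Zt_j)(u_jg^\top\Zt_i)$, which is quadratic in the signs of $u$; this is why Theorem~\ref{t:wishSSBM} outputs $u'$ with $u'_i=|u_i|$. The signs are genuinely destroyed---they are not a by-product of the reduction that can be ``applied back''. A $\sw$ recovery algorithm hands you $\widehat{u'}\approx |u|$, and since the loss $\ell_d(u,\widehat u)=1-\la u,\widehat u\ra^2$ is not invariant under coordinatewise sign flips of $u$, this does not yield small $\sc$ loss. The paper resolves this by splitting $Z$ into two halves, running the reduction on the first half to obtain $\widehat{u'}$, and then using only $\supp(\widehat{u'})$ to restrict the \emph{second} half to a low-dimensional $\sc$ instance (dimension $d'\approx k$, sparsity $k'\gtrsim k^{1/2}$) which now sits in the spectral-easy regime $k'\ge\sqrt{d'}$; a spectral algorithm on the held-out data then recovers $u$ with its signs intact. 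Your proposal is missing this hold-out idea.
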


\vspace{3mm}
\begin{algorithm}\SetAlgoLined\SetAlgoVlined\DontPrintSemicolon
    \KwIn{
    $Z_1,\dots, Z_N\in \R^M, 2K \in \Z^{+}, \psi \in \R$
    }
    \KwOut{$Y\in \R^{N\times N}$}

    \BlankLine

    \tcp{Clone data into $2K+1$ copies}
    $$
    \{Z_1\up0,\dots, Z_N\up0\}, \{Z_1\up1,\dots, Z_N\up1\} \gets \GaussC(\{Z_1,\dots, Z_N\})
    $$
    $$
    \b\{Z_1\up l,\dots, Z_N\up l\b\}_{l=1}^{2K} \gets
    \mathtt{GaussCloneRep}(\{Z_1\up1,\dots, Z_N\up1\}, 2K)
    $$

    \BlankLine
    
    \tcp{Derive a basis from the first copy}
    
    $\Zt_1\up0,\dots, \Zt_N\up0 \gets \GS(Z_1\up0,\dots, Z_N\up0)$
    
    \BlankLine
    
    \tcp{Compute the Gram-Schmidt coefficients}

    For every $i,j\in [N]$ and $l\in [2K]$, 
    \begin{align*}
        Y_{ij}\up l &\gets \la Z_i\up l,\Zt_j\up0\ra
    \end{align*}

    \BlankLine
    
    \tcp{Combine the entries via "flipping" technique}

    For $i \leq j$ and $l\in [K]$,
    $$Y_{ji}\up l = Y_{ij}\up l\gets\sign(Y_{ij}\up l \cdot Y_{ji}\up {l+K})$$
    
    \BlankLine

    \tcp{Denoise the entries}
    For every $i,j\in [N]$,
    $$
    Y^{\text{Rad}}_{ij} \gets \Denoise(Y_{ij}\up1,\dots, Y_{ij}\up K, \psi)
    $$

    \BlankLine

    \tcp{Lift entries to Gaussian distribution}
    
    $$Y \gets \RK_G^{\text{Rad}}\B(Y^{\text{Rad}}\B)$$

    \BlankLine

    Return $Y\in \mathbb{R}^{N \times N}$ 
    \caption{$\WishartToSSBM(Z_1,\dots, Z_N, 2K, \psi)$}
\label{alg:wish-ssbm}
\end{algorithm}
\vspace{3mm}

\begin{theorem}[TV Guarantee for $\WishartToSSBM$]\label{t:wishSSBM}
Fix $\epsilon>0$ and $\alpha\in (0,1/2]$. Define $A_{\alpha, \epsilon}=\max\{2\alpha \epsilon^{-1}, 4\alpha (1+\epsilon)^{-1}\}$ and let $K_{\alpha, \epsilon} = \lceil A_{\alpha, \epsilon}^2 + 3A_{\alpha, \epsilon} + 4\rceil$,
$C_{\alpha, \epsilon}= 2^{\lceil \log(2K_{\alpha, \epsilon})\rceil + 1}$, and $\psi = \frac{1}{2C_{\alpha, \epsilon}} \theta^2 n k^{-2}$. Let $C'_{\alpha, \epsilon} = C_0 A_{\alpha, \epsilon}^{-1}\cdot2^{-(\log(2K_{\alpha,\epsilon}) + 3)A_{\alpha, \epsilon}}$ for a small enough universal constant $C_0$.

Let $d,k,n$ be such that $n = d^{1+\epsilon}$ and $k = d^{\alpha}$ and let $Z\in \R^{n\times d}$ be a sample from $\sc$~\eqref{e:SCM2} with unit signal vector $u \in \ksparseflat d$. Let
 $Y\in \R^{d\times d}$ be the output of $\WishartToSSBM(Z_1,\dots, Z_d, 2K_{\alpha, \epsilon} , \psi)$ (Alg.~\ref{alg:wish-ssbm}).  
 Choose any $\delta\leq (\log n)^{-1}$ and take $\theta$ in the range
$$\delta \thetacomp(d,k,n) \leq \theta< \thetacomp(d,k,n) = k/\sqrt{n}\,.$$
    Then there is a $\lambda$ in the range
    $$
\lamcomp(d,k) \delta^{2A_{\alpha, \epsilon}} \cdot C'_{\alpha, \epsilon} \log^{-1/2} n \leq \lambda \leq \lamcomp(d,k)\,,
$$ 
such that
    $$
\TV\big( Y, \lambda u' u'^\top + W \big) \to 0\,, 
    $$
where $W\sim \GOE(d)$ and the signal $u' \in \ksparseflat d$ is such that $u'_i = |u_i|$ for every $i\in [d]$.
Moreover, the algorithm $\WishartToSSBM(Z_1,\dots, Z_d, 2K_{\alpha, \epsilon} , \psi)$ (Alg.~\ref{alg:wish-ssbm}) runs in $O(d^2n)$-time.

\end{theorem}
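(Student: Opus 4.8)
The plan is to condition throughout on $g$, reduce to $\|g\|=\sqrt n$ via Lemma~\ref{l:gNorm} (legitimate since $\sqrt\theta=o(1)$ when $\theta<\thetacomp$), and then follow the spike through the four stages of Algorithm~\ref{alg:wish-ssbm}, charging the total-variation cost of each stage against the triangle inequality, convexity of $\TV$, the data-processing inequality (Fact~\ref{tvfacts}), and Lemma~\ref{l:GaussianShift}. First, by Lemma~\ref{lem:cloning-guarantee} the cloning stage yields $Z\up0$ at SNR $\theta/2$ and $Z\up1,\dots,Z\up{2K}$ at SNR $\theta/C_{\alpha,\epsilon}$, all carrying the same spike pair $(u,g)$ with mutually independent isotropic noise parts $X\up0,\dots,X\up{2K}$. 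The engine is the perturbation equivariance Lemma~\ref{lem:gZ_strong}, which gives $\GS(Z\up0)=\GS(X\up0)+\sqrt{\theta/(2n)}\,gu^\top$ up to a $\TV$-negligible error, so that $\langle g,\GS(Z\up0)_j\rangle$ equals $\sqrt{\theta n/2}\,u_j$ up to a small relative error and the spike strength and support pass correctly from $Z\up0$ into the orthonormal basis $\Zt\up0:=\GS(Z\up0)$.

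Conditioning on $(\Zt\up0,g)$, each Gram--Schmidt coefficient decomposes as $Y\up l_{ij}=\langle X\up l_i,\Zt\up0_j\rangle+\sqrt{\theta/C_{\alpha,\epsilon}}\,u_i\langle g,\Zt\up0_j\rangle$. Because $X\up l$ is independent of $\Zt\up0$ and the $\Zt\up0_j$ are orthonormal, the first summand is an i.i.d.\ $\N(0,1)$ array over $(i,j,l)$ — so the Wishart dependence is removed \emph{exactly}, which is the whole point of orthogonalization — while the second is a copy-independent mean shift equal to $a_0\,\sgn(u_iu_j)$ on $\supp(u)^2$, with $a_0:=\theta\sqrt n/(k\sqrt{2C_{\alpha,\epsilon}})$ so that $a_0^2=\psi$, plus a cross-term that is sparse only in the row index $i$. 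Thus each $Y\up l$ is an independent-entry Gaussian matrix whose means carry the signal.

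Next I would analyze flipping: replacing $(i,j)$ with $i\le j$ by $\sgn(Y\up l_{ij}Y\up{l+K}_{ji})$ produces, conditionally on $(\Zt\up0,g)$ and using independence of distinct clones and of distinct columns, variables that are i.i.d.\ over $l\in[K]$ with law $\rad(p_{ij})$, where $p_{ij}=(2\Phi(m_{ij})-1)(2\Phi(m_{ji})-1)$ and $m_{ij}$ is the mean above. If $i\notin\supp(u)$ or $j\notin\supp(u)$ then one of $m_{ij},m_{ji}$ vanishes and $p_{ij}=0$ \emph{exactly} — this annihilates the merely row-sparse cross-term — while for $i,j\in\supp(u)$ the two means share a sign, so $p_{ij}>0$ and $p_{ij}=\tfrac2\pi a_0^2(1+\Delta_{ij})$ with a relative error $\Delta_{ij}=o(1)$ coming from the cross-term and the Lemma~\ref{lem:gZ_strong} error; the product squares away $\sgn(u_iu_j)$, which is exactly why the recovered signal is $u'_i=|u_i|$. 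Feeding the $K$ copies into $\Denoise(\cdot,\psi)$, Lemma~\ref{lem:denoise_guarantee} (whose hypotheses $|\Delta|\le|a|$ and $|a|\le M^{-1}$ follow from $\theta<\thetacomp$ and the choice of $C_{\alpha,\epsilon}$) outputs $\rad(M^{-1}\psi^M+(-1)^{M+1}M^{-1}(\Delta'_{ij})^M)$ on $\supp(u)^2$ and $\rad(0)$ off it, $\Delta'_{ij}=p_{ij}-\psi$; the constants $A_{\alpha,\epsilon},K_{\alpha,\epsilon},M_{\alpha,\epsilon}$ are calibrated so that after the $M$-th power the residual deviations, accumulated over the $\le k^2$ signal coordinates, are $o(1)$ for $n=d^{1+\epsilon}$, $k=d^\alpha$, while the common level $M^{-1}\psi^M$ stays large enough for the eventual $\lambda$ to lie in the stated interval (the lower end absorbing the $\delta^{2A_{\alpha,\epsilon}}$ loss incurred by taking $\theta$ as small as $\delta\thetacomp$).

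Finally, $\RK_G^{\rad}$ (Lemma~\ref{cor:gaussianize_}) lifts the $\pm1$ matrix to one with $\N(0,1)$ entries off $\supp(u)^2$ and $\N(\mu,1)$ entries on it, $\mu\asymp M^{-1}\psi^M/\sqrt{\log n}$, at $\TV$-cost $O(d^2n^{-3})$; assembling all the bounds and using Lemma~\ref{l:GaussianShift} to absorb the $o(1)$-sized discrepancies in the means gives $\TV(Y,\lambda u'u'^\top+W)\to0$ with $\lambda=\mu k$ in the claimed range, while the runtime is dominated by the single $\GS$ call on $d$ vectors in $\R^n$, i.e.\ $O(d^2n)$. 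The hard part will be Lemma~\ref{lem:gZ_strong} itself (deferred to Section~\ref{sec:gs_perturb}): Gram--Schmidt is iterative, so the spike in the first columns a priori distorts all later basis vectors, and one must show this contamination is negligible relative to the correctly propagated signal \emph{and} pin down the residual basis noise finely enough that the denoising stage can erase it; a secondary difficulty is the coupled calibration of $K_{\alpha,\epsilon}$ with the $\lambda$-interval, since a larger denoising power cleans the cross-term but at a cost in signal strength.
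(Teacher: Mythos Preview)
Your proposal is correct and follows essentially the same approach as the paper's proof: clone, apply the Gram--Schmidt perturbation equivariance (Lemma~\ref{lem:gZ_strong}) to control $\langle g,\Zt\up0_j\rangle$, observe that $(X\up l)^\top\Zt\up0$ is exactly i.i.d.\ Gaussian so the Wishart dependence is removed, use flipping to kill the row-only cross-term and square away $\sgn(u_iu_j)$, then denoise and Gaussianize. The paper carries out precisely these steps in the same order, with the only cosmetic difference that it writes the post-flip mean directly as $\tfrac{1}{C_{\alpha,\epsilon}}\theta\,u_i\langle g,\Zt\up0_j\rangle\,u_j\langle g,\Zt\up0_i\rangle$ rather than via your more explicit $(2\Phi(m_{ij})-1)(2\Phi(m_{ji})-1)$; the resulting constant factor is absorbed either way and the calibration of $M\ge A_{\alpha,\epsilon}$ to make the $(i,j)$-dependent residual $o(k^{-1})$ proceeds identically.
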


\paragraph{Proof of Theorem~\ref{t:wishSSBM}.}\label{subsec:wishSSBM_proof}
First, the cloning step, Gram-Schmidt algorithm, inner product calculations, denoising, and Gaussianization can all be computed in $O(d^2 n)$ time.

Recall from the cloning guarantee (Corollary~\ref{lem:cloning-guarantee}) that
$$Z\up0 = {X\up0} + \frac{1}{\sqrt{2}}\tsq g u^\top,\quad Z\up l = {X\up l} + \frac{1}{\sqrt{C_{\alpha,\epsilon}}}\tsq g u^\top, \  \forall l \in [2K_{\alpha,\epsilon}]\,,$$ 
where $C_{\alpha,\epsilon} = 2^{\lceil \log(2K_{\alpha,\epsilon}) \rceil + 1}$ and $X\up0, X\up1, X\up2, \dots, X\up{2K_{\alpha,\epsilon}} \sim N(0,I_n)^{\otimes d}$ 
are jointly independent and also independent of $g$. Then for every $l \in [2K_{\alpha,\epsilon}]$ 
$$Y\up l = {Z\up l}^\top \Zt\up 0=\b({X\up l}^\top + \frac{1}{\sqrt{C_{\alpha,\epsilon}}}\tsq u g^\top\b) \Zt\up0 = {X\up l}^\top \Zt\up0 + \frac{1}{\sqrt{C_{\alpha,\epsilon}}}\tsq u g^\top \Zt\up0\,.$$
Given $g, u$, and $\Zt\up 0$, we have ${X\up l}^\top \Zt\up0\eqdist N(0, I_d)^{\otimes d}$, so we can rewrite
$$
Y\up l \eqdist Q\up l + \frac{1}{\sqrt{C_{\alpha,\epsilon}}}\tsq u g^\top \Zt\up0\,,
$$
where $Q\up l \sim N(0, I_d)^{\otimes d}$ are jointly independent and independent of $g, \Zt\up 0$. In particular, for every $l\in [2K_{\alpha,\epsilon}]$,
$$
Y_{ij}\up l = \begin{cases}
    Q_{ij}\up l,\quad i\not\in S\\
    Q_{ij}\up l + \frac{1}{\sqrt{C_{\alpha,\epsilon}}}\tsq u_i g^\top \Zt_j\up0 , \quad i \in S\\
\end{cases}
$$
Then for $i\leq j$ and  $l \in [K_{\alpha,\epsilon}]$,
$$
Y_{ji}\up l = Y_{ij}\up l=\sign(Y_{ij}\up l \cdot Y_{ji}\up {l + K_{\alpha,\epsilon}}) \eqdist \begin{cases}
    \Rad(0), \quad i\not\in S \text{ or } j\not\in S\,,\\
    \Rad(\frac{1}{C_{\alpha,\epsilon}} \theta u_i g^\top \Zt_j\up0 \cdot u_j g^\top \Zt_{i}\up0), \quad i,j \in S\,.
\end{cases}
$$
Since $Q\up1, Q\up2\sim N(0, I_d)^{\otimes d}$, $Y\up l$ are independent symmetric matrices with jointly independent entries. 
Define an event 
$$
\Em = \B\{\forall i\in S, \ \b|g^T\Zt_i\up 0 - \frac{1}{\sqrt{2}}\tsq u_i \sqrt{n}\b| = \tO\b(1 + \theta n^{1/2}k^{-1} + \tsq n^{-1/2}d k^{-1/2} + \theta^{3/2} n^{1/2} k^{-1/2}\b)\B\}\,.
$$
From the GS-Perturbation Lemma~\ref{lem:gZ_strong} we have that $\P(\Em^{c}) \leq n^{-10}$. Then, for $\theta = O(\thetacomp)$ we have that given $\Em$, 
\begin{align*}
    g^T\Zt_i\up 0 &= \frac{1}{\sqrt{2}}\tsq u_i \sqrt{n} + \tO(1 + \theta n^{1/2}k^{-1} + \tsq n^{-1/2}d k^{-1/2} + \theta^{3/2} n^{1/2} k^{-1/2})\\
    &= \frac{1}{\sqrt{2}}\tsq u_i \sqrt{n} \B(1+ \tO(n^{-1/4} + d n^{-1} + d^{1/2}n^{-1/2})\B) \\&= \frac{1}{\sqrt{2}}\tsq u_i \sqrt{n} \B(1+ \tO(n^{-1/4} + d^{1/2}n^{-1/2})\B)\,,
\end{align*}
and therefore,
$$
u_i g^T\Zt_i\up 0 = \frac{1}{\sqrt{2}}\tsq u_i^2 \sqrt{n} \B(1+ \tO(n^{-1/4} + d^{1/2} n^{-1/2})\B) = \mathds{1}_{i\in S} \frac{1}{\sqrt{2}}\tsq \sqrt{n} k^{-1} \B(1+ \tO(n^{-1/4} + d^{1/2} n^{-1/2})\B)\,.
$$
In this case, for every $i,j\in [d], l \in [K_{\alpha,\epsilon}]$,
$$
\E Y_{ij}\up l = \begin{cases}
    0, \ i\not\in S \text{ or } j\not\in S,\\
    \frac{1}{2C_{\alpha,\epsilon}} \theta^2 n k^{-2} \b(1+ \tO(n^{-1/4} + d^{1/2} n^{-1/2})\b), \ i,j \in S.
\end{cases}
$$
Let $Y^{\text{Rad}}_{ij} \gets \Denoise(Y_{ij}\up1,\dots, Y_{ij}\up K, \psi = \frac{1}{2C_{\alpha,\epsilon}} \theta^2 n k^{-2})$. From the denoising procedure guarantees (Lemma~\ref{lem:denoise_guarantee}), $Y^{\text{Rad}}$ has jointly independent $\pm$ entries and satisfies 
$$
\E Y^{\text{Rad}}_{ij} = \begin{cases}
    0, \ i\not\in S \text{ or } j\not\in S,\\
    M^{-1}(\frac{1}{2C_{\alpha,\epsilon}} \theta^2 n k^{-2})^M \B(1+ \tO(n^{-M/4} + d^{M/2} n^{-{M/2}})\B), \ i,j \in S,
\end{cases}
$$
where $M = \lfloor \frac{\sqrt{1+8K_{\alpha,\epsilon}}-1}{2}\rfloor \geq A_{\alpha, \epsilon} = \max\{2\alpha \epsilon^{-1}, 4\alpha (1+\epsilon)^{-1}\}$. It is easy to verify that for this choice of $M$, $$
M^{-1}\b(\frac{1}{2C_{\alpha,\epsilon}} \theta^2 n k^{-2}\b)^M \B(1+ \tO(n^{-M/4} + d^{M/2} n^{-M/2})\B) = M^{-1}\b(\frac{1}{2C_{\alpha,\epsilon}} \theta^2 n k^{-2}\b)^M + o(k^{-1})\,.
$$
Define $Y^{\text{denoised}}\in \R^{d\times d}$ to be a symmetric matrix of jointly independent Rademacher Variables satisfying $\E Y^{\text{denoised}}_{ij}=0$ for $i\not\in S$ or $ j\not\in S$ and $\E Y^{\text{denoised}}_{ji} = \E Y^{\text{denoised}}_{ij} = M^{-1}\b(\frac{1}{2C_{\alpha,\epsilon}} \theta^2 n k^{-2}\b)^M$ for $i,j\in S$. Applying triangle inequality for TV distance, conditioning on event $\Em$ in TV (Fact \ref{tvfacts}) and using the expression for TV distance between Gaussians (Lemma~\ref{l:GaussianShift}), we have
$$
\tv(Y^{\text{Rad}}, Y^{\text{denoised}}) \leq \P(\Em^{c})+\tv(Y^{\text{Rad}}|\Em, Y^{\text{denoised}}) \leq n^{-10} + k^2 o(k^{-2}) = o(1)\,.
$$ 
Then, by DPI (Fact~\ref{tvfacts}),
$$
\tv\b(Y=\textsc{RK}_G^{\text{Rad}}(Y^{\text{Rad}}), \textsc{RK}_G^{\text{Rad}}(Y^{\text{denoised}})\b) \leq \tv(Y^{\text{Rad}}, Y^{\text{denoised}}) = o(1)\,,
$$
where $Y$ is the final output of $\WishartToSSBM$. Note that we can rewrite 
$$
\E Y^{\text{denoised}}_{ij} = M^{-1}\b(\frac{1}{2C_{\alpha,\epsilon}} \theta^2 n k^{-2}\b)^M \mathds{1}_{i,j\in S} = \lambda' u'_i u'_j\,,
$$
where the signal $u'\in \ksparseflat d$ is such that $u_i' = |u_i|,\ \forall i$ and $\lambda' = M^{-1}\b(\frac{1}{2C} \theta^2 n k^{-2}\b)^M k$. Finally, from the guarantees for $\textsc{RK}_G^{\text{Rad}}$ (Lemma~\ref{cor:gaussianize_}), we have 
$$
\tv(\textsc{RK}_G^{\text{Rad}}(Y^{\text{denoised}}), W + \lambda u' {u'}^\top) \leq O(n^{-3})\,, 
$$
where $$\lambda = \frac{\lambda'}{4\sqrt{6\log n + 2 \log {\lambda'}^{-1}}}\,.
$$ 
Substituting $\theta \geq \delta \thetacomp = \delta k/\sqrt{n}$ we obtain for a universal constant $C_0$, 
$$
\lambda \geq \lamcomp \delta^{2A} \cdot C_0 A_{\alpha, \epsilon}^{-1}\frac{1}{(2C_{\alpha,\epsilon})^{A_{\alpha, \epsilon}}} \log^{-1/2} n \geq \lamcomp \delta^{2A} \cdot C'_{\alpha, \epsilon} \log^{-1/2} n\,.
$$
From the triangle inequality, we conclude that
$$\tv\b(Y=\textsc{RK}_G^{\text{Rad}}(Y^{\text{Rad}}), W + \lambda u' {u'}^\top\b) \leq o(1) + O(n^{-3}) = o(1)\,,$$
which finishes the proof.

\section{Gram-Schmidt Perturbation}\label{sec:gs_perturb}

In this section we analyze the result of applying Gram-Schmidt orthogonalization (Algorithm~\ref{alg:GS}) to data from the Spiked Covariance Model. In particular, fix $\epsilon > 0$ and let $n \geq d^{1+\epsilon}$. Let $Z = X + \tsq gu^\top \in \R^{n\times d}$ be data from the Spiked Covariance Model~\eqref{e:SCM2}, where $X\sim \N(0,I_n)^{\otimes d}$, $g\sim \N(0,I_n)$ and $u \in \ksparse d$\footnote{The result in this section is for the more general case $u\in \ksparse d$ and automatically holds for $u\in\ksparseflat d$.} is a unit signal vector such that $|u_i|\leq c_u k^{-1/2} \ \forall i$ for some $c_u = \tO(1)$. Let $\theta$ be in the range 
$$\sqrt{k/n} = \thetastat \leq \theta< \thetacomp=\min\{k/\sqrt{n},\sqrt{d/n}\}\,.$$
 We prove the following result for basis vectors $\Xt = \GS(\{X_1,\dots, X_d\}), \Zt = \GS(\{Z_1,\dots, Z_d\})$:
 
\begin{lemma}[Spike in $\Zt$]\label{lem:gZ_strong}
    Fix any constant $K> 0$. 
    For $Z$ as described above, with probability $\geq 1-n^{-K}$, simultaneously for all $j \in [d]$,\footnote{In this section we write $\tO(A)$ (corresp. $\tOm(A)$) to denote that the expression is upper (corresp. lower) bounded by $c_1 (\log n)^{c_2} A$ for some known constants $c_1, c_2$. For all such bounds we explicitly derive the expressions for $c_1, c_2$ and only sometimes resort to $\tO, \tOm$ notation to provide intuition.} 
    $$\b|\la g, \Zt_j\ra - \la g, \Xt_j\ra - \tsq n^{1/2} u_j\b| \leq \begin{cases}
         \tO(\tsq) = o(1)
        &\text{if } j\not\in S\,,\\
        \tO(\theta n^{1/2} k^{-1} + \tsq n^{-1/2} d k^{-1/2}+\theta^{3/2} n^{1/2} k^{-1/2})\\
        \qquad = o(\tsq n^{1/2} k^{-1/2})=o(\tsq n^{1/2} \max_i u_i)
        &\text{if } j\in S\,,
    \end{cases}
    $$
    where the constants within the $\tO$ depend on $K$.
\end{lemma}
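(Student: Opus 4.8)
The plan is to unroll the Gram–Schmidt recursion and track how the rank-one perturbation $\tsq g u^\top$ propagates. Write $Z_j = X_j + \tsq u_j g$. I would first condition on $g$ and, without loss of generality (by Lemma~\ref{l:gNorm}), assume $\|g\| = \sqrt n$. Decompose each $X_j$ as $X_j = \bar X_j + X_{0,j}\, g/\|g\|$ where $\bar X_j = \Pi_{g^\perp}X_j$ and $X_{0,j} = \la X_j, g/\|g\|\ra \sim \N(0,1)$, so that $Z_j = \bar X_j + (X_{0,j} + \tsq u_j\sqrt n)\, g/\|g\|$. The Gram–Schmidt vectors $\Xt_j$ depend on the $X_j$ through a complicated iterative process, and $\Zt_j$ is its perturbed analogue; the key is that the perturbation lives entirely in the one-dimensional $g$-direction added to each input. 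I would set $e = g/\|g\|$ and aim to prove the inductive claim that $\la g, \Zt_j\ra = \la g, \Xt_j\ra + \tsq\sqrt n\, u_j + (\text{error})$, with the error terms as in the statement, by controlling at each step $j$ both the accumulated perturbation in $\Zt_1,\dots,\Zt_{j-1}$ and the effect of the unit-normalization.

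The main steps, in order: \textbf{(1)} Establish high-probability regularity bounds for the unperturbed Gram–Schmidt system — that $\|X_j - \sum_{\ell<j}\la X_j,\Xt_\ell\ra \Xt_\ell\| = \sqrt n\,(1+\tO(\sqrt{d/n}))$ (since we are projecting an $n$-dimensional Gaussian off a $(j-1)$-dimensional subspace, $j-1 \le d \ll n$), that $\la X_j, \Xt_\ell\ra = \tO(1)$ for $\ell < j$, and that $\la g, \Xt_j\ra = \tO(1)$; these follow from Gaussian concentration (Lemma~\ref{lem:gausnorm}) plus a union bound over the $O(d^2)$ inner products. \textbf{(2)} Expand $\Zt_j$ via the recursion: the numerator is $Z_j - \sum_{\ell<j}\la Z_j,\Zt_\ell\ra\Zt_\ell$, and I would substitute $Z_j = X_j + \tsq u_j g$, $\Zt_\ell = \Xt_\ell + (\text{perturbation}_\ell)$, and expand to first order, collecting the linear-in-$\tsq$ term $\tsq u_j g$ minus its projections onto the $\Xt_\ell$, which to leading order equals $\tsq u_j\,\Pi_{X_{<j}^\perp} g$. \textbf{(3)} Pair against $g$: since $\la g, \Pi_{X_{<j}^\perp}g\ra = \|g\|^2 - \sum_{\ell<j}\la g,\Xt_\ell\ra^2 = n - \tO(d)$ by step (1), this produces the main term $\tsq u_j\,(n - \tO(d))/\|\cdot\|$, and after dividing by the normalizer $\sqrt n(1+o(1))$ one gets $\tsq\sqrt n\, u_j(1 + \tO(d/n))$, explaining the $\tsq n^{-1/2}dk^{-1/2}$ error. \textbf{(4)} Bound the cross/higher-order terms: the quadratic-in-$\tsq$ contributions and the errors from replacing $\Zt_\ell$ by $\Xt_\ell$ inside the sum; each such replacement introduces a perturbation of size $\tsq\sqrt n |u_\ell|$ in the $g$-direction, and summing $k$ of them (only $\ell\in S$ contribute) against the $\tO(1)$-sized coefficients $\la Z_j,\Zt_\ell\ra$ and against $g$ yields the $\theta n^{1/2}k^{-1}$ and $\theta^{3/2}n^{1/2}k^{-1/2}$ terms. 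The case $j\notin S$ is the special case $u_j = 0$ where only the second-order leakage from the $\ell\in S$ coordinates survives, and one must check it is $\tO(\tsq)$.

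\textbf{Main obstacle.} The delicate part is step (4): controlling the \emph{accumulation} of perturbation errors through the recursion without the bound blowing up. Because Gram–Schmidt is iterative, an error in $\Zt_\ell$ feeds into $\la Z_j,\Zt_\ell\ra$ and hence into $\Zt_j$ for all $j > \ell$, so a naive induction risks geometric growth in the error constant. I expect to handle this by proving a uniform bound of the form $\|\Zt_j - \Xt_j - \tsq\sqrt n\, u_j e\| \le \varepsilon_j$ with $\varepsilon_j$ growing only polynomially (indeed, the total number of spiked coordinates is $k$, and each contributes an $O(\tsq)$-magnitude perturbation to the relevant projections), and carefully separating the $g$-component of the perturbation (which carries the signal) from the $g^\perp$-component (which is pure noise of controlled size). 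A secondary subtlety is that the normalization denominators $\|Z_j - \sum_{\ell<j}\la Z_j,\Zt_\ell\ra\Zt_\ell\|$ must be shown to stay $\sqrt n(1+o(1))$ even in the perturbed system, which follows once the perturbations are shown to be $o(\sqrt n)$ in norm. All the probabilistic content is standard Gaussian concentration; the bookkeeping of the perturbation propagation is where the real work lies.
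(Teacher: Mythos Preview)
Your proposal is correct and follows essentially the same approach as the paper: unroll the Gram--Schmidt recursion, track the perturbation along the $g$-direction by induction, and close the induction using Gaussian concentration for norms and inner products. The paper's execution differs only organizationally: rather than carrying explicit error expressions through the induction, it posits the ansatz $\Zt_i = \tfrac{s_i}{r_i}\bigl(\Xt_i + \tsq n^{-1/2}(u_i+\alpha_i)g + W_i\bigr)$ and introduces placeholder bounds $\an,\as,\wn,\ws$ (for $|\alpha_i|$ and $\max\{|\la g,W_i\ra|,\|W_i\|\}$ on and off the support) whose recursive self-consistency conditions are derived first and solved only at the end --- this is precisely the device that cleanly resolves your stated ``main obstacle'' of error accumulation without geometric blow-up. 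Two small corrections: your inductive vector correction should be $\tsq u_j\, e$ (equivalently $\tsq n^{-1/2}u_j\, g$), not $\tsq\sqrt n\, u_j\, e$, so that it has small norm; and you cannot literally assume $\|g\|=\sqrt n$ via Lemma~\ref{l:gNorm} (a TV statement) inside a high-probability lemma --- instead condition on the event $\{\,|\|g\|^2-n|\le c\sqrt n\,\}$ as the paper does.
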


A natural interpretation of this result is that given $Z= X+\tsq g u^\top$ the resulting basis vectors $\Zt$ approximately behave as $\Zt\approx \Xt+\tsq n^{-1/2} g u^\top$, preserving the original Spiked Covariance signal structure. In order to obtain this result, we generalize the statement of the lemma to the following theorem, which is then proved by induction.

\begin{theorem}[GS Perturbation]\label{thm:gs-pert}
Fix $K > 0$. Denote
\begin{enumerate}
    \item $\Xt \gets \GS(\{X_1,\dots, X_d\})$ and $ \Zt \gets \GS(\{Z_1,\dots, Z_d\})$, 
    \item $r_i = \|Z_i-\sum_{j=1}^{i-1}\la Z_i, \Zt_j\ra \Zt_j\|$ and $s_i = \|X_i-\sum_{j=1}^{i-1}\la X_i, \Xt_j\ra \Xt_j\|$.
\end{enumerate}
For all $d, k, \theta, n$ satisfying the above and $d \geq d_0 = O(1)^{O(1/\epsilon)}$, there exist $\an,\as,\wn,\ws = h(n,d,k,\theta)$ and event $\Em$, such that
$\P\b[\Em^c] \leq n^{-K}$, 
$$\an = \tO(n^{-1/2}), \as = o(k^{-1/2}), \wn = o(1), \ws = o(\min\{n^{1/4}, n^{1/2}k^{-1/2}\})\,,$$ and
given $\Em$ we can write 
\begin{equation}\label{eq:Zt}
    \Zt_i = \frac{s_i}{r_i} \b(\Xt_i + \tsq n^{-1/2}(u_i+\alpha_i)g + W_i\b)\,,
\end{equation}
where
    $$
    |\alpha_i| \leq \begin{cases}
        \an, & i \not\in S, \\
        \as, & i\in S\,,
    \end{cases}
\qquad \text{and} \qquad
    \max\b\{|\la g, W_i\ra|, \|W_i\|\b\} \leq \begin{cases}
        \wn, & i \not\in S, \\
        \ws, & i\in S\,.
    \end{cases}
    $$
\end{theorem}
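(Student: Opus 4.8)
### Proof Strategy for Theorem~\ref{thm:gs-pert}

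The plan is to prove the decomposition \eqref{eq:Zt} by induction on $i$, the index of the basis vector being constructed. The natural inductive hypothesis is the statement of the theorem restricted to indices $1,\dots,i-1$: that each $\Zt_j$, $j<i$, can be written in the claimed form with the claimed bounds on $|\alpha_j|$ and $\|W_j\|$, $|\la g,W_j\ra|$, and moreover that the auxiliary quantities (the Gram-Schmidt residual norms $r_j$, $s_j$, the inner products $\la X_i,\Xt_j\ra$, $\la g,\Xt_j\ra$, etc.) all concentrate on a single high-probability event $\Em$. Since we want $\P[\Em^c]\le n^{-K}$ with $d$ terms in a union bound, it suffices to control each index with failure probability $n^{-K-1}$, which is cheap given Gaussian tails.

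The key algebraic step is to expand one step of Gram--Schmidt on $Z_i = X_i + \tsq u_i g$. Writing the numerator $Z_i - \sum_{j<i}\la Z_i,\Zt_j\ra\Zt_j$ and substituting the inductive form of each $\Zt_j$, one separates (a) the ``unspiked'' part, which is exactly $s_i \Xt_i$ up to the residual-norm rescaling, (b) a term proportional to $g$, whose coefficient we must show equals $\tsq n^{-1/2}(u_i+\alpha_i)$ with $\alpha_i$ small, and (c) an error vector $W_i$ orthogonal-ish to $\Xt_i$. The coefficient of $g$ is where the spike propagation is tracked: $\la Z_i,\Zt_j\ra$ contains both $\la X_i,\Zt_j\ra$ and $\tsq u_i\la g,\Zt_j\ra$, and $\la g,\Zt_j\ra\approx \tsq n^{1/2}u_j$ by the inductive hypothesis, so the subtracted terms contribute a correction of order $\theta\sum_{j<i}u_i u_j(\cdot)$ — this is exactly the mechanism by which early spiked vectors distort later ones. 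The bookkeeping splits into the cases $i\in S$ and $i\notin S$: when $i\notin S$ there is no $u_i g$ term in $Z_i$ at all, so $\alpha_i$ comes purely from the accumulated distortion through the $j\in S$, $j<i$ indices, giving the $\tO(\tsq)$-type bound; when $i\in S$ the dominant term is $u_i\sim k^{-1/2}$ and we need the correction to be $o(k^{-1/2})$, which forces the three-term bound $\tO(\theta n^{1/2}k^{-1}+\tsq n^{-1/2}dk^{-1/2}+\theta^{3/2}n^{1/2}k^{-1/2})$ — each summand traceable to a distinct source: distortion from prior spiked coordinates, accumulation of generic noise $\la X_i,\Xt_j\ra$ over $j\lesssim d$ terms, and a lower-order spike cross-term respectively.

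For the concentration ingredients feeding $\Em$: I would use standard Gaussian norm concentration (Lemma~\ref{lem:gausnorm}) for $\|g\|=\sqrt n+O(1)$ and for each residual norm $s_i=\sqrt{n-i}+\tO(1)$ (so $s_i/r_i=1+\too(1)$), chi-square/inner-product concentration for $|\la X_i,\Xt_j\ra|=\tO(1)$ and $|\la g,\Xt_j\ra|=\tO(1)$ for each fixed $j$, and the fact that $\Xt_1,\dots,\Xt_{i-1}$ are distributed as a uniformly random orthonormal frame independent of $X_i$ and (conditionally) of $g$, which lets us treat $\la X_i,\Xt_j\ra$ and $\la g,\Xt_j\ra$ as roughly independent $\N(0,1)$-ish variables and sum their squares over $j\le i\le d$ to get $\tO(d)$. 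The projection $\Pi_{g^\perp}X_i$ decomposition used elsewhere in the paper (e.g.\ in the proof of Theorem~\ref{t:bipartite}) is the cleanest way to make the $g$-coefficient extraction rigorous: condition on $g$, split $X_i$ into its $g$-component and $g^\perp$-component, and carry the (deterministic in $g$) spike coefficient along. The main obstacle is the inductive control of the $g$-coefficient error $\alpha_i$: because the error in $\la g,\Zt_j\ra$ for $j\in S$ is itself of size $\tO(\theta n^{1/2}k^{-1}+\cdots)$ rather than $O(1)$, one must verify that these errors do not amplify when summed over the up to $k$ spiked indices below $i$ — i.e.\ that $\theta\cdot k\cdot(\text{per-term }\alpha)\cdot(\text{per-term inner product})$ stays $o(k^{-1/2})$ in the regime $\sqrt{k/n}\le\theta<\min\{k/\sqrt n,\sqrt{d/n}\}$. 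Checking that the claimed $\as,\ws$ are genuinely a fixed point of the induction (the recursion does not blow up the constants after $d$ steps, using $d\ge d_0=O(1)^{O(1/\epsilon)}$ to absorb polylog factors) is the delicate quantitative heart of the argument; everything else is Gaussian concentration plus careful arithmetic.
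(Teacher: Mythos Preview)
Your proposal is correct and matches the paper's approach closely: the paper proves the theorem by induction on $i$, deriving explicit recursive formulas for $\alpha_i$ and $W_i$ (Lemma~\ref{lem:rec}), bounding each via Gaussian concentration in separate case-split lemmas (Lemmas~\ref{lem:alpha}, \ref{lem:W}), and closing the loop by exhibiting a fixed point $(\an,\as,\wn,\ws)$ of the resulting recursive inequalities (Lemma~\ref{lem:const_exist}). One small deviation worth flagging: the paper does \emph{not} use the $\Pi_{g^\perp}X_i$ decomposition here---instead it controls $\la g,\Xt_j\ra$ by its own short induction (Lemma~\ref{lem:gX}) and then tracks $\la g,\Zt_j\ra$ directly from the decomposition \eqref{eq:Zt}---but your suggested route would work equally well.
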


\subsection{Sketch of Proof of Theorem~\ref{thm:gs-pert}}

To analyze the basis vectors $\Zt = \GS(Z_1,\dots,Z_d)$ we first choose to express them as
$$
\Zt_i = \frac{s_i}{r_i} \b(\Xt_i + \tsq n^{-1/2}(u_i+\alpha_i)g + W_i\b)\,.
$$
The basis vector $\Zt_i$ contains the corresponding basis vector $\Xt_i$ in the absence of spike with appropriate rescaling by $s_i r_i^{-1}$. Moreover, while $Z_i = X_i + \tsq u_i g $, the unit vector $\Zt_i$ contains $n^{-1/2} \tsq u_i g$, due to normalization, and with some perturbation $\alpha_i$. We put what remains into $W_i$ -- a vector of (hopefully) small magnitude and small correlation with the spike $g$.

With this representation of $\Zt_i$, in Lemma~\ref{lem:rec}, we identify a recursive relation that expresses $\alpha_i$ and $W_i$ in terms of $s_i, r_i, X_i, u_i$ and terms from the previous induction steps. These expressions follow directly from the iterative formula for Gram-Schmidt procedure.

Now, being able to express $\alpha_i$ and $W_i$ through quantities from previous steps, 
we establish high probability magnitude bounds. Using the fresh randomness of $X_i\sim N(0,I_n)$ at step $i$ and the concentration properties of Gaussian and $\chi^2$ random variables,
we obtain high probability bounds on $|\alpha_i|$ and $\|W_i\|, \la g, W_i\ra$ in terms of parameters $d,k,\theta,n, \an,\as,\wn,\ws$ (Lemmas~\ref{lem:alpha} and~\ref{lem:W}). 

We extend these results to obtain a high probability bound on the quantity of our interest -- $\la g, \Zt_i\ra$ in Lemmas~\ref{lem:gZ}, \ref{lem:gZ_strong}. This is the crucial consequence of the whole perturbation theorem and it allows us to explicitly quantify the spike in the basis vectors $\Zt$.

The high probability bounds in Lemmas~\ref{lem:alpha}, \ref{lem:W} and \ref{lem:gZ} are expressed in terms of both $d,k,\theta,n$ and $\an,\as,\wn,\ws$, where the letter appear from applying the induction assumptions. We finish the proof of the theorem by showing in Lem.~\ref{lem:const_exist} that we can find values for $\an, \as, \wn, \ws$ in terms of $d,k,\theta,n$ that satisfy the conditions of the Theorem~\ref{thm:gs-pert}. This extra step is introduced to simplify the recursive calculations -- plugging in \emph{a priori} values for $\an, \as, \wn, \ws$ in terms of $d,k,n,\theta$ results in complicated expressions.

\subsection{Preliminary Lemmas}

The following lemma is standard and can be found, for example, in \cite{spokoiny2023concentration}.

\begin{lemma}[General Norm Concentration of Gaussian rv]\label{lem:gausnorm0}
    For $X\sim \N(0, \Sigma),$ where $\Sigma\in \mathbb{R}^{n\times n}:$
    $$
    \Pr\b[\b|\|X\|^2 - \tr (\Sigma)\b|\geq 2\sqrt{t \tr(\Sigma^2)} + 2t \|\Sigma\|\b] \leq e^{-t}\,.
    $$
\end{lemma}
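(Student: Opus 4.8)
This is the Gaussian case of the Hanson--Wright/Bernstein inequality for a quadratic form, and the plan is the standard exponential-moment (Chernoff) argument. First I would diagonalize: since $\Sigma$ is positive semidefinite, write $\Sigma = \sum_{i=1}^n \lambda_i v_i v_i^\top$ with orthonormal eigenvectors $v_i$ and eigenvalues $\lambda_1 \ge \dots \ge \lambda_n \ge 0$, so that $\lambda_1 = \|\Sigma\|$. Using $X \eqdist \Sigma^{1/2} g$ for $g \sim \N(0,I_n)$, one gets $\|X\|^2 \eqdist \sum_{i=1}^n \lambda_i g_i^2$ with $g_i$ i.i.d. standard normal, hence the centered deviation
$$
D := \|X\|^2 - \tr(\Sigma) \eqdist \sum_{i=1}^n \lambda_i (g_i^2 - 1)
$$
is a sum of independent centered sub-exponential random variables.

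Next I would bound the cumulant generating function. Using $\E e^{s(g_i^2-1)} = e^{-s}(1-2s)^{-1/2}$ for $s < 1/2$ together with the elementary inequality $-s - \tfrac12\log(1-2s) = \sum_{k\ge 2}\tfrac{(2s)^k}{2k} \le \tfrac{s^2}{1-2s}$ (valid for $0\le s<1/2$), independence gives, for every $0 \le s < 1/(2\|\Sigma\|)$,
$$
\log \E e^{sD} = \sum_{i=1}^n \Big( -s\lambda_i - \tfrac12\log(1-2s\lambda_i) \Big) \le \sum_{i=1}^n \frac{s^2\lambda_i^2}{1-2s\lambda_i} \le \frac{s^2 \sum_i \lambda_i^2}{1 - 2s\|\Sigma\|} = \frac{s^2\,\tr(\Sigma^2)}{1 - 2s\|\Sigma\|}\,.
$$
This is exactly the sub-gamma condition $\log\E e^{sD} \le \tfrac{v s^2}{2(1-bs)}$ with variance factor $v = 2\tr(\Sigma^2)$ and scale $b = 2\|\Sigma\|$.

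Finally I would invoke the standard Chernoff optimization for sub-gamma variables: such a bound implies $\Pr[D \ge \sqrt{2vt} + bt] \le e^{-t}$ for all $t>0$, and substituting $v = 2\tr(\Sigma^2)$, $b = 2\|\Sigma\|$ yields $\Pr[D \ge 2\sqrt{t\tr(\Sigma^2)} + 2t\|\Sigma\|] \le e^{-t}$. For the lower tail, bound $\log\E e^{-sD} \le \sum_i (s\lambda_i - \tfrac12\log(1+2s\lambda_i)) \le \sum_i s^2\lambda_i^2 = s^2\tr(\Sigma^2) \le \tfrac{s^2\tr(\Sigma^2)}{1-2s\|\Sigma\|}$, so the same sub-gamma estimate applies to $-D$ and gives the matching lower deviation bound; combining the two tails yields the two-sided statement.

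\textbf{Main obstacle.} Honestly there is no serious obstacle --- this is a textbook exponential-moment computation. The only points needing mild care are (i) correctly identifying the pole of the CGF at $s = 1/(2\lambda_1) = 1/(2\|\Sigma\|)$ and using $\sum_i \lambda_i^2 = \tr(\Sigma^2)$; and (ii) the cosmetic fact that a literal union bound over the two tails gives $2e^{-t}$, so to match the stated $e^{-t}$ one either replaces $t$ by $t+\log 2$ (changing only absolute constants) or simply uses the one-sided form, which is all that Lemma~\ref{lem:gZ_strong} ultimately requires.
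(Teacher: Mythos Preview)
Your proof is correct and is the standard Laurent--Massart/sub-gamma argument; the paper does not actually prove this lemma but simply cites it as standard (from \cite{spokoiny2023concentration}), so there is nothing to compare. Your observation about the $2e^{-t}$ versus $e^{-t}$ discrepancy in the two-sided bound is accurate and, as you note, immaterial for the downstream applications.
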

Lemma~\ref{lem:gausnorm} and Corollary~\ref{lem:gausidnorm} are direct corollaries of Lemma~\ref{lem:gausnorm0}.
\begin{lemma}[Norm Concentration of Gaussian rv with Independent Coordinates]\label{lem:gausnorm}
    For $X\sim \N(0, \Sigma),$ where $\Sigma\in \mathbb{R}^{n\times n}$ is diagonal and $t\geq 1$,
    $$
    \Pr\b[\b|\|X\|^2 - \tr(\Sigma)\b|\geq t \sqrt{\tr(\Sigma^2)} \b] \leq e^{-t/4}\,.
    $$
    In particular,
    $$
    \Pr\b[\b|\|X\|^2 - \tr(\Sigma)\b|\geq 4 K \log n \sqrt{\tr(\Sigma^2)} \b] \leq n^{-K}\,. 
    $$
\end{lemma}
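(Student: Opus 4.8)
The plan is to obtain this as a one-step corollary of the general bound in Lemma~\ref{lem:gausnorm0}, the only new ingredient being that a diagonal positive semidefinite matrix has operator norm dominated by its Frobenius norm. Writing $\Sigma=\mathrm{diag}(\Sigma_{11},\dots,\Sigma_{nn})$ with $\Sigma_{ii}\ge 0$, we have $\|\Sigma\|=\max_i\Sigma_{ii}\le\big(\sum_i\Sigma_{ii}^2\big)^{1/2}=\sqrt{\tr(\Sigma^2)}=:A$. I would then apply Lemma~\ref{lem:gausnorm0} with its free parameter set to $\tau:=t/4$, obtaining
$$\Pr\Big[\,\big|\|X\|^2-\tr(\Sigma)\big|\ \ge\ 2\sqrt{\tau}\,A+2\tau\|\Sigma\|\,\Big]\ \le\ e^{-\tau}=e^{-t/4}\,.$$

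It then remains only to check that the deviation threshold above is at most $t\,A$, since in that case the event $\{\,|\|X\|^2-\tr\Sigma|\ge t\,A\,\}$ is contained in the event just bounded. Using $\|\Sigma\|\le A$ and $\tau=t/4$, the threshold equals $\big(2\sqrt{t/4}+t/2\big)A=\big(\sqrt t+t/2\big)A$, and $\sqrt t+t/2\le t$ precisely when $t\ge 4$; this already establishes the first displayed inequality for all $t\ge 4$. For the narrow leftover range $1\le t<4$ --- where the crude bound from Lemma~\ref{lem:gausnorm0} can be slightly too weak in the near-rank-one case $\|\Sigma\|\approx A$ --- I would instead argue directly: $\|X\|^2=\sum_i\Sigma_{ii}g_i^2$ with i.i.d. $g_i\sim\N(0,1)$, so $\|X\|^2-\tr(\Sigma)$ is a centered sub-exponential variable whose Bernstein parameters are controlled by $A$ and $\|\Sigma\|\le A$, and a one-line Bernstein/MGF estimate over the compact interval $t\in[1,4)$ yields the claim after fixing the absolute constants. (Equivalently, one could simply state the lemma for $t\ge 4$, since every invocation of it in the paper takes $t=\Theta(\log n)$.)

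The ``in particular'' clause then follows immediately: assuming $n$ is large enough that $4K\log n\ge 4$, setting $t=4K\log n$ in the first inequality gives $\Pr[\,|\|X\|^2-\tr\Sigma|\ge 4K\log n\sqrt{\tr(\Sigma^2)}\,]\le e^{-K\log n}=n^{-K}$. There is essentially no obstacle here; the only care needed is the absolute-constant bookkeeping in the small-$t$ regime, which is exactly why the result is flagged as a direct corollary rather than proved from scratch.
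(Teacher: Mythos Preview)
Your approach is correct and matches the paper's treatment: the paper states only that Lemma~\ref{lem:gausnorm} is a ``direct corollary'' of Lemma~\ref{lem:gausnorm0} and gives no further argument, so your derivation via the bound $\|\Sigma\|\le\sqrt{\tr(\Sigma^2)}$ and the substitution $\tau=t/4$ is exactly the intended route. In fact you are more careful than the paper, which glosses over the small-$t$ bookkeeping entirely; your observation that the application only ever uses $t=4K\log n\gg 4$ is the right way to dispose of the leftover range $1\le t<4$.
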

\begin{corollary}[Norm Concentration of Standard Gaussian rv]\label{lem:gausidnorm}
    For $X\sim \N(0, I_n)$ and $t\geq 1$:
    $$
    \Pr\b[\b|\|X\|^2 - n\b|\geq t n^{1/2} \b] \leq e^{-t/4}\,,
    $$
    and in particular,
    $$
    \Pr\b[\b|\|X\|^2 - n\b|\geq 4 K \log n \cdot n^{1/2} \b] \leq n^{-K}\,.
    $$
\end{corollary}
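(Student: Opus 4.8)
The plan is to obtain both displays by directly specializing the preceding concentration bounds to the identity covariance. Observe first that for $X\sim\N(0,I_n)$ the squared norm $\|X\|^2=\sum_{i=1}^n X_i^2$ is a $\chi^2$ random variable with $n$ degrees of freedom, so its mean is $n$; the content of the corollary is just the sub-exponential tail around this mean. I would apply Lemma~\ref{lem:gausnorm} with $\Sigma=I_n$, which is diagonal as the lemma requires. For this choice $\tr(\Sigma)=n$ and $\tr(\Sigma^2)=\tr(I_n)=n$, so $\sqrt{\tr(\Sigma^2)}=n^{1/2}$, and the lemma immediately yields, for every $t\geq1$,
$$\Pr\b[\b|\|X\|^2-n\b|\geq t\,n^{1/2}\b]\leq e^{-t/4}\,,$$
which is the first displayed bound. (Alternatively one can start from Lemma~\ref{lem:gausnorm0} with $\Sigma=I_n$, which gives the deviation $2\sqrt{tn}+2t$ at level $e^{-t}$, then absorb the $2t$ term into $2\sqrt{tn}$ using $t\leq n$ and reparametrize; this reproduces the same bound up to the universal constant in the exponent.)

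For the ``in particular'' clause I would simply take $t=4K\log n$. This choice satisfies the hypothesis $t\geq1$ of Lemma~\ref{lem:gausnorm} for all $n$ large enough (concretely $n\geq e^{1/(4K)}$; the finitely many remaining values of $n$ are irrelevant because every statement invoking this corollary is asymptotic in $n$, and in any case the bound is vacuous once $n^{-K}\geq 1$). Plugging in gives $e^{-t/4}=e^{-K\log n}=n^{-K}$, which is exactly the claimed probability.

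There is essentially no obstacle here: the corollary is a one-line specialization of a standard Gaussian/$\chi^2$ concentration inequality already recorded in the excerpt. The only point requiring a word of care is the range restriction $t\geq 1$, which is inherited verbatim from Lemma~\ref{lem:gausnorm} and is what forces the ``$t\geq 1$'' qualifier in the first display; for the second display it translates into the (asymptotically harmless) requirement that $K\log n\gtrsim 1$.
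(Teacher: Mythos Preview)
Your proposal is correct and matches the paper's approach: the paper simply declares this corollary (together with Lemma~\ref{lem:gausnorm}) to be a direct consequence of Lemma~\ref{lem:gausnorm0}, and your specialization $\Sigma=I_n$ in Lemma~\ref{lem:gausnorm} followed by the substitution $t=4K\log n$ is exactly the intended one-line derivation.
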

\begin{lemma}[Gaussian Tail Bound (Proposition 2.1.2 in \cite{vershynin2018high})]\label{lem:gaustail}
    For $X\sim \N(0, \sigma^2)$ and $t\geq 1$:
    $$
    \Pr\b[ |X| \geq t \sigma \b] \leq \frac{\sqrt{2}}{\sqrt{\pi}} e^{-t^2/2}\,,
    $$
    and in particular,
    $$
    \Pr\b[|X|\geq \sigma \sqrt{\log (2/\pi) + 2K \log n}\b] \leq n^{-K}\,.
    $$
\end{lemma}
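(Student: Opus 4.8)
The plan is to reduce to the standard normal case and invoke the classical one-sided Gaussian tail estimate. First I would note that $X/\sigma\sim\N(0,1)$, so it suffices to establish $\Pr[|Z|\geq t]\leq \sqrt{2/\pi}\,e^{-t^2/2}$ for $Z\sim\N(0,1)$ and $t\geq 1$; the stated inequality for $X$ then follows by homogeneity. By symmetry of the Gaussian density, $\Pr[|Z|\geq t]=2\Pr[Z\geq t]=\frac{2}{\sqrt{2\pi}}\int_t^\infty e^{-x^2/2}\,dx$.

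The one computational step is the standard bound on the Gaussian tail integral: for $x\geq t\geq 1$ we have $1\leq x/t$, so
$$\int_t^\infty e^{-x^2/2}\,dx\;\leq\;\int_t^\infty \frac{x}{t}\,e^{-x^2/2}\,dx\;=\;\frac{1}{t}\,e^{-t^2/2}\;\leq\;e^{-t^2/2},$$
where the final inequality uses $t\geq 1$. Multiplying through by $\frac{2}{\sqrt{2\pi}}=\sqrt{2/\pi}$ yields the first claim.

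For the second claim I would substitute $t=\sqrt{\log(2/\pi)+2K\log n}$, which satisfies $t\geq 1$ as soon as $n$ is large enough relative to $K$ (indeed $\log(2/\pi)<0$ while $2K\log n\to\infty$), and compute
$$\Pr[|X|\geq \sigma t]\;\leq\;\sqrt{2/\pi}\;e^{-t^2/2}\;=\;\sqrt{2/\pi}\cdot e^{-\frac12\log(2/\pi)}\cdot e^{-K\log n}\;=\;\sqrt{2/\pi}\cdot\sqrt{\pi/2}\cdot n^{-K}\;=\;n^{-K}.$$

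I do not expect any genuine obstacle: this is a textbook estimate, cited here as Proposition 2.1.2 of \cite{vershynin2018high}. The only point deserving a remark is the hypothesis $t\geq 1$ --- it is essential, since the displayed bound fails for $t$ near $0$ (where the left-hand side tends to $1>\sqrt{2/\pi}$); this is precisely why the lemma restricts to $t\geq 1$, and the condition is automatically met in the asymptotic regime $n\to\infty$ with $K$ fixed in which the lemma is used.
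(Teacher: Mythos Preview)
Your proof is correct and is the standard textbook derivation of this bound. The paper itself does not supply a proof of this lemma --- it simply cites it as Proposition~2.1.2 in \cite{vershynin2018high} --- so your argument is exactly what one would expect: reduce to the standard normal, apply the Mill's-ratio trick $\int_t^\infty e^{-x^2/2}\,dx\leq t^{-1}e^{-t^2/2}$, use $t\geq 1$, and then substitute the particular value of $t$ to get the $n^{-K}$ consequence.
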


\subsection{Proof of Theorem~\ref{thm:gs-pert}}

For a chosen constant $K$, let $\cc0 = 4K \log n$, $\cc2 = \sqrt{\log (2/\pi) + 2K \log n}$ -- that is, $\cc0$ is the constant from the Gaussian norm concentration inequalities (Lem.~\ref{lem:gausnorm}, \ref{lem:gausidnorm}) and $\cc2$ from the Gaussian tail bound (Lem.~\ref{lem:gaustail}). In our argument we will show that all conditions of Thm.~\ref{thm:gs-pert} hold simultaneously with probability at least $1 - n^{-K+4}$, which yields the statement.\footnote{One simply needs to apply the argument with a constant $K':=K+4$.}

\subsubsection{Bounds on Norms $s_i, r_i$}
Recall that we defined $r_i = \|Z_i-\sum_{j=1}^{i-1}\la Z_i, \Zt_j\ra \Zt_j\|$ and $s_i = \|X_i-\sum_{j=1}^{i-1}\la X_i, \Xt_j\ra \Xt_j\|$. Denote $A_i := X_i-\sum_{j=1}^{i-1}\la X_i, \Zt_j\ra \Zt_j$. Define the events
\begin{align}
\Enorm 0 = \Eg &= \b\{\|g\|^2\in [n \pm \cc0 n^{1/2} ] \b\}\quad \text{and}\nonumber\\
\Enorm i &=
\b\{\|X_{i}\|^2 \in [n \pm \cc0 n^{1/2} ] \b\}
 \cap \B\{\|A_i\|^2 \in [(n-i+1) \pm \cc0 (n-i+1)^{1/2} ] \B\} \nonumber\\
 &\qquad\qquad\qquad\cap \B\{s_i^2 \in [(n-i+1) \pm \cc0 (n-i+1)^{1/2} ]\B\} \cap \Enorm {i-1},\quad i \in [d] \,.\nonumber
\end{align}

First, since $g \sim \N(0, I_n)$, by Cor.~\ref{lem:gausidnorm}, we have $\P(\Enorm 0) = \P(\Eg) \geq 1 - n^{-K}$. We claim that Cor. \ref{lem:gausidnorm} then implies
$$\P(\Enorm i) \geq 1 - (1+3i)n^{-K} \geq 1 - n^{-K+1}\,.$$
Indeed, we have $X_i \sim \N(0, I_n)$, so $\P(\|X_{i}\|^2 \in [n \pm \cc0 n^{1/2} ])  \geq 1 - n^{-K}$ for every $i \in [d]$. Moreover, $A_i = X_i - \sum_{j=1}^{i-1} \la X_{i}, \Zt_j\ra \Zt_j$ is a projection of $X_i$ onto a $(n-i+1)$-dimensional subspace, and hence by Cor.~\ref{lem:gausidnorm}, $\P(\|A_i\|^2 \in [(n-i+1) \pm \cc0 (n-i+1)^{1/2} ]) \geq 1 - n^{-K}$ for any $g, X_1,\dots,X_{i-1}$. Similarly, $\P(s_i^2 = \|X_i - \sum_{j=1}^{i-1} \la X_{i}, \Xt_j\ra \Xt_j\|^2 \in [(n-i+1) \pm \cc0 (n-i+1)^{1/2} ]) \geq 1 - n^{-K}$. 

By the union bound applied to $\Enorm {i-1}$ and the three events above, $\P(\Enorm i) \geq 1 - (1+3i)n^{-K}$ for $n\geq d^{1+\epsilon} \geq 1+3d$.

\begin{lemma}[Bound on $r_{i}, s_i$]\label{lem:basis_norms}
    For each $ i\in [d]$, given $\Enorm i$, we have 
    \begin{align}
        \label{e:norm_r}\b|r_i - (n-i+1)^{1/2}\b| &\leq  2 \cc0 + 18 c_u \tsq k^{-1/2} n^{1/2} \mathds{1}_{i\in S}\\
        \label{e:norm_s}\b|s_i - (n-i+1)^{1/2}\b| &\leq 2\cc0\\
        \label{e:norm_s_inv}\b|s_i^{-1} - n^{-1/2}\b| &\leq 16 \cc0 n^{-1}\\
        \label{e:norm_rs}\b|s_i r_i^{-1} - 1\b| &\leq 8 \cc0 n^{-1/2} + 36 c_u \tsq k^{-1/2} \mathds{1}_{i\in S}\,.
    \end{align}

\end{lemma}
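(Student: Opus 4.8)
Every one of the four estimates is a deterministic consequence of the event $\Enorm i$ --- no new randomness enters --- so the plan is to unwind the definitions of $r_i$ and $s_i$ and apply the elementary inequality $|\sqrt a - \sqrt b| = |a-b|/(\sqrt a+\sqrt b) \le |a-b|/\sqrt b$. I will use throughout that on $\Enorm i$ the quantities $\cc0 = 4K\log n$ and $\tsq k^{-1/2}n^{1/2}$ are both $o(n^{1/2})$ --- the latter because $\theta < \thetacomp \le k/\sqrt n$ forces $\tsq k^{-1/2}n^{1/2} \le n^{1/4}$ --- and that for $d \ge d_0$ one has $n-i+1 \ge n-d \ge n/2$, so $(n-i+1)^{1/2} \in [\tfrac12 n^{1/2},\, n^{1/2}]$ and $r_i, s_i \ge \tfrac12 n^{1/2}$.

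Bound \eqref{e:norm_s} is immediate from $s_i^2 \in [(n-i+1)\pm\cc0(n-i+1)^{1/2}]$: dividing $|s_i^2-(n-i+1)|$ by $s_i+(n-i+1)^{1/2} \ge (n-i+1)^{1/2}$ gives $|s_i - (n-i+1)^{1/2}| \le \cc0$. The substantive bound is \eqref{e:norm_r}, and the key observation I would make is that $r_i$ equals the norm of the orthogonal projection of $Z_i$ off $\mathrm{span}(Z_1,\dots,Z_{i-1}) = \mathrm{span}(\Zt_1,\dots,\Zt_{i-1})$. Writing $P_i$ for that projector and using $Z_i = X_i + \tsq u_i g$ together with the definitional identity $A_i = X_i - \sum_{j<i}\la X_i,\Zt_j\ra\Zt_j = P_i X_i$, one gets $r_i = \|P_i Z_i\| = \|A_i + \tsq u_i P_i g\|$. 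When $i\notin S$ this is just $\|A_i\|$, so \eqref{e:norm_r} follows exactly as for $s_i$ from $\|A_i\|^2 \in [(n-i+1)\pm\cc0(n-i+1)^{1/2}]$. When $i\in S$, the triangle inequality together with $\|P_i g\| \le \|g\| \le 2n^{1/2}$ (from $\|g\|^2 \le n+\cc0 n^{1/2}$) and $|u_i|\le c_u k^{-1/2}$ gives $\left|\, r_i - \|A_i\|\,\right| \le \tsq |u_i|\|P_i g\| \le 2 c_u\tsq k^{-1/2}n^{1/2}$, and combining with $\left|\,\|A_i\| - (n-i+1)^{1/2}\,\right| \le \cc0$ yields \eqref{e:norm_r}; the stated constants $2\cc0$ and $18 c_u$ leave comfortable slack (e.g.\ for a cruder bound on $\|g\|$).

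The remaining two bounds I would derive purely algebraically from the first two. For \eqref{e:norm_rs}, write $|s_i r_i^{-1} - 1| = |s_i - r_i|/r_i$; the numerator is at most $|s_i - (n-i+1)^{1/2}| + |(n-i+1)^{1/2} - r_i| \le 4\cc0 + 18 c_u\tsq k^{-1/2}n^{1/2}\mathds{1}_{i\in S}$ by \eqref{e:norm_s}--\eqref{e:norm_r}, while $r_i \ge (n-i+1)^{1/2} - (2\cc0 + 18 c_u\tsq k^{-1/2}n^{1/2}) \ge \tfrac12 n^{1/2}$, so dividing gives the claim. For \eqref{e:norm_s_inv}, I would first bound $|s_i^{-1} - (n-i+1)^{-1/2}| = |s_i - (n-i+1)^{1/2}|/(s_i(n-i+1)^{1/2}) = O(\cc0 n^{-1})$ using \eqref{e:norm_s} and $s_i \ge \tfrac12 n^{1/2}$, and then pass from $(n-i+1)^{-1/2}$ to $n^{-1/2}$ at the cost of $|(n-i+1)^{-1/2}-n^{-1/2}| = O(i\,n^{-3/2}) = O(d\,n^{-3/2})$, which is $o(n^{-1/2})$ since $d \le n^{1/(1+\epsilon)} = o(n)$; adding these gives \eqref{e:norm_s_inv} (the $O(d\,n^{-3/2})$ term is harmless downstream, where only $s_i^{-1} = n^{-1/2} + o(n^{-1/2})$ is used).

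The only place that needs genuine care is keeping straight \emph{which} subspace each residual is taken against: $s_i$ is the Gram--Schmidt residual of $X_i$ off $\mathrm{span}(X_1,\dots,X_{i-1})$, whereas $r_i$ and $A_i$ are defined against $\mathrm{span}(Z_1,\dots,Z_{i-1}) = \mathrm{span}(\Zt_1,\dots,\Zt_{i-1})$ --- a different subspace --- and the clean decomposition $P_i Z_i = A_i + \tsq u_i P_i g$ works precisely because $A_i$ was defined with the $\Zt_j$'s. Everything else is routine, modulo verifying that the ``for $d \ge d_0$'' lower bounds $r_i, s_i \ge \tfrac12 n^{1/2}$ are legitimate, which is exactly where the hypotheses $n \ge d^{1+\epsilon}$, $d \ge d_0$, and $\theta < \thetacomp$ enter.
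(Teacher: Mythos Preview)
Your approach is correct and essentially the same as the paper's: both decompose the Gram--Schmidt residual as $A_i + \tsq u_i P_i g$ (the paper writes $B_i$ for the second piece), bound the spike contribution via $\|P_i g\|\le\|g\|$, and then read off \eqref{e:norm_rs} and \eqref{e:norm_s_inv} by elementary algebra. Your use of the reverse triangle inequality $\big|\|A_i+B_i\|-\|A_i\|\big|\le\|B_i\|$ is a bit cleaner than the paper's expansion of $r_i^2=\|A_i\|^2+\|B_i\|^2+2\langle A_i,B_i\rangle$, and your remark that the $O(d\,n^{-3/2})$ contribution to \eqref{e:norm_s_inv} can dominate $16\cc0 n^{-1}$ when $n<d^2$ (yet is harmless downstream since only $s_i^{-1}=n^{-1/2}(1+o(1))$ is ever used) is a good catch that the paper's ``by an argument similar to above'' glosses over.
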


\begin{proof}
Recall that we defined $r_i = \|Z_i-\sum_{j=1}^{i-1}\la Z_i, \Zt_j\ra \Zt_j\|.$ Plugging in $Z_i= X_i+\tsq u_i g $, we get
$$Z_{i} - \sum_{j=1}^{i-1} \la Z_{i}, \Zt_j\ra \Zt_j = \underbrace{\b(X_{i} - \sum_{j=1}^i \la X_{i}, \Zt_j\ra \Zt_j\b)}_{A_i} + \underbrace{\b(\tsq u_{i} g  - \sum_{j=1}^{i-1} \la \tsq u_{i} g, \Zt_j\ra \Zt_j\b)}_{B_i}\,,$$
and therefore,
$$
r_i^2 = \|A_i + B_i\|^2 = \|A_i\|^2 + \|B_i\|^2 + 2 \la A_i, B_i\ra\,.
$$
Given $\Enorm{i}$ defined above, we have that $\|A_i\|^2 \in [(n-i+1) \pm \cc0 (n-i+1)^{1/2} ]$ and
$$ \|B_i\|^2 = \|\tsq u_i g - \sum_{j=1}^{i-1} \la \tsq u_{i} g, \Zt_j\ra \Zt_j\|^2 \leq  (\tsq u_i)^2  \|g\|^2 \leq (\tsq u_i)^2 \b( n + \cc0 n^{1/2}\b) \leq 2 \theta u_i^2 n\,,$$
for $n^{1/2} \geq \cc0$. Finally, by Cauchy-Schwarz, for $(n-i+1)^{1/2}\geq n^{1/2}/\sqrt{2} \geq \cc0$,
\begin{align*}
    2|\la A_i, B_i\ra| &\leq 2 \|A_i\| \|B_i\|\\
    &\leq 2 \sqrt{(n-i+1) + \cc0 (n-i+1)^{1/2}}\tsq |u_i|\sqrt{n + \cc0 n^{1/2}} \\
    &\leq 8 \tsq |u_i| n\,.
\end{align*}
Combining these results we obtain that given $\Enorm i$,
\begin{align*}
    \b|r_i^2 - (n-i+1)\b| &\leq \cc0 (n-i+1)^{1/2} + 2\theta u_i^2 n + 8\tsq |u_i| n\\
    &\leq \cc0 n^{1/2} + ( 2 (c_u)^2 \theta k^{-1} + 8 c_u \tsq k^{-1/2})n\mathds{1}_{i\in S}\\
    &\leq \cc0 n^{1/2} + 9c_u\tsq k^{-1/2} n \mathds{1}_{i\in S}\,,
\end{align*}
since $\tsq k^{-1/2} \leq d^{-\epsilon/2} \leq (2c_u)^{-1}$ for sufficiently large $d$. Therefore, since $n-i+1 \geq n - d +1 \geq n/2$, we obtain \eqref{e:norm_r}:
\begin{align*}
    \b|r_i - (n-i+1)^{1/2}\b| &\leq \B|\frac{r_i^2 - (n-i+1)}{r_i + (n-i+1)^{1/2}}\B|\\
    &\leq 2 \frac{\cc0 n^{1/2} +9c_u\tsq k^{-1/2} n \mathds{1}_{i\in S}}{n^{1/2}} \\
    &= 2 \cc0 + 18 c_u \tsq k^{-1/2} n^{1/2} \mathds{1}_{i\in S}\,.
\end{align*}
Given $\Enorm i$, we have 
$$
\b|s_i^2 - (n-i+1)\b| \leq \cc0 (n-i+1)^{1/2} \leq \cc0 n^{1/2}\,,
$$
and by an argument similar to above we establish \eqref{e:norm_s}, \eqref{e:norm_s_inv}:
$$
\b|s_i - (n-i+1)^{1/2}\b| \leq 2\cc0, \qquad \b|s_i^{-1} - n^{-1/2}\b| \leq 16\cc0 n^{-1} \,.
$$
We conclude that since $r_i \geq \frac14 n^{1/2}$, \eqref{e:norm_rs} also holds:
\begin{equation*}\b|s_i r_i^{-1} - 1\b| = \B|\frac{s_i - (n-i+1)^{1/2}}{r_i} + \frac{(n-i+1)^{1/2} - r_i}{r_i}\B| \leq 8 \cc0 n^{-1/2} + 36 c_u \tsq k^{-1/2} \mathds{1}_{i\in S}\,.
\end{equation*}
\end{proof} 

\subsubsection{Induction Argument}

We prove the statement of the theorem by induction on $i.$ 

\subsubsection{Base Case $i=1$.}
Note that $s_1\Xt_1 = X_1$. Hence, writing $\Zt_1$ in the form given in the theorem statement,
\begin{align*}
    \Zt_1 = \frac{1}{r_1} Z_1 = \frac{1}{r_1} (s_1 \Xt_1 + \tsq u_1 g) 
    &= \frac{s_1}{r_1} (\Xt_1 + \tsq n^{-1/2} (u_1+ (n^{1/2}s_1^{-1}-1)u_1)g)
    \\ &=\frac{s_1}{r_1} (\Xt_1 + \tsq n^{-1/2} (u_1+ \alpha_1)g + W_1)\,,
\end{align*}
where we have defined $W_1=0$ and $\alpha_1 := \b(n^{1/2}s_1^{-1}-1\b)u_1$. 
Let $\EE 1 = \Enorm1\subset \Eg$, in which case from \eqref{e:norm_s_inv} in Lem.~\ref{lem:basis_norms}, 
$$|\alpha_1| = |u_1| \b|n^{1/2}s_1^{-1} - 1\b| \leq 16\cc0 c_u n^{-1/2} k^{-1/2} \mathds{1}_{1\in S}\,.$$ It suffices to choose any $\an\geq 0$, $\as \geq 16\cc0 c_u n^{-1/2}k^{-1/2}$ to satisfy the conditions of Thm.~\ref{thm:gs-pert}.

\subsubsection{Induction step $i-1 \rightarrow i$.}
\begin{lemma}[Recursion for $\alpha_i$ and $W_i$]\label{lem:rec}
We can write 
    $$\Zt_{i} = \frac{s_{i}}{r_{i}} \b(\Xt_{i} + \tsq n^{-1/2}(u_{i}+\alpha_{i})g + W_{i}\b),$$ where
\begin{align*}
    \alpha_{i} &= (s_{i}^{-1}n^{1/2} - 1)u_{i} - s_{i}^{-1}\sum_{j=1}^{i-1} \la Z_{i}, \Zt_j\ra s_j r_j^{-1} (u_j + \alpha_j),\quad \text{and}\\
    W_{i} &= - s_{i}^{-1}\sum_{j=1}^{i-1} \b\la \tsq u_{i}g, \Zt_j\b\ra s_jr_j^{-1} \b(\Xt_j + W_j\b)\\
    &\qquad -
    s_{i}^{-1}\sum_{j=1}^{i-1} \la X_{i}, \Zt_j\ra \b((s_jr_j^{-1}-1)\Xt_j + s_jr_j^{-1} W_j\b)\\  
    &\qquad- s_{i}^{-1}\sum_{j=1}^{i-1} \B\la X_{i}, (s_{i}r_{i}^{-1} - 1)\Xt_{i} + s_{i}r_{i}^{-1} \b(\tsq n^{-1/2}(u_{i}+\alpha_{i})g + W_{i} \b)\B\ra \Xt_j\,.
\end{align*}
\end{lemma}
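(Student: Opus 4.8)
The plan is to prove Lemma~\ref{lem:rec} as the purely linear-algebraic core of the induction on $i$ behind Theorem~\ref{thm:gs-pert}: assuming the claimed form $\Zt_j = \frac{s_j}{r_j}\b(\Xt_j + \tsq n^{-1/2}(u_j+\alpha_j)g + W_j\b)$ for every $j<i$, I derive the analogous form, together with the stated recursions for $\alpha_i$ and $W_i$, at step $i$. There is no probabilistic content here; the concentration estimates on $\alpha_i$ and $W_i$ are handled afterwards (Lemmas~\ref{lem:alpha} and~\ref{lem:W}).

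First I would write out the two Gram-Schmidt update rules from Algorithm~\ref{alg:GS}: for the $Z$-basis, $r_i\Zt_i = Z_i - \sum_{j=1}^{i-1}\la Z_i,\Zt_j\ra\Zt_j$; for the $X$-basis, $X_i = s_i\Xt_i + \sum_{j=1}^{i-1}\la X_i,\Xt_j\ra\Xt_j$. Substituting $Z_i = X_i + \tsq u_i g$ and the second identity into the first, and noting that the target representation is equivalent to $\frac{r_i}{s_i}\Zt_i - \Xt_i = \tsq n^{-1/2}(u_i+\alpha_i)g + W_i$, reduces the lemma to the vector identity
$$
s_i\b(\tsq n^{-1/2}(u_i+\alpha_i)g + W_i\b) \;=\; \tsq u_i g + \sum_{j<i}\la X_i,\Xt_j\ra\Xt_j - \sum_{j<i}\la Z_i,\Zt_j\ra\Zt_j\,.
$$
Next I would insert the induction hypothesis for each $j<i$, writing $\Zt_j = \Xt_j + \b[(s_jr_j^{-1}-1)\Xt_j + s_jr_j^{-1}(\tsq n^{-1/2}(u_j+\alpha_j)g + W_j)\b]$, and expand $\la Z_i,\Zt_j\ra\Zt_j$ via $\la Z_i,\Zt_j\ra = \la X_i,\Zt_j\ra + \tsq u_i\la g,\Zt_j\ra$ and $\la X_i,\Zt_j\ra = \la X_i,\Xt_j\ra + \la X_i,\Zt_j - \Xt_j\ra$; the two copies of $\sum_{j<i}\la X_i,\Xt_j\ra\Xt_j$ then cancel.

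It remains to split the right-hand side into its component along $g$ and everything else. The coefficient of $g$ comes out to $\tsq u_i - \tsq n^{-1/2}\sum_{j<i}\la Z_i,\Zt_j\ra s_jr_j^{-1}(u_j+\alpha_j)$; equating it with the coefficient $s_i\tsq n^{-1/2}(u_i+\alpha_i)$ on the left and solving for $\alpha_i$ gives exactly the displayed formula — and in particular expresses $\alpha_i$ entirely through quantities of index $<i$. Whatever remains defines $s_iW_i$, and a routine regrouping — the pieces weighted by $\tsq u_i\la g,\Zt_j\ra$ collapse to $-\sum_{j<i}\tsq u_i\la g,\Zt_j\ra s_jr_j^{-1}(\Xt_j+W_j)$, the pieces weighted by $\la X_i,\Zt_j\ra$ to $-\sum_{j<i}\la X_i,\Zt_j\ra\b((s_jr_j^{-1}-1)\Xt_j + s_jr_j^{-1}W_j\b)$, leaving $-\sum_{j<i}\la X_i,\Zt_j-\Xt_j\ra\Xt_j$ — reproduces the formula for $W_i$ after dividing by $s_i$. (The apparent circularity, $W_i$ occurring on the right of its own recursion through $(s_ir_i^{-1}-1)\Xt_i + s_ir_i^{-1}(\tsq n^{-1/2}(u_i+\alpha_i)g + W_i)$, is only cosmetic: that combination equals $\Zt_i-\Xt_i$, a vector already fixed by the data.)

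The main obstacle is entirely bookkeeping: the substitutions generate on the order of a dozen terms, and one must be disciplined about which of them carry a component along $g$ — only $g$ itself, and the step-$j$ perturbations $\Zt_j-\Xt_j$ through their $(u_j+\alpha_j)g$ parts — so that the $g$-free remainder telescopes cleanly into the closed form claimed for $W_i$. No inequalities or concentration arguments enter; the proof is driven purely by the two Gram-Schmidt recursions and the inductive shape of $\Zt_j$.
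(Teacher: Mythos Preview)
Your proposal is correct and follows essentially the same route as the paper: start from the two Gram--Schmidt recursions, insert the inductive representation $\Zt_j=\Xt_j+\Delta_j$ with $\Delta_j=(s_jr_j^{-1}-1)\Xt_j+s_jr_j^{-1}\b(\tsq n^{-1/2}(u_j+\alpha_j)g+W_j\b)$, expand, cancel the common $\sum_{j<i}\la X_i,\Xt_j\ra\Xt_j$, then peel off the $g$-component to read off $\alpha_i$ and collect the remainder as $W_i$. The paper organizes the expansion slightly differently (it substitutes everything into $r_i\Zt_i$ at once rather than first isolating the identity $s_i\b(\tsq n^{-1/2}(u_i+\alpha_i)g+W_i\b)=\tsq u_ig+\sum\la X_i,\Xt_j\ra\Xt_j-\sum\la Z_i,\Zt_j\ra\Zt_j$), but the algebra and the grouping into the three displayed sums for $W_i$ are identical. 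Your remark on the apparent self-reference in the third sum is apt; in the paper's own derivation that inner bracket carries index $j$, i.e.\ it is $\Delta_j=\Zt_j-\Xt_j$, so the statement as printed contains a typographical index slip that you have effectively diagnosed.
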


\begin{proof}

$\Zt_{i}, \Xt_{i}$ are the basis vectors on the $i$-th step of the standard GS process, so they satisfy the following recursive relation:
$$
\Zt_{i} = \frac{Z_{i} - \sum_{j=1}^{i-1} \la Z_{i}, \Zt_j\ra \Zt_j}{\|Z_{i} - \sum_{j=1}^{i-1} \la Z_{i}, \Zt_j\ra \Zt_j\|}, \qquad
\Xt_{i} = \frac{X_{i} - \sum_{j=1}^{i-1} \la X_{i}, \Xt_j\ra \Xt_j}{\|X_{i} - \sum_{j=1}^{i-1} \la X_{i}, \Xt_j\ra \Xt_j\|}\,.
$$
In our notation we can rewrite this as
\begin{equation}
    \label{e:GS-Ztilde}
    \Zt_{i} = r_{i}^{-1} \b( Z_{i} - \sum_{j=1}^{i-1} \la Z_{i}, \Zt_j\ra \Zt_j\b), \qquad
\Xt_{i} = s_{i}^{-1}\b(X_{i} - \sum_{j=1}^{i-1} \la X_{i}, \Xt_j\ra \Xt_j\b)\,.
\end{equation}

We aim to rewrite $\Zt_{i}$ in the form
$\Zt_{i} = \frac{s_{i}}{r_{i}} \b(\Xt_{i} + \tsq n^{-1/2}(u_{i}+\alpha_{i})g + W_{i}\b),$
where $\alpha_{i}, W_{i}$ are expressed in terms of $\{ X_j, \Xt_j, Z_j, \Zt_j, \alpha_j, W_j\}_{j=1}^{i-1}, X_{i}.$

We now make the inductive assumption that for all $j\leq i-1$, 
\begin{align*}
    \Zt_j &= \Xt_j + \underbrace{(s_{j}r_{j}^{-1} - 1)\Xt_{j} + s_{j}r_{j}^{-1} \b(\tsq n^{-1/2}(u_{j}+\alpha_{j})g + W_{j} \b)}_{\Delta_j}\,.
\end{align*}
Recalling that $Z_{i} = X_{i} + \tsq u_{i} g$ and
substituting the last displayed equation into~\eqref{e:GS-Ztilde} we get
\begin{align*}
    r_{i}\Zt_{i} &=   Z_{i} - \sum_{j=1}^{i-1} \la Z_{i}, \Zt_j\ra \Zt_j
    \\
    &=   X_{i}+\tsq u_{i}g - \sum_{j=1}^{i-1} \la X_{i}+\tsq u_{i}g, \Xt_j+\Delta_j\ra (\Xt_j+\Delta_j)\\
    &=  X_{i} - \sum_{j=1}^{i-1} \la X_{i}, \Xt_j\ra \Xt_j +\tsq u_{i}g - \sum_{j=1}^{i-1} \la \tsq u_{i}g, \Xt_j+\Delta_j\ra (\Xt_j+\Delta_j)\\
    & \qquad \qquad- \sum_{j=1}^{i-1} \la X_{i}, \Xt_j+\Delta_j\ra \Delta_j  - \sum_{j=1}^{i-1} \la X_{i}, \Delta_j\ra \Xt_j\\
    &=  s_{i} \Xt_{i} +\tsq u_{i}g - \sum_{j=1}^{i-1} \la \tsq u_{i}g, \Zt_j\ra \Zt_j - \sum_{j=1}^{i-1} \la X_{i}, \Zt_j\ra \Delta_j  - \sum_{j=1}^{i-1} \la X_{i}, \Delta_j\ra \Xt_j\,.
\end{align*}
We now extract all terms that include vector $g$:
\begin{align*}
   r_{i} \Zt_{i} 
    &=  s_{i} \Xt_{i}\\ 
    &\qquad+ g\cdot \B(\tsq u_{i} - \sum_{j=1}^{i-1} \la \tsq u_{i}g, \Zt_j\ra s_jr_{j}^{-1} \tsq n^{-1/2} (u_j+\alpha_j) - 
    \sum_{j=1}^{i-1} \la X_{i}, \Zt_j\ra s_jr_{j}^{-1}\tsq n^{-1/2} (u_j+\alpha_j)\B) \\
    &\qquad- \sum_{j=1}^{i-1} \la \tsq u_{i}g, \Zt_j\ra \b(\Zt_j - s_jr_{j}^{-1} \tsq n^{-1/2} (u_j+\alpha_j)g\b)-
    \sum_{j=1}^{i-1} \la X_{i}, \Zt_j\ra \b(\Delta_j - s_jr_{j}^{-1}\tsq n^{-1/2} (u_j+\alpha_j)g\b)\\  
    &\qquad- \sum_{j=1}^{i-1} \la X_{i}, \Delta_j\ra \Xt_j\,.
\end{align*}
Rearranging the terms we bring $\Zt_{i}$ to the desired form:
\begin{align*}
    \Zt_{i} 
    &= r_{i}^{-1}s_{i} \B(  \Xt_{i} + g\cdot\tsq n^{-1/2}  \b(u_{i} + (s_{i}^{-1}n^{1/2}-1)u_{i} - s_{i}^{-1}\sum_{j=1}^{i-1} \la Z_{i}, \Zt_j\ra s_jr_{j}^{-1} (u_j+\alpha_j)\b) \\
    &\qquad\qquad - s_{i}^{-1}\sum_{j=1}^{i-1} \la \tsq u_{i}g, \Zt_j\ra s_jr_j^{-1} \b(\Xt_j + W_j\b)\\
    &\qquad\qquad -
    s_{i}^{-1}\sum_{j=1}^{i-1} \la X_{i}, \Zt_j\ra \b((s_jr_j^{-1}-1)\Xt_j + s_jr_j^{-1} W_j\b)\\  
    &\qquad\qquad- s_{i}^{-1}\sum_{j=1}^{i-1} \b\la X_{i}, (s_{j}r_{j}^{-1} - 1)\Xt_{j} + s_{j}r_{j}^{-1} \b(\tsq n^{-1/2}(u_{j}+\alpha_{j})g + W_{j} \b)\b\ra \Xt_j\B)\,.
\end{align*}
The lemma statement follows.
\end{proof}

We have already defined events $\Enorm d \subseteq \dots \subseteq \Enorm 1 \subseteq \Enorm 0 = \Eg$ that guarantee the desired behavior of $s_i, r_i$ and proved $\Pr[\Enorm d] \geq 1 - n^{-K+1}$. The remainder of the proof is structured as follows. Using the recursive expressions derived in Lem.~\ref{lem:rec}, in Lemmas~\ref{lem:gX}, \ref{lem:gZ} we prove bounds on $\la g, \Xt_j\ra$ and $\la g, \Zt_j\ra$ given a high probability event $\EE{i}^{\la g, \Xt\ra}$. Lemmas~\ref{lem:alpha} and \ref{lem:W} show the bounds on $|\alpha_i|$ and $|\la g, W_i\ra|, \|W_i\|$  given high probability events $\EE{i}^{\alpha}, \EE{i}^{W}$ correspondingly. These high probability events are such that
\begin{enumerate}
    \item for $\star \in \{ \la g, \Xt\ra, \alpha, W\}$, event $\EE{i}^{\star}$ only depends on $g,X_1,\dots,X_i$ and  $\EE{d}^{\star} \subseteq \dots \subseteq \EE{1}^{\star}$,
    \item $\forall \ i \in [d]$, $\EE{i}^{W} \subseteq \EE{i}^{\alpha} \subseteq \EE{i}^{\la g, \Xt\ra} \subseteq \Enorm i$,
    \item $\Pr[\EE{d}^{W}] \geq 1-n^{-K+4}$.
\end{enumerate}
We then set $\EE = \EE{d}^{W}$ to finish the proof.

\begin{lemma}\label{lem:gX}
    Define
    $$
    \EE{j}^{\la g, \Xt\ra} = \b\{|\la g, \Xt_j\ra|\leq 8\cc2 \b\} \cap \EE{j-1}^{\la g, \Xt\ra}\cap \Enorm j\,.
    $$
    Event $\EE{j}^{\la g, \Xt\ra}$ only depends on the randomness in $g,X_1,\dots,X_j$ and satisfies
    $$
    \Pr\b[\EE{j}^{\la g, \Xt\ra} \b] \geq 1 - j n^{-K+1} - 2j n^{-K}\geq 1 - n^{-K+2}\,.
    $$
\end{lemma}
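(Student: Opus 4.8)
The plan is to condition on $X=(X_1,\dots,X_d)$ and to exploit that $g\sim\N(0,I_n)$ is independent of $X$. The structural point is that the Gram--Schmidt output $\Xt_j=\GS(X_1,\dots,X_d)_j$ is a measurable function of $X_1,\dots,X_j$ alone: the recursion in Algorithm~\ref{alg:GS} builds $\Xt_j$ from $X_1,\dots,X_j$ and never references $X_{j+1},\dots,X_d$ or $g$. Together with the inductive assumption that $\EE{j-1}^{\la g,\Xt\ra}$ depends only on $(g,X_1,\dots,X_{j-1})$ and the fact that $\Enorm j$ depends only on $(g,X_1,\dots,X_j)$ --- it is built from $\|X_j\|^2$, $\|A_j\|^2$, $s_j^2$ and $\Enorm{j-1}$, and the vectors $\Zt_l$ appearing in $A_j$ are functions of $Z_1,\dots,Z_{j-1}$, hence of $(g,X_1,\dots,X_{j-1})$ --- the intersection defining $\EE{j}^{\la g,\Xt\ra}$ is measurable with respect to $(g,X_1,\dots,X_j)$, which is the first claim.

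For the magnitude bound I would condition on $X_1,\dots,X_j$. Almost surely these vectors are linearly independent, so $\Xt_j$ is a well-defined unit vector that is deterministic given $X_1,\dots,X_j$; since $g$ is independent of $X$, the conditional law of $\la g,\Xt_j\ra$ is exactly $\N(0,1)$. Applying the Gaussian tail bound (Lemma~\ref{lem:gaustail}) with $\sigma=1$ and $t=\cc2$ gives $\Pr\big[\,|\la g,\Xt_j\ra|\geq\cc2\mid X_1,\dots,X_j\,\big]\leq n^{-K}$; averaging over $X$ and using the trivial inclusion $\{|\la g,\Xt_j\ra|\geq 8\cc2\}\subseteq\{|\la g,\Xt_j\ra|\geq\cc2\}$ yields $\Pr\big[\,|\la g,\Xt_j\ra|\geq 8\cc2\,\big]\leq n^{-K}$. (Thus weakening the threshold from $\cc2$ to $8\cc2$ only shrinks the bad event; the slack costs nothing and is presumably carried for later convenience.)

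It then remains to assemble the probability bound by induction on $j$. A union bound over the three events in the definition of $\EE{j}^{\la g,\Xt\ra}$ gives
$$
\Pr\big[(\EE{j}^{\la g,\Xt\ra})^c\big]\ \leq\ \Pr\big[|\la g,\Xt_j\ra|\geq 8\cc2\big]+\Pr\big[(\EE{j-1}^{\la g,\Xt\ra})^c\big]+\Pr\big[(\Enorm j)^c\big]\ \leq\ n^{-K}+\Pr\big[(\EE{j-1}^{\la g,\Xt\ra})^c\big]+n^{-K+1}\,,
$$
where I used the bound $\Pr[\Enorm j]\geq 1-(1+3j)n^{-K}\geq 1-n^{-K+1}$ established earlier in the proof of Theorem~\ref{thm:gs-pert}. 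With the base case at $j=1$ ($\EE{1}^{\la g,\Xt\ra}=\{|\la g,\Xt_1\ra|\leq 8\cc2\}\cap\Enorm 1$, for which the same union bound gives $\Pr[(\EE{1}^{\la g,\Xt\ra})^c]\leq n^{-K}+n^{-K+1}$), unrolling the recursion yields $\Pr[(\EE{j}^{\la g,\Xt\ra})^c]\leq jn^{-K+1}+jn^{-K}\leq jn^{-K+1}+2jn^{-K}$, and the stated bound $\geq 1-n^{-K+2}$ follows from $j\leq d$ and $n\geq d^{1+\epsilon}$ (so that $j(n+2)\leq n^2$ once $d\geq d_0$).

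I do not expect any genuine obstacle here: the content is a single conditioning step plus a routine union bound. The only thing requiring care is the measurability bookkeeping --- tracking which events are functions of $(g,X_1,\dots,X_i)$ --- because the larger induction in Theorem~\ref{thm:gs-pert} relies on it, so that the fresh, conditionally standard-Gaussian randomness of $X_{i+1}$ is available at the next step.
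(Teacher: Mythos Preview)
Your proof is correct, and in fact cleaner than the paper's. You condition on $X_1,\dots,X_j$ and use the independence of $g$ to get $\la g,\Xt_j\ra\sim\N(0,1)$ directly; the paper instead works through the Gram--Schmidt recursion $s_j\la g,\Xt_j\ra=\la g,X_j\ra-\sum_{k<j}\la X_j,\Xt_k\ra\la g,\Xt_k\ra$, conditions on $g$ and $X_1,\dots,X_{j-1}$, bounds each of the two pieces as a Gaussian in the fresh $X_j$, and needs the inductive hypothesis $|\la g,\Xt_k\ra|\leq 8\cc2$ to control the variance $\sigma_j^2=\sum_{k<j}\la g,\Xt_k\ra^2$ of the second piece. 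That is why the paper picks up two tail events per step (hence the $2jn^{-K}$), whereas you incur only one. The paper's route is consistent with the rest of the induction in Theorem~\ref{thm:gs-pert}, which repeatedly exploits the freshness of $X_i$, but for this particular lemma your choice of conditioning is the more direct one and avoids any genuine use of the induction hypothesis for the magnitude bound.
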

\begin{proof}
    We prove these statements by induction on $j$. The base case $j=1$ follows the same proof as the step, which we present here. From recursive Gram-Schmidt relation for basis vectors $\Xt_j$ we have
    \begin{align*}
        s_j\la g, \Xt_j\ra =  \la g, X_j \ra - \sum_{k=1}^{j-1} \la X_j, \Xt_k\ra \la g, \Xt_k\ra\,.
    \end{align*}
    Assume $\Enorm{j} \subseteq \Eg, \EE{j-1}^{\la g, \Xt\ra}$ hold. Recall that given $\Eg$ we have $\|g\|^2 \leq n + \cc0 n^{1/2} \leq 2n$. $\la g, X_j\ra \sim \N(0, \|g\|^2)$, and therefore, by Lem.~\ref{lem:gaustail},
    $$
    \Pr\b[|\la g, X_j\ra| \geq \sqrt{2}\cc2 n^{1/2}\b] \leq \Pr\b[|\la g, X_j\ra| \geq \cc2 |g|\b] \leq n^{-K}\,.
    $$
    Note that $\sum_{k=1}^{j-1} \la X_j, \Xt_k\ra \la g, \Xt_k\ra \sim \N(0, \sigma_j^2)$, where given $\EE{j-1}^{\la g, \Xt\ra}$,
    $$\sigma_j = \sqrt{\sum_{k=1}^{j-1} \la g,\Xt_k\ra^2} \leq j^{1/2} \cdot 8 \cc2 \,,$$
    and therefore, with probability at least $1-n^{-K}$, 
    $$
    \B|\sum_{k=1}^{j-1} \la X_j, \Xt_k\ra \la g, \Xt_k\ra\B| \leq \cc2 \sigma_j \leq 8 (\cc2)^2 j^{1/2}\,.
    $$
    Finally, by Lem.~\ref{lem:basis_norms}, given $\Enorm j$, $s_j^{-1} \leq 2n^{-1/2}$, and therefore, with probability $\geq 1 - 2n^{-K}$,
    $$
    |\la g, \Xt_j\ra| \leq s_j^{-1}\b|\la g, X_j \ra\b| + s_j^{-1} \B|\sum_{k=1}^{j-1} \la X_j, \Xt_k\ra \la g, \Xt_k\ra\B| \leq 2\sqrt{2}\cc2 + 16 (\cc2)^2 j^{1/2}n^{-1/2}  \leq 8 \cc2 \,,
    $$
    since $j^{1/2}n^{-1/2} \leq d^{-\epsilon/2} = o(1)$. By union bound we have $\Pr[\EE{j}^{\la g, \Xt\ra}|\EE{j-1}^{\la g, \Xt\ra}, \Enorm{j}] \geq 1 - 2n^{-K}$, and since $\Pr[\Enorm{j}] \geq 1 - n^{-K+1}$, by union bound applied to $\EE{j-1}^{\la g, \Xt\ra}, \Enorm{j}$,
    $$
    \Pr[\EE{j}^{\la g, \Xt\ra}] \geq 1 - j n^{-K+1} - 2j n^{-K}\geq 1 - n^{-K+2}\,.
    $$
\end{proof}

\begin{lemma}[Bound on $\la g, \Zt_j\ra$]\label{lem:gZ}
    Given $\EE{j}^{\la g, \Xt\ra}$,
    $$
    \b|\la g, \Zt_j\ra - (\la g, \Xt_j\ra + \tsq n^{1/2} u_j)\b| \leq \begin{cases}
        4 \tsq n^{1/2} \an + 2\wn + 64\cc0\cc2 n^{-1/2}\\
        \qquad= O(\tsq n^{1/2} \an + \wn) + \tO(n^{-1/2}) = o(1) &\text{if } j\not\in S\,,\\
        4 \tsq n^{1/2} \as + 2\ws + 75 (c_u)^2 \theta n^{1/2} k^{-1}\\
        \qquad= O(\tsq n^{1/2} \as + \ws + \theta n^{1/2} k^{-1}) &\text{if } j\in S\,.
    \end{cases}
    $$
\end{lemma}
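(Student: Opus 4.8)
The plan is to read the bound off directly from the representation of $\Zt_j$ established in Theorem~\ref{thm:gs-pert} (equivalently, from Lemma~\ref{lem:rec}),
\[
\Zt_j = \frac{s_j}{r_j}\B(\Xt_j + \tsq n^{-1/2}(u_j+\alpha_j)g + W_j\B),
\]
by taking the inner product against $g$. This gives the exact identity
\[
\la g, \Zt_j\ra = \frac{s_j}{r_j}\B(\la g,\Xt_j\ra + \tsq n^{-1/2}(u_j+\alpha_j)\|g\|^2 + \la g, W_j\ra\B),
\]
so the task reduces to showing the right-hand side differs from $\la g,\Xt_j\ra + \tsq n^{1/2}u_j$ by at most the advertised amount. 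First I would write $s_j/r_j = 1+\delta_j$ and $\|g\|^2 = n+\eta_j$ and expand, which produces the main term $\la g,\Xt_j\ra + \tsq n^{1/2}u_j$ plus five error terms: $\delta_j\la g,\Xt_j\ra$, $(1+\delta_j)\tsq n^{-1/2}\alpha_j(n+\eta_j)$, $(1+\delta_j)\tsq n^{-1/2}u_j\eta_j$, $\delta_j\tsq n^{1/2}u_j$, and $(1+\delta_j)\la g,W_j\ra$.

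Next I would bound each error term on the event $\EE{j}^{\la g,\Xt\ra}\subseteq\Enorm j\subseteq\Eg$, using inputs already in hand: from Lemma~\ref{lem:basis_norms}, $|\delta_j|\le 8\cc0 n^{-1/2}+36 c_u\tsq k^{-1/2}\mathds{1}_{j\in S}$; from Corollary~\ref{lem:gausidnorm}, $|\eta_j|\le\cc0 n^{1/2}$; from Lemma~\ref{lem:gX}, $|\la g,\Xt_j\ra|\le 8\cc2$; and, by the time the lemma is invoked in the induction, the bounds $|\alpha_j|\le\an$ (resp.\ $\as$) from Lemma~\ref{lem:alpha} and $|\la g,W_j\ra|\le\wn$ (resp.\ $\ws$) from Lemma~\ref{lem:W}, according to whether $j\notin S$ or $j\in S$. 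In the case $j\notin S$ the three $u_j$-terms vanish, and using $|1+\delta_j|\le 2$ and $n+\eta_j\le 2n$ (valid once $d\ge d_0$) the $\alpha_j$-term is at most $4\tsq n^{1/2}\an$, the $W_j$-term at most $2\wn$, and $|\delta_j\la g,\Xt_j\ra|\le 64\cc0\cc2 n^{-1/2}$, which is exactly the first case of the lemma.

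The case $j\in S$ needs slightly more care, and the term to watch is $\delta_j\tsq n^{1/2}u_j$: bounding $|u_j|\le c_u k^{-1/2}$ splits it into $8\cc0 c_u\tsq k^{-1/2}$ and $36 c_u^2\theta n^{1/2}k^{-1}$, the second of which is the source of the $\theta n^{1/2}k^{-1}$ term in the statement. The remaining new contributions are sub-leading: $|(1+\delta_j)\tsq n^{-1/2}u_j\eta_j|\le 2\cc0 c_u\tsq k^{-1/2}$ and the extra piece $288 c_u\cc2\tsq k^{-1/2}$ from $\delta_j\la g,\Xt_j\ra$ in this regime. These, together with $8\cc0 c_u\tsq k^{-1/2}$, get absorbed into $4\tsq n^{1/2}\as$ precisely because the induction maintains $\as\ge 16\cc0 c_u n^{-1/2}k^{-1/2}$ (as in the base case $i=1$), while the residual $\tO(n^{-1/2})$-scale pieces are absorbed into $\theta n^{1/2}k^{-1}$, since $\theta n^{1/2}k^{-1}\ge\thetastat n^{1/2}k^{-1}=k^{-1/2}$ and $k\le d<n$ forces $k^{-1/2}\gg n^{-1/2}$. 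Collecting all contributions and being generous with constants yields $4\tsq n^{1/2}\as + 2\ws + 75 c_u^2\theta n^{1/2}k^{-1}$.

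I do not expect a genuine obstacle here: the argument is a single expansion followed by term-by-term estimates. The only real care is (i) the case split $j\in S$ vs.\ $j\notin S$, since the $u_j$-dependent error terms behave completely differently, and (ii) checking that every sub-leading term actually fits inside one of the three named terms of the bound — which, as above, leans on the maintained lower bound on $\as$ and on the assumption $\theta\ge\thetastat$. The loose constants ($4$, $2$, $75 c_u^2$) are chosen so that this bookkeeping always goes through.
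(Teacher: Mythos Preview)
Your approach is essentially the paper's: start from the representation $\Zt_j = (s_j/r_j)(\Xt_j + \tsq n^{-1/2}(u_j+\alpha_j)g + W_j)$, take the inner product with $g$, expand, and bound term by term using Lemma~\ref{lem:basis_norms}, Lemma~\ref{lem:gX}, and the inductive bounds on $\alpha_j$ and $\la g,W_j\ra$. The $j\notin S$ case is handled exactly as in the paper.

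One bookkeeping point in the $j\in S$ case deserves correction. You write that the sub-leading $\tsq k^{-1/2}$-scale terms (namely $8\cc0 c_u\tsq k^{-1/2}$, $2\cc0 c_u\tsq k^{-1/2}$, and $288 c_u\cc2\tsq k^{-1/2}$) ``get absorbed into $4\tsq n^{1/2}\as$'' via the base-case lower bound $\as\ge 16\cc0 c_u n^{-1/2}k^{-1/2}$. This does not work as stated: the term $4\tsq n^{1/2}\as$ is already fully consumed by the bound $4\tsq n^{1/2}|\alpha_j|\le 4\tsq n^{1/2}\as$, so there is no slack to absorb additional contributions without inflating the constant $4$. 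The paper instead absorbs all of these $\tsq k^{-1/2}$-scale terms (and the $\tO(n^{-1/2})$ piece) into the $(c_u)^2\theta n^{1/2}k^{-1}$ term, using $\theta\ge\thetastat=\sqrt{k/n}$ to show $\tsq k^{-1/2}\ll \theta n^{1/2}k^{-1}$ for large $d$; this is what turns the leading coefficient from $74$ into $75$. With that adjustment your argument goes through and matches the paper.
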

Substituting the explicit values for $\wn,\ws$ from Lem.~\ref{lem:const_exist}, we obtain:
\begin{mylem}{\ref{lem:gZ_strong}} \boldnormal{(Spike in $\Zt$)}
    Given $\EE{j}^{\la g, \Xt\ra}$,
    $$\b|\la g, \Zt_j\ra - (\la g, \Xt_j\ra + \tsq n^{1/2} u_j)\b| = \begin{cases}
         \tO(\tsq)
        &\text{if } j\not\in S\,,\\
        \tO(\theta n^{1/2} k^{-1} + \tsq n^{-1/2} d k^{-1/2}+\theta^{3/2} n^{1/2} k^{-1/2}),
        &\text{if } j\in S\,.
    \end{cases}
    $$
\end{mylem}
\begin{proof}[of Lemma~\ref{lem:gZ_strong}] The expressions of Lem.~\ref{lem:gZ} contain $\an, \as, \wn, \ws$. Here we use the final expressions for these quantities derived in Lem.~\ref{lem:const_exist} to obtain explicit bounds in terms of $d,k,\theta,n$.
\begin{enumerate}
    \item ($j\not\in S$) From Lem.~\ref{lem:gZ},
    $$
    \b|\la g, \Zt_j\ra - (\la g, \Xt_j\ra + \tsq n^{1/2} u_j)\b| = O(\tsq n^{1/2} \an + \wn) + \tO(n^{-1/2})\,.
    $$
    First, since $\theta \geq \thetastat = \sqrt{k/n}$ and we choose $\an = 16\cc2 \sqrt{c_u} n^{-1/2}$ in Lem.~\ref{lem:const_exist}, we have $O(\tsq n^{1/2} \an) + \tO(n^{-1/2}) = \tO(\tsq)$. We now show that $\wn = \tO(\tsq)$. From Lem.~\ref{lem:const_exist}, we have 
    \begin{align*}
        \wn &= \tO(n^{-1}d^{1/2} + \tsq n^{-1/2} + n^{-1/2}k^{1/2}\ws)\\
        &= \tO(\tsq) + n^{-1/2}k^{1/2}\tO(\tsq n^{-1/2} d k^{-1/2} + \theta)\\
        &= \tO(\tsq)\,,
    \end{align*}
    where in the first line we used $n^{-1}d^{1/2}\ll \tsq$ for $\theta\geq \thetastat$ and in the second we substituted $\ws = \tO(\tsq n^{-1/2} d k^{-1/2} + \theta)$ (Lem.~\ref{lem:const_exist}).
    \item ($j\in S$) From Lem.~\ref{lem:gZ},
    $$
    \b|\la g, \Zt_j\ra - (\la g, \Xt_j\ra + \tsq n^{1/2} u_j)\b| = O(\tsq n^{1/2} \as + \ws + \theta n^{1/2} k^{-1})\,.
    $$
    Substituting $\ws = \tO(\tsq n^{-1/2} d k^{-1/2} + \theta)$ and $\as = \tO(\tsq n^{-1} k^{-1/2} d + \theta k^{-1/2} + \tsq n^{-1/2} \ws)$ (Lem.~\ref{lem:const_exist}), we obtain
    \begin{align*}
        \b|\la g, \Zt_j\ra - (\la g, \Xt_j\ra + \tsq n^{1/2} u_j)\b| &= \tO(\theta n^{-1/2} k^{-1/2} d + \tsq n^{1/2} \theta k^{-1/2} + \tsq n^{-1/2}dk^{-1/2}\\
        &\qquad\qquad+  \theta + \theta n^{1/2} k^{-1})\\
        &= \tO(\theta^{3/2} n^{1/2} k^{-1/2} + \tsq n^{-1/2}dk^{-1/2} +  \theta n^{1/2} k^{-1})\,,
    \end{align*}
    where we used $\tsq \leq 1$ and $\theta\geq \thetastat$.
\end{enumerate}
\end{proof}

\begin{proof}[of Lemma~\ref{lem:gZ}] From our expression for $\Zt_j$~\eqref{eq:Zt}
$$
\la g, \Zt_j\ra = \frac{s_j}{r_j}\b( \la g, \Xt_j\ra + \tsq n^{-1/2}(u_j + \alpha_j)\|g\|^2 + \la g,W_j\ra\b)\,,
$$
and therefore, 
\begin{align*}
\la g, \Zt_j\ra - \b(\la g, \Xt_j\ra + \tsq n^{1/2} u_j \b) &= \b(s_j r_j^{-1} - 1\b)\la g, \Xt_j\ra \\
&\qquad + s_j r_j^{-1} \b( \tsq n^{-1/2}(u_j + \alpha_j)\| g\|^2 - \tsq n^{1/2} u_j \b) \\
&\qquad + \b(s_j r_j^{-1} - 1\b) \tsq n^{1/2} u_j + s_j r_j^{-1} \la g,W_j\ra\,.
\end{align*}
Recall that $\EE{j}^{\la g, \Xt\ra} \subset \EE{j}^{\text{norm}}$. Then from \eqref{e:norm_rs} in  Lem.~\ref{lem:basis_norms}, $\b|s_j r_j^{-1} - 1\b| \leq \Delta = 8\cc0 n^{-1/2} + 36c_u \tsq k^{-1/2}\mathds{1}_{j\in S} \leq 1$. From the high-probability bound on $|\la g, \Xt_j\ra|$ in Lem.~\ref{lem:gX} and the fact that given $\EE{j}^{\text{norm}}$, $|\|g\|^2 - n| \leq \cc0 n^{1/2}$, we have that given $\EE{j}^{\la g, \Xt\ra}$,
$$|\la g, \Xt_j\ra| \leq 8\cc2 ,\quad \b|\tsq n^{-1/2}(u_j + \alpha_j)\| g\|^2 - \tsq n^{1/2} u_j\b| \leq \cc0 \tsq |u_j + \alpha_j| + \tsq n^{1/2}|\alpha_j|\,.$$ 
Combining these bounds with $\|g\|^2 \leq n +\cc0 n^{1/2} \leq 2n$, we obtain that 
\begin{align*}
    \b|\la g, \Zt_j\ra - (\la g, \Xt_j\ra + \tsq n^{1/2} u_j)\b| &\leq \cc0 \tsq |u_j + \alpha_j| + \tsq n^{1/2}|\alpha_j| + |\la g, W_j\ra| \\
    &\qquad+ \Delta \B(8\cc2  + 2 \tsq n^{1/2} |u_j + \alpha_j| + |\la g, W_j\ra|\B)\\
    &\leq (\cc0 \tsq + \tsq n^{1/2} + 2\Delta \tsq n^{1/2})|\alpha_j| + (1 + \Delta)|\la g, W_j\ra| \\
    &\qquad + (\cc0 \tsq + 2\Delta \tsq n^{1/2})|u_j| + 8\cc2 \Delta \\
    &\leq 4 \tsq n^{1/2} |\alpha_j| + 2|\la g, W_j\ra|\\
    &\qquad +c_u(\cc0 \tsq + 2\Delta \tsq n^{1/2})k^{-1/2}\mathds{1}_{j\in S} + 8\cc2 \Delta\,.
\end{align*}

Applying the assumption of induction for $\alpha_j, \la g, W_j\ra$ we get
\begin{itemize}
    \item If $j\not\in S$:
    \begin{align*}
        \b|\la g, \Zt_j\ra - (\la g, \Xt_j\ra + \tsq n^{1/2} u_j)\b| &\leq 4 \tsq n^{1/2} \an + 2\wn + 64\cc0\cc2 n^{-1/2}\\
        &= O(\tsq n^{1/2} \an + \wn) + \tO(n^{-1/2}) = o(1)\,.
    \end{align*}
    \item If $j\in S$: Here, since $\theta \geq \thetastat = \sqrt{k/n}$, $\Delta = 8\cc0 n^{-1/2} + 36c_u \tsq k^{-1/2}\mathds{1}_{j\in S} \leq 37c_u \tsq k^{-1/2}$. Then
    \begin{align*}
        \b|\la g, \Zt_j\ra - (\la g, \Xt_j\ra + \tsq n^{1/2} u_j)\b| &\leq 4 \tsq n^{1/2} \as + 2\ws +c_u\cc0 \tsq k^{-1/2} + 74 (c_u)^2 \theta n^{1/2} k^{-1} \\
        &\qquad+ 296 \cc2 c_u \tsq k^{-1/2}\\
        &\leq 4 \tsq n^{1/2} \as + 2\ws + 75 (c_u)^2 \theta n^{1/2} k^{-1}\\
        &= O(\tsq n^{1/2} \as + \ws + \theta n^{1/2} k^{-1})\,,
    \end{align*}
    where in the last inequality we again used $\theta \geq \sqrt{k/n} \geq k/n \cdot d^{-\epsilon/2}$.
\end{itemize}
\end{proof}

\begin{lemma}[Bound on $\alpha_{i}$]\label{lem:alpha}
We can define an event $\EE{i}^{\alpha}\subseteq \EE{i}^{\la g, \Xt\ra}$ that $g, X_1,\dots,X_{i-1}$ only depends on the randomness of $g,X_1,\dots,X_{i}$ and satisfies 
$$\Pr[\EE{i}^{\alpha}]\geq 1 - in^{-K} - in^{-K+2} \geq 1-n^{-K+3}\,,$$
and given $\EE{i}^{\alpha}$,
    $$|\alpha_{i}| \leq \begin{cases}
        16 \cc2 \sqrt{c_u} n^{-1/2} = \tO(n^{-1/2})        &\text{if } i\not\in S\,,\\
        36c_u\cc2 \tsq n^{-1/2} k^{-1/2} d \an + 49(c_u)^3 \theta k^{-1/2} + 16(c_u)^2 \tsq n^{-1/2} \ws\\
        \quad= \tO(\tsq n^{-1/2} k^{-1/2} d \an) + O(\theta k^{-1/2} + \tsq n^{-1/2} \ws)
        &\text{if } i\in S,\,.
    \end{cases}
    $$
Note that in order for 
$$|\alpha_{i}| \leq \begin{cases}
        \an, & i\not\in S\,,\\
        \as,& i\in S
    \end{cases}
$$
it is sufficient to choose any $\an, \as$ that satisfy
\begin{align}
    \label{e:alpha_cond1}\an &\geq 16 \cc2 \sqrt{c_u} n^{-1/2}\\
    \label{e:alpha_cond2}\as &\geq 36c_u\cc2 \tsq n^{-1/2} k^{-1/2} d \an + 49(c_u)^3 \theta k^{-1/2} + 16(c_u)^2 \tsq n^{-1/2} \ws\,.
\end{align}
\end{lemma}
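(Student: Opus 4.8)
The plan is to start from the closed-form recursion for $\alpha_i$ established in Lemma~\ref{lem:rec},
\[
\alpha_i = \b(s_i^{-1}n^{1/2}-1\b)u_i \;-\; s_i^{-1}\sum_{j=1}^{i-1}\la Z_i,\Zt_j\ra\, s_jr_j^{-1}(u_j+\alpha_j)\,,
\]
and to bound each piece on a suitable high-probability event, invoking the inductive hypotheses of Theorem~\ref{thm:gs-pert} for $j\le i-1$ (namely $|\alpha_j|\le\an$, resp.\ $\as$, and $\max\{|\la g,W_j\ra|,\|W_j\|\}\le\wn$, resp.\ $\ws$). The deterministic first term is disposed of directly via~\eqref{e:norm_s_inv} of Lemma~\ref{lem:basis_norms}: it vanishes when $i\notin S$, while for $i\in S$ it is at most $16\cc0 c_u n^{-1/2}k^{-1/2}\le 16\cc0 c_u\,\theta k^{-1/2}$, using that $\theta\ge\thetastat=\sqrt{k/n}$ gives $n^{-1/2}\le\theta k^{-1/2}$; this lands inside the claimed $O(\theta k^{-1/2})$ budget.

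For the sum, put $c_j:=s_jr_j^{-1}(u_j+\alpha_j)$ and split $\la Z_i,\Zt_j\ra=\la X_i,\Zt_j\ra+\tsq u_i\la g,\Zt_j\ra$ via $Z_i=X_i+\tsq u_ig$. The first part is the ``fresh randomness'' term: $\Zt_1,\dots,\Zt_{i-1}$ and the coefficients $c_j$ are measurable with respect to $\mathcal F_{i-1}:=\sigma(g,X_1,\dots,X_{i-1})$, whereas $X_i\sim\N(0,I_n)$ is independent of $\mathcal F_{i-1}$ and $\{\Zt_j\}_{j<i}$ is orthonormal, so conditionally on $\mathcal F_{i-1}$ the quantity $\sum_{j<i}\la X_i,\Zt_j\ra c_j$ is exactly $\N\b(0,\sum_{j<i}c_j^2\b)$. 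Using $\sum_{j\in S}u_j^2\le\|u\|_2^2=1$, $|u_j|\le c_uk^{-1/2}$, and the inductive bounds on $|\alpha_j|$, one gets $\sum_{j<i}c_j^2\le 4\sum_{j<i}(|u_j|+|\alpha_j|)^2=O(c_u^2)+O(d\an^2+k\as^2)=O(c_u^2)$, since the standing hypotheses $\an=\tO(n^{-1/2})$ and $\as=o(k^{-1/2})$ force $d\an^2=\tO(d/n)=o(1)$ and $k\as^2=o(1)$. Hence, by Lemma~\ref{lem:gaustail}, off an event of probability $\le n^{-K}$ one has $\b|s_i^{-1}\sum_{j<i}\la X_i,\Zt_j\ra c_j\b|\le 2n^{-1/2}\cc2\big(\sum_{j<i}c_j^2\big)^{1/2}=\tO(n^{-1/2})$, which already gives the $i\notin S$ bound and, via $n^{-1/2}\le\theta k^{-1/2}$, is absorbed into the $O(\theta k^{-1/2})$ budget when $i\in S$.

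The remaining part $\tsq u_is_i^{-1}\sum_{j<i}\la g,\Zt_j\ra c_j$ is only significant for $i\in S$, where the prefactor obeys $\tsq u_is_i^{-1}\le 2\tsq c_uk^{-1/2}n^{-1/2}$ by~\eqref{e:norm_s_inv} and $|u_i|\le c_uk^{-1/2}$. Here I bound $|\la g,\Zt_j\ra|$ using Lemma~\ref{lem:gZ} (with its $\wn,\ws$ not yet substituted), $|\la g,\Xt_j\ra|\le 8\cc2$ (Lemma~\ref{lem:gX}), $|c_j|\le 2(|u_j|+|\alpha_j|)$, $\sum_{j\in S}|u_j|\le c_uk^{1/2}$, $\sum_{j\in S}u_j^2\le1$, together with the inductive bounds on $|\alpha_j|$. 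The $j\notin S$ summands of $\sum_{j<i}\la g,\Zt_j\ra c_j$ contribute $\tO(d\an)$ (there $u_j=0$, $|c_j|\le 2\an$, and $|\la g,\Zt_j\ra|=\tO(1)$ by Lemma~\ref{lem:gZ} since $\tsq n^{1/2}\an=\tO(\tsq)$, $\wn=o(1)$), which after the prefactor yields the term $\tO(c_u\tsq n^{-1/2}k^{-1/2}d\an)$; the $\ws$-part of the $j\in S$ error terms of $\la g,\Zt_j\ra$ yields $O(c_u^2\tsq n^{-1/2}\ws)$; and every remaining $j\in S$ contribution is $O((c_u)^3\theta k^{-1/2})$ (using $\theta\ge\thetastat$, $\theta\le\thetacomp\le k$, and $\as=o(k^{-1/2})$). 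Adding these to the first-term and fresh-randomness bounds reproduces the claimed three-term estimate. I then take $\EE{i}^{\alpha}:=\EE{i-1}^{\alpha}\cap\EE{i}^{\la g,\Xt\ra}\cap\b\{\b|\sum_{j<i}\la X_i,\Zt_j\ra c_j\b|\le\cc2\big(\sum_{j<i}c_j^2\big)^{1/2}\b\}$, which is $\sigma(g,X_1,\dots,X_i)$-measurable, nested, and $\subseteq\EE{i}^{\la g,\Xt\ra}$; a union bound over the $i$ fresh tail events ($\le n^{-K}$ each, by Lemma~\ref{lem:gaustail}) on top of $\Pr[\EE{i}^{\la g,\Xt\ra}]\ge 1-in^{-K+1}-2in^{-K}$ from Lemma~\ref{lem:gX} gives $\Pr[\EE{i}^{\alpha}]\ge 1-in^{-K}-in^{-K+2}\ge1-n^{-K+3}$, and the sufficiency of~\eqref{e:alpha_cond1}--\eqref{e:alpha_cond2} is immediate by matching terms.

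The main obstacle is keeping the iterative error from compounding over the $d$ steps: $\alpha_i$ feeds on every previous $\alpha_j$ and $W_j$, so a careless estimate would grow like a product of $1+o(1)$ factors repeated $d\approx n^{1/(1+\epsilon)}$ times. The saving feature is that each $\alpha_j$ (and $W_j$) enters only with the tiny weight $u_j+\alpha_j$, and $\sum_j u_j^2\le1$ while $\sum_j\alpha_j^2=o(1)$, so all the feedback sums stay $O(1)$; the genuinely delicate part is then the bookkeeping that isolates precisely the three advertised error terms — one carrying $\an$, one carrying $\ws$, and one clean $\theta k^{-1/2}$ — which is routine once the decomposition through $\la g,\Zt_j\ra$ (Lemma~\ref{lem:gZ}) is in place.
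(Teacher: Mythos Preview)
Your proposal is correct and follows essentially the same approach as the paper: both start from the recursion of Lemma~\ref{lem:rec}, split $\la Z_i,\Zt_j\ra=\la X_i,\Zt_j\ra+\tsq u_i\la g,\Zt_j\ra$ to obtain the same three pieces (the paper labels them $\I$, $\II$, $\III$), bound the $\la X_i,\Zt_j\ra$ sum as a conditional Gaussian via Lemma~\ref{lem:gaustail}, bound the $\la g,\Zt_j\ra$ sum through Lemma~\ref{lem:gZ}, and define $\EE{i}^{\alpha}=\EE{i-1}^{\alpha}\cap\EE{i}^{\la g,\Xt\ra}\cap\EIII{i}$ identically. The bookkeeping and the resulting constants match.
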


\begin{proof}
In the base case we have shown that taking $\EE{1}^{\alpha} = \EE{i}^{\la g, \Xt\ra}\subseteq \Enorm 1$ is sufficient to satisfy the conditions of the lemma in case $i=1$. We now prove the statement for general $i>1$. Copying the expression for $\alpha_i$ from Lemma \ref{lem:rec} and plugging in $Z_{i} = X_{i} + \tsq u_{i} g$ yields 
\begin{align*}
    \alpha_{i} &= (s_{i}^{-1}n^{1/2} - 1)u_{i} - s_{i}^{-1}\sum_{j=1}^{i-1} \la Z_{i}, \Zt_j\ra s_j r_j^{-1} (u_j + \alpha_j)\\
    &= \underbrace{u_{i} \cdot (n^{1/2}s_{i}^{-1} - 1)}_{\I} + \underbrace{u_{i} \cdot \tsq s_{i}^{-1}\sum_{j=1}^{i-1} \la g, \Zt_j\ra s_j r_j^{-1}  (u_j + \alpha_j)}_{\II} + \underbrace{s_{i}^{-1}\sum_{j=1}^{i-1} \la X_{i}, \Zt_j\ra s_j r_j^{-1} (u_j + \alpha_j)}_{\III}\,.
\end{align*}
We analyze the magnitude of each of these terms: 
\begin{itemize}
    \item $\I$. From Eq.~\eqref{e:norm_s_inv} in Lem.~\ref{lem:basis_norms}, given $\Enorm {i}$, $|s_{i}^{-1} - n^{-1/2}| \leq 16\cc0 n^{-1}$ and hence 
    \begin{equation}\label{e:Ibound}
        |\I|= |u_{i}| n^{1/2} |s_{i}^{-1} - n^{-1/2}| \leq 16\cc0c_u n^{-1/2} k^{-1/2} \mathds{1}_{i\in S}\,.
    \end{equation}
    \item $\II$. From the bounds on $|s_{i}^{-1} - n^{-1/2}|$ and $|s_j r_j^{-1} - 1|$ (Eq.~\eqref{e:norm_s_inv}, \eqref{e:norm_rs} in Lem.~\ref{lem:basis_norms}), given $\Enorm j$, $|s_j r_j^{-1}| \leq 2$ and $|s_i^{-1}|\leq n^{-1/2} + 16\cc 0 n^{-1} \leq 2 n^{-1/2}$. Additionally, from Lemma \ref{lem:gZ} we have that given $\EE{j}^{\la f, \Xt\ra}$,
    $$
    |\la g, \Zt_j\ra| \leq \begin{cases}
        |\la g, \Xt_j\ra| + o(1) \leq 9 \cc2 &\text{if } j\not\in S\,,\\
        |\la g, \Xt_j\ra| + \tsq n^{1/2}|u_j| + 4\tsq n^{1/2}\as + 2\ws + 75(c_u)^2 \theta n^{1/2} k^{-1}\\
        \qquad \leq 7c_u \tsq n^{1/2}k^{-1/2} + 2\ws &\text{if } j\not\in S\,,
    \end{cases}
    $$
    where we used $\as \ll k^{-1/2}$ and $\max\{\tsq k^{-1/2}, 1\} = o(\tsq n^{1/2} k^{-1/2})$. Substituting these bounds into the expression for $|\II|$, we get
    \begin{align}
        |\II| &\leq 2c_u \tsq n^{-1/2} k^{-1/2} \mathds{1}_{i\in S} \B( \sum_{j\in [i-1]\setminus S} 18 \cc2 \an\nonumber \\
        &\qquad\qquad+ \sum_{j\in [i-1]\cap S} 2(7c_u \tsq n^{1/2} k^{-1/2} + 2 \ws) \cdot 2 c_u k^{-1/2}\B)\nonumber\\
        &= 2c_u \tsq n^{-1/2} k^{-1/2} \mathds{1}_{i\in S} \B( 18 \cc2 d \an + 28 (c_u)^2 \tsq n^{1/2} + 8 c_u k^{1/2} \ws \B)\nonumber\\
        \label{e:IIbound}&=\B(36c_u\cc2 \tsq n^{-1/2} k^{-1/2} d \an  + 56(c_u)^3 \theta k^{-1/2} + 16(c_u)^2 \tsq n^{-1/2} \ws\B) \mathds{1}_{i\in S}
    \end{align}
    \item $\III.$ 
    Observe that conditioned on fixed $\{g,X_j\}_{j=1}^{i-1}$,
     term $\III$ is a Gaussian random variable with variance
    $$
\sigma_i^2:=    s_i^{-2}\sum_{j=1}^{i-1} \Big( \frac{s_j}{r_j}\B)^2 (u_j + \alpha_j)^2\,.
    $$
    Define the event
    \begin{equation}
        \EIII i = \{ |\III| \leq \cc2 \sigma_i \}
    \end{equation}
    and note that by Gaussian Tail Bound (Lemma \ref{lem:gaustail}) $\P( \EIII i) \geq 1- n^{-K}$. 
    Then, letting $\EE{i}^{\alpha} = \EE{i}^{\la g, \Xt\ra} \cap \EIII i \cap \EE{i-1}^{\alpha}$ we have that given $\EE{i}^{\alpha}$, 
    \begin{align}
        |\III| &\leq \cc2 s_i^{-1} \sqrt{\sum_{j=1}^{i-1} s_j^2 r_j^{-2} (u_j + \alpha_j)^2}\nonumber\\
        &\leq 4 \cc2 n^{-1/2} \sqrt{\sum_{j\in [i-1]\setminus S} (u_j + \alpha_j)^2+ \sum_{j\in [i-1]\cap S} (u_j + \alpha_j)^2}\nonumber\\
        &\leq 4 \cc2 n^{-1/2}= \sqrt{d \an^2+ 2c_u k k^{-1}}\nonumber \\
        &\leq 8 \cc2 n^{-1/2} (d^{1/2} \an + \sqrt{2c_u})\nonumber \\
        \label{e:IIIbound}&\leq 16 \cc2 \sqrt{c_u} n^{-1/2} \,,
    \end{align}
    where we applied Lem.~\ref{lem:basis_norms} to bound $s_i^{-1}, s_j r_j^{-1}$ and in the last line we used $d^{1/2}\an = O(d^{1/2}n^{-1/2}) = o(1)$.
\end{itemize}

By union bound, $\Pr[\EE{i}^{\alpha}] \geq 1 - in^{-K} - in^{-K+2}\geq 1 - n^{-K+3}$. We now combine $\I,\II$, and $\III$ to bound $|\alpha_{i}|$ given $ \EE{i}^{\alpha}$ in two cases:
\begin{itemize}
    \item $i\not\in S:$ Since $u_{i}=0$, $|\alpha_{i}| = |\III| \leq 16 \cc2 \sqrt{c_u} n^{-1/2} \log n = \tO(n^{-1/2})$.
    \item $i\in S:$ Combining \eqref{e:Ibound}, \eqref{e:IIbound}, and \eqref{e:IIIbound}, we obtain
    \begin{align}
        |\alpha_{i}| &\leq  |\I| + |\II| + |\III| \nonumber\\
        &\leq 16 \cc0 c_u n^{-1/2} k^{-1/2} + 36c_u\cc2 \tsq n^{-1/2} k^{-1/2} d \an  + \nonumber\\
        &\qquad\qquad+ 56(c_u)^3 \theta k^{-1/2} + 16(c_u)^2 \tsq n^{-1/2} \ws + 16 \cc2 \sqrt{c_u} n^{-1/2}\nonumber\\
        \label{alpha_ineq1}&\leq 36c_u\cc2 \tsq n^{-1/2} k^{-1/2} d \an + 49(c_u)^3 \theta k^{-1/2} + 16(c_u)^2 \tsq n^{-1/2} \ws\,,
    \end{align}
    where in \eqref{alpha_ineq1} we used $\max\{ n^{-1/2} k^{-1/2}, n^{-1/2}\} = o(\theta k^{-1/2})$.\qedhere
\end{itemize}
\end{proof}

\begin{lemma}[Bound on $\la g,W_{i}\ra, \|W_{i}\|.$]\label{lem:W} 
We can define an event $\EE{i}^{W}\subseteq \EE{i}^{\alpha}$, that only depends on the randomness of $g, X_1,\dots, X_{i}$, such that 
$$\P[\EE{i}^{W}] \geq 1 - 4i n^{-K} - i n^{-K+3} \geq 1 - n^{-K+4}$$
and given $\EE{i}^{W}$,
$$|\la g,W_{i}\ra|, \|W_{i}\| \leq \begin{cases}
        \wn, \qquad i\not\in S\,,\\
        \ws,\qquad i\in S\,.
    \end{cases}
$$
where $\wn, \ws$ satisfy:
$\wn = \tOm \B(n^{-1}d^{1/2} + \tsq n^{-1/2} + n^{-1/2} k^{1/2} \ws\B)$ and $\ws = \tOm \B(\wn + \tsq n^{-1/2} d k^{-1/2} + \theta + \tsq n^{-1/2} k^{1/2} \ws^2 \B)$ with explicit constants for $\tOm$ stated in Equations \eqref{W_final_noS} and \eqref{W_final_S}.
\end{lemma}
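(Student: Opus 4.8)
The plan is to carry out the inductive step $i-1\to i$ using the recursion for $W_i$ supplied by Lemma~\ref{lem:rec}. Write $W_i = -s_i^{-1}(T_1+T_2+T_3)$, where $T_1 = \tsq u_i\sum_{j<i}\la g,\Zt_j\ra s_jr_j^{-1}(\Xt_j+W_j)$ collects the terms carrying the spike $u_ig$, $T_2 = \sum_{j<i}\la X_i,\Zt_j\ra\left((s_jr_j^{-1}-1)\Xt_j + s_jr_j^{-1}W_j\right)$ collects the terms linear in the perturbations $\Delta_j=\Zt_j-\Xt_j$ tested against $\la X_i,\Zt_j\ra$, and $T_3 = \sum_{j<i}\la X_i,\Delta_j\ra\Xt_j$ is the back-projection onto the old basis. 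I will condition on $\EE{i-1}^W$, which is measurable with respect to $g,X_1,\dots,X_{i-1}$ and already supplies the norm estimates of Lemma~\ref{lem:basis_norms} (so $s_i^{-1}\le\tO(n^{-1/2})$ and $|s_jr_j^{-1}-1|\le\tO(n^{-1/2})+\tO(\tsq k^{-1/2})\mathds{1}_{j\in S}$), the bounds $|\la g,\Xt_j\ra|\le\tO(1)$ of Lemma~\ref{lem:gX}, the bounds $|\la g,\Zt_j\ra|\le\tO(1)$ for $j\notin S$ and $|\la g,\Zt_j\ra|\le\tO(\tsq n^{1/2}k^{-1/2})+\tO(\ws)$ for $j\in S$ of Lemma~\ref{lem:gZ}, and the inductive bounds $|\alpha_j|\le\an,\as$ and $\|W_j\|,|\la g,W_j\ra|\le\wn,\ws$ for all $j<i$. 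On top of this I adjoin a small number ($O(i)=O(d)\le O(n)$) of fresh, $X_i$-dependent high-probability events: that $\sum_{j<i}\la X_i,\Zt_j\ra^2\le\tO(d)$ and $\sum_{j\in S\cap[i-1]}\la X_i,\Zt_j\ra^2\le\tO(k)$ (Cor.~\ref{lem:gausidnorm}), that $|\la X_i,\Xt_j\ra|,|\la X_i,\Zt_j\ra|\le\tO(1)$ for each $j<i$ and $|\la X_i,g\ra|\le\tO(\sqrt n)$ (Lemma~\ref{lem:gaustail}), and that the Gaussian vectors $\sum_{j<i}\la X_i,\Zt_j\ra s_jr_j^{-1}W_j$, separately over the indices in $S$ and outside $S$, concentrate at their standard deviations $\tO((\sum_j\|W_j\|^2)^{1/2})$ (Lemma~\ref{lem:gausnorm0}). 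Intersecting all these with $\EE{i}^\alpha$ gives $\EE{i}^W\subseteq\EE{i}^\alpha$, and a union bound yields $\P[(\EE{i}^W)^c]\le 4in^{-K}+in^{-K+3}\le n^{-K+4}$ since $i\le d\le n$.

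Granting $\EE{i}^W$, I estimate $\|W_i\|\le s_i^{-1}(\|T_1\|+\|T_2\|+\|T_3\|)$ and $|\la g,W_i\ra|\le s_i^{-1}(|\la g,T_1\ra|+|\la g,T_2\ra|+|\la g,T_3\ra|)$ using three ingredients. First, orthonormality of $\{\Xt_j\}$ and $\{\Zt_j\}$: any sum $\sum_j c_j\Xt_j$ has norm $(\sum_j c_j^2)^{1/2}$, and $\sum_j\la g,\Zt_j\ra^2\le\|g\|^2\le\tO(n)$. Second, Cauchy--Schwarz after splitting each sum into its $j\in S$ part (at most $k$ terms) and its $j\notin S$ part (at most $d$ terms), which turns the $W_j$-contributions into multiples of $\wn,\ws$ and the $(u_j+\alpha_j)$-contributions into multiples of $\tsq k^{-1/2},\tsq\an,\tsq\as$. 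Third --- and this is what keeps the feedback under control --- Gaussian concentration in the fresh randomness $X_i$ for $T_2,T_3$: with $v_j$ fixed, $\sum_{j<i}\la X_i,\Zt_j\ra v_j$ concentrates at $\tO((\sum_j\|v_j\|^2)^{1/2})$, so the $W_j$-part of $T_2$ is $\tO(d^{1/2}\wn+k^{1/2}\ws)$ rather than the $\tO(d\wn+k\ws)$ one gets from a naive triangle inequality, and $\|T_3\|^2=\sum_{j<i}\la X_i,\Delta_j\ra^2$ with $|\la X_i,\Delta_j\ra|\le\tO(n^{-1/2})+\tO(\tsq(|u_j|+|\alpha_j|))+\tO(\|W_j\|)$ gives $\|T_3\|\le\tO(d^{1/2}n^{-1/2}+\tsq d^{1/2}\an+d^{1/2}\wn+\tsq+k^{1/2}\ws)$ after splitting. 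Multiplying through by $s_i^{-1}\le\tO(n^{-1/2})$ and simplifying with $\theta<\thetacomp$, $\theta\ge\thetastat$, $\an=\tO(n^{-1/2})$, $\as=o(k^{-1/2})$, $\wn=o(1)$, $n\ge d^{1+\epsilon}$ and $k=d^\alpha\le d^{1/2}$, one reaches, for $i\notin S$ (where $u_i=0$, so $T_1=0$), a bound of the form $\tO(n^{-1}d^{1/2}+\tsq n^{-1/2}+n^{-1/2}k^{1/2}\ws)$, and for $i\in S$ (where $T_1$ is present and contributes $\tO(\theta)\ws$ and $\tO(\tsq n^{-1/2}k^{1/2})\ws^2$ via the $\ws$ term in the $j\in S$ bound for $|\la g,\Zt_j\ra|$) a bound of the form $\tO(\wn+\tsq n^{-1/2}dk^{-1/2}+\theta+\tsq n^{-1/2}k^{1/2}\ws^2)$. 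These are exactly the two $\tOm$-constraints in the statement; recording the explicit polylogarithmic constants gives the bounds promised there, and the existence of $\wn,\ws$ satisfying all constraints simultaneously is deferred to Lemma~\ref{lem:const_exist}. The identical computation with one factor $\Xt_j$ replaced by the scalar $\la g,\Xt_j\ra$ (using $|\la g,\Xt_j\ra|\le\tO(1)$, $\|g\|^2\le\tO(n)$, $|\la g,W_j\ra|\le\wn,\ws$) handles $\la g,W_i\ra$ and yields the same two bounds.

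The base case $i=1$ is immediate since $W_1=0$. I expect the main obstacle to be the self-referential nature of the recursion: the bound for $\|W_i\|$ feeds back on $\max_{j<i}\|W_j\|$, and through $T_1$ and the $\la X_i,\Delta_j\ra$ terms it also feeds back on $\wn,\ws$ (and $\ws^2$) themselves, so one must check that the resulting system of inequalities is consistent --- that every feedback coefficient is $o(1)$ after multiplication by $s_i^{-1}$. This is precisely where $n\ge d^{1+\epsilon}$ and $\alpha\le 1/2$ are used (they make $n^{-1/2}d^{1/2}$ and $n^{-1/2}k^{1/2}$ small), together with the a priori fact $\wn,\ws=o(1)$ (which downgrades $\theta\ws$ to $o(\theta)$ and $\tsq n^{-1/2}k^{1/2}\ws^2$ to $o(\wn)$); it is also why the crude triangle-inequality bound on $\|\sum_{j<i}\la X_i,\Zt_j\ra W_j\|$ must be replaced by Gaussian concentration. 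Packaging the conclusion as a pair of $\tOm$-inequalities rather than closed-form bounds is exactly what allows the bootstrap to be completed cleanly in Lemma~\ref{lem:const_exist}.
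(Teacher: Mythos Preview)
Your approach is essentially the same as the paper's: decompose $W_i$ via Lemma~\ref{lem:rec} into three pieces (the paper calls them $A_1,A_2,A_3$), bound the deterministic piece $T_1$ directly using the estimates from Lemma~\ref{lem:gZ}, and use Gaussian concentration in the fresh randomness $X_i$ (conditionally on $g,X_1,\dots,X_{i-1}$) for $T_2,T_3$, splitting each sum over $j\in S$ versus $j\notin S$. The paper packages the concentration step slightly more economically: it treats each of $A_2$ and $A_3$ as a single conditionally-Gaussian vector and applies norm concentration (Lemma~\ref{lem:gausnorm}) and the tail bound (Lemma~\ref{lem:gaustail}) once per piece, rather than bounding individual inner products $\la X_i,\Xt_j\ra$, $\la X_i,\Zt_j\ra$, $\la X_i,W_j\ra$ for each $j<i$. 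That is precisely what produces exactly four new events at step $i$ and hence the $4in^{-K}$ term in the probability bound. Your event count is off: adjoining $O(i)$ fresh $X_i$-dependent events at step $i$ and recursively intersecting with $\EE{i-1}^W$ gives $O(i^2)n^{-K}$ cumulative failure probability, not $4in^{-K}$ --- harmless for the final $n^{-K+4}$ conclusion since $i\le d\le n$, but worth tightening to match the stated bound.
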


\begin{proof}
As shown in the base case, setting $\EE{1}^{W} = \EE{1}^{\alpha} \subseteq \Enorm 1$ is sufficient to satisfy the conditions of the lemma in case $i=1$. Recall from Lemma \ref{lem:rec} the expression for $W_i$,
    \begin{align*}
        W_{i} &= -\underbrace{s_{i}^{-1}\sum_{j=1}^{i-1} \la \tsq u_{i}g, \Zt_j\ra s_jr_j^{-1} \b(\Xt_j + W_j\b)}_{A_1}
        - \underbrace{
        s_{i}^{-1}\sum_{j=1}^{i-1} \la X_{i}, \Zt_j\ra \b((s_jr_j^{-1}-1)\Xt_j + s_jr_j^{-1} W_j\b)}_{A_2}\\  
        &\qquad- \underbrace{s_{i}^{-1}\sum_{j=1}^{i-1} \b\la X_{i}, (s_{j}r_{j}^{-1} - 1)\Xt_{j} + s_{j}r_{j}^{-1} \b(\tsq n^{-1/2}(u_{j}+\alpha_{j})g + W_{j} \b)\b\ra \Xt_j}_{A_3}\,.
    \end{align*}
We now analyze both $|\la g,W_{i}\ra|, \|W_{i}\|$ term by term. From Lemmas \ref{lem:basis_norms}, \ref{lem:gX} and \ref{lem:gZ} we have that given $\EE{i}^{\alpha}$,
\begin{gather*}
        |s_i^{-1}| \leq n^{-1/2}+16 \cc0 n^{-1} \leq 2n^{-1/2}, \quad |s_j r_j^{-1}| \leq 1 + 8\cc0 n^{-1/2} + 36c_u \tsq k^{-1/2} \leq 2\\
        |\la g, \Xt_j\ra|,1=|\Xt_j| \leq 8 \cc2, \quad \la g, \Zt_j\ra = \begin{cases}
         9 \cc2,
        &\text{if } j\not\in S\,,\\
        7c_u \tsq n^{1/2}k^{-1/2} + 2\ws,
        &\text{if } j\in S\,.
    \end{cases}
\end{gather*}
By induction assumption $$|\la g,W_{j}\ra|, \|W_{j}\|\leq \begin{cases}
        \wn, \qquad j\not\in S\,,\\
        \ws,\qquad j\in S\,,
\end{cases}$$ for $j< i$. 
\begin{itemize}
        \item $A_1.$ Our goal here is to analyze both $|\la g, A_1\ra|, \|A_1\|$. Substituting the bounds above into $A_1$ we get:
        \begin{align}
             |\la g, A_1\ra|, \|A_1\| &\leq 2c_un^{-1/2} \tsq k^{-1/2} \mathds{1}_{i\in S} \B(\sum_{j \in [i-1]\setminus S} 18 \cc2 (9\cc2 + \wn) \nonumber\\
             &\qquad\qquad+ \sum_{j \in [i-1]\cap S} 2(7c_u \tsq n^{1/2}k^{-1/2} + 2\ws) (9\cc2 + \ws) \B)\nonumber\\
             \label{w_ineq_1}&\leq 2c_un^{-1/2} \tsq k^{-1/2} \mathds{1}_{i\in S} \B( 180 (\cc2)^2 d \\
             &\qquad\qquad + (14c_u \tsq n^{1/2}k^{1/2} + 4k\ws) (9\cc2 + \ws) \B)\nonumber\\
             &\leq 2c_un^{-1/2} \tsq k^{-1/2} \mathds{1}_{i\in S} \B( 180 (\cc2)^2 d + 126c_u\cc2 \tsq n^{1/2}k^{1/2} \nonumber\\
             &\qquad\qquad + 36 \cc2 k\ws + 14c_u \tsq n^{1/2}k^{1/2}\ws + 4k\ws^2\B)\nonumber\\   
             \label{W_A_1}&= \B( 360c_u(\cc2)^2 \tsq n^{-1/2}  k^{-1/2} d + 252(c_u)^2 \cc2 \theta\\
             &\qquad\qquad + 72c_u \cc2  \tsq n^{-1/2} k^{1/2}\ws + 28(c_u)^2 \theta \ws + 8c_u \tsq n^{-1/2} k^{1/2}\ws^2\B) \mathds{1}_{i\in S}\nonumber
        \end{align}
        where in \ref{w_ineq_1} we used $\wn \leq 1$.
        \item $A_2.$ Our goal here is to analyze both $|\la g, A_2\ra|, \|A_2\|$. Observe that conditioned on $g, \{X_j\}_{j=1}^{i-1}$,
        $\la g, A_2\ra$ is a Gaussian random variable with variance $$\sigma_i^2 := s_i^{-2}\sum_{j=1}^{i-1} \b( (s_jr_j^{-1}-1) \la g, \Xt_j\ra + s_j r_j^{-1}\la g,W_j\ra\b)^2\,.$$
        Similarly, $A_2$ is an $n$-dimensional Gaussian random variable with diagonal covariance matrix $\Sigma_i$, where 
        $$
        \tr(\Sigma_i) = s_i^{-2}\sum_{j=1}^{i-1} \b( (s_jr_j^{-1}-1) \Xt_j + s_j r_j^{-1}W_j\b)^2\,.
        $$
        Define the event
        \begin{equation}
            \EE{i}^{A_2} = \{ |\la g,A_2\ra| \leq \cc2 \sigma_i \}\cap \{ |\|A_2\| - \sqrt{\tr(\Sigma_i)}| \leq 2\cc0 \sqrt{\tr(\Sigma_i)}\} \cap \EE{i}^{\alpha}
        \end{equation}
        and note that by Gaussian concentration (Lemmas \ref{lem:gausnorm}, \ref{lem:gaustail}), $\P( \EE{i}^{A_2}) \geq 1- 2n^{-K}$. 
        Substituting in bounds for $s_i, s_jr_j^{-1}, \la g,\Xt_j\ra$ we obtain given $\EE{i}^{A_2}$ as follows:
        \begin{align}
            |\la g, A_2\ra|, \|A_2\| &\leq (\cc2 + 2 \cc0 + 1) \max(\sigma_i, \sqrt{\tr(\Sigma_i)})\nonumber\\
            \label{w_ineq_2}&\leq 8\cc2 n^{-1/2} \B(\sqrt{\sum_{j \in [i-1]\setminus S} (64 \cc0\cc2 n^{-1/2} + 2 \wn)^2}\\
            &\qquad + \sqrt{\sum_{j \in [i-1]\cap S} \b((8\cc0 n^{-1/2} + 36c_u \tsq k^{-1/2})8\cc2 + 2 \ws \b)^2}\B)\nonumber\\
            \label{w_ineq_3}&\leq 8\cc2 n^{-1/2} \B(64 \cc0\cc2 n^{-1/2} d^{1/2} + 2 d^{1/2} \wn\\
            &\qquad + 296 c_u\cc2 \tsq+ 2 k^{1/2} \ws\B)\nonumber\\
            \label{W_A_2}&\leq  512\cc0(\cc2)^2 n^{-1}  d^{1/2} + 16\cc2 n^{-1/2} d^{1/2} \wn\\
            &\qquad + 2368 c_u(\cc2)^2\tsq  n^{-1/2} + 16\cc2 n^{-1/2} k^{1/2} \ws\nonumber\,,
        \end{align}
        where in \eqref{w_ineq_2} we used $\cc2 \geq 2(\cc0+1)$ (we can make such choice of $\cc2$ initially) and in \eqref{w_ineq_3} we used $n^{-1/2} = o(\tsq k^{-1/2})$.
        \item $A_3.$ Our goal here is to analyze both $|\la g, A_3\ra|, \|A_3\|$. 
        Observe that conditioned on $g, \{X_j\}_{j=1}^{i-1}$, $\la g, A_3\ra$ is a Gaussian random variable with variance 
        $$\sigma_i^2 = s_i^{-2}\sum_{j=1}^{i-1} \b( (s_jr_j^{-1}-1) \Xt_j + s_j r_j^{-1}(\tsq n^{-1/2}(u_j+\alpha_j)g + W_j)\b)^2 \la g, \Xt_j \ra^2\,.$$
        Similarly, $A_3$ is an $n$-dimensional Gaussian random variable with covariance $\Sigma_i$, where 
        $$\tr(\Sigma_i) := s_i^{-2}\sum_{j=1}^{i-1} \b( (s_jr_j^{-1}-1) \Xt_j + s_j r_j^{-1}(\tsq n^{-1/2}(u_j + \alpha_j)g + W_j)\b)^2\,.$$
        Define the event
        \begin{equation}
            \EE{i}^{A_3} = \{ |\la g,A_3\ra| \leq \cc2 \sigma_i \}\cap \{ | \|A_3\| - \sqrt{\tr(\Sigma_i)}| \leq 2\cc0 \sqrt{\tr(\Sigma_i)}\} \cap \EE{i}^{\alpha}
        \end{equation}
        and note that by Gaussian concentration (Lemmas \ref{lem:gausnorm} and~\ref{lem:gaustail}) $\P( \EE{i}^{A_3}) \geq 1- 2n^{-K}$. Then, similarly to the argument for bounding $A_2$,
        \begin{align}
            |\la g, A_3\ra|, \|A_3\| &\leq 2 \cc2 \max(\sigma_i, \sqrt{\tr(\Sigma_i)})\\
            &\leq 8 \cc2 n^{-1/2} \B( \sqrt{\sum_{j \in [i-1]\setminus S} \b((64\cc0 \cc2 n^{-1/2} + 2(2\tsq \an + \wn))8\cc2 \b)^2}\nonumber\\
            &\qquad +\sqrt{\sum_{j \in [i-1]\cap S} \b((296 \cc2 c_u \tsq k^{-1/2} + 2(4c_u \tsq k^{-1/2}  + \ws))8\cc2 \b)^2}\B)\nonumber\\
            \label{w_ineq_4}&\leq 8 \cc2 n^{-1/2}\B( d^{1/2} \b(64\cc0 \cc2 n^{-1/2} + 2\wn\b)8\cc2\\
            &\qquad +k^{1/2} \b(296 \cc2 c_u \tsq k^{-1/2} + 2 \ws\b)8\cc2\B)\nonumber\\
            \label{W_A_3}&\leq 4096\cc0 (\cc2)^3 n^{-1} d^{1/2} + 128 (\cc2)^2 d^{1/2} n^{-1/2} \wn \\
            &\qquad + 18944 (\cc2)^3 c_u \tsq n^{-1/2} + 128 (\cc2)^2 k^{1/2}n^{-1/2} \ws \nonumber
        \end{align}
        where in \eqref{w_ineq_4} we used $\tsq \an = \tsq O(n^{-1/2}) = o_{n^{O(\epsilon)}}(n^{-1/2})$.
    \end{itemize}
    Now let $\EE{i}^{W} = \EE{i}^{A_2}\cap\EE{i}^{A_3}\cap\EE{i-1}^{W}\cap \EE{i}^{\alpha}$. By union bound, $\Pr[\EE{i}^{W}] \geq 1 - 4i n^{-K} - i n^{-K+3} \geq 1 - n^{-K+4}$. We now combine bounds \eqref{W_A_1}, \eqref{W_A_2}, and \eqref{W_A_3} to get the desired bound in two cases: 
    \begin{itemize}
        \item $i\not\in S:$
        \begin{align}
            |\la g,W_{i}\ra|, \|W_{i}\| &\leq |A_2| + |A_3|\nonumber\\
            &\leq \B(512\cc0(\cc2)^2 n^{-1}  d^{1/2} + 16\cc2 n^{-1/2} d^{1/2} \wn\nonumber\\
            &\qquad + 2368 c_u(\cc2)^2\tsq  n^{-1/2} + 16\cc2 n^{-1/2} k^{1/2} \ws\B)\nonumber\\
            &\qquad+\B(4096\cc0 (\cc2)^3 n^{-1} d^{1/2}  + 128 (\cc2)^2 d^{1/2} n^{-1/2} \wn \nonumber\\
            &\qquad+ 18944 (\cc2)^3 c_u \tsq n^{-1/2}  + 128 (\cc2)^2 k^{1/2}n^{-1/2} \ws \B)\nonumber\\
            \label{W_final_noS}&\leq 4097\cc0 (\cc2)^3 n^{-1} d^{1/2}+ 129 (\cc2)^2 d^{1/2} n^{-1/2} \wn \\
            &\qquad+ 18945 (\cc2)^3 c_u \tsq n^{-1/2}  + 129 (\cc2)^2 k^{1/2}n^{-1/2} \ws \nonumber\\
            &= \tO(n^{-1}d^{1/2} + d^{1/2}n^{-1/2}\wn + \tsq n^{-1/2} + k^{1/2}n^{-1/2} \ws)\nonumber\,.
        \end{align}
        where we used $\log n \geq 4096(\cc2)^{-1}$.
        \item $i\in S:$ 
        \begin{align}
            |\la g,W_{i}\ra|, \|W_{i}\| &\leq |A_1| + |A_2| + |A_3|\nonumber\\
            \label{W_final_S}&\leq \B( 360c_u(\cc2)^2 \tsq n^{-1/2}  k^{-1/2} d + 252(c_u)^2 \cc2 \theta \\
             &\qquad + 72c_u \cc2  \tsq n^{-1/2} k^{1/2}\ws + 28(c_u)^2 \theta \ws + 8c_u \tsq n^{-1/2} k^{1/2}\ws^2\B)\nonumber\\
             &\qquad + \B(
             4097\cc0 (\cc2)^3 n^{-1} d^{1/2} + 129 (\cc2)^2 d^{1/2} n^{-1/2} \wn\nonumber\\
            &\qquad+ 18945 (\cc2)^3 c_u \tsq n^{-1/2} + 129 (\cc2)^2 k^{1/2}n^{-1/2} \ws
             \B)\nonumber\\
             &= \tO(\tsq n^{-1/2}k^{-1/2}d +\theta + \tsq n^{-1/2} k^{-1/2} \ws + \theta \ws + \tsq n^{-1/2} k^{1/2} \ws^2\nonumber\\
             &\qquad+n^{-1}d^{1/2} + d^{1/2}n^{-1/2}\wn + \tsq n^{-1/2} + k^{1/2}n^{-1/2} \ws)\nonumber \qedhere\,.
        \end{align}
    \end{itemize}
\end{proof}

We now let $\EE i := \EE{i}^{W}$ and finish the proof by showing the existence of $\an, \as, \wn, \ws$ that satisfy all the requirements. 

\begin{lemma}[Existence of $\an, \as, \wn, \ws$.]\label{lem:const_exist} There exist $\an, \as, \wn, \ws,$ s.t. $\an = \tO(n^{-1/2}), \as = o(k^{-1/2})), \wn = o(1), \ws = o(\min\{n^{1/4}, n^{1/2}k^{-1/2}\})$ and all of the conditions of Base Case and Lemmas \ref{lem:alpha}, \ref{lem:W} hold.
\end{lemma}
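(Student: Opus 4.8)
The plan is to treat Lemma~\ref{lem:const_exist} purely as a feasibility statement: I must exhibit $\an,\as,\wn,\ws$ that meet the four asymptotic targets while simultaneously satisfying every constraint accumulated along the induction. Those constraints are exactly: the base-case inequalities $\an\geq 0$ and $\as\geq 16\cc0 c_u n^{-1/2}k^{-1/2}$; the two conditions \eqref{e:alpha_cond1}--\eqref{e:alpha_cond2} from Lemma~\ref{lem:alpha}; and the self-consistency requirements that $\wn$ (resp.\ $\ws$) be at least the right-hand side of \eqref{W_final_noS} (resp.\ \eqref{W_final_S}) from Lemma~\ref{lem:W}. The essential subtlety is that \eqref{W_final_noS}--\eqref{W_final_S} contain $\wn$, $\ws$, and even $\ws^2$ on their right-hand sides, so what is really being asked for is a super-solution of a (mildly nonlinear) recursion rather than a plain lower bound.

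First I would fix $\an := 16\cc2\sqrt{c_u}\,n^{-1/2}$, the smallest value permitted by \eqref{e:alpha_cond1}; this is clearly $\tO(n^{-1/2})$ and also clears the base-case requirement $\an\geq 0$. Next I would untangle the $\wn$--$\ws$ coupling by observing that every self-referential coefficient is $o(1)$ in the present regime: the coefficient of $\wn$ in both \eqref{W_final_noS} and \eqref{W_final_S} is $\tO(d^{1/2}n^{-1/2})=o(1)$ since $n\geq d^{1+\epsilon}$; the linear-in-$\ws$ coefficients are $\tO(k^{1/2}n^{-1/2})$, $\tO(\tsq n^{-1/2}k^{1/2})$ and $\tO(\theta)$, each $o(1)$ because $k^{1/2}n^{-1/2}=o(1)$, $\tsq<1$, and $\theta<\thetacomp<1$; and the coefficient of the quadratic term $\ws^2$ is $\tO(\tsq n^{-1/2}k^{1/2})$, which is negligible once the a priori bound $\ws=o(n^{1/2}k^{-1/2})$ is in hand, since then $\tO(\tsq n^{-1/2}k^{1/2})\cdot\ws=o(\ws)$. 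Moving all such terms to the left-hand side, it suffices to take $\wn,\ws$ equal to a sufficiently large $\polylog(n)$ multiple of the remaining ``source'' terms:
\[
\ws := B\left(\tsq n^{-1/2}k^{-1/2}d+\theta+n^{-1}d^{1/2}+\tsq n^{-1/2}\right),
\]
\[
\wn := B\left(n^{-1}d^{1/2}+\tsq n^{-1/2}+k^{1/2}n^{-1/2}\ws\right),
\]
with $B=B(n)=\polylog(n)$ chosen to dominate the accumulated logarithmic constants in \eqref{W_final_noS}--\eqref{W_final_S}. Finally, having fixed $\an$ and $\ws$, I would set $\as$ equal to a constant multiple of the right-hand side of \eqref{e:alpha_cond2}; this clears \eqref{e:alpha_cond2} by construction, and since this value dominates $16\cc0 c_u n^{-1/2}k^{-1/2}$ it also clears the base case for $\as$.

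It then remains to verify the four asymptotic targets and the mild a priori bounds ($\wn\le 1$, $\as\ll k^{-1/2}$, $\max\{\tsq k^{-1/2},1\}=o(\tsq n^{1/2}k^{-1/2})$) that are implicitly invoked in deriving \eqref{W_final_noS}--\eqref{W_final_S} and Lemma~\ref{lem:gZ}. Each reduces to a finite list of term-by-term comparisons that repeatedly exploit $n\geq d^{1+\epsilon}$ and $\thetastat\le\theta<\thetacomp=\min\{k/\sqrt n,\sqrt{d/n}\}$ (and $k\le\sqrt d$ where it is needed). For instance, $\tsq<(k/\sqrt n)^{1/2}$ gives $\tsq n^{-1/2}k^{-1/2}d<n^{-3/4}d$, which is $o(n^{1/4})$ since $d\ll n$, and one checks similarly that $\theta$, $n^{-1}d^{1/2}$, $\tsq n^{-1/2}$ are all $o(\min\{n^{1/4},n^{1/2}k^{-1/2}\})$, yielding $\ws=o(\min\{n^{1/4},n^{1/2}k^{-1/2}\})$ — which is exactly what licenses discarding the $\ws^2$ term above. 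Then $\wn=o(1)$ follows since $n^{-1}d^{1/2}=o(1)$, $\tsq n^{-1/2}=o(1)$, and $k^{1/2}n^{-1/2}\ws=o(k^{1/2}n^{-1/2}\cdot n^{1/2}k^{-1/2})=o(1)$; and $\as=o(k^{-1/2})$ follows by dividing \eqref{e:alpha_cond2} through by $k^{-1/2}$ and using $\an=\tO(n^{-1/2})$, $\tsq<1$, $\theta=o(1)$, and $\ws=o(n^{1/2}k^{-1/2})$ once more. The bound $\wn\le 1$ and $\as\ll k^{-1/2}$ are immediate from the sharp bounds just established, and $\max\{\tsq k^{-1/2},1\}=o(\tsq n^{1/2}k^{-1/2})$ follows from $\theta\ge\thetastat=\sqrt{k/n}\gg k/n$.

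The main obstacle is the circularity: $\wn$, $\ws$, and $\ws^2$ all appear on the right-hand sides of the inequalities they are required to satisfy, so one cannot simply ``read off'' values. The clean way around it is the two-step bootstrap sketched above — first impose a crude a priori bound (say $\ws\le 1$) to linearize the recursion, verify that all self-referential coefficients are then $o(1)$, solve the resulting linear system for explicit $\polylog$-size choices of $\wn,\ws$, and finally re-derive the sharp bound $\ws=o(n^{1/2}k^{-1/2})$, which retroactively justifies having dropped the quadratic term. Beyond this, the work is bookkeeping: keeping careful track of the $\polylog(n)$ prefactors so that the implied constants in the ``$\tOm$'' of Lemma~\ref{lem:W} actually close, and executing the routine case analysis in the verification, which is where the hypothesis $n\geq d^{1+\epsilon}$ and the range $\thetastat\le\theta<\thetacomp$ get used in an essential way.
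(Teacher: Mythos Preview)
Your proposal is correct and follows essentially the same route as the paper: fix $\an=16\cc2\sqrt{c_u}\,n^{-1/2}$, take $\ws$ and $\wn$ to be $\polylog$-size multiples of the ``source'' terms after observing that every self-referential coefficient (including the one in front of $\ws^2$, once the a~priori bound $\ws=o(n^{1/2}k^{-1/2})$ is fed back in) is $o(1)$, and then read off $\as$ from \eqref{e:alpha_cond2}. The paper writes the choices as maxima rather than sums and drops the two extra source terms $n^{-1}d^{1/2}$ and $\tsq n^{-1/2}$ from $\ws$ (they are dominated since $\theta\ge\thetastat$), and it phrases the closing check as the pair of conditions $\ws\gg\wn$ and $\ws\ll\theta^{-1/2}n^{1/2}k^{-1/2}$ --- but these are exactly your absorption conditions in disguise, so the arguments coincide.
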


\begin{proof}
    Recall the conditions on $\an, \as, \wn, \ws$ we established throughout the proof: 
    \begin{itemize}
        \item (Theorem~\ref{thm:gs-pert} Conditions) $\an = \tO(n^{-1/2}), \as = o(k^{-1/2})), \wn = o(1)$, $\ws = o(\min\{n^{1/4}, n^{1/2}k^{-1/2}\})$.
        \item (Base Case) $\an\geq 0$, $\as \geq 16\cc0 c_u n^{-1/2}k^{-1/2}$.
        \item (Lemma~\ref{lem:alpha}) $\an \geq 16\cc2 \sqrt{c_u} n^{-1/2} $, $\as \geq 36c_u\cc2 \tsq n^{-1/2} k^{-1/2} d \an + 49(c_u)^3 \theta k^{-1/2} + 16(c_u)^2 \tsq n^{-1/2} \ws$.
        \item (Lemma~\ref{lem:W}) $\wn = \tOm\B(n^{-1}d^{1/2} + \tsq n^{-1/2} + n^{-1/2} k^{1/2} \ws\B)$ and $\ws = \tOm \B(\wn + \tsq n^{-1/2} d k^{-1/2} + \theta + \tsq n^{-1/2} k^{1/2} \ws^2 \B)$ with explicit constants for $\tOm$ stated in equations \eqref{W_final_noS}, \eqref{W_final_S}.
    \end{itemize}
    We set
    \begin{align*}
        \an &= 16\cc2 \sqrt{c_u} n^{-1/2} = \widetilde{\Theta}(n^{-1/2})\\
        \ws &= \max\{C_0' \tsq n^{-1/2} d k^{-1/2}, C_1'\theta\} = o\b(\min\{n^{1/4}, n^{1/2}k^{-1/2}\}\b)\\
        \as &= \max \{36c_u\cc2 \tsq n^{-1/2} k^{-1/2} d \an , 49(c_u)^3 \theta k^{-1/2}, 16(c_u)^2 \tsq n^{-1/2} \ws\} = o(k^{-1/2})\\
        \wn &= \max\{C_0 n^{-1}d^{1/2}, C_1\tsq n^{-1/2}, C_2 n^{-1/2}k^{1/2} \ws\}\b) = o(1)\,,
    \end{align*}
    where $C_0, C_1, C_2 = \tO(1)$ are the corresponding coeffitiens from eq.~\eqref{W_final_noS} and $C_0', C_1' = \tO(1)$ from eq.~\eqref{W_final_S}. The conditions of Thm.~\ref{thm:gs-pert} can be easily verified by substituting $\theta \leq \min\{k/\sqrt{n}, \sqrt{d/n}\} \ll 1$ and $k\leq d \ll n$.

    Base Case conditions and those of Lem.~\ref{lem:alpha} are immediately satisfied. To verify the conditions of Lem.~\ref{lem:W}, it is sufficient to show that for the chosen values
    $$
    \ws \gg \wn \quad \text{and} \quad \ws \ll \theta^{-1/2} n^{1/2}k^{-1/2}\,.
    $$
    Indeed, since $\theta\geq \thetastat$ and $n\geq d\geq k$, $$\wn = \tO(n^{-1}d^{1/2} + \tsq n^{-1/2} + n^{-1/2}k^{1/2} \ws) = o(\tsq n^{-1/2} d k^{-1/2}) + o(\ws) \ll \ws\,,$$
    and 
    $$
    \ws = \tO(\tsq n^{-1/2}dk^{-1/2} + \theta) = o(\theta^{-1/2}n^{1/2}k^{-1/2})\,,
    $$
    which concludes the proof. 
\end{proof}

\appendix

\section{Definitions of Average-Case Reductions for Detection}\label{sec:avg_case}

In this section we formally define the statistical detection problems and associated average-case reductions. Analogous definitions for recovery problems and reductions are in Sec.~\ref{sec:avg_case_recovery}. 

\subsection{Detection Problems}\label{subsec:detect_problem}
We identify two main ingredients of a planted problem: a \emph{collection of planted distributions} and a \emph{problem task}. In this section we focus on the detection task.

\paragraph{Collections of Planted Distributions.} We will define a planted problem on a collection of planted distributions $\PP {\mu,N} {\cU}$ over $\R^{D_N}$, where
\begin{align*}
    \PP {\mu} {\cU} = \b\{P_u^{\mu, N}\b\}_{u \in \cU}\,.
\end{align*}
Here $N$ is the main sequence index (in our case this will be dimension), $\mu\in \cM$ is a fixed problem parameter independent of $N$, $u\in \cU$ is the planted signal from an allowed collection $\cU = \cU_N$, and $P_u^{\mu, N}$ is a distribution over $\R^{D_N}$ corresponding to parameter $\mu$ and the planted signal $u$.
For both Spiked Wigner and Spiked Covariance Model we take $d$ as the sequence index and consider $\cU_d \in \{\ksparse d, \ksparseflat d\}$. For $\sc$ we consider $\PP {\mu,d} {\cU_d}$, where $\mu = (\alpha, \beta, \gamma)$ and for $\sw$ we consider $\QQ {\nu, d} {\cU_d}$, where $\nu = (\alpha, \beta)$.

Both models describe \textit{families} of such collections, as parameters $\mu, \nu$ vary. Each fixed point of $\mu$ or $\nu$ corresponds to a specific planted problem, whose complexity might differ from the one of a different planted problem with parameters $\mu'$ or $\nu'$. Moreover, every value of the dimensionality parameter $d$ implies a separate collection of distributions.

For simplicity, we often conflate the notation for a problem defined on a collection of distributions and the collection of distributions itself, i.e. $\PP {\mu, N} {\cU}$, when it is clear whether we consider detection or recovery from the context.

\paragraph{Detection Task.}

Fix $\cU = \cU_N$ and consider a collection of planted distributions $\PP {\mu, N} {\cU}$ on $\R^{D_N}$. Let $f:\cM\to\cM$ be a function mapping a parameter $\mu$ to a corresponding ``null hypothesis" parameter $\mu^0 = f(\mu)$. Denote the collections of distributions induced by $\mu$ and $\mu^0=f(\mu)$ as follows:
$$
\PP {\mu, N} {\cU, H_0} := \PP {\mu^0, N} {\cU},\quad \text{and} \quad \PP {\mu, N} {\cU, H_1} := \PP {\mu, N} {\cU}\,.
$$
The detection problem for $\PP {\mu, N} {\cU}$ is a task of distinguishing between $\PP {\mu, N} {\cU, H_0}$ and $\PP {\mu, N} {\cU, H_1}$ and is defined as follows.

\begin{definition}[Detection Problem]\label{def:detection_problem}
    Given a sample $X$ from an unknown distribution $P_{u^{\star}} \in \b\{\PP {\mu} {\cU, H_0} \bigcup \PP {\mu} {\cU, H_1}\b\}$, the detection problem is the task of distinguishing between events $H_0 = \{P_{u^{\star}} \in \PP {\mu} {\cU, H_0}\}$ and $H_1 = \{P_{u^{\star}} \in \PP {\mu} {\cU, H_1}\}$. In particular, an algorithm $\mathcal{A}: \R^{D_N} \to \{0,1\}$ solves the detection problem with error $\delta_N$ if the Type $I+II$ error does not exceed $\delta_N$, i.e.,
$$
\sup_{P_{u^{\star}} \in \PP {\mu,N} {\cU, H_0}} \P_{X \sim P_{u^\star}} \b[\mathcal{A}(X) = 1\b] + \sup_{P_{u^{\star}} \in \PP {\mu,N} {\cU, H_1}} \P_{X \sim P_{u^\star}} \b[\mathcal{A}(X) = 0\b] \leq \delta_N\,.
$$

\end{definition}

For both Spiked Wigner and Covariance Models, a natural detection problem is to distinguish between the \emph{null hypothesis}, where the signal-to-noise ratio (SNR) parameter $\lambda$ (corresp. $\theta$) is zero, and the \emph{planted hypothesis}, where $\lambda$ (corresp. $\theta$) take some nonzero value. Formally, for parameters $\mu = (\alpha, \beta, \gamma)$ for $\sc$ and $\nu = (\alpha, \beta)$ for $\sw$, we consider functions $\mu_0 = f(\mu) = (\alpha, -\infty, \gamma)$, $\nu_0 = g(\nu) = (\alpha, -\infty)$. In correspondence to the definition above, the detection task for $\sc(\mu)$ is to distinguish between $H_0: \sc(\mu_0)$ and $H_1: \sc(\mu)$ and for $\sw(\nu)$ between $H_0: \sw(\nu_0)$ and $H_1: \sw(\nu)$. 

Moreover, we say that an algorithm $\mathcal{A}$ solves the $\sc(\mu)$ (or $\sw(\nu)$) detection problem with Type $I+II$ error $\delta$ if in the definition above we have 
$$
\limsup_{N\to\infty} \delta_N \leq \delta\,.
$$

\subsection{Average-Case Reductions in Total Variation}\label{subsec:average-case}
We now formally define an average-case reduction between two detection problems. We start with a definition for mapping $\PP {\mu, N} {\cU}$ to $\QQ {\nu, N} {\cU'}$ for fixed dimensionality parameter $N$.

\begin{definition}[Average-Case Reduction for Detection from $\PP {\mu, N} {\cU}$ to $\QQ {\nu, N} {\cU'}$]\label{def:avg_case_detect}

Fix parameters $\mu, \nu$ and two collections of planted signals $\cU = \cU_N, \cU' = \cU'_N$. Let $\PP {\mu,N} {\cU}$ and $\QQ {\nu, N} {\cU'}$ be two collections of planted distributions over $\R^{D_N}$ and $\R^{D'_N}$ respectively. Consider the two induced detection problems with corresponding hypothesis $\b\{ \PP {\mu,N} {\cU, H_0}, \PP {\mu,N} {\cU, H_1} \b\}$ and $\b\{\QQ {\nu, N} {\cU', H_0}, \QQ {\nu, N} {\cU', H_1}\b\}$.

We say that a $O(N^C)$-time (possibly randomized) algorithm $\mathcal{A}^{\text{red}}: \R^{D_N} \to \R^{D'_N}$ for $C=O(1)$ is an \emph{average-case reduction with TV-error $\epsilon_N$ from $\PP {\mu, N} {\cU}$ to $\QQ {\nu, N} {\cU'}$} if for all $u \in \cU_N$, there exists some $u' \in \cU'_N$ satisfying:
    \begin{align*}
        \text{if } X \sim P^{\mu, N}_{u,H_0} \in \PP {\mu, N} {\cU,H_0},  \text{ then } & \tv(\mathcal{A}^{\text{red}}(X), {Q}^{\nu, N}_{u', H_0})\leq \epsilon_N\,, \\
        \text{if } X \sim P^{\mu, N}_{u,H_1} \in \PP {\mu, N} {\cU, H_1}, \text{ then } & \tv(\mathcal{A}^{\text{red}}(X), {Q}^{\nu, N}_{u', H_1})\leq \epsilon_N \,.
    \end{align*}

\end{definition}

The following lemma states the complexity implications of such a reduction. 

\begin{lemma}[Implication of Reduction for Detection from $\PP {\mu, N} {\cU}$ to $\QQ {\nu, N} {\cU'}$]\label{lem:red_detect0}
    Let $\PP {\mu, N} {\cU}$, $\QQ {\nu,N} {\cU'}$ be two detection problems defined as above. Assume there exists an $O(N^C)$-time average-case reduction for detection from $\PP {\mu, N} {\cU}$ to $\QQ {\nu,N} {\cU'}$ with parameter $\epsilon_N$ from the definition above. If there is an $O(N^M)$-time algorithm for detection for $\QQ {\nu,N} {\cU'}$ achieving Type $I+II$ error at most $\delta_N$, there exists a $O(N^{M} + N^C)$-time algorithm for detection for $\PP {\mu,N} {\cU}$ achieving Type $I+II$ error at most $2\epsilon_N + \delta_N$.
\end{lemma}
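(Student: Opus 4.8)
The plan is to chain the reduction algorithm with the hypothetical fast detector for the target problem, and then control the total error by the triangle inequality for probabilities of events. First I would fix the dimensionality parameter $N$ and let $\mathcal{A}^{\mathrm{red}}:\R^{D_N}\to\R^{D'_N}$ be the given $O(N^C)$-time average-case reduction with TV-error $\epsilon_N$, and let $\mathcal{B}:\R^{D'_N}\to\{0,1\}$ be the assumed $O(N^M)$-time detection algorithm for $\QQ{\nu,N}{\cU'}$ with Type $I+II$ error at most $\delta_N$. I would define the composed algorithm $\mathcal{A} := \mathcal{B}\circ\mathcal{A}^{\mathrm{red}}$, which by inspection runs in $O(N^M+N^C)$ time (the reduction produces an instance of size $D'_N = \mathrm{poly}(N)$, on which $\mathcal{B}$ runs in $O(N^M)$ time).

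Next I would bound the Type I error of $\mathcal{A}$. Fix any $u\in\cU_N$ and let $u'\in\cU'_N$ be the corresponding target signal guaranteed by Definition~\ref{def:avg_case_detect}. If $X\sim P^{\mu,N}_{u,H_0}$, then by the definition of the reduction $\tv\b(\mathcal{A}^{\mathrm{red}}(X),Q^{\nu,N}_{u',H_0}\b)\le\epsilon_N$, and since the event $\{\mathcal{B}(\cdot)=1\}$ is measurable, the definition of total variation gives
$$
\P_{X\sim P^{\mu,N}_{u,H_0}}\b[\mathcal{A}(X)=1\b]
\le \P_{Y\sim Q^{\nu,N}_{u',H_0}}\b[\mathcal{B}(Y)=1\b] + \epsilon_N
\le \delta_N + \epsilon_N\,,
$$
where the last step uses that $\mathcal{B}$ has Type $I+II$ error at most $\delta_N$, so in particular its Type I error on the null instance $Q^{\nu,N}_{u',H_0}$ is at most $\delta_N$. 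Taking the supremum over all null distributions $P^{\mu,N}_{u,H_0}\in\PP{\mu,N}{\cU,H_0}$ (equivalently, over all $u\in\cU_N$) yields a Type I error bound of $\delta_N+\epsilon_N$. The symmetric argument applied to $X\sim P^{\mu,N}_{u,H_1}$ with the event $\{\mathcal{B}(\cdot)=0\}$ gives a Type II error bound of $\delta_N+\epsilon_N$, so the Type $I+II$ error of $\mathcal{A}$ is at most $2\epsilon_N + 2\delta_N$.

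To match the claimed bound $2\epsilon_N+\delta_N$ I would be slightly more careful in the last step: rather than bounding Type I and Type II errors separately by $\delta_N$ each, I note that for a \emph{fixed} pair $(u,u')$ the relevant null and planted target distributions are $Q^{\nu,N}_{u',H_0}$ and $Q^{\nu,N}_{u',H_1}$, and the sum of $\mathcal{B}$'s errors on this single pair is at most $\delta_N$ by hypothesis; adding the two TV costs $\epsilon_N+\epsilon_N$ then gives the combined bound $2\epsilon_N+\delta_N$ directly. This is the only subtle point — one must add the TV errors to the \emph{sum} of Type I and Type II errors rather than to each separately — and it is where I expect a careless proof to lose a factor of $2$. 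Everything else is a routine application of the data-processing/triangle structure of total variation (Fact~\ref{tvfacts}) together with the runtime composition, so no further machinery is needed.
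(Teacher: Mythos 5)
Your proof is correct and follows essentially the same route as the paper's: compose the reduction $\mathcal{A}^{\mathrm{red}}$ with the assumed detector $\mathcal{B}$, use the TV guarantee to relate the composed algorithm's false-alarm/miss probabilities on the source distributions to $\mathcal{B}$'s on the target distributions, and then apply the Type~I+II hypothesis once (to the sum, not to each term) to land on $2\epsilon_N+\delta_N$. Your explicit remark about adding the TV slack to the \emph{sum} of the two error probabilities rather than to each separately captures exactly the bookkeeping the paper's proof performs implicitly.
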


\begin{proof} 
    Given a sample $X$ from $P_u \in \b\{\PP {\mu, N} {\cU, H_0} \bigcup \PP {\mu, N} {\cU, H_1} \b\}$ and the $O(N^M)$-time detection algorithm $\mathcal{B}:\R^{D'_N} \to \{0, 1\}$, consider an algorithm $\mathcal{A}:\R^{D_N}$ defined as 
    $$
    \mathcal{A} = \mathcal{B} \circ \mathcal{A}^{\text{red}}(X)\,.
    $$
    Clearly, the runtime of $\mathcal{A}$ is a sum of the runtimes of the two subroutines $\mathcal{B}, \mathcal{A}^{\text{red}}$: $O(N^{M} + N^{C})$.
    From the definition of total variation distance,
    \begin{align*}
        \B| \P_{X' \sim {Q}^{\nu, N}_{u',H_0}}\b[ \mathcal{B}(X') = 1\b] - \P_{X \sim {P}^{\mu, N}_{u,H_0}}\b[ \mathcal{B}\b(\mathcal{A}^{\text{red}}(X)\b) = 1\b] \B| 
        &\leq \tv\b({Q}^{\nu, N}_{u',H_0}, \mathcal{A}^{\text{red}}(X) \b) 
        \\& \leq \epsilon_N, \text{ for } X\sim {P}^{\mu, N}_{u,H_0} 
        \\
        \B| \P_{X' \sim {Q}^{\nu, N}_{u',H_1}}\b[ \mathcal{B}(X') = 0\b] - \P_{X \sim {P}^{\mu, N}_{u,H_1}}\b[ \mathcal{B}\b(\mathcal{A}^{\text{red}}(X)\b) = 0\b] \B| 
        &\leq \tv\b({Q}^{\nu, N}_{u',H_1}, \mathcal{A}^{\text{red}}(X) \b) 
        \\&\leq \epsilon_N, \text{ for } X\sim {P}^{\mu, N}_{u,H_1}\,.
    \end{align*}
    This yields the following bound on the Type $I+II$ error for algorithm $\mathcal{A}$: 
    $$
    \P_{X \sim {P}^{\mu, N}_{u,H_0}}\b[ \mathcal{B}\b(\mathcal{A}^{\text{red}}(X)\b) = 1\b] + \P_{X \sim {P}^{\mu, N}_{u,H_1}}\b[ \mathcal{B}\b(\mathcal{A}^{\text{red}}(X)\b) = 0\b] \leq \delta_N + 2\epsilon_N\,,
    $$
    which concludes the proof.
\end{proof}

Notably, while Def.~\ref{def:avg_case_detect} of an average-case reduction is specific to a particular value of the dimensionality parameter $N$, we study the complexity of problems $\sc(\mu), \sw(\nu)$, where both $\sc(\mu)$ and $\sw(\nu)$ describe a \emph{sequence} of collections of distributions with $d = 1,2,\dots$. To address this, we define an average-case reduction between problems $\cP^\mu_{\cU} = \b\{\PP {\mu, N} {\cU_N}\b\}_{N=1}^{\infty}$ and $\cQ^\nu_{\cU'} = \b\{\QQ {\nu,N} {\cU'_N}\b\}_{N=1}^{\infty}$, where $\mu, \nu$ are independent of the dimensionality parameter $N$. 

\begin{definition}[Average-Case Reduction for Detection, Sequence]\label{def:avg_case_points}
Let $\cP^\mu_{\cU}  = \b\{\PP {\mu, N} {\cU_N}\b\}_{N=1}^{\infty}$ and $\cQ^\nu_{\cU'}  = \b\{\QQ {\nu,N} {\cU'_N}\b\}_{N=1}^{\infty}$ for fixed parameters $\mu, \nu$ independent of $N$. 
We say that an $O(N^{C})$-time algorithm $\mathcal{A} = \{\mathcal{A}_N:\R^{D_N}\to \{0,1\}\}$ is an \emph{average-case reduction for detection from $\cP^{\mu}_{\cU}$ to $\cQ^{\nu}_{\cU'}$} if
    for any sequence $\{\mu_i\}_{i=1}^{\infty}$, such that $\lim_{i\to\infty} \mu_i = \mu$, there exist a sequences $\{\nu_i, N_i\}_{i=1}^{\infty}$ and an index $i_0$, such that 
    \begin{enumerate}
        \item $\lim_{i\to\infty} \nu_i = \nu$ and $\lim_{i\to\infty} N_i = \infty$;
        \item for any $i > i_0$, $\mathcal{A}_{N_i}$ is an average-case reduction for detection from $\PP {\mu, N_i} {\cU_{N_i}}$ to $\QQ {\nu_i, N_i} {\cU'_{N_i}}$ with the total variation parameter $\epsilon_{N_i}$ from Def.~\ref{def:avg_case_detect}; 
        \item $\lim_{i\to\infty} \epsilon_{N_i} = 0$. 
    \end{enumerate}
Note that fully analogous definition for recovery is presented in Def.~\ref{def:avg_case_points_rec}.
\end{definition}

\subsection{Implications of Average-Case Reductions for Phase Diagrams}

We now apply Lemma~\ref{lem:red_detect0} to the Def.~\ref{def:avg_case_points} of an average-case reduction to obtain an implication of average-case reductions to
phase diagrams. We state the following lemma for the case of reductions between $\sc_{\gamma}$ and $\sw$ to simplify the notation\footnote{One can easily generalize the statement to the case of a reduction between any planted detection problems $\PP {\mu} {\cU}$ and $\QQ {\nu} {\cU'}$ with well-defined ``Hard" regions in their phase diagrams.}.

\begin{lemma}[Implication of Reduction for Detection for Phase Diagrams]\label{lem:red_detect}
    Consider the detection problems associated with $\sc_{\gamma}(\mu), \sw(\nu)$ for $\mu \in \Omc_{\gamma}, \nu \in \Omw$ and assume there exists an $O(d^C)$-time average-case reduction for detection from $\sc_{\gamma}(\mu)$ to $\sw(\nu)$ (Def.~\ref{def:avg_case_points}).
    Fix $\epsilon > 0$ and assume there is a $O(d^M)$-time algorithm for detection for $\sw(\nu')$ for all $\nu' \in B_{\epsilon}(\nu)$ achieving asymptotic Type $I+II$ error $\delta$. Then there exists a $O(d^{M} + d^C)$-time algorithm for detection for $\sc_{\gamma}(\mu)$ achieving asymptotic Type $I+II$ error at most $\delta$.

    An analogous implication holds for average-case reduction for detection from $\sw(\nu)$ to $\sc_{\gamma}(\mu)$.
\end{lemma}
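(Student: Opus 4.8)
The plan is to take the desired detector for $\sc_{\gamma}(\mu)$ to be the composition $\mathcal{B}\circ\mathcal{A}^{\mathrm{red}}$ of the given reduction with the assumed $\sw$ detector, and to transport the error guarantee from $\sw$ back to $\sc$ via the per-dimension accounting of Lemma~\ref{lem:red_detect0}. The only real work is threading this through the sequence-level Def.~\ref{def:avg_case_points}, which introduces a drifting target parameter $\nu_i\to\nu$ and a sequence of working dimensions $N_i\to\infty$.

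First I would invoke Def.~\ref{def:avg_case_points} for the given reduction from $\cP^{\mu}_{\cU}$ to $\cQ^{\nu}_{\cU'}$, applied to a fixed approximating sequence $\mu_i\to\mu$. This produces $\{\nu_i\}$, $\{N_i\}$, an index $i_0$, and TV parameters $\epsilon_{N_i}\to 0$ with $\nu_i\to\nu$, $N_i\to\infty$, such that for every $i>i_0$ the map $\mathcal{A}^{\mathrm{red}}_{N_i}:\R^{D_{N_i}}\to\R^{D'_{N_i}}$ is an average-case reduction for detection from $\PP{\mu,N_i}{\cU_{N_i}}$ to $\QQ{\nu_i,N_i}{\cU'_{N_i}}$ in the sense of Def.~\ref{def:avg_case_detect} with error $\epsilon_{N_i}$. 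Since $\nu_i\to\nu$ and $B_{\epsilon}(\nu)$ is open, there is $i_1\ge i_0$ with $\nu_i\in B_{\epsilon}(\nu)$ for all $i>i_1$, so the hypothesis supplies an $O(d^{M})$-time detector $\mathcal{B}=\{\mathcal{B}_d\}$ that solves detection for $\sw(\nu_i)$ with Type $I+II$ error whose $\limsup$ over $d$ is at most $\delta$, for each such $i$.

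Next I would set $\mathcal{A}_d:=\mathcal{B}_d\circ\mathcal{A}^{\mathrm{red}}_d$. Under the canonical correspondence the $\sw$ instance output by the reduction has dimension $d$ and ambient dimension $D'_d=\mathrm{poly}(d)$, so $\mathcal{A}_d$ runs in time $O(d^{M}+d^{C})$. For each $i>i_1$, Lemma~\ref{lem:red_detect0} applied at dimension $N_i$ — with source $\PP{\mu,N_i}{\cU_{N_i}}$, target $\QQ{\nu_i,N_i}{\cU'_{N_i}}$, reduction TV parameter $\epsilon_{N_i}$, and $\mathcal{B}_{N_i}$ achieving Type $I+II$ error $\delta^{\sw,\nu_i}_{N_i}$ on that target — gives that $\mathcal{A}_{N_i}$ achieves Type $I+II$ error at most $\delta^{\sw,\nu_i}_{N_i}+2\epsilon_{N_i}$ on $\PP{\mu,N_i}{\cU_{N_i}}$. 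As $\epsilon_{N_i}\to 0$, the conclusion follows once $\limsup_i\delta^{\sw,\nu_i}_{N_i}\le\delta$ is established.

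I expect this last point to be the main obstacle: the hypothesis only controls, for each fixed $\nu'\in B_{\epsilon}(\nu)$, the $\limsup$ over $d$ of $\mathcal{B}_d$'s error on $\sw(\nu')$, while here $\nu_i$ and $N_i$ move together. The plan is to absorb this into the freedom in Def.~\ref{def:avg_case_points}: by choosing the approximating sequence $\{\mu_i\}$ and then passing to a subsequence, one can arrange that along the retained indices $N_i$ is large enough (relative to $\nu_i$, which has already stabilized near $\nu$) that $\delta^{\sw,\nu_i}_{N_i}\le\delta+\eta$ for any prescribed $\eta>0$; then $\delta^{\sw,\nu_i}_{N_i}+2\epsilon_{N_i}\le\delta+2\eta$ along that subsequence, and letting $\eta\to 0$ yields the asymptotic Type $I+II$ error bound $\delta$ for $\mathcal{A}$ in the same sequence-level sense in which all statements about a limiting parameter are understood here. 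The runtime bound was already noted, and the reverse implication, from a reduction from $\sw(\nu)$ to $\sc_{\gamma}(\mu)$, follows by the identical argument with the two models interchanged.
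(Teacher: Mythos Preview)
Your proposal is essentially the paper's approach: compose the reduction with the assumed $\sw$ detector and invoke Lemma~\ref{lem:red_detect0} along the sequence $\{N_i\}$ produced by Def.~\ref{def:avg_case_points}. The one ingredient the paper adds that you do not is a monotonicity step: it chooses the approximating sequence $\mu_i=(\alpha_i,\beta_i,\gamma)$ with $\beta_i\le\beta$, so that once Lemma~\ref{lem:red_detect0} yields a detector for $\sc_\gamma(\mu_i)$ at dimension $d_i$, one can add noise to an instance of $\sc_\gamma(\mu)$ to reduce it to $\sc_\gamma(\mu_i)$ and thereby obtain a detector for $\sc_\gamma(\mu)$ itself at that dimension. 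This sidesteps any source-side parameter drift directly, rather than via subsequences. On the target-side diagonal issue you flag --- controlling $\delta^{\sw,\nu_i}_{N_i}$ when $\nu_i$ and $N_i$ move together --- the paper simply asserts $\lim_i \delta_{d_i}\le\delta$ without further comment, so your treatment of that point is if anything more careful than the paper's.
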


\begin{proof}
    Consider any sequence $\{\mu_i = (\alpha_i ,\beta_i, \gamma)\}_{i=1}^{\infty}$, such that $\lim_{i\to\infty} \mu_i = \mu$ and $\beta_i \leq \beta$ (i.e. the SNR in $\mu_i$ is at most the SNR in $\mu$). From Def.~\ref{def:avg_case_points}, there exist sequences $\{\nu_i, d_i, \epsilon_{d_i}\}_{i=1}^{\infty}$ and an index $i_0$, such that $\lim_{i\to\infty} \nu_i = \nu$, $\lim_{i\to\infty} \epsilon_{d_i} = 0$, $\lim_{i\to\infty} d_i = \infty$ and for any $i > i_0$, there exists an average-case reduction for detection from $\sc_{\gamma} (\mu)$ in dimension $d_i$ to $\sw(\nu)$ in dimension $d_i$ with total variation parameter $\epsilon_{d_i}$ from Def.~\ref{def:avg_case_detect}. 
    
    Consider index $i'$, such that for all $i > i'$ we have $\nu_i \in B_{\epsilon}(\nu)$.
    From Lemma~\ref{lem:red_detect0}, we conclude that since there exists an $O(d^M)$-time algorithm for detection for $\sw(\nu_i)$ in dimension $d_i$ achieving an error $\delta_{d_i}$, there exists an $O(d^M+d^C)$-time algorithm for detection for $\sc_{\gamma} (\mu_i)$ in dimension $d_i$ achieving an error $\delta_{d_i} + 2\epsilon_{d_i}$. However, since $\beta_i \leq \beta$, the detection problem for $\mu_i$ is strictly \emph{harder} than the one for $\mu$, and therefore, there exists an $O(d^M+d^C)$-time algorithm for detection for $\sc_{\gamma} (\mu)$ in dimension $d_i$ achieving an error $\delta_{d_i} + 2\epsilon_{d_i}$. One, for example, could add additional noise to an instance of $\sc_{\gamma}(\mu)$ and apply the algorithm for $\sc_{gamma}(\mu_i)$.
    
    Finally, note that $$\lim_{i\to\infty} \delta_{d_i} + 2\epsilon_{d_i} \leq \delta + 0 = \delta\,,$$  
    which concludes the proof.
\end{proof}

\subsection{Average-Case Reductions on the Computational Threshold} 

As motivated in Sec.~\ref{sec:intro_avg_case} and \ref{sec:intro_equiv}, hardness results for points arbitrarily close to the computational thresholds of $\sc$ and $\sw$ have implications for the whole hard regimes $\Omc$ and $\Omw$. To capture this idea, we introduce the following definition of average-case reduction between the points on the computational thresholds $\CCc$ and $\CCw$. We emphasize that we do not consider complexity of the points \emph{exactly on the computational threshold}, rather points \emph{arbitrarily close to the thresholds but still inside the hard regions $\Omc$ and $\Omw$}. We state the appropriate complexity implications in Lemma~\ref{lem:red_comp_thr_impl}.

\begin{definition}[Average-Case Reduction on the Computational Threshold]\label{def:avg_case_points_threshold}
    Fix $\gamma\geq 1$ and parameters $\mu \in \CCc_{\gamma}, \nu \in \CCw$ at the computational thresholds of $\sc$ and $\sw$. We say that there is an average-case reduction for detection from $\sc(\mu)$ to $\sw(\nu)$ if for any sequence $\{\mu_i\}_{i=1}^{\infty}$, such that $\mu_i \in \Omc_{\gamma}$ and $\lim_{i\to\infty} \mu_i = \mu$, there exists a sequence $\{\nu_i\}_{i=1}^{\infty}$ and index $i_0$, such that
    \begin{enumerate}
        \item $\nu_i \in \Omw$ for all $i$; 
        \item $\lim_{i\to \infty} \nu_i = \nu$;
        \item for any $i > i_0$, there exists an average-case reduction for detection from  $\sc(\mu_i)$ to $\sw(\nu_i)$ (Def.~\ref{def:avg_case_points}).
    \end{enumerate}
    A reduction from from $\sw(\nu)$ to $\sc(\mu)$ is defined analogously.

\end{definition}

The following is a direct consequence of Lemma~\ref{lem:red_detect} - that is, implication of average-case reduction for detection for phase diagrams.

\begin{lemma}[Implication of Reduction on the Comp. Thr. for Phase Diagrams]\label{lem:red_comp_thr_impl}
Fix $\gamma\geq 1$ and consider parameters $\mu \in \CCc_{\gamma}$ and $\nu \in \CCw$ on the computational thresholds of $\sc$ and $\sw$. Assume there exists an $O(d^C)$-time average-case reduction for detection from $\sc_{\gamma}(\mu)$ to $\sw(\nu)$ (Def.~\ref{def:avg_case_points_threshold}). Fix $\epsilon > 0$ and assume there is a $O(d^M)$-time algorithm for detection for $\sw(\nu')$ for all $\nu' \in B_{\epsilon}(\nu)\cap \Omw$ achieving asymptotic Type $I+II$ error $\delta$. Then, for every $\epsilon' > 0$ there exists $\mu' \in B_{\epsilon'}(\mu) \cap \Omc_{\gamma}$ and an $O(d^{M} + d^C)$-time algorithm for detection for $\sc_{\gamma}(\mu')$ achieving asymptotic Type $I+II$ error at most $\delta$.

    An analogous implication holds for average-case reduction for detection from $\sw(\nu)$ to $\sc_{\gamma}(\mu)$.
\end{lemma}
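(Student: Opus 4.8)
The plan is to peel back the two nested layers of definition — Def.~\ref{def:avg_case_points_threshold} (threshold reduction) built on top of Def.~\ref{def:avg_case_points} (sequence reduction) — and then quote the already-established implication Lemma~\ref{lem:red_detect}. Fix $\epsilon' > 0$. Since $\mu \in \CCc_{\gamma}$ lies on the closure of the open region $\Omc_{\gamma}$ (cut out by strict inequalities), we may fix a sequence $\{\mu_i\}_{i \geq 1}$ with $\mu_i \in \Omc_{\gamma}$ and $\mu_i \to \mu$. Applying the hypothesized $O(d^C)$-time average-case reduction for detection from $\sc_{\gamma}(\mu)$ to $\sw(\nu)$ in the sense of Def.~\ref{def:avg_case_points_threshold} to this sequence yields a sequence $\{\nu_i\}$ with $\nu_i \in \Omw$, $\nu_i \to \nu$, and an index $i_0$ such that for every $i > i_0$ there is an $O(d^C)$-time average-case reduction for detection from $\sc(\mu_i)$ to $\sw(\nu_i)$ in the sense of Def.~\ref{def:avg_case_points}.

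Next I would pick one good index and read off the target parameter. Because $\mu_i \to \mu$ and $\nu_i \to \nu$, there is an index $i^\star > i_0$ with $\mu_{i^\star} \in B_{\epsilon'}(\mu)$ and $\nu_{i^\star} \in B_{\epsilon}(\nu)$; set $\mu' := \mu_{i^\star}$ and $\nu' := \nu_{i^\star}$. Then $\mu' \in B_{\epsilon'}(\mu) \cap \Omc_{\gamma}$, $\nu' \in B_{\epsilon}(\nu) \cap \Omw$, and there is an $O(d^C)$-time average-case reduction for detection from $\sc_{\gamma}(\mu')$ to $\sw(\nu')$ (Def.~\ref{def:avg_case_points}). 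Since $\Omw$ and $B_{\epsilon}(\nu)$ are both open and $\nu'$ lies in their intersection, there is a radius $\epsilon'' > 0$ with $B_{\epsilon''}(\nu') \subseteq B_{\epsilon}(\nu) \cap \Omw$; hence the assumed $O(d^M)$-time detection algorithm for $\sw$, of asymptotic Type $I+II$ error $\delta$, is available at \emph{every} $\nu'' \in B_{\epsilon''}(\nu')$.

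Finally I would invoke Lemma~\ref{lem:red_detect} with source $\mu'$, target $\nu'$, and ball radius $\epsilon''$: its two hypotheses — an $O(d^C)$-time average-case reduction for detection from $\sc_{\gamma}(\mu')$ to $\sw(\nu')$, and an $O(d^M)$-time detector for $\sw(\nu'')$ of asymptotic Type $I+II$ error $\delta$ for all $\nu'' \in B_{\epsilon''}(\nu')$ — are exactly what has been assembled, so the lemma outputs an $O(d^M + d^C)$-time algorithm for detection for $\sc_{\gamma}(\mu')$ with asymptotic Type $I+II$ error at most $\delta$. This is precisely the claim. The reverse direction ($\sw(\nu) \to \sc_{\gamma}(\mu)$) goes through verbatim after interchanging the two models and using the closing ``analogous implication'' clause of Lemma~\ref{lem:red_detect}.

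There is no genuine obstacle here; the two points that need care are (i) the quantifier bookkeeping, so that a single index $i^\star$ simultaneously satisfies $i^\star > i_0$, $\mu_{i^\star}$ near $\mu$, and $\nu_{i^\star}$ near $\nu$, and (ii) the small topological step enlarging ``an algorithm on $B_{\epsilon}(\nu) \cap \Omw$'' to ``an algorithm on a full ball $B_{\epsilon''}(\nu')$ about the interior point $\nu'$'', which is the exact shape of input Lemma~\ref{lem:red_detect} consumes. One should also record that the exponent $C$ is uniform along the approximating sequence, which is what the phrase ``$O(d^C)$-time average-case reduction $\dots$ (Def.~\ref{def:avg_case_points_threshold})'' is meant to encode.
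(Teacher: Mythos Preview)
Your proposal is correct and takes essentially the same approach as the paper, which simply states that the lemma is ``a direct consequence of Lemma~\ref{lem:red_detect}.'' You have carefully unpacked the quantifiers in Def.~\ref{def:avg_case_points_threshold}, selected a suitable index, and handled the small topological step (shrinking to a ball $B_{\epsilon''}(\nu')$ inside the open set $B_\epsilon(\nu)\cap\Omw$) needed to match the hypothesis of Lemma~\ref{lem:red_detect} exactly --- all of which the paper leaves implicit.
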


\section{Definitions of Average-Case Reductions for Recovery} \label{sec:avg_case_recovery}

In this section we formally define the statistical recovery problems and associated average-case reductions. We extends our definitions for average-case reductions for detection from Sec.~\ref{sec:avg_case}. 

\subsection{Recovery Problems}\label{subsec:recover_problem}

Similarly to detection problems, the recovery problems are defined on a collection of planted distributions $\PP {\mu,N} {\cU}$ over $\R^{D_N}$. The difference lies in the task - signal recovery instead of the signal detection.

\paragraph{Recovery Task.} 
Fix parameter $\mu$ and a collection of allowed signals $\cU = \cU_N$. Let $\PP {\mu,N} {\cU}$ be a collection of planted distributions over $\R^{D_N}$ and let $\ell_N:\cU_N\times \widehat{\cU_N}\to [0,+\infty]$ be a loss function. 

\begin{definition}[Recovery Problem]\label{def:recovery_problem}
     Given a sample $X\in \R^{D_N}$ from an unknown distribution $P_{u^{\star}}^{\mu, N} \in \PP {\mu,N} {\cU}$, a \textbf{recovery problem on $\PP {\mu,N} \cU$ with a loss function $\ell_N$} is a task of producing an estimate $\widehat{u^{\star}} \in \widehat{\cU_N}$, such that the expected loss $\ell_N$ is minimized. In particular, an algorithm $\mathcal{A}: \R^{D_N} \to \widehat{\cU_N}$ solves the recovery problem with expected loss at most $\epsilon_N$ if
$$ \sup_{u^{\star} \in \cU_N} \E_{X\sim P_{u^{\star}}^{\mu, N}} \ell_N(u^{\star}, \widehat{u^{\star}} = \mathcal{A}(X)) \leq \delta_N\,,$$
for some $\delta_N \geq 0$.
\end{definition}

While most of our results for $\sw$ and $\sc$ are applicable to an arbitrary choice of the loss function $\ell_d$ and the threshold $l^{\star}_d$, our reduction at the computational threshold in Section~\ref{sec:gs_reduction} slightly alters the signal $u$, preserving the support and the strength. Thus, we slightly restrict the recovery problems to the following setting. 

We assume $\widehat{\cU_d} = \{u\in \mathbb{S}^{d-1}: \|u\|_0 \leq k\}$ - that is, the recovery algorithm $\mathcal{A}: \R^{D_d} \to \widehat{\cU_d}$ produces a $k$-sparse unit $\widehat{u^{\star}}$. Moreover, we let
$$
\ell_d(u^{\star},\widehat{u^{\star}}) = 1 - \b(\la u^{\star},\widehat{u^{\star}}\ra\b)^2\,,
$$
and let $l^{\star}_d = 1 - k^{-1/2} d^{2\delta}$ for some small $\delta > 0$, i.e., we say that $\mathcal{A}$ solves the recovery problem if 
$$ \sup_{u^{\star} \in \cU_d} \E_{X\sim P_{u^{\star}}^{\mu}} \ell_d(u^{\star}, \widehat{u^{\star}} = \mathcal{A}(X)) \leq \epsilon_d \leq 1 - k^{-1/2} d^{2\delta}\,.$$

This choice of $\ell_d$ aligns with the most common approach to planted rank-1 signal estimation - solving $\text{arg}\max_{u: \|u\|_2 = 1, \|u\|_1 = k} {u^T \Sigma u}$, where $\Sigma$ comes from either $\sw$ or the empirical covariance matrix of $\sc$ samples.

\cite{cai2016optimal} show that the best achievable loss $\ell_d^{\min}$ by (any) algorithm for $\sc$ is
$$
\b(\ell_d^{\min}\b)^2 \leq \inf_{\widehat{u^{\star}}} \E \b(\ell_d(u^{\star},\widehat{u^{\star}})\b)^2 = O(1) \frac{(\theta+1) k}{n \theta^2}\,.
$$
For $\theta = \sqrt{k^{1+\eta}/n} = \thetastat \cdot k^{\eta/2}$ with $\eta\in (0,1)$, we have $\ell_d^{\star} = O(k^{-\eta/2}) = o(1)$, and therefore, our restriction for an algorithm to achieve expected loss at most $ l_d^{\star} = 1 - k^{-1/2} d^{2\delta} = 1 - o(1)$ is mild. 

Moreover, we say that an algorithm $\mathcal{A}$ solves the $\sw$ (or $\sc$) recover problem with loss $\delta$ if in the definition above we have 
$$
\limsup_{N\to\infty} \delta_N \leq \delta\,.
$$

\subsection{Average-Case Reductions in Total Variation}\label{subsec:average-case-rec}
We now formally define an average-case reduction between two recovery problems. We start with a definition for mapping $\PP {\mu, N} {\cU}$ to $\QQ {\nu, N} {\cU'}$ for fixed dimensionality parameter $N$.

\begin{definition}[Average-Case Reduction for Recovery from $\PP {\mu, N} {\cU}$ to $\QQ {\nu, N} {\cU'}$]\label{def:avg_case_recover}

    Fix parameters $\mu, \nu$ and two collections of planted signals $\cU = \cU_N, \cU' = \cU'_N$. Let $\PP {\mu,N} {\cU}$ and $\QQ {\nu,N} {\cU'}$ be two collections of planted distributions over $\R^{D_N}$ and $\R^{D'_N}$ respectively and let $\ell_N:\cU \times \widehat{\cU} \to [0, +\infty]$, $\ell'_N:\cU' \times \widehat{\cU'} \to [0, +\infty]$ be two loss functions. Consider two recovery problems: a recovery problem for $\PP {\mu, N} {\cU}$ with a loss function $\ell_N$ and a loss threshold $\ell_N^{\star}$ and for $\QQ {\nu,N} {\cU'}$ with a loss function $\ell'_N$ and a loss threshold ${\ell'}_N^{\star}$. 
    
    We say that an $O(N^{C})$-time (potentially randomized) reduction algorithm $\mathcal{A}^{\text{red}}: \R^{D_N} \to \R^{D'_N}$ is an \textbf{average-case reduction for recovery} from $\PP {\mu} {\cU}$ to $ \QQ {\nu} {\cU'}$ with parameters $\epsilon_N, f, C', C=O(1)$ if:
    \begin{enumerate}
        \item(Reduction Algorithm) The reduction algorithm $\mathcal{A}^{\text{red}}: \R^{D_N} \to \R^{D'_N}$ is such that for all $u \in \cU$, there exists some $u' \in \cU'$ satisfying
        $$
        \tv(\mathcal{A}^{\text{red}}(X), {Q}^{\nu}_{u'})\leq \epsilon_N\,,
        $$
        where $X \sim P^{\mu, N}_u \in \PP {\mu, N} {\cU}$, ${Q}^{\nu, N}_{u'} \in \QQ {\nu, N} {\cU'}$ for some $\epsilon_N > 0$ 
        \item(Recovery Algorithm) There exists an $O(N^{C'})$-time algorithm $\mathcal{A}^{\text{rec}}: \R^{D_N}\times \widehat{\cU'_N} \to \widehat{\cU_N}$ for $C'=O(1)$, such that if $X, u, u' = u'(X)$ are defined as above and $\widehat{u'} \in \widehat{\cU'}$ is such that $\ell'_N(u', \widehat{u'}) \leq \delta_N \leq {\ell'_N}^{\star}$, then
        $$
        \sup_{u\in \cU} \E_{X \sim P_{u}^{\mu, N}} \ell_N\b(u, \mathcal{A}^{\text{rec}}(X, \widehat{u'})\b) \leq \eta_N\,,
        $$
        where $\eta_N = f(\delta_N)$.
    \end{enumerate}

\end{definition}

The following Lemma states the complexity implications of existence of such a reduction. 

\begin{lemma}[Implication of Reduction for Recovery from $\PP {\mu, N} {\cU}$ to $\QQ {\nu, N} {\cU'}$]\label{lem:red_recover0}
    Let $\PP {\mu, N} {\cU}, \QQ {\nu, N} {\cU'}$ be two recovery problems defined as above and assume there exists an $O(N^{C})$-time average case reduction for recovery with parameters $\epsilon_N, f, C'$ from the definition above. Then, if there is an $O(N^{M})$-time algorithm for recovery for $\QQ {\nu, N} {\cU'}$, that with probability at least $1 - \psi_N$ achieves loss at most $\delta_N$ for the loss function $\ell'_N$, there exists an $O(N^{M} + N^{C} + N^{C'})$-time algorithm for recovery for $\PP {\mu, N} {\cU}$ that with probability at least $1-\epsilon_N - \psi_N$ achieves loss at most $\eta_N = f(\delta_N)$ for the loss function $\ell_N$.
\end{lemma}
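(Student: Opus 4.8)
\textbf{Proof proposal for Lemma~\ref{lem:red_recover0}.}

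The plan is to mirror the proof of the detection analogue (Lemma~\ref{lem:red_detect0}): compose the reduction $\mathcal{A}^{\text{red}}$ with the assumed recovery algorithm for $\QQ {\nu, N} {\cU'}$ and then with the recovery-transfer algorithm $\mathcal{A}^{\text{rec}}$, and track the accumulated error through a single application of the total variation data processing inequality (Fact~\ref{tvfacts}, Item 4). First I would fix $u \in \cU$ and a sample $X \sim P_u^{\mu, N}$, and let $u' \in \cU'$ be the signal guaranteed by the reduction, so that $\tv(\mathcal{A}^{\text{red}}(X), Q^{\nu, N}_{u'}) \leq \epsilon_N$. Let $\mathcal{B}: \R^{D'_N} \to \widehat{\cU'_N}$ be the assumed $O(N^M)$-time recovery algorithm for $\QQ {\nu, N} {\cU'}$, which on a genuine sample from $Q^{\nu, N}_{u'}$ outputs $\widehat{u'}$ with $\ell'_N(u', \widehat{u'}) \leq \delta_N$ with probability at least $1 - \psi_N$. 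Define the composite algorithm $\mathcal{A}(X) = \mathcal{A}^{\text{rec}}\big(X, \mathcal{B}(\mathcal{A}^{\text{red}}(X))\big)$; its runtime is the sum $O(N^M + N^C + N^{C'})$ of the three subroutine runtimes.

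The key step is to argue that running $\mathcal{B}$ on $\mathcal{A}^{\text{red}}(X)$ instead of on a true draw from $Q^{\nu, N}_{u'}$ costs at most $\epsilon_N$ in probability. Consider the event $\mathcal{G} = \{\ell'_N(u', \mathcal{B}(\cdot)) \leq \delta_N\}$ viewed as a (measurable) function of the input to $\mathcal{B}$, which itself may be randomized — so formally I would absorb $\mathcal{B}$'s internal randomness into the definition of the Markov kernel and apply the data processing inequality to the kernel $W \mapsto \mathds{1}\{\ell'_N(u', \mathcal{B}(W)) \leq \delta_N\}$. This gives
$$
\Big| \P_{W \sim Q^{\nu,N}_{u'}}\big[\ell'_N(u', \mathcal{B}(W)) \leq \delta_N\big] - \P_{X \sim P^{\mu,N}_u}\big[\ell'_N(u', \mathcal{B}(\mathcal{A}^{\text{red}}(X))) \leq \delta_N\big] \Big| \leq \tv\big(\mathcal{A}^{\text{red}}(X), Q^{\nu,N}_{u'}\big) \leq \epsilon_N\,.
$$
Since the first probability is at least $1 - \psi_N$, the second is at least $1 - \psi_N - \epsilon_N$. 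On this good event, the output $\widehat{u'} = \mathcal{B}(\mathcal{A}^{\text{red}}(X))$ satisfies $\ell'_N(u', \widehat{u'}) \leq \delta_N \leq {\ell'_N}^\star$, so the guarantee on $\mathcal{A}^{\text{rec}}$ in Item~2 of Definition~\ref{def:avg_case_recover} applies and yields that $\mathcal{A}^{\text{rec}}(X, \widehat{u'})$ achieves loss $\ell_N(u, \cdot) \leq \eta_N = f(\delta_N)$. Hence with probability at least $1 - \epsilon_N - \psi_N$ the composite algorithm outputs an estimate with loss at most $\eta_N$, which is the claim; taking the supremum over $u \in \cU$ is immediate since all bounds are uniform in $u$.

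I expect the main subtlety — rather than obstacle — to be the careful handling of the two independent sources of randomness in $\mathcal{A}^{\text{rec}}$: the internal coins of $\mathcal{B}$ and of $\mathcal{A}^{\text{red}}$. One clean way around it is to note that Item~2 of Definition~\ref{def:avg_case_recover} is stated as a guarantee \emph{whenever} the input estimate $\widehat{u'}$ satisfies $\ell'_N(u', \widehat{u'}) \leq \delta_N$, so conditioning on the good event $\mathcal{G}$ and then invoking the $\mathcal{A}^{\text{rec}}$ guarantee pointwise (for each realization of $\widehat{u'}$ in the good set) suffices; the expectation bound $\sup_u \E \ell_N(u, \mathcal{A}(X)) \leq \eta_N + \P[\mathcal{G}^c] \cdot \ell_N^{\max}$ can then also be written down if a bound on the loss under the bad event is needed, but the high-probability phrasing in the lemma statement avoids this entirely. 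Everything else is bookkeeping: summing runtimes and a union bound over the events $\{\epsilon_N\text{-coupling fails}\}$ and $\{\mathcal{B}\text{ fails}\}$.
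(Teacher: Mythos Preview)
Your proposal is correct and follows essentially the same approach as the paper: define the composite algorithm $\mathcal{A}(X) = \mathcal{A}^{\text{rec}}\big(X, \mathcal{B}(\mathcal{A}^{\text{red}}(X))\big)$, sum the runtimes, and use the total variation guarantee to transfer the $\psi_N$-probability success of $\mathcal{B}$ at cost $\epsilon_N$. If anything, your write-up is more careful than the paper's---you make explicit the DPI/event-probability argument where the paper simply invokes ``the coupling property of total variation,'' and you flag the handling of internal randomness that the paper leaves implicit.
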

\begin{proof}
    Given a sample $X$ from $P_u^{\mu, N} \in \PP {\mu, N} {\cU}$ and the $O(N^{M})$-time recovery algorithm $\mathcal{B}:\R^{D'_N} \to \widehat{\cU'_N}$, consider an algorithm $\mathcal{A}:\R^{D_N}$ defined as
    $$
    \mathcal{A} = \mathcal{A}^{\text{rec}}\B(X, \mathcal{B} \circ \mathcal{A}^{\text{red}}(X)\B)\,.
    $$
    Clearly, the runtime of $\mathcal{A}$ is a sum of the runtimes of the three subroutines $\mathcal{B}, \mathcal{A}^{\text{red}}, \mathcal{A}^{\text{rec}}$: $O(N^{M} + N^{C} + N^{C'})$.
    From the coupling property of total variation and the guarantee for recovery algorithm $\mathcal{B}$, with probability at least $1-\epsilon_N - \psi_N$, $\mathcal{B} \circ \mathcal{A}^{\text{red}}(X) = \widehat{u'}$, where $\ell'_N(u', \widehat{u'}) \leq \delta_N$. Then, from the guarantees for the recovery algorithm, with probability at least $1-\epsilon_N - \psi_N$, $$\ell_N\B(u, \mathcal{A}^{\text{rec} }\B(X, \mathcal{B} \circ \mathcal{A}^{\text{red}}(X)\B)\B) = \ell_N(u, \mathcal{A}^{\text{rec}}(X, \widehat{u'})) \leq \eta_N = f(\delta_N)\,,$$
    which concludes the proof.
\end{proof}

Similarly to the detection case we define $\cP^\mu_{\cU} = \b\{\PP {\mu, N} {\cU_N}\b\}_{N=1}^{\infty}$ and $\cQ^\nu_{\cU'} = \b\{\QQ {\nu,N} {\cU'_N}\b\}_{N=1}^{\infty}$, where $\mu, \nu$ are independent of the dimensionality parameter. We simply adapt the Def.~\ref{def:avg_case_points} substituting the detection reduction with the recovery one.

\begin{definition}[Average-Case Reduction for Recovery]\label{def:avg_case_points_rec}
Let $\cP^\mu_{\cU}  = \b\{\PP {\mu, N} {\cU_N}\b\}_{N=1}^{\infty}$ and $\cQ^\nu_{\cU'}  = \b\{\QQ {\nu,N} {\cU'_N}\b\}_{N=1}^{\infty}$ for fixed parameters $\mu, \nu$ independent of $N$. 
We say that an $O(N^{C})$-time algorithm $\mathcal{A} = \{\mathcal{A}_N:\R^{D_N}\to \{0,1\}\}$ is an \emph{average-case reduction for recovery from $\cP^{\mu}_{\cU}$ to $\cQ^{\nu}_{\cU'}$} if
    for any sequence $\{\mu_i\}_{i=1}^{\infty}$, such that $\lim_{i\to\infty} \mu_i = \mu$, there exist a sequences $\{\nu_i, N_i\}_{i=1}^{\infty}$ and an index $i_0$, such that 
    \begin{enumerate}
        \item $\lim_{i\to\infty} \nu_i = \nu$ and $\lim_{i\to\infty} N_i = \infty$;
        \item for any $i > i_0$, $\mathcal{A}_{N_i}$ is an average-case reduction for recovery from $\PP {\mu, N_i} {\cU_{N_i}}$ to $\QQ {\nu_i, N_i} {\cU'_{N_i}}$ with parameters $\epsilon_{N_i}, f, C'$ from Def.~\ref{def:avg_case_recover};
        \item $\lim_{i\to\infty} \epsilon_{N_i} = 0$;
    \end{enumerate}
\end{definition}

\subsection{Implications of Average-Case Reductions for Phase Diagrams}

We now apply Lemma~\ref{lem:red_recover0} to the Def.~\ref{def:avg_case_points_rec} of an average-case reduction to obtain an implication of existence of average-case reductions 
phase diagrams. Similarly to the detection case, we state the following lemma for the case of reductions between $\sc_{\gamma}$ and $\sw$ to simplify the notation, noting that the lemma can be easily generalized.

\begin{lemma}[Implication of Reduction for Recovery for Phase Diagrams]\label{lem:red_recover}
    Consider the recovery problems associated with $\sc_{\gamma}(\mu), \sw(\nu)$ for $\mu \in \Omc_{\gamma}, \nu \in \Omw$ and assume there exists an $O(d^C)$-time average-case reduction for recovery from $\sc_{\gamma}(\mu)$ to $\sw(\nu)$ (Def.~\ref{def:avg_case_points}) with parameters $f, C'$. Moreover, assume that $f(\delta') \leq \eta$ for every $\delta' \leq \delta$.
    Fix $\epsilon > 0$ and assume there is a $O(d^M)$-time algorithm for recovery for $\sw(\nu')$ for all $\nu' \in B_{\epsilon}(\nu)$ achieving the loss $\delta$ with probability at least $1-\psi$. Then there exists a $O(d^{M} + d^C + d^{C'})$-time algorithm for recovery for $\sc_{\gamma}(\mu)$ achieving the loss at most $\eta$ with probability at least $1-\psi$.

    A analogous implication holds for an average-case reduction for recovery from $\sw(\nu)$ to $\sc_{\gamma}(\mu)$.
\end{lemma}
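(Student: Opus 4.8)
The plan is to transcribe the proof of Lemma~\ref{lem:red_detect} almost verbatim, substituting the single-dimension recovery implication (Lemma~\ref{lem:red_recover0}) for its detection analogue, and using the recovery version Def.~\ref{def:avg_case_points_rec} of the sequential reduction (the recovery analogue of Def.~\ref{def:avg_case_points}) in place of the detection one. Fix an arbitrary sequence $\mu_i = (\alpha_i,\beta_i,\gamma) \to \mu$ with $\beta_i \le \beta$, so that $\sc_\gamma(\mu_i)$ has SNR no larger than $\sc_\gamma(\mu)$. By the hypothesized $O(d^C)$-time average-case reduction for recovery from $\sc_\gamma(\mu)$ to $\sw(\nu)$, Def.~\ref{def:avg_case_points_rec} supplies sequences $\nu_i \to \nu$, $d_i \to \infty$, $\epsilon_{d_i} \to 0$ and an index $i_0$ such that for every $i > i_0$ the reduction, run in dimension $d_i$, is an average-case reduction for recovery from $\sc_\gamma(\mu_i)$ to $\sw(\nu_i)$ with parameters $\epsilon_{d_i}, f, C'$ in the sense of Def.~\ref{def:avg_case_recover}.

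First I would pick $i_1 \ge i_0$ with $\nu_i \in B_\epsilon(\nu)$ for all $i > i_1$; for such $i$ the assumed $O(d^M)$-time $\sw$-recovery algorithm applies to $\sw(\nu_i)$ and, in dimension $d_i$, achieves loss at most some $\delta_{d_i}$ with $\limsup_i \delta_{d_i} \le \delta$, with probability at least $1 - \psi_{d_i}$ where $\limsup_i \psi_{d_i} \le \psi$. Then I would invoke Lemma~\ref{lem:red_recover0} with $N = d_i$: composing the $\sw$-recovery algorithm with the reduction's forward map $\mathcal{A}^{\mathrm{red}}$ and its recovery map $\mathcal{A}^{\mathrm{rec}}$ yields an $O(d_i^M + d_i^C + d_i^{C'})$-time recovery algorithm for $\sc_\gamma(\mu_i)$ achieving loss at most $f(\delta_{d_i})$ with probability at least $1 - \epsilon_{d_i} - \psi_{d_i}$. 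Finally I would undo the SNR step: lowering $\theta$ (e.g.\ by replacing each column $Z_j$ by $\sqrt{a}\,Z_j + \sqrt{1-a}\,W_j$ with fresh $W_j\sim\N(0,I_n)$, which multiplies $\theta$ by $a$ while leaving the signal vector $u$ fixed) is itself an average-case reduction for recovery from $\sc_\gamma(\mu)$ to $\sc_\gamma(\mu_i)$ with trivial recovery map, so the same algorithm solves $\sc_\gamma(\mu)$ in dimension $d_i$ with the same runtime, loss bound, and success probability. Letting $i\to\infty$: the runtime is $O(d^M + d^C + d^{C'})$, the success probability tends to at least $1-\psi$ since $\epsilon_{d_i}\to 0$, and $\limsup_i f(\delta_{d_i}) \le \eta$ by the hypothesis that $f(\delta')\le\eta$ for all $\delta'\le\delta$. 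The $\sw \to \sc$ direction is the mirror image, using the assumed reduction in that direction.

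The one step needing care — and it is mild — is the final passage to the limit in the loss: the $\sw$-recovery algorithm is only guaranteed \emph{asymptotic} loss $\le\delta$, so a priori $\delta_{d_i}$ may slightly exceed $\delta$. To still conclude $\limsup_i f(\delta_{d_i})\le\eta$ one uses that for every $\delta'>\delta$ we have $\delta_{d_i}<\delta'$ for all large $i$, together with the monotonicity-type hypothesis on $f$ (in practice $f$ is non-decreasing and continuous, so $f(\delta_{d_i})\to f(\delta)\le\eta$). Everything else — extracting the sequences from Def.~\ref{def:avg_case_points_rec}, the union bound over the reduction's total-variation failure event and the $\sw$-algorithm's failure event, and the additivity of runtimes — is a direct copy of the detection argument of Lemma~\ref{lem:red_detect} with Lemma~\ref{lem:red_recover0} in place of Lemma~\ref{lem:red_detect0}.
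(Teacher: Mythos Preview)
Your proposal is correct and takes essentially the same approach as the paper: the paper's proof is the single line ``The proof follows exactly the proof for analogous lemma for detection: Lemma~\ref{lem:red_detect},'' and you have carried out precisely that substitution (Lemma~\ref{lem:red_recover0} for Lemma~\ref{lem:red_detect0}, Def.~\ref{def:avg_case_points_rec} for Def.~\ref{def:avg_case_points}), with the same SNR-lowering step. Your remark on the passage to the limit for $f(\delta_{d_i})$ is a fair technical observation that the paper glosses over, but it does not constitute a different approach.
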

\begin{proof}
    The proof follows exactly the proof for analogous lemma for detection: Lemma~\ref{lem:red_detect}
\end{proof}

Finally, the definition for the average-case reduction for recovery at the computational threshold is defined in exactly the same way as the one for recovery, see Def.~\ref{def:avg_case_points_threshold} and has implications analogous to Lemma~\ref{lem:red_comp_thr_impl} as a consequence of Lemma~\ref{lem:red_recover}.

\section{Deferred Proofs}

\subsection{Proof of Corollary~\ref{cor:clone_reduction}}\label{proof:clone_reduction}

\begin{mycor}{\ref{cor:clone_reduction}}\textnormal{(Canonical Reduction from $\sc$ to $\sw$, Corollary of Thm.~\ref{t:bipartite})} With canonical parameter correspondence $\mu = (\alpha, \beta, \gamma) \leftrightarrow \nu = f(\mu)=(\alpha, \beta + \gamma/2)$~\eqref{eq:param}, there exists a $O(d^{2+\gamma})$-time average-case reduction for both detection and recovery from $\sc(\mu)$ to $\sw(\nu=f(\mu))$ for $\gamma \geq 2, \mu\in\Omc_{\gamma}, \nu\in\Omw$.
\end{mycor}

\begin{proof}
    Fix $\gamma > 2$. From Theorem~\ref{t:bipartite}, there exists a $O(d^2n=d^{2+\gamma})$-time reduction algorithm $\mathcal{A}^{\text{Red}} = \CloneCov : \R^{n\times d} \to \R^{d\times d}$ mapping an instance of $\sc(d, k, \theta < \thetacomp, n=d^{\gamma})$ with a unit signal vector $u \in\ksparse d$ to an instance of $\sw(d,k,\lambda = \theta\sqrt{n})$ with the same signal vector $u$. 
    
    First, we show that an $\CloneCov$ is an average-case reduction from $\sc(d, k, \theta, n=d^{\gamma})$ to $\sw(d,k,\lambda = \theta\sqrt{n})$ for detection (Def.~\ref{def:avg_case_detect}) and recovery (Def.~\ref{def:avg_case_recover}). Note that the algorithm succeeds for $\theta = \lambda = 0$, so the existence of an average-case reduction for detection follows. Moreover, since the algorithm $\CloneCov$ preserves the signal vector $u$ intact, there is no need for a separate recovery algorithm stage, and we establish an average-case reduction for recovery with the same estimation loss functions for $\sc$ and $\sw$. 

    According to Definition~\ref{def:avg_case_points}, we obtain an existence of the average-case reduction for both recovery and detection from $\sc\b(\mu = (\alpha, \beta, \gamma)\b)$ to $\sw\b(\nu = (\alpha, \beta+\gamma/2)\b)$ for $\gamma > 2$. Indeed, for any sequence $\mu_i = (\alpha_i, \beta_i, \gamma)$ converging to $mu$ we consider a sequence $\nu_i = (\alpha_i, \beta_i+\gamma/2)$ and since the reduction from $\sc(\mu_i)$ to $\sw(\nu_i)$ is established, the existence of an average-case reduction from $\sc(\mu)$ to $\sw(\nu)$ follows. 

    We now extend the result from $\gamma>2$ to the case of $\gamma = 2$. The goal is to prove that there exist average-case reductions for recovery and detection from $\sc(d, k, \theta < \thetacomp, n=d^{\gamma=2})$ to $\sw(d,k,\lambda = \theta\sqrt{n})$. We do this in two steps: 
    $$
    \sc(d, k, \theta, n=d^{\gamma}) \to \sc(d, k, \theta' = \theta d^{\eta}, n'=d^{\gamma + \eta}) \to \sw(d,k,\lambda' = \theta'\sqrt{n'})\,,
    $$
    where we choose any small $\eta>0$ and have $\theta'\approx \theta$.
    As established above, since $\gamma + \eta > 2$, $\CloneCov$ is an average-case reduction for both detection and recovery for the second step in the chain. 
    
    We now establish a simple average-case reduction that achieves the first step - that is, slightly increases the number of samples $n$ relative to dimension $d$, keeping the SNR $\theta$ roughly the same. Given a sample $Z = X + \tsq g u^\top \in \R^{n\times d}$ from $\sc(d, k, \theta, n=d^{\gamma}$, consider two clones $Z\up1, Z\up2 = \textsc{GaussClone}(Z)$. From Lemma~\ref{lem:cloning-guarantee},
    $$
    Z\up1 = X\up1 + \tsq gu^\top, Z\up2 = X\up2 + \tsq gu^\top\,,
    $$
    where $X\up1, X\up2 \sim N(0, I_n)^{\otimes d}$ are independent. We now randomly rotate $Z\up2$, i.e. consider ${Z'}\up2 = U Z\up2$ for $U\sim\text{Unif}(\mathcal{O}(n))$ and let a matrix $Z' \in \R^{2n \times d}$ be such that the first $n$ rows form $Z\up 1$ and the last $n$ rows form ${Z'}\up 2$. Since $Z\up 1, {Z'}\up2$ are tow independent samples from $\sc(d, \theta/2, k, n)$, it is easy to verify that the resulting matrix $Z'$ is a random sample from $\sc(d, \theta/2, k, 2n)$. Repeating this procedure $\log_2 (d^{\eta})$ times we obtain an algorithm that maps an instance of $\sc(d,\theta,k,n)$ to an instance $\sc(d,\theta d^{-\eta}, k, n d^{\eta})$ in $0$ total variation. Moreover, it preserves the signal vector $u$ intact, thus resulting in an average-case reduction for both detection and recovery. This completes the first step of the two step process. It is easy to verify that sequentially combining two average-case reductions that preserve the signal $u$ the same results in an average-case reduction for both detection and recovery. 

    Finally, fix any $\alpha, \beta < \betacomp$. For any $\eta > 0$, we have obtained an average-case reduction for detection and recovery from $\sc\b(\mu = (\alpha, \beta, \gamma)\b)$ to $\sw\b(\nu = (\alpha, \beta'=\beta + \gamma/2 - \eta/2)\b)$. Since we can take an arbitrarily small $\eta$, for any sequences $\{\mu_i = (\alpha_i, \beta_i, \gamma), d_i\}_i$, such that $\lim_{i\to\infty} \mu_i = \mu$ and $\lim_{i\to\infty} d_i = \infty$, we can define a sequence $\{\nu_i = (\alpha_i, \beta'_i=\beta_i + \gamma/2 - \eta_{d_i}/2)\}_i$, and observe that there exists an average-case reduction for both detection and recovery from $\sc(d_i, k_i=d_i^{\alpha_i}, \theta_i= d_i^{\beta_i}, n_i = d_i^{\gamma})$ to $\sw(d_i,k_i,\lambda_i = \theta_i\sqrt{n_i})$ and $\lim_{i\to\infty} \nu_i = \nu$. Then by Definition~\ref{def:avg_case_points}, there exists an average-case reduction from $\mu = (\alpha, \beta, \gamma)$ to $\nu = (\alpha, \beta+\gamma/2)$ for $\gamma = 2$.
\end{proof}

\subsection{Proof of Corollary~\ref{cor:gs_reduction}}\label{proof:gs_reduction}

\begin{mycor}{\ref{cor:gs_reduction}}\textnormal{(Canonical Reduction from $\sc$ to $\sw$ at the Computational Threshold)}

With canonical parameter correspondence $\mu = (\alpha, \beta, \gamma) \leftrightarrow \nu = f(\mu)=(\alpha, \beta + \gamma/2)$~\eqref{eq:param}, there exists an $O(d^{2+\gamma})$-time average-case reduction at the computational threshold for both detection and recovery\footnote{As described in Section~\ref{sec:avg_case}, we prove the reduction for recovery for a slightly restricted class of potential recovery algorithms for $\sw$, which we believe to be a very mild restriction. The recovery algorithm in this case also finishes in $O(d^{2+\gamma})$ and given a recovery algorithm for $\sw$ with loss $<{\ell'}^{\star}$, is guaranteed to output an estimate for $\sc$ with loss at most ${\ell'}^{\star} + o(1)$.} from $\sc(\mu)$ to $\sw(\nu)$ for $\alpha \leq 1/2, \gamma\geq1, \mu \in \CCc_{\gamma}, \nu \in \CCw$.

\end{mycor}

\begin{proof}
    Fix $\epsilon > 0, \alpha\leq 1/2$, and let $n = d^{\gamma = 1 + \epsilon}, k = d^{\alpha}, \theta = \delta\thetacomp(d,k,n)$ for $\delta \leq (\log n)^{-1}$. Define constants $A_{\alpha, \epsilon}, C_{\alpha, \epsilon}, K_{\alpha, \epsilon}, \psi$ as in the statement of Theorem~\ref{t:wishSSBM}. From Theorem~\ref{t:wishSSBM},
    there exists a $O(d^{2+\gamma})$-reduction algorithm $$\mathcal{A}^{\text{Red}} = \WishartToSSBM(\cdot, 2K_{\alpha, \epsilon}, \psi) : \R^{n\times d} \to \R^{d\times d}$$ that maps an instance of $\sc(d, k, \theta=\delta\thetacomp, n)$ with a unit signal vector $u \in\ksparseflat d$ to an instance of $\sw(d,k,\lambda)$ with a signal vector $u'\in \ksparseflat d$ and $\lambda \in [\phi \lamcomp(d,k), \lamcomp(d,k)]$ for some $\phi \geq \delta^{2A_{\alpha, \epsilon}} \cdot C(A_{\alpha, \epsilon}) \log^{-1/2} n$. 

    First, it is easy to verify that the algorithm $\mathcal{A}^{\text{Red}}$ defined as above for some value $\theta > 0$ successfully maps the distributions in the absence of signal:
    $$
    \tv \b( \mathcal{A}^{\text{Red}}\b(N(0,I_n)^{\otimes d}\b), W \b) \to 0 \quad \text{as} \quad n\to\infty\,,
    $$
    where $W\sim \GOE(d)$, which proves that $\mathcal{A}^{\text{Red}}$ satisfies the null-hypothesis requirement of Definition~\ref{def:avg_case_detect}. Moreover, since given a Spiked Covariance sample with a signal vector $u\in \ksparseflat d$ as an input $\mathcal{A}^{\text{Red}}$ outputs an instance of Spiked Wigner with a signal vector $u' \in \ksparseflat d$, we conclude the existence of an average-case reduction for detection according to Definition~\ref{def:avg_case_detect}.

    We now prove that $\mathcal{A}^{\text{Red}}$ can be used to establish an average-case reduction for recovery. We modify our reduction algorithm slightly to only act on half of our initial sample $Z$. Given $Z\in \R^{n\times d}$ a sample from Spiked Covariance Model ~\eqref{e:SCM2}, let $Z\up 1, Z\up 2\in \R^{n/2 \times d}$ denote the first and the last halves of the rows of $Z$. Note that $Z\up1, Z\up2$ are two independent instances of $\sc(d, k, \theta, n/2)$. Let $\alpha' = \alpha, \epsilon' = \log_d (n/2) - 1, \delta' = \thetacomp(d, k, n/2) / \theta$ and define new constants $A_{\alpha', \epsilon'}, C_{\alpha', \epsilon'}, K_{\alpha', \epsilon'}, \psi'$ as in Theorem~\ref{t:wishSSBM}. We let $$\mathcal{A}_{\text{recovery}}^{\text{Red}}(Z) := \WishartToSSBM(Z\up1, 2K_{\alpha', \epsilon'}, \psi')\,,$$
    where $Z\up 1$ consists of the first $n/2$ rows of $Z$ as described above. From Theorem~\ref{t:wishSSBM}, $Y = \mathcal{A}_{\text{recovery}}^{\text{Red}}(Z)$ is close in total variation to an instance of $\sw(d, k, \lambda)$ with a unit vector signal $u' \in \ksparseflat, u'_i = |u_i|$ and $\lambda \in [\phi \lamcomp(d,k), \lamcomp(d,k)]$ for some $\phi \geq \delta^{2A_{\alpha', \epsilon'}} \cdot C(A_{\alpha', \epsilon'}) \log^{-1/2} (n/2)$.

    We now define $\mathcal{A}^{\text{Rec}}: \R^{n\times d} \times \R^d \to \R^d$ -- an estimation part of the recovery average-case reduction. The idea is to use the remaining rows $Z\up2$ and the sparsity information in $\widehat{u'}$ to recover~$u$.

    As discussed in Section~\ref{sec:avg_case_recovery}, we assume that $\widehat{u'} \in \mathbb{S}^{d-1}, \|\widehat{u'}\|_1 \leq k$ and the recovery algorithm for the Spiked Wigner instance satisfies  $$\ell'(u', \widehat{u'}) = 1 - \b(\la u', \widehat{u'} \ra\b) ^2 \leq 1 - k^{-1/2} d^{2\delta}\,.$$
    It is easy to verify that we then have  $\supp(\widehat{u'})\cap\supp(u')=  \Omega(k^{1/2} d^{\delta})$.

    Then, selecting the rows of $Z\up 2$ that are in the support of $\widehat{u'}$ creates an instance of $\sc$ of dimension $d' = k=d^{\alpha}$, sparsity parameter at least $k' = k^{1/2} d^{\delta} = d^{\alpha/2 + \delta}$ and $\theta' \approx d^{\alpha/2 + \delta} / \sqrt{n}$. Now for $k' \geq \sqrt{d'}$, the computational threshold is $\thetacomp(d',k',n) = \sqrt{d'/n} \ll \theta'$, and therefore, a spectral algorithm solves the estimation problem in time $O(d^2n)$, achieving loss at most $o(1)$ for the new problem, for example, using the recovery algorithm in \cite{yuan2013truncated}. It is easy to verify that this achieves $\ell(u, \widehat{u}) \leq \ell'(u', \widehat{u'}) + o(1)$, where $\widehat{u}$ is the output of the corresponding spectral algorithm.

    For every $\delta \leq (\log n)^{-1}$, we have established average-case reductions for both detection and recovery from $\sc(d, k, \theta=\delta\thetacomp, n)$ to $\sw(d,k,\lambda)$ for $\lambda \in [\phi \lamcomp(d,k), \lamcomp(d,k)]$ for some $\phi \geq \delta^{2A_{\alpha, \epsilon}} \cdot C(A_{\alpha, \epsilon}) \log^{-1/2} n$. Note that for $n = d^{\gamma}$, $$\lim_{d\to\infty} \log_d (\log n)^{-1} = 0\,.$$ Thus, for every $\eta > 0, \eta' > 0$, for sufficiently large $d > d_{\eta}$, 
    we have a reduction from $\sc\b(\mu = (\alpha, \beta = \betacomp - \eta, \gamma)\b)$ to $\sw\b( \nu = (\alpha, \beta' = \betacomp + \gamma/2 - \eta'') \b)$, where $\betacomp = \min\{\alpha-\gamma/2, 1/2-\gamma/2\}$,  and $ \eta'' = - \log_d \phi(\delta = d^{-\epsilon}) \leq 2A_{\alpha, \epsilon}\eta + \eta'$. 
    Then, for every sequence of parameters $\mu_i \to \mu$, we can choose sufficiently small d$\eta_i, \eta'_i$ and $d_i > d_{\eta_i}$ to obtain a reduction from $\sc(\mu_i)$ to $\sw(\nu_i)$ in dim. $d_i$, where $\nu_i = \alpha_i, \beta'_i = \betacomp + \gamma/2 - \eta''_i$. Since $\eta''_i \to 0$ as $\eta_i, \eta'_i \to 0$, by Definition~\ref{def:avg_case_points}, there exists an average-case reduction from $\mu = (\alpha, \beta, \gamma)$ to $\nu = (\alpha, \beta+\gamma/2)$ for $\gamma > 1$.

    It is now left to prove analogous statement for $\gamma = 1$. The proof parallels that of Corollary~\ref{cor:clone_reduction}, where we have similarly used a result for all $\gamma > 2$ to obtain the reductions for $\gamma = 2$.
\end{proof}

\newpage 

\bibliographystyle{alpha}
\bibliography{GB_BIB}

\end{document}